\newenvironment{aenumerate}{\begin{enumerate}[label={\textup{(\alph*)}}]}{\end{enumerate}}
\newcommand{\nc}{\newcommand}
\nc{\rv}{\upupsilon}
\nc{\CC}{{\mathbb{C}}}
\nc{\LL}{{\mathbb{L}}}
\nc{\RR}{{\mathbb{R}}}
\renewcommand{\P}{{\mathbb{P}}}
\nc{\OO}{{\mathbb{O}}}
\nc{\QQ}{{\mathbb{Q}}}
\nc{\ZZ}{{\mathbb{Z}}}
\nc{\cA}{{\mathcal{A}}}
\nc{\cB}{{\mathcal{B}}}
\nc{\cC}{{\mathcal{C}}}
\nc{\cD}{{\mathcal{D}}}
\nc{\cE}{{\mathcal{E}}}
\nc{\tcE}{{\tilde{\mathcal{E}}}}
\nc{\cF}{{\mathcal{F}}}
\nc{\tcF}{{\widetilde{\mathcal{F}}}}
\nc{\cG}{{\mathcal{G}}}
\nc{\cH}{{\mathcal{H}}}
\nc{\cI}{{\mathcal{I}}}
\nc{\cJ}{{\mathcal{J}}}
\nc{\cK}{{\mathcal{K}}}
\nc{\cL}{{\mathcal{L}}}
\nc{\cM}{{\mathcal{M}}}
\nc{\cN}{{\mathcal{N}}}
\nc{\cO}{{\mathcal{O}}}
\nc{\cP}{{\mathcal{P}}}
\nc{\cQ}{{\mathcal{Q}}}
\nc{\cR}{{\mathcal{R}}}
\nc{\cS}{{\mathcal{S}}}
\nc{\cT}{{\mathcal{T}}}
\nc{\cU}{{\mathcal{U}}}
\nc{\cV}{{\mathcal{V}}}
\nc{\cW}{{\mathcal{W}}}
\nc{\cX}{{\mathcal{X}}}
\nc{\cY}{{\mathcal{Y}}}
\nc{\cZ}{{\mathcal{Z}}}
\nc{\baH}{\bar{H}}
\nc{\baS}{\bar{S}}
\nc{\baj}{\bar{\jmath}}
\nc{\bcU}{\bar{\cU}}
\nc{\rc}{{\mathrm{c}}}
\nc{\rf}{{\mathsf{f}}}
\nc{\rch}{{\mathrm{ch}}}
\nc{\rtd}{{\mathrm{td}}}
\nc{\rB}{{\mathrm{B}}}
\nc{\rC}{{\mathrm{C}}}
\nc{\rE}{{\mathrm{E}}}
\nc{\rF}{{\mathrm{F}}}
\nc{\rG}{{\mathrm{G}}}
\nc{\rH}{{\mathrm{H}}}
\nc{\rK}{{\mathrm{K}}}
\nc{\rL}{{\mathrm{L}}}
\nc{\rM}{{\mathrm{M}}}
\nc{\rN}{{\mathrm{N}}}
\nc{\rP}{{\mathrm{P}}}
\nc{\rR}{{\mathrm{R}}}
\nc{\rS}{{\mathrm{S}}}
\nc{\rT}{{\mathrm{T}}}
\nc{\rW}{{\mathrm{W}}}
\nc{\rX}{{\mathrm{X}}}
\nc{\rQ}{{\mathrm{Q}}}
\nc{\bA}{{\mathbf{A}}}
\nc{\bB}{{\mathbf{B}}}
\nc{\bC}{{\mathbf{C}}}
\nc{\bD}{{\mathbf{D}}}
\nc{\bE}{{\mathbf{E}}}
\nc{\bF}{{\mathbf{F}}}
\nc{\bG}{{\mathbf{G}}}
\nc{\bH}{{\mathbf{H}}}
\nc{\bI}{{\mathbf{I}}}
\nc{\bJ}{{\mathbf{J}}}
\nc{\bK}{{\mathbf{K}}}
\nc{\bL}{{\mathbf{L}}}
\nc{\bM}{{\mathbf{M}}}
\nc{\bN}{{\mathbf{N}}}
\nc{\bO}{{\mathbf{O}}}
\nc{\bP}{{\mathbf{P}}}
\nc{\bQ}{{\mathbf{Q}}}
\nc{\bR}{{\mathbf{R}}}
\nc{\bS}{{\mathbf{S}}}
\nc{\bT}{{\mathbf{T}}}
\nc{\bU}{{\mathbf{U}}}
\nc{\bV}{{\mathbf{V}}}
\nc{\bW}{{\mathbf{W}}}
\nc{\bX}{{\mathbf{X}}}
\nc{\bY}{{\mathbf{Y}}}
\nc{\bZ}{{\mathbf{Z}}}
\nc{\ba}{{\mathbf{a}}}
\nc{\bb}{{\mathbf{b}}}
\nc{\bc}{{\mathbf{c}}}
\nc{\bd}{{\mathbf{d}}}
\nc{\be}{{\mathbf{e}}}
\nc{\bg}{{\mathbf{g}}}
\nc{\bh}{{\mathbf{h}}}
\nc{\bi}{{\mathbf{i}}}
\nc{\bj}{{\mathbf{j}}}
\nc{\bk}{{\mathbf{k}}}
\nc{\bl}{{\mathbf{l}}}
\nc{\bm}{{\mathbf{m}}}
\nc{\bn}{{\mathbf{n}}}
\nc{\bo}{{\mathbf{o}}}
\nc{\bp}{{\mathbf{p}}}
\nc{\bq}{{\mathbf{q}}}
\nc{\br}{{\mathbf{r}}}
\nc{\bs}{{\mathbf{s}}}
\nc{\bt}{{\mathbf{t}}}
\nc{\bu}{{\mathbf{u}}}
\nc{\bv}{{\mathbf{v}}}
\nc{\bw}{{\mathbf{w}}}
\nc{\bx}{{\mathbf{x}}}
\nc{\by}{{\mathbf{y}}}
\nc{\bz}{{\mathbf{z}}}
\nc{\fA}{{\mathfrak{A}}}
\nc{\fB}{{\mathfrak{B}}}
\nc{\fC}{{\mathfrak{C}}}
\nc{\fD}{{\mathfrak{D}}}
\nc{\fE}{{\mathfrak{E}}}
\nc{\fF}{{\mathfrak{F}}}
\nc{\fG}{{\mathfrak{G}}}
\nc{\fH}{{\mathfrak{H}}}
\nc{\fI}{{\mathfrak{I}}}
\nc{\fJ}{{\mathfrak{J}}}
\nc{\fK}{{\mathfrak{K}}}
\nc{\fL}{{\mathfrak{L}}}
\nc{\fM}{{\mathfrak{M}}}
\nc{\fN}{{\mathfrak{N}}}
\nc{\fO}{{\mathfrak{O}}}
\nc{\fP}{{\mathfrak{P}}}
\nc{\fQ}{{\mathfrak{Q}}}
\nc{\fR}{{\mathfrak{R}}}
\nc{\fS}{{\mathfrak{S}}}
\nc{\fT}{{\mathfrak{T}}}
\nc{\fU}{{\mathfrak{U}}}
\nc{\fV}{{\mathfrak{V}}}
\nc{\fW}{{\mathfrak{W}}}
\nc{\fX}{{\mathfrak{X}}}
\nc{\fY}{{\mathfrak{Y}}}
\nc{\fZ}{{\mathfrak{Z}}}
\nc{\fa}{{\mathfrak{a}}}
\nc{\fb}{{\mathfrak{b}}}
\nc{\fc}{{\mathfrak{c}}}
\nc{\fd}{{\mathfrak{d}}}
\nc{\fe}{{\mathfrak{e}}}
\nc{\ff}{{\mathfrak{f}}}
\nc{\fg}{{\mathfrak{g}}}
\nc{\fh}{{\mathfrak{h}}}
\nc{\fj}{{\mathfrak{j}}}
\nc{\fk}{{\mathfrak{k}}}
\nc{\fl}{{\mathfrak{l}}}
\nc{\fm}{{\mathfrak{m}}}
\nc{\fn}{{\mathfrak{n}}}
\nc{\fo}{{\mathfrak{o}}}
\nc{\fp}{{\mathfrak{p}}}
\nc{\fq}{{\mathfrak{q}}}
\nc{\fr}{{\mathfrak{r}}}
\nc{\fs}{{\mathfrak{s}}}
\nc{\ft}{{\mathfrak{t}}}
\nc{\fu}{{\mathfrak{u}}}
\nc{\fv}{{\mathfrak{v}}}
\nc{\fw}{{\mathfrak{w}}}
\nc{\fx}{{\mathfrak{x}}}
\nc{\fy}{{\mathfrak{y}}}
\nc{\fz}{{\mathfrak{z}}}
\nc{\sA}{{\mathsf{A}}}
\nc{\sB}{{\mathsf{B}}}
\nc{\sC}{{\mathsf{C}}}
\nc{\sD}{{\mathsf{D}}}
\nc{\sE}{{\mathsf{E}}}
\nc{\sF}{{\mathsf{F}}}
\nc{\sG}{{\mathsf{G}}}
\nc{\sH}{{\mathsf{H}}}
\nc{\sI}{{\mathsf{I}}}
\nc{\sJ}{{\mathsf{J}}}
\nc{\sK}{{\mathsf{K}}}
\nc{\sL}{{\mathsf{L}}}
\nc{\sM}{{\mathsf{M}}}
\nc{\sN}{{\mathsf{N}}}
\nc{\sO}{{\mathsf{O}}}
\nc{\sP}{{\mathsf{P}}}
\nc{\sQ}{{\mathsf{Q}}}
\nc{\sR}{{\mathsf{R}}}
\nc{\sS}{{\mathsf{S}}}
\nc{\sT}{{\mathsf{T}}}
\nc{\sU}{{\mathsf{U}}}
\nc{\sV}{{\mathsf{V}}}
\nc{\sW}{{\mathsf{W}}}
\nc{\sX}{{\mathsf{X}}}
\nc{\sY}{{\mathsf{Y}}}
\nc{\sZ}{{\mathsf{Z}}}
\nc{\sa}{{\mathsf{a}}}
\nc{\sd}{{\mathsf{d}}}
\nc{\se}{{\mathsf{e}}}
\nc{\sg}{{\mathsf{g}}}
\nc{\sh}{{\mathsf{h}}}
\nc{\si}{{\mathsf{i}}}
\nc{\sj}{{\mathsf{j}}}
\nc{\sk}{{\mathsf{k}}}
\nc{\sm}{{\mathsf{m}}}
\nc{\sn}{{\mathsf{n}}}
\nc{\so}{{\mathsf{o}}}
\nc{\sq}{{\mathsf{q}}}
\nc{\sr}{{\mathsf{r}}}
\nc{\st}{{\mathsf{t}}}
\nc{\su}{{\mathsf{u}}}
\nc{\sv}{{\mathsf{v}}}
\nc{\sw}{{\mathsf{w}}}
\nc{\sx}{{\mathsf{x}}}
\nc{\sy}{{\mathsf{y}}}
\nc{\sz}{{\mathsf{z}}}
\nc{\oA}{{\overline{A}}}
\nc{\oB}{{\overline{B}}}
\nc{\oC}{{\overline{C}}}
\nc{\oD}{{\overline{D}}}
\nc{\oE}{{\overline{E}}}
\nc{\oF}{{\overline{F}}}
\nc{\oG}{{\overline{G}}}
\nc{\oH}{{\overline{H}}}
\nc{\oI}{{\overline{I}}}
\nc{\oJ}{{\overline{J}}}
\nc{\oK}{{\overline{K}}}
\nc{\oL}{{\overline{L}}}
\nc{\oM}{{\overline{M}}}
\nc{\oN}{{\overline{N}}}
\nc{\oO}{{\overline{O}}}
\nc{\oP}{{\overline{P}}}
\nc{\oQ}{{\overline{Q}}}
\nc{\oR}{{\overline{R}}}
\nc{\oS}{{\overline{S}}}
\nc{\oT}{{\overline{T}}}
\nc{\oU}{{\overline{U}}}
\nc{\oV}{{\overline{V}}}
\nc{\oW}{{\overline{W}}}
\nc{\oX}{{\overline{X}}}
\nc{\oY}{{\overline{Y}}}
\nc{\oZ}{{\overline{Z}}}
\nc{\oa}{{\overline{a}}}
\nc{\ob}{{\overline{b}}}
\nc{\oc}{{\overline{c}}}
\nc{\od}{{\overline{d}}}
\nc{\of}{{\overline{f}}}
\nc{\og}{{\overline{g}}}
\nc{\oh}{{\overline{h}}}
\nc{\oi}{{\overline{i}}}
\nc{\oj}{{\overline{j}}}
\nc{\ok}{{\overline{k}}}
\nc{\ol}{{\overline{l}}}
\nc{\om}{{\overline{m}}}
\nc{\on}{{\overline{n}}}
\nc{\oo}{{\overline{o}}}
\nc{\op}{{\overline{p}}}
\nc{\oq}{{\overline{q}}}
\nc{\os}{{\overline{s}}}
\nc{\ot}{{\overline{t}}}
\nc{\ou}{{\overline{u}}}
\nc{\ov}{{\overline{v}}}
\nc{\ow}{{\overline{w}}}
\nc{\ox}{{\overline{x}}}
\nc{\oy}{{\overline{y}}}
\nc{\oz}{{\overline{z}}}
\nc{\tA}{{\tilde{A}}}
\nc{\tB}{{\tilde{B}}}
\nc{\tC}{{\tilde{C}}}
\nc{\tD}{{\tilde{D}}}
\nc{\tE}{{\tilde{E}}}
\nc{\tF}{{\tilde{F}}}
\nc{\tG}{{\tilde{G}}}
\nc{\tH}{{\tilde{H}}}
\nc{\tI}{{\tilde{I}}}
\nc{\tJ}{{\tilde{J}}}
\nc{\tK}{{\tilde{K}}}
\nc{\tL}{{\tilde{L}}}
\nc{\tM}{{\tilde{M}}}
\nc{\tN}{{\tilde{N}}}
\nc{\tO}{{\tilde{O}}}
\nc{\tP}{{\tilde{P}}}
\nc{\tQ}{{\tilde{Q}}}
\nc{\tR}{{\tilde{R}}}
\nc{\tS}{{\widetilde{S}}}
\nc{\tT}{{\tilde{T}}}
\nc{\tU}{{\tilde{U}}}
\nc{\tV}{{\tilde{V}}}
\nc{\tW}{{\tilde{W}}}
\nc{\tX}{{\widetilde{X}}}
\nc{\tY}{{\tilde{Y}}}
\nc{\tZ}{{\tilde{Z}}}
\nc{\ta}{{\tilde{a}}}
\nc{\tb}{{\tilde{b}}}
\nc{\tc}{{\tilde{c}}}
\nc{\td}{{\tilde{d}}}
\nc{\te}{{\tilde{e}}}
\nc{\tf}{{\tilde{f}}}
\nc{\tg}{{\tilde{g}}}
\nc{\ti}{{\tilde{i}}}
\nc{\tj}{{\tilde{j}}}
\nc{\tk}{{\tilde{k}}}
\nc{\tl}{{\tilde{l}}}
\nc{\tm}{{\tilde{m}}}
\nc{\tn}{{\tilde{n}}}
\nc{\tp}{{\tilde{p}}}
\nc{\tq}{{\tilde{q}}}
\nc{\tr}{{\tilde{r}}}
\nc{\ts}{{\tilde{s}}}
\nc{\tu}{{\tilde{u}}}
\nc{\tv}{{\tilde{v}}}
\nc{\tw}{{\tilde{w}}}
\nc{\tx}{{\tilde{x}}}
\nc{\ty}{{\tilde{y}}}
\nc{\tz}{{\tilde{z}}}
\nc{\hA}{{\hat{A}}}
\nc{\hB}{{\hat{B}}}
\nc{\hC}{{\hat{C}}}
\nc{\hD}{{\hat{D}}}
\nc{\hE}{{\hat{E}}}
\nc{\hF}{{\hat{F}}}
\nc{\hG}{{\hat{G}}}
\nc{\hH}{{\hat{H}}}
\nc{\hI}{{\hat{I}}}
\nc{\hJ}{{\hat{J}}}
\nc{\hK}{{\hat{K}}}
\nc{\hL}{{\hat{L}}}
\nc{\hM}{{\hat{M}}}
\nc{\hN}{{\hat{N}}}
\nc{\hO}{{\hat{O}}}
\nc{\hP}{{\hat{P}}}
\nc{\hQ}{{\hat{Q}}}
\nc{\hR}{{\hat{R}}}
\nc{\hS}{{\widehat{S}}}
\nc{\hT}{{\hat{T}}}
\nc{\hU}{{\widehat{U}}}
\nc{\hV}{{\hat{V}}}
\nc{\hW}{{\hat{W}}}
\nc{\hX}{{\hat{X}}}
\nc{\hY}{{\hat{Y}}}
\nc{\hZ}{{\hat{Z}}}
\nc{\ha}{{\hat{a}}}
\nc{\hb}{{\hat{b}}}
\nc{\hc}{{\hat{c}}}
\nc{\hd}{{\hat{d}}}
\nc{\he}{{\hat{e}}}
\nc{\hf}{{\widehat{f}}}
\nc{\hg}{{\hat{g}}}
\nc{\hh}{{\hat{h}}}
\nc{\hi}{{\hat{i}}}
\nc{\hj}{{\hat{j}}}
\nc{\hk}{{\hat{k}}}
\nc{\hl}{{\hat{l}}}
\nc{\hm}{{\hat{m}}}
\nc{\hn}{{\hat{n}}}
\nc{\ho}{{\hat{o}}}
\nc{\hp}{{\hat{p}}}
\nc{\hq}{{\hat{q}}}
\nc{\hr}{{\hat{r}}}
\nc{\hs}{{\hat{s}}}
\nc{\hu}{{\hat{u}}}
\nc{\hv}{{\hat{v}}}
\nc{\hw}{{\hat{w}}}
\nc{\hx}{{\hat{x}}}
\nc{\hy}{{\hat{y}}}
\nc{\hz}{{\hat{z}}}
\nc{\eps}{\varepsilon}
\nc{\lan}{\big\langle}
\nc{\ran}{\big\rangle}
\nc{\kk}{{\Bbbk}}
\nc{\et}{{\mathrm{\acute{e}t}}}
\nc{\num}{{\mathrm{num}}}
\nc{\xrightiso}{ \xrightarrow{\ \raisebox{-0.5ex}[0ex][0ex]{$\sim$}\ }}
\def\onto{\ensuremath{\twoheadrightarrow}}
\def\bw#1#2{\textstyle{\bigwedge\hskip-0.9mm^{#1}}\hskip0.2mm{#2}}
\DeclareMathOperator{\Hom}{\mathrm{Hom}}
\DeclareMathOperator{\Ext}{\mathrm{Ext}}
\DeclareMathOperator{\End}{\mathrm{End}}
\DeclareMathOperator{\cRHom}{\mathrm{R}\mathcal{H}\mathit{om}}
\DeclareMathOperator{\cExt}{\mathcal{E}\!\mathit{xt}}
\DeclareMathOperator{\cEnd}{\mathcal{E}\mathit{nd}}
\DeclareMathOperator{\Bl}{\mathrm{Bl}}
\DeclareMathOperator{\Bs}{\mathrm{Bs}}
\DeclareMathOperator{\Sing}{\mathrm{Sing}}
\DeclareMathOperator{\Pic}{\mathrm{Pic}}
\DeclareMathOperator{\Sym}{\mathrm{Sym}}
\DeclareMathOperator{\Ker}{\mathrm{Ker}}
\DeclareMathOperator{\Coker}{\mathrm{Coker}}
\DeclareMathOperator{\Gr}{\mathrm{Gr}}
\DeclareMathOperator{\OGr}{\mathrm{OGr}}
\DeclareMathOperator{\LGr}{\mathrm{LGr}}
\DeclareMathOperator{\ev}{\mathrm{ev}}
\DeclareMathOperator{\rank}{\mathrm{rk}}
\DeclareMathOperator{\ch}{\mathrm{ch}}
\nc{\bkk}{{\overline{\kk}}}
\newcommand{\g}{{\mathrm{g}}}
\theoremstyle{plain}
\newtheorem{theorem}{Theorem}[section]
\newtheorem{lemma}[theorem]{Lemma}
\newtheorem{proposition}[theorem]{Proposition}
\newtheorem{corollary}[theorem]{Corollary}
\theoremstyle{definition}
\newtheorem{definition}[theorem]{Definition}
\theoremstyle{remark}
\newtheorem{remark}[theorem]{Remark}
\newtheorem*{remark*}{Remark}
\title{Mukai bundles on Fano threefolds}
\author{Arend Bayer}
\address{{\sloppy
\parbox{0.99\textwidth}{
School of Mathematics, University of Edinburgh\\
JCMB, Peter Guthrie Tait Road, Edinburgh EH9 3FD, UK
}\medskip}}
\email{arend.bayer@ed.ac.uk}
\author{Alexander Kuznetsov}
\address{{\sloppy
\parbox{0.99\textwidth}{
Algebraic Geometry Section, Steklov Mathematical Institute of Russian Academy of Sciences\\
8 Gubkin str., Moscow 119991 Russia
\\[3pt]
Laboratory of Algebraic Geometry, NRU Higher School of Economics, Russian Federation
}\medskip}}
\email{akuznet@mi-ras.ru}
\author{Emanuele Macr\`i}
\address{{\sloppy
\parbox{0.99\textwidth}{
Universit\'e Paris-Saclay, CNRS, Laboratoire de Math\'ematiques d'Orsay\\
Rue Michel Magat, B\^at. 307, 91405 Orsay, France
}\medskip}}
\email{emanuele.macri@universite-paris-saclay.fr}
\subjclass[2020]{14C20, 14D20, 14F08, 14H51, 14H60, 14J28, 14J30, 14J45, 14J60}
\keywords{Fano threefolds, vector bundles and stability, curves, K3 surfaces, Brill--Noether theory}
\thanks{A.B. was partially supported by the EPSRC grant EP/R034826/1 and the ERC grant ERC-2018-CoG819864-WallCrossAG. 
A.K. was partially supported by the HSE University Basic Research Program. 
E.M. was partially supported by the ERC grant ERC-2020-SyG-854361-HyperK}
\begin{document}

\begin{abstract}
We give a proof of Mukai's Theorem on the existence of certain exceptional vector bundles on prime Fano threefolds.
To our knowledge this is the first complete proof in the literature. 
The result is essential for Mukai's biregular classification of prime Fano threefolds, 
and for the existence of semiorthogonal decompositions in their derived categories.

Our approach is based on Lazarsfeld's construction that produces vector bundles on a variety 
from globally generated line bundles on a divisor, 
on Mukai's theory of stable vector bundles on K3 surfaces, 
and on Brill--Noether properties of curves and (in the sense of Mukai) of K3 surfaces.
\end{abstract}

\maketitle

\setcounter{tocdepth}{1}
\tableofcontents

%%%%%%%%%%%%%%%%%%%%%%%%%

\section{Introduction}\label{sec:intro}

Let~$X$ be a complex smooth prime Fano threefold,
i.e., a smooth proper threefold such that
\begin{equation*}
\Pic(X) = \ZZ \cdot K_X
\end{equation*}
and~$-K_X$ is ample. 
The discrete invariant of~$X$ is given by the genus~$\g(X)$, defined by the equality
\begin{equation*}
(-K_X)^3 = 2\g(X)-2
\qquad\text{or}\qquad 
h^0(X,\cO_X(-K_X)) = \g(X) + 2,
\end{equation*}
and by Iskovskikh's fundamental result, see~\cite[Theorem 4.3.3]{Fano-book}, $g = \g(X)$ satisfies 
\begin{equation}
\label{eq:genus-bound}
2 \le g \le 12,
\qquad 
g \neq 11.
\end{equation} 
Following Fano's ideas, Iskovskikh also gave a birational description of all prime Fano threefolds.

\subsection{Mukai's Theorems}\label{subsec:IntroMukai}

If~$g \le 5$, the image of the anticanonical morphism~$X \to \P^{g + 1}$ has small codimension, 
which one can use to describe~$X$ as a complete intersection in a weighted projective space, see~\cite[Section 12.2]{Fano-book}. 
For~$g \ge 6$, the image of the anticanonical morphism is no longer a complete intersection, 
and an alternative method for the biregular classification of~$X$ was needed. 
This was provided by Mukai (see Section~\ref{subsec:history} for a brief account of the development of his ideas), 
and the main result of our paper is a complete proof of his theorem.

\begin{theorem}
\label{thm:Main}
Let~$\kk$ be an algebraically closed field of characteristic zero.
Let~$X$ be a smooth prime Fano threefold over~$\kk$ of genus $\g(X) = r \cdot s\geq 6$, for integers~$r,s \ge 2$.
Then there exists a unique vector bundle~$\cU_r$ on~$X$ such that
\begin{equation*}
\label{eq:intro-invariants}
\rank(\cU_r) = r,
\qquad 
\rc_1(\cU_r) = K_X,
\qquad 
\rH^\bullet(X,\cU_r) = 0,
\qquad
\Ext^\bullet(\cU_r,\cU_r) = \kk,
\end{equation*}
and~$\cU_r^\vee$ is globally generated with~$\dim \rH^0(X, \cU_r^\vee) = r + s$ and $\rH^{>0}(X, \cU_r^\vee)=0$.
\end{theorem}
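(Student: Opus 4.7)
\textit{Proof plan.} The plan follows the three ingredients highlighted in the abstract: construct the Mukai bundle $E$ on an anticanonical K3 surface $S \subset X$ via Brill--Noether theory on a canonical curve $C \subset S$, and then pass from $S$ to $X$ via a Lazarsfeld-type construction applied to the divisor $S \subset X$. Write $H := -K_X$ for the ample generator of $\Pic(X)$, pick a smooth $S \in |H|$ (a K3 surface of genus $g = rs$ with polarization $H_S := H|_S$, $H_S^2 = 2g-2$), and a smooth $C \in |H_S|$ (a canonical curve of genus $g$).

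\emph{Step 1 (Mukai bundle on the K3).} Set $d := (r-1)(s+1)$, so that the Brill--Noether number $\rho(g, r-1, d) = g - r(g - d + r - 1)$ vanishes. By Lazarsfeld's theorem on the Brill--Noether generality of canonical curves on general K3 surfaces, there exists a globally generated line bundle $A$ on $C$ of degree $d$ with $h^0(A) = r$ and $h^1(A) = s$. Lazarsfeld's construction on $S$ produces
\begin{equation*}
0 \to F \to \rH^0(C, A) \otimes \cO_S \to i_{C,*} A \to 0,
\end{equation*}
whose dual $E := F^\vee$ fits in $0 \to \cO_S^{\oplus r} \to E \to i_{C,*}(\omega_C \otimes A^{-1}) \to 0$: a rank-$r$ vector bundle with $\rc_1(E) = H_S$, $\rH^0(S, E) = r+s$, and $\rH^{>0}(S, E) = 0$. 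Its Mukai vector $v(E) = (r, H_S, s)$ satisfies $v(E)^2 = H_S^2 - 2rs = -2$, so by Mukai's theory on K3 surfaces $E$ is $\mu_{H_S}$-stable and spherical, with $\Ext^\bullet_S(E, E) = \kk \oplus \kk[-2]$, and is uniquely determined up to isomorphism by $v(E)$.

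\emph{Step 2 (Lifting to $X$).} The key step is to realize $E$ as the restriction to $S$ of a locally free sheaf on $X$. By a Noether--Lefschetz argument on the family of anticanonical sections of $X$, combined with Mukai's Brill--Noether theorem for K3 surfaces, choose a smooth $S \in |H|$ whose Picard lattice contains a globally generated line bundle $L$ with $L^2 = 2r - 4$, $L \cdot H_S = (r-1)(s+1)$, $h^0(S, L) = r$, and $h^{>0}(S, L) = 0$. Define $\cU_r$ on $X$ by the Lazarsfeld construction for the divisor $S$:
\begin{equation*}
0 \to \cU_r \to \rH^0(S, L) \otimes \cO_X \to i_* L \to 0.
\end{equation*}
The kernel is locally free of rank $r$ with $\rc_1(\cU_r) = -[S] = K_X$. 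Dualizing and using Grothendieck duality $\cExt^1_X(i_* L, \cO_X) \simeq i_*(L^{-1}(H_S))$, and writing $M := L^{-1}(H_S)$, which satisfies $M^2 = 2s - 4$, $h^0(M) = s$, $h^{>0}(M) = 0$ by the symmetric analysis, we obtain
\begin{equation*}
0 \to \cO_X^{\oplus r} \to \cU_r^\vee \to i_* M \to 0,
\end{equation*}
and the restriction $\cU_r^\vee|_S$ is identified with the spherical bundle $E$ of Step~1 (or with its Mukai dual of the same type), reconciling the two constructions.

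\emph{Step 3 (Verification and uniqueness).} The vanishings $\rH^\bullet(X, \cU_r) = 0$ and $\rH^{>0}(X, \cU_r^\vee) = 0$, together with $h^0(\cU_r^\vee) = r + s$ and global generation of $\cU_r^\vee$, follow at once from the two displayed short exact sequences and the known cohomology of $L$ and $M$ on $S$. Exceptionality $\Ext^\bullet(\cU_r, \cU_r) = \kk$ follows by applying $\Hom(-, \cU_r)$ to the defining sequence and using Grothendieck duality to reduce to $\Ext$-computations on $S$ controlled by the sphericity $\Ext^\bullet_S(E, E) = \kk \oplus \kk[-2]$. For uniqueness, any bundle $\cU$ on $X$ with the stated numerical invariants has $\cU|_S$ of Mukai vector $(r, H_S, s)$ and is stable on $S$; by Mukai's uniqueness $\cU|_S \simeq E$, and a standard deformation/rigidity argument as $S$ varies in $|H|$ promotes this to $\cU \simeq \cU_r$ on $X$. \textbf{Main obstacle.} The most delicate step is Step~2, namely producing a smooth anticanonical $S \subset X$ carrying the required line bundle $L$: this is a Noether--Lefschetz existence problem for the $|H|$-family of K3 surfaces on $X$, requiring lattice-theoretic analysis of K3 Picard groups combined with Mukai's Brill--Noether theorem, and likely a case-by-case treatment across the genera $g \in \{6, 8, 9, 10, 12\}$ admitted by the hypothesis $g = rs$ with $r, s \ge 2$.
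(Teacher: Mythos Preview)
Your overall architecture matches the paper's: construct a Lazarsfeld bundle on a K3 from a Brill--Noether-extremal line bundle on a curve, locate a special anticanonical K3 surface carrying what the paper calls a \emph{special Mukai class} $\Xi$ (your $L$), and then apply the Lazarsfeld construction once more to $i_*L$ on $X$ to produce $\cU_r$. Steps~1 and the second half of Step~2 are essentially correct and coincide with the paper's Lemma~\ref{lem:lazarsfeld-bundle} and Corollary~\ref{cor:mb-x-existence}.

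The genuine gap is the first sentence of Step~2. Invoking ``a Noether--Lefschetz argument'' does not constitute a proof, and a naive version fails: the Noether--Lefschetz locus for the lattice $\langle H_S, L\rangle$ is a divisor in the $19$-dimensional moduli of K3s, but the family $|H|$ on a \emph{fixed} $X$ is a linear system of dimension $g+1 \le 13$ whose image in moduli you have no a priori control over; there is no transversality or dimension-count argument that forces it to meet this specific Noether--Lefschetz divisor. The paper's solution (Proposition~\ref{prop:special-s}) is quite different and is the technical core: take a \emph{pencil} $\{S_t\}_{t\in\P^1}$ of anticanonical divisors through a fixed BNP-general curve $C$, form the relative Lazarsfeld bundle, and study its restriction to $C\times\P^1$. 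This restriction is an extension of $\eta\boxtimes\cO(-1)$ by $\bR_C(\xi^{-1})\boxtimes\cO$, so its class lies in $\Ext^1(\eta,\bR_C(\xi^{-1}))\otimes \rH^0(\cO_{\P^1}(1))$. The hard work (Theorem~\ref{thm:mukai-extension}) shows that for very general $t$ (where $\Pic(S_t)=\ZZ\cdot H_{S_t}$) the resulting class $\epsilon(t)$ is the \emph{unique} Mukai extension class up to scalar; hence $\epsilon = \epsilon_0\otimes f_0$ for a single $\epsilon_0$ and a linear form $f_0$, which must vanish at a unique $t_0\in\P^1$. Then Theorem~\ref{thm:mb-s} converts the vanishing $\epsilon(t_0)=0$ into instability of the Lazarsfeld bundle on $S_{t_0}$, which in turn forces $S_{t_0}$ (or its minimal resolution) to carry the class $\Xi$. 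Note in particular that $S_{t_0}$ need not be smooth --- the paper works with du Val members --- so your requirement of a smooth $S$ carrying $L$ is too strong.

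Two further issues. First, your Step~1 claim that $E$ is automatically $\mu_{H_S}$-stable ``by Mukai's theory'' holds only when $\Pic(S)=\ZZ\cdot H_S$; on the special surface of Step~2 the Lazarsfeld bundle is precisely \emph{not} stable (this is the content of Theorem~\ref{thm:mb-s}), so the r\^oles of the generic and the special $S$ must be kept separate. Second, the exceptionality in Step~3 is not a formal consequence of sphericity on one $S$: the paper needs the additional vanishing $\rH^1(S,\bL_S(\xi)^\vee\otimes\bL_S(\xi)(-H))=0$ (Proposition~\ref{prop:l-l-m-h}), whose proof again uses the stability criterion of Theorem~\ref{thm:mb-s} in an essential way (switching to a different Mukai pair $(\xi',\eta')$), combined with Lemma~\ref{lem:ext1-f1-f2}. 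Likewise, uniqueness is proved not by restriction to a single $S$ but by a pencil argument using simplicity of $\cU_X|_C$ (Proposition~\ref{prop:lec}).
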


We also prove a more general result (Theorem~\ref{thm:mb-x} and Corollary~\ref{cor:eu}),
saying that any prime Fano threefold~$X$ 
with factorial terminal singularities of genus~$\g(X) = r \cdot s$ with~$r,s \ge 2$
carries a maximal Cohen--Macaulay \emph{Mukai sheaf}~$\cU_X$ (see Definition~\ref{def:mb-x}), 
which is unique if~$\g(X) \ge 6$.
If, moreover, $\cU_X$ is locally free (which is automatic for smooth~$X$) and~$\g(X) \ge 6$, 
it satisfies all the properties of Theorem~\ref{thm:Main}.
In the follow-up paper~\cite{BKM:models} we prove that the Mukai sheaf~$\cU_X$ \emph{is} locally free, 
thus deducing Theorem~\ref{thm:Main} for all prime Fano threefolds with factorial terminal singularities.

Since~$\cU_r^\vee$ is globally generated, the anticanonical morphism of~$X$ factors, up to a linear projection, through a morphism
\begin{equation*}
X \to \Gr(r, r + s),
\end{equation*}
to the Grassmannian of $r$-dimensional subspaces in a vector space of dimension~$r + s$.
Using the decompositions~$g = 2 \cdot s$ for~$g \in \{6,8,10\}$ and~$g = 3 \cdot s$ for~$g \in \{9, 12\}$ and studying this morphism, Mukai found an explicit description of any prime Fano threefold~$X$ of genus~$g \ge 6$.

\begin{theorem} \label{thm:Mmodel}
\label{thm:descriptions}
Let~$\kk$ be an algebraically closed field of characteristic zero and let~$X$ be a smooth prime Fano threefold over~$\kk$ of genus~$g \ge 6$.
\begin{itemize}
\item 
If~$g = 6 $ then~$X$ is a complete intersection of a quadric and three hyperplanes 
in the cone in~$\P^{10}$ over~$\Gr(2,5) \subset \P^9$;
\item 
if~$g = 7$ then~$X$ is a dimensionally transverse linear section of~$\OGr_+(5,10) \subset \P^{15}$, 
the connected component of the Grassmannian of isotropic $5$-dimensional subspaces 
in a $10$-dimensional vector space endowed with a non-degenerate symmetric bilinear form;
\item 
if~$g = 8$ then~$X$ is a dimensionally transverse linear section of~$\Gr(2,6) \subset \P^{14}$;
\item 
if~$g = 9$ then~$X$ is a dimensionally transverse linear section of~$\LGr(3,6) \subset \P^{13}$, 
the Grassmannian of isotropic $3$-dimensional subspaces 
in a $6$-dimensional vector space endowed with a non-degenerate skew-symmetric bilinear form;
\item 
if~$g = 10$ then~$X$ is a dimensionally transverse linear section in~$\P^{13}$
of a $5$-dimensional homogeneous variety of the simple algebraic group of Dynkin type~$\mathbf{G}_2$; and
\item 
if~$g = 12$ then~$X$ is the subvariety in~$\Gr(3,7) \subset \P^{34}$, 
parameterizing $3$-dimensional subspaces isotropic for three skew-symmetric bilinear forms.
\end{itemize}
\end{theorem}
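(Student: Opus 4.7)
The plan is to apply Theorem~\ref{thm:Main} to produce, for each composite genus $g \in \{6, 8, 9, 10, 12\}$, a natural morphism from $X$ to an appropriate Grassmannian, and then analyze the image case by case; the prime genus $g = 7$ requires a parallel but independent construction of a Mukai-type bundle. For $g \in \{6, 8, 10\}$ write $g = 2s$ and take $r = 2$; for $g \in \{9, 12\}$ write $g = 3s$ and take $r = 3$. Theorem~\ref{thm:Main} supplies the Mukai bundle $\cU_r$ whose dual is globally generated with $h^0(X, \cU_r^\vee) = r + s$, so the evaluation morphism yields a map
\[
\varphi_X \colon X \longrightarrow \Gr(r, r + s),
\qquad
\varphi_X^* \cO(1) \cong \det(\cU_r^\vee) \cong \cO_X(-K_X).
\]
In particular $\varphi_X$ realises the anticanonical morphism of $X$ as the composition of the Plücker embedding with a linear projection from the kernel of the natural map $\Lambda^r \rH^0(X, \cU_r^\vee) \to \rH^0(X, \cO_X(-K_X))$; for $g \ge 7$ the anticanonical morphism is a closed embedding by Iskovskikh's very ampleness results, and so, at least for $g \ne 6$, is $\varphi_X$ after this projection.

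The simplest cases are $g = 8$ and $g = 12$. For $g = 8$, combining $h^0(\cO_X(-K_X)) = 10$ (so $X$ spans a $\P^9 \subset \P^{14}$) with $\deg \Gr(2, 6) = 14 = (-K_X)^3$ identifies $\varphi_X(X)$ as a dimensionally transverse codimension-$5$ linear section of $\Gr(2, 6) \subset \P^{14}$. For $g = 12$, a Hirzebruch--Riemann--Roch computation of $\chi(X, \Lambda^2 \cU_3^\vee)$, together with the cohomology vanishings of Theorem~\ref{thm:Main}, yields a three-dimensional space of skew-symmetric forms on $\rH^0(X, \cU_3^\vee) \cong \kk^7$ vanishing on $\varphi_X(X)$; a dimension-and-degree count then identifies $X$ with the full subvariety of $\Gr(3, 7)$ parameterising $3$-planes isotropic for these forms. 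The cases $g = 9, 10$ follow the same pattern but require producing a canonical extra tensor that forces $\varphi_X(X)$ into a proper homogeneous subvariety. For $g = 9$, a cohomological computation produces a canonical nondegenerate skew form on $\rH^0(X, \cU_3^\vee) \cong \kk^6$ whose Plücker relations vanish on $\varphi_X(X)$, so that $\varphi_X(X) \subset \LGr(3, 6) \subset \P^{13}$; since $\deg \LGr(3, 6) = 16 = 2g - 2$, a further degree count identifies $X$ with a codimension-$3$ linear section. An analogous argument for $g = 10$ produces a canonical $\mathbf{G}_2$-invariant trivector on $\rH^0(X, \cU_2^\vee) \cong \kk^7$ and realises $X$ as a codimension-$2$ linear section of the associated five-dimensional $\mathbf{G}_2$-variety in $\P^{13}$.

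The two remaining genera require adjustments. For $g = 6$, the map $\varphi_X$ to $\Gr(2, 5) \subset \P^9$ is not itself an embedding because $\Lambda^2 \rH^0(\cU_2^\vee) = \kk^{10}$ while $\rH^0(X, \cO_X(-K_X)) = \kk^8$; combining $\varphi_X$ with an additional section of $\cO_X(-K_X)$ outside the image of the Plücker pairing lifts $X$ to a subvariety of the cone in $\P^{10}$ over $\Gr(2, 5)$, and a degree and linear-algebra analysis presents $X$ as the complete intersection of a quadric and three hyperplanes in this cone. For $g = 7$, Theorem~\ref{thm:Main} does not apply, and one instead invokes a parallel existence result for a rank-$5$ Mukai-type bundle with $h^0 = 10$ attached to the half-spin representation; its evaluation morphism embeds $X$ into $\OGr_+(5, 10) \subset \P^{15}$, and a dimension and degree count ($\deg \OGr_+(5,10) = 12 = 2g-2$) identifies $X$ as a dimensionally transverse codimension-$7$ linear section. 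The main obstacle throughout is the construction of the canonical auxiliary structures --- the spaces of skew forms (for $g = 9, 12$), the $\mathbf{G}_2$-invariant (for $g = 10$), the cone-vertex direction (for $g = 6$), and the rank-$5$ Mukai-type bundle (for $g = 7$) --- each of which requires a precise Hirzebruch--Riemann--Roch computation combined with the cohomology and Ext vanishings from Theorem~\ref{thm:Main}, followed by an argument identifying the resulting tensors as nondegenerate of the required type.
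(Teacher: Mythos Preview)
The paper does not prove Theorem~\ref{thm:descriptions} in this paper; it explicitly defers the proof to the follow-up~\cite{BKM:models} (see the paragraph immediately after the theorem statement and the last paragraph of \S\ref{subsec:IntroMukai}). So there is no in-paper argument to compare your proposal against.

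Your outline is the classical Mukai strategy, and it is the natural one once Theorem~\ref{thm:Main} is available. But as you yourself concede in your final paragraph, what you have written is a plan, not a proof, and the missing steps are substantial. The most serious gap is~$g = 7$: since~$7$ is prime, Theorem~\ref{thm:Main} gives nothing, and your appeal to ``a parallel existence result for a rank-$5$ Mukai-type bundle'' is a restatement of the problem rather than a solution. Constructing that bundle (tied to the half-spinor representation of~$\Spin(10)$) requires a genuinely separate argument, and nothing in the present paper supplies it. For the composite genera, the remaining work is also nontrivial: producing the auxiliary tensors via Riemann--Roch is plausible, but proving they are \emph{nondegenerate} (the skew form for~$g = 9$, the trivector of~$\mathbf{G}_2$-type for~$g = 10$, the independence of the three forms for~$g = 12$) is not a formal consequence of a cohomology computation and needs real geometric input. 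Likewise, your degree-matching observations (e.g.\ $\deg\Gr(2,6) = 14 = (-K_X)^3$ for~$g = 8$) are suggestive but do not by themselves rule out that~$\varphi_X(X)$ is a degenerate or non-transverse intersection; one must still argue that~$\varphi_X$ is an embedding onto its image and that the image is exactly the expected linear section.
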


In~\cite[Theorem~1.1]{BKM:models} we prove this theorem for all prime Fano threefolds~$X$
with factorial terminal singularities of genus~$\g(X) \ge 6$.
We also generalize Theorems~\ref{thm:Main} and~\ref{thm:Mmodel} 
to Fano varieties of dimension~$n \ge 4$ with~$\Pic(X) = \ZZ \cdot H$ and~$K_X = -(n-2) H$.

The significance of Mukai's results for the study of Fano threefolds and K3 surfaces is hard to overestimate.
They have applications ranging from computing quantum periods (e.g., \cite{Corti:QuantumPeriods}), 
the classification of 2-Fano manifolds (e.g., \cite{AC:2Fano}), 
and Picard groups of moduli spaces of K3 surfaces (e.g., \cite{GLT}), 
to the study of curves (e.g., \cite{CFM,CU}) and automorphism groups (e.g., \cite{KPS}). 
They were also used to define interesting semiorthogonal decompositions of derived categories of Fano threefolds, see~\cite{K08}.

However, to the best of our knowledge, a complete direct proof of these results 
still cannot be found in the literature, see Section~\ref{subsec:history} for a detailed discussion.
Thus, our main goal is to fill this gap 
by giving a complete proof of Theorem~\ref{thm:Main} along with the generalizations discussed above in this paper, 
and of Theorem~\ref{thm:descriptions} along with generalizations also to higher dimension in~\cite{BKM:models}.

\subsection{Lazarsfeld bundles on K3 surfaces}\label{subsec:IntroLazarsfeldMukai}

Our proof of Theorem~\ref{thm:Main} uses ideas of Mukai and two extra ingredients: 
Lazarsfeld's fundamental observation in~\cite{Laz} that Brill--Noether properties of line bundles on a divisor 
are naturally encoded by its associated vector bundle on the ambient variety (perhaps first constructed in~\cite{Maruyama}), 
and Brill--Noether and Petri generality properties of K3 surfaces and curves, 
see~\S\ref{sec:BNtheory} for a review of what is relevant for us.

In particular, given a curve~$C$ with an embedding $j \colon C \hookrightarrow S$ into a K3 surface~$S$, and a globally generated line bundle~$\xi$ on~$C$, the vector bundle~$\bL_S(\xi)$ on~$S$ is defined by the exact sequence
\begin{equation*}
0 \to \bL_S(\xi) \to \rH^0(C, \xi) \otimes \cO_S \to j_*\xi \to 0.
\end{equation*}
We apply this construction when~$C$ is a smooth Brill--Noether--Petri general curve and~$\xi$ satisfies
\begin{equation}
\label{eq:intro-xi}
\deg(\xi) = (r - 1)(s + 1)
\qquad\text{and}\qquad
h^0(\xi) = r,
\qquad\text{hence}\qquad 
h^1(\xi) = s,
\end{equation}
where~$\g(C) = r \cdot s$ is a fixed factorization. 
(The existence of such~$\xi$ follows from the Brill--Noether generality of~$C$; 
however, $\xi$ is not unique, see Remark~\ref{rem:number-mp}.)
The vector bundle~$\bL_S(\xi)$ has the same invariants as the restriction of a Mukai bundle 
from an ambient Fano threefold, and we show that Petri generality of~$C$ implies that~$\bL_S(\xi)$ is simple and rigid, i.e., spherical.

It is easy to see that~$\bL_S(\xi)$ is stable if~$\Pic(S)$ is generated by the class of~$C$.
The first important step in our proof of Theorem~\ref{thm:Main} 
is the following general stability criterion for~$\bL_S(\xi)$
in terms of an extension of the class~$\xi$ to~$\Pic(S)$.

\begin{theorem}[cf.~{Theorem~\ref{thm:mb-s}}]
\label{obs:laz-s-stability}
Let~$\bL_S(\xi)$ be the spherical vector bundle associated to 
a smooth Brill--Noether--Petri general curve~$C$ on a $K3$ surface~$S$ 
and a globally generated line bundle~$\xi$ on~$C$ satisfying~\eqref{eq:intro-xi}, 
where~$r \in \{2,3\}$ and~$r \cdot s = \g(C)$.
Let~$H$ be the class of~$C$ in~$\Pic(S)$.
If~$\bL_S(\xi)$ is not $H$-Gieseker stable then~$\xi$ extends to~$S$. 
More precisely, there is a base point free divisor class~$\Xi \in \Pic(S)$ 
such that~$\cO_S(\Xi)\vert_C \cong \xi$ and
\begin{equation}
\label{eq:intro-Xi}
H \cdot \Xi = (r - 1)(s + 1),
\qquad
h^0(\cO_S(\Xi)) = r,
\qquad\text{and}\qquad
h^1(\cO_S(\Xi)) = h^2(\cO_S(\Xi)) = 0.
\end{equation}
Moreover, if~$\Pic(S)$ does not have a class~$\Xi$ satisfying~\eqref{eq:intro-Xi} 
then~$\bL_S(\xi)$ does not depend on~$C$ or~$\xi$.
\end{theorem}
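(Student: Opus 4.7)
The plan is to extract the divisor class~$\Xi$ from a destabilizing filtration of~$\bL_S(\xi)$. The main tools are the embedding $\bL_S(\xi) \hookrightarrow \rH^0(C,\xi) \otimes \cO_S$ coming from the defining short exact sequence, the Brill--Noether--Petri generality of~$C$, and the sphericity of~$\bL_S(\xi)$, i.e.\ $\langle v, v \rangle = -2$ for its Mukai vector $v := v(\bL_S(\xi)) = (r, -H, s)$.

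Assume first $\bL_S(\xi)$ is not $H$-Gieseker stable, and let $F \subset \bL_S(\xi)$ be a maximal saturated $\mu_H$-destabilizing subsheaf, working with~$\bL_S(\xi)^\vee$ in its place if required by rank considerations in the $r = 3$ case. The embedding above equips $F^\vee$ with $r$ distinguished global sections whose composition with the evaluation $\cO_S^r \to j_*\xi$ vanishes. Restricting this syzygy to~$C$ and applying the base-point-free pencil trick of Green--Lazarsfeld (when $F$ is a line bundle, automatic for $r = 2$) or its generalization to nets via the syzygy bundle $M_V := \ker(\rH^0(\xi) \otimes \cO_C \to \xi)$ (when $F$ has rank~$2$, relevant for $r = 3$), one obtains a nonzero sub-line-bundle injection $\xi \hookrightarrow F^\vee|_C$. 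Petri generality of~$C$---injectivity of the Petri map $\rH^0(\xi) \otimes \rH^0(\omega_C \otimes \xi^{-1}) \to \rH^0(\omega_C)$---combined with the Mukai-pairing identity $v(F) + v(\bL_S(\xi)/F) = v$ imposed by sphericity forces this injection to be an isomorphism. Setting $\cO_S(\Xi) := F^\vee$ (after discarding any fixed part, justified by the maximality of~$F$) then gives $\cO_S(\Xi)|_C \cong \xi$, $\Xi \cdot H = (r-1)(s+1)$, and $\Xi^2 = 2(r-2)$.

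The remaining cohomology conditions $h^0(\cO_S(\Xi)) = r$ and $h^1(\cO_S(\Xi)) = h^2(\cO_S(\Xi)) = 0$ follow from Riemann--Roch on~$S$ together with the vanishing $h^i(\cO_S(\Xi - H)) = 0$ for $i = 0, 1$ (consequences of $(\Xi - H) \cdot H < 0$ and standard K3 Brill--Noether vanishings); this produces the isomorphism $\rH^0(\cO_S(\Xi)) \xrightiso \rH^0(\xi) = \kk^r$, and base-point-freeness of $|\Xi|$ then follows from that of~$\xi$ on~$C$ via Saint-Donat's theorem. Finally, if no such~$\Xi$ exists then by the preceding analysis $\bL_S(\xi)$ is $H$-Gieseker stable; since $v$ is spherical, Mukai's theorem on moduli of stable sheaves on K3 surfaces shows that $M_H(v)$ consists of a single reduced point, hence $\bL_S(\xi)$ depends only on~$v$ and is independent of the choice of~$C$ and~$\xi$. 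The hardest step is the $r = 3$ case: the destabilizer has rank~$2$, so one must use the net generalization of the base-point-free pencil trick and carefully combine the resulting Brill--Noether information with Petri generality and the Mukai-pairing balance to rule out alternative configurations of $v(F)$ and pin down the claimed invariants of~$\Xi$.
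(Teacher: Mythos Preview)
Your outline has the right shape---extract a destabilizing sub/quotient and read off~$\Xi$---but the central numerical step is not justified, and the $r=3$ case is substantially harder than you indicate.

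\textbf{The numerics of $\Xi$.}  You claim that ``Petri generality combined with the Mukai-pairing identity $v(F)+v(\bL_S(\xi)/F)=v$'' forces the injection $\xi\hookrightarrow F^\vee|_C$ to be an isomorphism and pins down $\Xi\cdot H=(r-1)(s+1)$, $\Xi^2=2(r-2)$.  This is the heart of the theorem, and your sketch does not establish it.  The additivity of Mukai vectors together with $\langle v,v\rangle=-2$ does not by itself constrain $v(F)$; you first need to know that \emph{both} $F$ and $\bL_S(\xi)/F$ are spherical (for rank~$2$ the subsheaf is a line bundle, hence automatically spherical, but the quotient is only a priori $\cI_Z\otimes L$, and for rank~$3$ nothing is automatic).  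In the paper this is obtained by applying Mukai's lemma to the multispherical filtration (Proposition~\ref{prop:filtration-msph}).  Even once the factors are spherical, the equalities $\Xi\cdot H=(r-1)(s+1)$ and $h^0(\cO_S(\Xi))=r$ do \emph{not} follow from Petri on~$C$: the paper deduces them from the Brill--Noether inequality on~$S$ (Proposition~\ref{prop:rs1}), which uses that $h^0(\cO_S(D_1))\cdot h^0(\cO_S(D_2))\le g$ to force $(r_1s_2-1)(r_2s_1-1)\le 0$ and hence $r_1=s_2=1$.  Your base-point-free pencil trick on~$C$ only gives $\xi\hookrightarrow F^\vee|_C$, i.e.\ $\deg(F^\vee|_C)\ge (r-1)(s+1)$, not equality.

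\textbf{The rank~$3$ structure.}  For $r=3$, a destabilizing filtration need not have a rank-$2$ spherical factor: a priori one can have three rank-$1$ factors, or factors with multiplicity $(1,2)$ or $(2,1)$.  The paper handles this via Lemma~\ref{lem:r23-instability} (collapsing the $3$-step case to a $2$-step one with spherical factors) and Lemma~\ref{lem:two-plus-one} (a separate Brill--Noether computation on~$S$ ruling out the multiplicity cases).  Your phrase ``working with $\bL_S(\xi)^\vee$ if required by rank considerations'' and the invocation of a ``net generalization of the base-point-free pencil trick'' do not address these possibilities.

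Finally, ``discarding any fixed part, justified by maximality of~$F$'' is not justified (maximality of a slope-destabilizer says nothing about base loci of its determinant), and the vanishing $h^1(\cO_S(\Xi-H))=0$ is not a ``standard K3 vanishing''---in the paper it is a consequence of the precise identification of~$\Xi$ as a special Mukai class.
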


We call divisor classes~$\Xi$ satisfying~\eqref{eq:intro-Xi} {\sf special Mukai classes of type~$(r,s)$}. 
In addition to $\Xi$ being an extension of $\xi$ from $C$ to $S$---with both $\xi$ and $\Xi$ 
being extremal in the sense of the Brill--Noether theory for $C$ and $S$, respectively---the significance of $\Xi$ 
is that the analogue $\bL_X(\Xi)$ on $X$ of the Lazarsfeld construction $\bL_S(\xi)$ on $S$ 
will produce our sought-after Mukai bundle on $X$.

The next result characterizes the restriction of Lazarsfeld bundles~$\bL_S(\xi)$ back to the curve~$C$.

\begin{theorem}[cf.~{Theorem~\ref{thm:mukai-extension}}]
\label{obs:mukai-extension}
Let~$\bL_S(\xi)$ be the spherical vector bundle associated to a Brill--Noether--Petri general curve~$C$ on a $K3$ surface~$S$ 
and a globally generated line bundle~$\xi$ on~$C$ satisfying~\eqref{eq:intro-xi},
where~$r \cdot s = \g(C)$.
Then there is an exact sequence
\begin{equation}
\label{eq:intro-me}
0 \to \big( \rH^0(C,\xi)^\vee \otimes \cO_C \big) / \xi^{-1} \to \bL_S(\xi)^{\vee}\vert_C \to \xi^{-1}(K_C) \to 0.
\end{equation}
Moreover, if~$\Pic(S) = \ZZ \cdot [C]$ and~$r \in \{2,3\}$, the extension class of~\eqref{eq:intro-me} 
is uniquely determined by the property that the connecting homomorphism
\begin{equation*}
\rH^0(C, \xi^{-1}(K_C)) \to \rH^1(C, (\rH^0(C,\xi)^\vee \otimes \cO_C) / \xi^{-1})
\end{equation*}
in the cohomology exact sequence of~\eqref{eq:intro-me} is zero.
\end{theorem}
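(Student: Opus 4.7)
The plan is to derive \eqref{eq:intro-me} by restricting the \emph{dual} of the Lazarsfeld sequence from $S$ to $C$, verify $\delta = 0$ via a K3 cohomology vanishing, and pin down uniqueness through a dimension count.

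First I would dualize the defining Lazarsfeld sequence $0 \to \bL_S(\xi) \to V \otimes \cO_S \to j_*\xi \to 0$, where $V := \rH^0(C,\xi)$. Since $\omega_S = \cO_S$, adjunction gives $\cO_S(C)\vert_C \cong K_C$ and hence $\cExt^1_S(j_*\xi, \cO_S) \cong j_*(\xi^{-1}(K_C))$, yielding
\[
0 \to V^\vee \otimes \cO_S \to \bL_S(\xi)^\vee \to j_*(\xi^{-1}(K_C)) \to 0
\]
on $S$. Taking derived restriction to $C$ and using the Koszul resolution of $j_*\cO_C$ to compute $\Tor_1^{\cO_S}(j_*(\xi^{-1}(K_C)), \cO_C) \cong \xi^{-1}$, one obtains a 4-term exact sequence
\[
0 \to \xi^{-1} \to V^\vee \otimes \cO_C \to \bL_S(\xi)^\vee\vert_C \to \xi^{-1}(K_C) \to 0
\]
on $C$, whose first arrow is the evaluation dual. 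Splitting at the middle term defines $Q := (V^\vee \otimes \cO_C)/\xi^{-1}$ and produces \eqref{eq:intro-me}.

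For the vanishing of $\delta$, I would apply $\rH^0$ to the dualized sequence on $S$. Since $\rH^1(\cO_S) = 0$, the map $\rH^0(\bL_S(\xi)^\vee) \to \rH^0(C, \xi^{-1}(K_C))$ is surjective, and this surjection factors as $\rH^0(\bL_S(\xi)^\vee) \to \rH^0(\bL_S(\xi)^\vee\vert_C) \to \rH^0(\xi^{-1}(K_C))$ through the restriction to $C$. Hence the second arrow is also surjective, which in the long exact cohomology sequence of \eqref{eq:intro-me} is equivalent to $\delta = 0$.

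For uniqueness, set $\eta := \xi^{-1}(K_C)$. The task reduces to showing the subspace of $\Ext^1_C(\eta, Q)$ consisting of classes $e$ whose cup product annihilates $\rH^0(\eta)$ is one-dimensional. By Serre duality on $C$, the kernel dimension of the cup product map $\Ext^1_C(\eta, Q) \to \Hom(\rH^0(\eta), \rH^1(Q))$ equals the cokernel dimension of the natural multiplication
\[
\rH^0(\eta) \otimes \rH^0(M_\xi \otimes K_C) \to \rH^0(M_\xi \otimes \eta \otimes K_C),
\]
where $M_\xi := Q^\vee = \ker(V \otimes \cO_C \to \xi)$ is the Lazarsfeld--Mukai bundle of $\xi$ on $C$ (of rank $r - 1$). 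The main obstacle is pinning this cokernel to exactly $1$: for $r = 2$, where $M_\xi$ is a line bundle and the multiplication reduces to $\rH^0(\eta) \otimes \rH^0(\eta) \to \rH^0(\eta^2)$, and for $r = 3$, where $M_\xi$ has rank $2$. This is where the parity restriction $r \in \{2,3\}$ and the hypothesis $\Pic(S) = \ZZ \cdot [C]$ combine, presumably via Brill--Noether--Petri generality of $C$ together with the absence of a special Mukai class extending $\eta$ from $C$ to $S$. A potentially cleaner alternative is to show that any extension $F$ with $\delta = 0$ extends to a sheaf on $S$ with the same Mukai vector as $\bL_S(\xi)^\vee$, and then invoke Mukai's uniqueness of spherical bundles in a fixed Mukai class to force $F \cong \bL_S(\xi)^\vee\vert_C$.
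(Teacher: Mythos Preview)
Your construction of the exact sequence and your argument for $\delta = 0$ are correct and match Lemma~\ref{lem:restricted-lb} and Proposition~\ref{prop:lec} in the paper; your factorization argument for $\delta = 0$ via $\rH^1(\cO_S) = 0$ and adjunction is in fact slightly more direct than the paper's dimension count.

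The real content is the uniqueness, and here your proposal has a genuine gap. Your dimension-count approach correctly identifies the target --- the cokernel of the multiplication map $\rH^0(\eta) \otimes \rH^0(M_\xi \otimes K_C) \to \rH^0(M_\xi \otimes \eta \otimes K_C)$ must be one-dimensional --- but gives no mechanism to prove it; for $r=2$ this amounts to controlling the corank of $\rH^0(\eta)^{\otimes 2} \to \rH^0(\eta^2)$, which does not follow from BNP-generality alone and is essentially the subject of Voisin's work on the Wahl map referenced in the introduction. Your alternative is closer in spirit to the paper's line but rests on a wrong expectation: the Lazarsfeld construction $\bL_S(\cG)$ applied to an arbitrary Mukai extension~$\cG$ does \emph{not} produce a sheaf with the Mukai vector of $\bL_S(\xi)^\vee$. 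It has rank $r+s$ and fits into an exact sequence $0 \to \bL_S(\xi)^\vee(-H) \to \bL_S(\cG) \to \bL_S(\eta) \to 0$ (Proposition~\ref{prop:lazarseld-mukai}), with $\upchi(\bL_S(\cG),\bL_S(\cG)) > 2$; so it is not spherical and Mukai's uniqueness does not apply.

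The paper's argument (Theorem~\ref{thm:mukai-extension}) instead exploits this: since $\upchi(\bL_S(\cG),\bL_S(\cG)) > 2$, the bundle $\bL_S(\cG)$ cannot be stable. The crucial step is Lemma~\ref{lem:unstable-extensions}, a detailed numerical analysis of possible Harder--Narasimhan factors using $\Pic(S) = \ZZ \cdot H$ and the restriction $r \in \{2,3\}$, showing that every non-stable extension of $\bL_S(\eta)$ by $\bL_S(\xi)^\vee(-H)$ must split. From the splitting a diagram chase produces an isomorphism $\cG \cong j^*\bL_S(\xi)^\vee$, and since $\Hom(\bR_C(\xi^{-1}),\eta) = 0$ (Lemma~\ref{lem:rxi-h0}) this pins down the extension class up to scalar. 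Thus the hypothesis $\Pic(S) = \ZZ \cdot H$ enters through this instability-forces-splitting step, not via the absence of special Mukai classes as you conjecture.
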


Theorems~\ref{obs:laz-s-stability} and~\ref{obs:mukai-extension} 
form the technical core of our paper.
We prove them for~$r \in \{2,3\}$ (but for arbitrary~$g$), 
but we expect these results to be true for any~$r$ and~$g$.

Theorem \ref{obs:mukai-extension} is a variation of a long-running theme. 
When one assumes additionally that the extension class~\eqref{eq:intro-me} 
produces a slope-semistable vector bundle on~$C$, 
it fits into a series of results that stable vector bundles on~$C$ 
with large number of sections are restrictions of vector bundles on~$S$, 
see, e.g., \cite[Section~10]{Mukai:noncommutativizability}, \cite{ABS:Mukai}, \cite{Fey:Mukai}. 
Closest to our case is~\cite{Fey:HK}, as it covers the case of spherical vector bundles; 
it treats the case when~$g \gg 0$; see~\cite[Remark~4.3]{Fey:HK} for the precise list of conditions. 
Our proof is more elementary, but its idea is motivated by Feyzbakhsh' approach.

Even closer are the results of~\cite[Section~3]{Voisin:Wahl}: 
the rank~2 case of Theorem~\ref{obs:mukai-extension} follows from Voisin's results in the case where~$S$ is general.
Indeed, Lemma~3.18 of~\cite{Voisin:Wahl} shows the uniqueness of the extension under the assumption~\cite[3.1(i)]{Voisin:Wahl}. 
Moreover, by the proof of~\cite[Proposition~4.1]{Voisin:Wahl}, this assumption holds 
when~$C$ is a general curve on a general K3 surface with~$\Pic(S) = \ZZ\cdot [C]$.

Meanwhile, Theorem~\ref{obs:laz-s-stability} is reminiscent to results on the Donagi--Morrison Conjecture; 
see, e.g.,~\cite{L-C,AH, Hubarcak}. 
Often, such results show that line bundles on~$C$ with negative Brill--Noether number are, 
after adding an effective divisor, obtained by restriction of a line bundle on~$S$, 
and take advantage of the fact that the associated Lazarsfeld bundle is not simple. 
In our case, the line bundle on~$C$ has zero Brill--Noether number,
the Lazarsfeld bundle is simple but unstable, 
and we take advantage of instability to obtain its precise extension to a line bundle on~$S$.

\subsection{Our argument for Theorem~\ref{thm:Main}}\label{ss:argument}

We use Theorems~\ref{obs:laz-s-stability} and~\ref{obs:mukai-extension} 
to prove Theorem~\ref{thm:Main} and Theorem~\ref{thm:mb-x} as follows.
Let~$X$ be a prime Fano threefold of genus~$g \ge 6$
and let~$S \subset X$ be a smooth hyperplane section such that
\begin{equation}
\label{eq:intro-pic-1}
\Pic(S) = \ZZ \cdot H_S, 
\end{equation}
where~$H_S$ stands for the restriction of~$H \coloneqq -K_X$ to~$S$.
First, we apply~\cite[Theorem]{Laz} to find a Brill--Noether--Petri general curve~$C \subset S$ in the linear system~$|H_S|$ 
such that the pencil~$\{S_t\}_{t \in \P^1}$ of anticanonical divisors of~$X$ through~$C$ 
is a Lefschetz pencil.
This pencil contains~$S$, hence~\mbox{$\Pic(S_t) \cong \ZZ \cdot H_{S_t}$} for very general~$t$.

Next, we consider the blowup~$\tX \coloneqq \Bl_C(X)$ as a family of K3 surfaces over~$\P^1$, 
containing the fixed curve $C$ as the exceptional divisor $E = C \times \P^1$. 
We choose a globally generated line bundle~$\xi \in \Pic(C)$ satisfying~\eqref{eq:intro-xi}, where~$r \cdot s = \g(C)$,
and apply a relative version  of the Lazarsfeld construction 
to the line bundle~$\xi \boxtimes \cO_{\P^1}$ on~$E$. As~$\Pic(S_t) = \ZZ \cdot H_{S_t}$ for very general~$t \in \P^1$, 
the restriction~$\bL_{\tX/\P^1}(\xi \boxtimes \cO)\vert_{S_t}$ 
of the Lazarsfeld bundle~$\bL_{\tX/\P^1}(\xi \boxtimes \cO)$ on~$\tX$ is stable in this case.
Then Theorem~\ref{obs:mukai-extension} implies that the extension classes 
of the sequences~\eqref{eq:intro-me} associated with the restrictions
\begin{equation*}
(\bL_{\tX/\P^1}(\xi \boxtimes \cO)\vert_{S_t}) \vert_C \cong
(\bL_{S_t}(\xi)) \vert_C
\end{equation*}
are all proportional to a fixed class. 
It follows that this extension class vanishes for some~$t_0 \in \P^1$, 
hence the Lazarsfeld bundle~$\bL_{\tX/\P^1}(\xi \boxtimes \cO)\vert_{S_{t_0}}$ is not stable and, 
by Theorem~\ref{obs:laz-s-stability}, that the surface~$S_{t_0}$ has a special Mukai class~$\Xi$ of type~$(r,s)$.
More precisely, since~$S_{t_0}$ may be singular, we consider the minimal resolution~$\sigma \colon \tS_{t_0} \to S_{t_0}$ 
and apply Theorem~\ref{thm:mb-s}, a more precise version of Theorem~\ref{obs:laz-s-stability},
to deduce the existence of the divisor class~$\Xi$ on the minimal resolution~$\tS_{t_0}$ of~$S_{t_0}$.

Finally, we apply the Lazarsfeld construction to the sheaf~$\sigma_*\cO_{S_{t_0}}(\Xi)$ to 
obtain a maximal Cohen--Macaulay sheaf~$\bL_X(\Xi)$ on~$X$, and check that, 
if~$g \ge 6$ it satisfies the requirements of Theorem~\ref{thm:Main}, 
see Corollaries~\ref{cor:mb-x-existence} and~\ref{cor:eu} for details.
To prove exceptionality (and uniqueness) of~$\bL_X(\Xi)$ 
we use an extra cohomology vanishing for the Lazarsfeld bundle on~$S$ proved in Proposition~\ref{prop:l-l-m-h}.

\subsection{Further generalizations}
\label{ss:further}

It would be interesting and useful to extend our argument 
(and thus Theorem~\ref{thm:Main}, its extension to the factorial terminal case in Theorem~\ref{thm:mb-x}, 
as well as Theorem~\ref{thm:descriptions} and the more general version proved in \cite{BKM:models}) to more general situations.

First, Mukai claims that Theorem~\ref{thm:Main} (and even Theorem~\ref{thm:Mmodel}) 
extends to \emph{Brill--Noether general} Fano threefolds
(see~\cite[Definition~6.4 and Theorem~6.5]{Muk:New}).
Moreover, similar results hold for some smooth Fano threefolds which are not Brill--Noether general:
one can construct Mukai bundles on some Fano threefolds of higher Picard rank (see~\cite[Propositions~7.7 and~7.12]{K22})
and ``almost Mukai'' bundles (i.e., vector bundles that satisfy all the properties of Theorem~\ref{thm:Main} 
except possibly for the global generation of the dual) 
on some nonfactorial 1-nodal Fano threefolds (see~\cite[Proposition~3.3 and Remark~3.5]{KS23}).
This suggests that Theorem~\ref{thm:Main} can be generalized even further.
However,  Theorem~\ref{thm:descriptions} fails for some nonfactorial threefolds, see~\cite{Muk22}.

The second possible direction of generalization is to positive characteristic.

In both cases the most problematic step of our argument 
is the existence of a smooth anticanonical divisor~$S \subset X$ satisfying~\eqref{eq:intro-pic-1}. 
The existence of such~$S$ is used twice in the proof: first to apply~\cite{Laz} 
and conclude that~$X$ contains a Brill--Noether--Petri general curve~$C$, 
and second to prove the uniqueness of the extension class of the sequence~\eqref{eq:intro-me} in Theorem~\ref{thm:mukai-extension}.
Condition~\eqref{eq:intro-pic-1} cannot be satisfied over~$\bar{\mathbb{F}}_p$ with~$p \ne 2$
(because in this case the Picard number of a K3 surface is even, see, e.g., \cite[Corollary~17.2.9]{Hu}), 
nor for nonfactorial~$X$ (because the Picard number of an anticanonical divisor of~$X$ 
is greater or equal than the rank of the class group of~$X$).
However, we hope that~\eqref{eq:intro-pic-1} can be replaced by Brill--Noether generality of~$S$.

Another interesting question is the following. Theorems~\ref{obs:laz-s-stability} and~\ref{obs:mukai-extension} are concerned with special vector bundles on K3 surfaces and curves and do not involve a Fano threefold.
Therefore, they make sense for any value of~$g$, not only for the Fano range~\eqref{eq:genus-bound}, and consequently it is interesting to ask if the corresponding results (Theorems~\ref{thm:mb-s} and~\ref{thm:mukai-extension}) stay true for~$r > 3$.
Again, we currently have no approach towards proofs in this generality.

\subsection{History of Theorem~\ref{thm:Main}}
\label{subsec:history}

Theorem~\ref{thm:Main} was first announced by Mukai in~\cite{Mukai:PNAS}. 
The argument sketched there, expanded upon in~\cite[\S5.2]{Fano-book}, 
relies on Fujita's extension theorem in~\cite{Fujita:extendampledivisor} 
for sheaves~$\cF$ on an ample divisor~$D$ to the ambient variety~$X$. 
However, Fujita's theorem does not apply when~$D$ is a surface, 
as it relies on the vanishing of~$\rH^2(D, \cEnd(\cF)\otimes \cO_X(-nD)\vert_D)$ for~$n \ge 1$. 

We are also aware of a sketch of an argument written by Mukai in~\cite{Muk:New} below Theorem~6.5,
which relies on the analogue~\cite[Theorem~4.7]{Muk:New} of Theorem~\ref{thm:descriptions} for K3 surfaces, 
the proof of which in turn is also only given as a sketch.

An attempt at a different proof was given in~\cite[Theorem 6.2]{BLMS}. 
However, the proof given there is also incomplete at best.
It claims that for a K3 surface~$S \subset X$, there is a \emph{stable} bundle~$\cU$ of the right invariants; 
for higher Picard rank of~$S$, this is not clear a priori. 
More crucially, it claims that as~$S$ varies in a pencil, 
the restriction of~$\cU$ to the base locus remains constant; again, this statement is correct (as it can be deduced from Theorem~\ref{thm:mukai-extension}), but 
 the deformation argument sketched in \cite{BLMS} does not work.

Finally, in the cases where~$g \in \{6,8\}$, Theorem~\ref{thm:Main} (and even Theorem~\ref{thm:descriptions}) 
was deduced by Gushel\ensuremath{'} in~\cite{Gu6,Gu8} from the existence of special elliptic curves 
on smooth hyperplane sections~$S \subset X$ 
(these elliptic curves are precisely special Mukai classes of types~$(2,3)$ and~$(2,4)$, respectively).
The existence of elliptic curves on other Fano threefolds of even genus and a proof of Theorem~\ref{thm:Main} for~$r = 2$
was recently given in~\cite{CFK}.
This argument seems to be independent of Mukai's classification results. 
It is more indirect, as it relies on the irreducibility of the moduli space of Fano threefolds, proved in~\cite[Theorem~7]{CLM}.

\subsection{Notation and conventions}\label{sec:notation}

We work over an algebraically closed field~$\kk$ of characteristic zero;
all schemes and morphisms are assumed to be~$\kk$-linear.
Given a morphism~$f$ we let~$f_*$ and~$f^*$ denote the \emph{underived} pushforward and pullback functors;
the corresponding derived functors are denoted by~$\rR f_*$ and~$\rL f^*$, respectively.
We write~$[1]$ for the shift functor in the derived category.

For a coherent sheaf~$\cF$ on a scheme~$S$ we write
\begin{equation*}
h^i(\cF) \coloneqq \dim \rH^i(S, \cF).
\end{equation*}
Furthermore, we write
\begin{equation*}
\upchi(\cF) \coloneqq \sum (-1)^i h^i(\cF)
\qquad\text{and}\qquad 
\upchi(\cF_1, \cF_2) \coloneqq \sum (-1)^i \dim \Ext^i(\cF_1, \cF_2)
\end{equation*}
for the Euler characteristic of a sheaf~$\cF$ and the Euler bilinear form, respectively.

For a sheaf~$\cF$ of finite projective dimension on a K3 surface~$S$ with du Val singularities we write
\begin{equation*}
	\rv(\cF) \coloneqq (\rank(\cF), \rc_1(\cF), \ch_2(\cF) + \rank(\cF)) \in \ZZ \oplus \Pic(S) \oplus \ZZ
\end{equation*}
for the {\sf Mukai vector} of~$\cF$.
The {\sf Mukai pairing} on~$\ZZ \oplus \Pic(S) \oplus \ZZ$ is given by
\begin{equation*}
	\langle (r_1,D_1,s_1), (r_2,D_2,s_2) \rangle \coloneqq -r_1s_2 + D_1\cdot D_2 - s_1r_2.
\end{equation*} 
The Riemann--Roch Theorem translates then into the equality
\[
\upchi(\cF_1,\cF_2) = -\langle \rv(\cF_1), \rv(\cF_2) \rangle.
\]
In particular, since~$\rv(\cO_S) = (1, 0 , 1)$, 
for~$\rv(\cF) = (r, D, s)$ we obtain
\begin{equation}
\label{eq:rr-k3}
\upchi(\cF) = r + s,
\qquad 
\upchi(\cF, \cF) = 2rs - D^2.
\end{equation}
Finally, given an ample divisor $A$ on $S$, 
we define the slope and the reduced Euler characteristic of~$\cF$ as follows.
\begin{equation}
\label{eq:Paris20231230}
\upmu_A(\cF) \coloneqq \frac{A \cdot \rc_1(\cF)}{\rank(\cF)}, 
\qquad  
\updelta(\cF)\coloneqq \frac{\upchi(\cF)}{\rank(\cF)}.
\end{equation}

\subsection*{Acknowledgments}
The paper benefited from many useful discussions with the following people which we gratefully thank: 
Asher Auel,
Marcello Bernardara, 
Tzu-Yang Chou, 
Ciro Ciliberto,
Olivier Debarre,
Daniele Faenzi,
Flaminio Flamini,
Andreas Knutsen,
Chunyi Li, 
Shengxuan Liu, 
Jonathan Ng,
Yuri Prokhorov,
Claire Voisin,
Peter Yi Wei.
We also thank Gavril Farkas for helping us put Theorems~\ref{obs:laz-s-stability} and~\ref{obs:mukai-extension} into appropriate context and the referee for many useful comments and suggestions.
We also acknowledge the influence of the work of Shigeru Mukai, which provided and keeps providing a source of inspiration for us.

%%%%%%%%%%%%%%%%%%%%%%%%%

\section{Brill--Noether theory}\label{sec:BNtheory}

In this section we recall some facts from Brill--Noether theory for curves and K3 surfaces
and introduce Mukai special classes that play an important role in the rest of the paper.

\subsection{Curves}

We refer to~\cite[Ch.~IV--V]{ACGH} for a general treatment of Brill--Noether theory for curves.

Let~$C$ be a smooth proper curve of genus~$g$.
For~$1 \le d \le 2g-3$ and~$r \ge 0$ the {\sf Brill--Noether locus}~$\rW_{d}^r(C) \subset \Pic^d(C)$ is defined as
\begin{equation*}
\rW_d^r(C) \coloneqq \{ \cL \in \Pic^d(C) \mid h^0(\cL) \ge r + 1,\ h^1(\cL) \ge 1 \}.
\end{equation*}
If a line bundle~$\cL$ corresponds to a point of~$\rW_d^r(C) \setminus \rW_d^{r+1}(C)$ the cotangent space to~$\rW_d^r(C)$ at this point is equal to the cokernel of the {\sf Petri map}
\begin{equation*}
\rH^0(C,\cL) \otimes \rH^0(C, \cL^{-1}(K_C)) \to \rH^0(C, \cO_C(K_C)).
\end{equation*}

It is well known (see, e.g., \cite[Theorem~V.1.1]{ACGH}) that for~$r \ge d - g$ one has
\begin{equation*}
\dim \rW_d^r(C) \ge g - (r + 1)(g - d + r),
\end{equation*}
and that if the right-hand side (the expected dimension of~$\rW_d^r(C)$) is nonnegative, $\rW_d^r(C) \ne \varnothing$.

\begin{definition}
\label{def:bn-curves}
A smooth proper curve~$C$ is {\sf Brill--Noether general} ({\sf BN-general}) if the locus~$\rW_d^r(C)$ is nonempty if and only if~$(r + 1)(g - d + r) \le g$; in other words,
\begin{equation*}
h^0(\cL) \cdot h^1(\cL) \le g
\end{equation*}
for any line bundle~$\cL$ on~$C$.
Furthermore, we will say that~$C$ is {\sf Brill--Noether--Petri general} ({\sf BNP-general}) if it is BN-general and the Petri map is injective for any line bundle~$\cL$ on~$C$.
\end{definition}

A general curve of genus~$g$ is known to be BNP-general, see~\cite{Gie} and~\cite{Laz}.
For BNP-general curves the locus~$\rW_d^r(C) \setminus \rW_d^{r+1}(C)$ is smooth and nonempty of dimension~$g - (r + 1)(g - d + r)$ whenever this number is nonnegative (\cite[Proposition~IV.4.2 and Theorem~V.1.7]{ACGH}).

The following definition axiomatizes the situation studied in~\cite[\S3]{Muk3}.

\begin{definition}
\label{def:mukai-pair}
A pair of line bundles~$(\xi,\eta)$ on a curve~$C$ of genus~$g = r \cdot s$ is a {\sf Mukai pair of type~$(r,s)$} if~$\xi \otimes \eta \cong \cO_C(K_C)$,
\begin{equation}
\label{eq:h0h1-xi}
\deg(\xi) = (r - 1)(s + 1),
\qquad
h^0(\xi) = r,
\qquad\text{hence}\qquad 
h^1(\xi) = s,
\end{equation} 
and both~$\xi$ and~$\eta$ are globally generated.
\end{definition}

If~$(\xi,\eta)$ is a Mukai pair of type~$(r,s)$ then Serre duality implies
\begin{equation*}
\label{eq:eta-h0}
\deg(\eta) = (r + 1)(s - 1),
\qquad 
h^0(\eta) = s,
\qquad\text{and}\qquad 
h^1(\eta) = r,
\end{equation*}
and~$(\eta,\xi)$ is a Mukai pair of type~$(s,r)$. 
Thus, the definition is symmetric.

The following lemma deduces the existence of Mukai pairs from Brill--Noether theory.

\begin{lemma}
\label{lem:xi-eta}
If~$C$ is a BN-general curve of genus~$g = r \cdot s$ 
and a line bundle~$\xi \in \Pic(C)$ satisfies~$\deg(\xi) = (r - 1)(s + 1)$ and~$h^0(\xi) \ge r$
then~$(\xi,\xi^{-1}(K_C))$ is a Mukai pair of type~$(r,s)$.
In particular, any BN-general curve of genus~$g = r \cdot s$ has a Mukai pair of type~$(r,s)$.

If~$C$ is BNP-general and~$(\xi,\eta)$ is a Mukai pair, the Petri maps for~$\xi$ and~$\eta$ are isomorphisms.
\end{lemma}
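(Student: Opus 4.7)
The plan is to verify the conditions of Definition~\ref{def:mukai-pair} one by one for the pair $(\xi, \eta)$ with $\eta \coloneqq \xi^{-1}(K_C)$. Riemann--Roch gives $\upchi(\xi) = \deg(\xi) + 1 - g = (r-1)(s+1) + 1 - rs = r - s$, so $h^0(\xi) - h^1(\xi) = r - s$. The hypothesis $h^0(\xi) \ge r$ then forces $h^1(\xi) \ge s$, whence $h^0(\xi) \cdot h^1(\xi) \ge rs = g$. Brill--Noether generality reverses this inequality, so in fact $h^0(\xi) \cdot h^1(\xi) = g$; combined with the difference $h^0(\xi) - h^1(\xi) = r - s$ this pins down $h^0(\xi) = r$ and $h^1(\xi) = s$, which is~\eqref{eq:h0h1-xi}. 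Serre duality supplies the symmetric invariants $\deg(\eta) = 2g - 2 - (r-1)(s+1) = (r+1)(s-1)$, $h^0(\eta) = s$, and $h^1(\eta) = r$.

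Next I would establish global generation of $\xi$ by contradiction: a base point $p \in C$ would force $h^0(\xi(-p)) = h^0(\xi) = r$, so $\xi(-p) \in \rW_{(r-1)(s+1)-1}^{r-1}(C)$. For this locus the quantity $(r'+1)(g - d' + r')$ with $r' = r - 1$ and $d' = (r-1)(s+1) - 1$ evaluates to $r(s+1) = g + r > g$, which by Definition~\ref{def:bn-curves} forces this Brill--Noether locus to be empty---a contradiction. The same argument with the roles of $r,s$ swapped (applied to $\eta$, whose invariants were computed above) shows that $\eta$ is globally generated. This completes the proof that $(\xi,\eta)$ is a Mukai pair of type $(r,s)$.

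For the existence assertion, the expected dimension of $\rW_{(r-1)(s+1)}^{r-1}(C)$ equals $g - r(g - (r-1)(s+1) + r - 1) = g - rs = 0$, so by Definition~\ref{def:bn-curves} this locus is nonempty on a BN-general curve, producing a line bundle $\xi$ to which the first part of the lemma applies. Finally, under BNP-generality the Petri map for $\xi$ is injective; since its domain $\rH^0(C,\xi) \otimes \rH^0(C,\eta)$ and target $\rH^0(C, K_C)$ both have dimension $g = rs$, injectivity upgrades to an isomorphism, and the same conclusion holds for $\eta$ by symmetry of the definition.

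I do not anticipate a serious obstacle: the argument is a bookkeeping assembly of Riemann--Roch, Serre duality, and the numerical inequality defining BN-generality. The only mildly subtle point is recognizing that BN-generality squeezes $h^0(\xi)$ and $h^1(\xi)$ to their minimal values simultaneously---the product is bounded from above by $g$ and from below by $rs$, while Riemann--Roch fixes the difference, so both invariants are forced at once.
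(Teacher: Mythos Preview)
Your proof is correct and follows essentially the same route as the paper's: Riemann--Roch plus BN-generality to pin down $h^0(\xi)$ and $h^1(\xi)$, the base-point argument via $\xi(-p)$ violating BN-generality, existence from the nonemptiness of the zero-dimensional Brill--Noether locus, and the Petri isomorphism by dimension count. Your presentation is slightly more explicit in the numerical computations (e.g., you spell out $(r'+1)(g-d'+r') = r(s+1) = g+r$), but the argument is the same.
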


\begin{proof}
If~$\deg(\xi) = (r - 1)(s + 1)$ then~$\upchi(\xi) = (r - 1)(s + 1) + 1 - rs = r - s$ by Riemann--Roch.
So, if~$h^0(\xi) > r$ then~$h^1(\xi) > s$ which contradicts the BN-generality of~$C$, hence~$h^0(\xi) = r$ and~$h^1(\xi) = s$.
Similarly, if~$\xi$ is not globally generated at a point~$P$ then~$\xi(-P)$ violates the BN property.
Therefore, $\xi$ is globally generated, and by the symmetry of the definition the same holds for $\eta \coloneqq \xi^{-1}(K_C)$.
Thus, $(\xi,\eta)$ is a Mukai pair of type~$(r,s)$.

Since~$r \cdot s = g$, the expected dimension of~$\rW_{(r-1)(s+1)}^{r-1}(C)$ is~$0$, 
hence~$\rW_{(r-1)(s+1)}^{r-1}(C) \ne \varnothing$ by~\cite[Theorem~V.1.1]{ACGH}. 
This means that there is a line bundle~$\xi$ of degree~$(r-1)(s+1)$ with~$h^0(\xi) \ge r$,
hence~$(\xi,\xi^{-1}(K_C))$ is a Mukai pair of type~$(r,s)$.

Finally, if~$C$ is BNP-general, the Petri map for~$\xi$ is injective,
and as the dimensions of its source and target are the same, 
it is an isomorphism; again, the same property for $\eta$ follows.
\end{proof}

\begin{remark}
\label{rem:number-mp}
If~$C$ is BNP-general, the Brill--Noether locus~$\rW^{r-1}_{(r-1)(s+1)}(C)$ is reduced and zero-dimensional of length equal to 
\begin{equation*}
\rN(r,s) \coloneqq 
(rs)! \, \frac{ \prod_{i=1}^{r-1} i! \prod_{i=1}^{s-1} i!}{ \prod_{i=1}^{r+s-1} i!},
\end{equation*}
the degree of the Grassmannian~$\Gr(r,r+s)$, see~\cite[Theorem~1]{EH}.
Therefore, this degree is exactly the number of Mukai pairs of type~$(r,s)$ on a BNP-general curve.
\end{remark}

The following proposition will be important for the study of Lazarsfeld bundles.
Part~\ref{it:xi-k}, for any $r$, is attributed to Castelnuovo in~\cite[Theorem~1.11]{Ciliberto}.

\begin{proposition}\label{prop:xi-eta}
Let~$C$ be a BNP-general curve of genus~$g = r \cdot s$ with~$s \ge r$ and~$r \in \{2, 3\}$.
\begin{aenumerate}
\item
\label{it:xi-k}
If~$(\xi, \eta)$ is a Mukai pair on~$C$ of type~$(r,s)$ then the natural morphism
\begin{equation*}
\rH^0(C,\xi) \otimes \rH^0(C, \cO_C(K_C)) \to \rH^0(C, \xi(K_C))
\end{equation*}
is surjective.
\item
\label{it:xi-eta-k}
If~$(\xi_1, \eta_1)$ and~$(\xi_2, \eta_2)$ are  Mukai pairs of type~$(r,s)$ with~$\xi_1 \not\cong \eta_2$ then the natural morphism
\begin{equation*}
\rH^0(C, \xi_1) \otimes \rH^0(C, \eta_2(K_C)) \to \rH^0(C, \xi_1 \otimes \eta_2(K_C))
\end{equation*}
is surjective.
\end{aenumerate}
\end{proposition}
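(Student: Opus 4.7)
\noindent The plan is to reduce both surjectivity statements to cohomological vanishings using the kernel bundle
\begin{equation*}
0 \to M_\xi \to \rH^0(C,\xi) \otimes \cO_C \to \xi \to 0.
\end{equation*}
For part~(a) I tensor this by $K_C$; for part~(b) I apply the construction to $\xi_1$ and tensor by $\eta_2(K_C)$. In each case the long exact sequence, together with the vanishings $\rH^1(\xi(K_C))=\rH^1(\eta_2(K_C))=0$ (by Serre duality and degree considerations), identifies the cokernel of the multiplication with a subquotient of $\rH^1(M_\xi\otimes L)$ for a line bundle $L$. Since $M_\xi$ is a line bundle for $r=2$ and a rank-two vector bundle for $r=3$, the argument splits naturally.

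For $r=2$ we have $M_\xi\cong \xi^{-1}$, so the argument is the classical base-point-free pencil trick. In part~(a) the connecting map $\rH^1(\eta)\to \rH^0(\xi)\otimes \rH^1(K_C)\cong \rH^0(\xi)$ coming from the long exact sequence is surjective, and both spaces have dimension $r=2$ by the Mukai-pair identity $h^1(\eta)=r$; this forces an isomorphism and hence the required surjectivity. In part~(b) the same mechanism identifies the obstruction with $\rH^1(\eta_1\otimes \eta_2)$, which is Serre dual to $\rH^0(\xi_1\otimes \xi_2\otimes K_C^{-1})$. The latter line bundle has degree $4-2s$, which is negative for $s\ge 3$, while for $s=2$ it is of degree zero and admits a non-zero section precisely when $\xi_1\otimes \xi_2\cong K_C$, i.e.\ $\xi_1\cong \eta_2$, the case excluded by hypothesis.

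For $r=3$ the bundle $M_\xi$ has rank two with $\det M_\xi\cong \xi^{-1}$, so $M_\xi^\vee\cong M_\xi\otimes \xi$. For part~(a), Serre duality gives $h^1(M_\xi\otimes K_C)=h^0(M_\xi\otimes \xi)$, and tensoring the kernel sequence by $\xi$ identifies this with the dimension of the kernel of the multiplication
\begin{equation*}
\rH^0(C,\xi)\otimes \rH^0(C,\xi)\to \rH^0(C,\xi^{\otimes 2}).
\end{equation*}
The antisymmetric part $\wedge^2 \rH^0(C,\xi)$ always lies in this kernel and has dimension $\binom{r}{2}=r=3$, so part~(a) is equivalent to the injectivity of $\mathrm{Sym}^2 \rH^0(C,\xi)\hookrightarrow \rH^0(C,\xi^{\otimes 2})$, i.e.\ to the image $\varphi_\xi(C)\subset \P^2$ not lying on a plane conic. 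Since $\varphi_\xi(C)$ is irreducible, reduced, and spans $\P^2$, it cannot lie on a reducible or non-reduced conic; and lying on a smooth conic would factor $\varphi_\xi$ through $\P^1$ as a $g^1_{s+1}$ on $C$, contradicting BN-generality since $\rho(g,1,s+1)=-s<0$. For part~(b), the same reduction together with Serre duality requires proving $\rH^0(M_{\xi_1}^\vee\otimes \xi_2(-K_C))=0$. To this end I plan to establish that $M_{\xi_1}$ is slope-stable: a saturated line subbundle $L\cong \cO_C(-D)$ of $M_{\xi_1}$ corresponds to a nontrivial relation $\sum_{i=1}^r f_i\, t_i = 0$ in $\rH^0(C,\xi_1(D))$ with $f_i\in \rH^0(C,\cO_C(D))$ and $t_1,\dots,t_r$ a basis of $\rH^0(C,\xi_1)$; but BN-generality forces $h^0(\cO_C(D))=1$ as soon as $\deg D\le s+1<g/2+1$, so any such relation must be a multiple of $\sum a_i t_i=0$ with $a_i\in \kk$ and hence trivial. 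This rules out line subbundles of degree $\ge -(s+1)=\upmu(M_{\xi_1})$, and then $M_{\xi_1}^\vee\otimes \xi_2(-K_C)$ is stable of slope $-3s+5<0$, so it has no global sections.

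The main obstacle I anticipate is the geometric analysis in the $r=3$ case: both the exclusion of plane conics through $\varphi_\xi(C)$ in part~(a) and the slope-stability of $M_{\xi_1}$ in part~(b) rely on carefully deploying BN-generality to rule out short pencils on $C$, and the borderline cases (singular conics; effective divisors at the Brill--Noether threshold) need separate verification.
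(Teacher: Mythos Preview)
Your proposal is correct. Part~(a) and part~(b) for~$r=2$ follow essentially the same route as the paper: the base-point-free pencil trick and, for~$r=3$ in~(a), the reduction to injectivity of~$\Sym^2\rH^0(\xi)\to\rH^0(\xi^{\otimes 2})$ and its failure forcing a~$g^1_{s+1}$. Your packaging via~$M_\xi^\vee\cong M_\xi\otimes\xi$ is slightly different from the paper's Koszul spectral sequence, but the key step is identical.

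For part~(b) with~$r=3$ you take a genuinely different path. You prove slope-stability of~$M_{\xi_1}$ by showing that any line subbundle~$\cO_C(-D)\hookrightarrow M_{\xi_1}$ with~$\deg D\le s+1$ would force~$h^0(\cO_C(D))\ge 2$, contradicting BN-generality; this is correct and yields the stronger conclusion that~$M_{\xi_1}$ is stable. The paper instead resolves~$M_{\xi_1}$ via the Koszul complex of~$\xi_1$: since~$M_{\xi_1}$ is a quotient of~$\wedge^2\rH^0(\xi_1)\otimes\xi_1^{-1}$, the vanishing of~$\rH^1(M_{\xi_1}\otimes\eta_2(K_C))$ follows from~$\rH^1(\xi_1^{-1}\otimes\eta_2(K_C))=0$, and the latter is immediate from~$\deg(\xi_1\otimes\eta_2^{-1})=2(r-s)\le 0$ together with~$\xi_1\not\cong\eta_2$ when~$r=s$. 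The paper's argument is shorter and uses only the single assumption~$s\ge r$; your route is longer but gives stability of~$M_{\xi_1}$ as a byproduct, which may be of independent interest.
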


\begin{proof}
\ref{it:xi-k}
First, assume~$r = 2$. As~$\xi$ is globally generated by definition, we have an exact sequence
\begin{equation*}
0 \to \xi^{-1} \to \rH^0(C, \xi) \otimes \cO_C \to \xi \to 0.
\end{equation*}
Twisting it by~$K_C$, we obtain
\begin{equation*}
0 \to \eta \to \rH^0(C, \xi) \otimes \cO_C(K_C) \to \xi(K_C) \to 0.
\end{equation*}
As $h^1(\eta) = h^0(\xi) = h^1(\rH^0(C, \xi) \otimes \cO_C(K_C))$ by Serre duality 
and $h^1(\xi(K_C)) = 0$ because the degree of~$\xi$ is positive, 
this exact sequence induces a surjection of global sections, which is our claim.

Now assume~$r = 3$, and let~$V = \rH^0(C, \xi)$. 
Since~$\xi$ is globally generated, it induces
a morphism~$C \to \P^2 = \P(V^{\vee})$.
The pullback of the Koszul complex on~$\P^2$ is an exact sequence
\begin{equation*}
0 \to 
\wedge^3 V \otimes \xi^{-3} \to 
\wedge^2 V \otimes \xi^{-2} \to 
V \otimes \xi^{-1} \to 
\cO_C \to 0.
\end{equation*}
Twisting it by~$\xi(K_C)$, we obtain an exact sequence
\begin{equation*}
0 \to 
\wedge^3 V \otimes \xi^{-2}(K_C) \to 
\wedge^2 V \otimes \xi^{-1}(K_C) \to 
V \otimes \cO_C(K_C) \to 
\xi(K_C) \to 0.
\end{equation*}
The $\bE_1$ page of its hypercohomology spectral sequence has the form
\begin{equation*}
\xymatrix@C=.9em@R=2ex{
\wedge^3 V \otimes \rH^1(C,\xi^{-2}(K_C)) \ar[r] &
\wedge^2 V \otimes \rH^1(C,\xi^{-1}(K_C)) \ar[r] \ar@{-->}[drr] &
V &
0
\\
\wedge^3 V \otimes \rH^0(C,\xi^{-2}(K_C)) \ar[r] &
\wedge^2 V \otimes \rH^0(C,\xi^{-1}(K_C)) \ar[r] &
V  \otimes \rH^0(C,\cO_C(K_C)) \ar[r] &
\rH^0(C,\xi(K_C)).
}
\end{equation*}
Since it converges to zero, surjectivity of the last arrow in the lower row 
is equivalent to the dashed arrow in the~$\bE_2$ page being zero. 
To show this, it is enough to check that the upper row is exact in the second term.
Tensoring it with~$\wedge^3 V^\vee$, 
using the natural identifications~$V \otimes \wedge^3 V^\vee = \wedge^2 V^\vee$ 
and~$\wedge^2 V \otimes \wedge^3 V^\vee = V^\vee$,
and dualizing it afterwards, we obtain the sequence
\begin{equation*}
\wedge^2 V \to V \otimes V \to \rH^0(C,\xi^2),
\end{equation*}
where the first arrow is the natural embedding.
Therefore it is enough to check that the natural map~$\Sym^2V = \Sym^2\rH^0(C,\xi) \to \rH^0(C,\xi^2)$ is injective.
But if it is not injective, the image of the map~$C \to \P^2$ given by~$\xi$ is contained in a conic in~$\P^2$, hence this map factors through a map~$C \to \P^1$ of degree~$\tfrac12\deg(\xi) = \tfrac12(r - 1)(s + 1) = s + 1$, hence~$\rW_{s + 1}^1(C) \ne \varnothing$, which contradicts the BN-generality of the curve~$C$ as $g - 2 (g - s) = 2s - g < 0$.

\ref{it:xi-eta-k}
As before, consider the morphism~$C \to \P(\rH^0(C, \xi_1)^\vee)$ and the pullback of the Koszul complex
\begin{equation*}
\dots \to 
\wedge^2\rH^0(C, \xi_1) \otimes \xi_1^{-2} \to 
\rH^0(C, \xi_1) \otimes \xi_1^{-1} \to
\cO_C \to 0.
\end{equation*}
Tensoring it by~$\xi_1 \otimes \eta_2(K_C)$, we obtain an exact sequence
\begin{equation*}
\dots \to 
\wedge^2\rH^0(C, \xi_1) \otimes \xi_1^{-1} \otimes \eta_2(K_C) \to 
\rH^0(C, \xi_1) \otimes \eta_2(K_C) \to
\xi_1 \otimes \eta_2(K_C) \to 0.
\end{equation*}
Now note that~$\deg(\xi_1) = r \le s = \deg(\eta_2)$,
and if this is an equality, $\xi_1^{-1} \otimes \eta_2$ is a nontrivial 
(by the assumption~$\xi_1 \not\cong \eta_2$) line bundle of degree zero, 
hence~$\rH^1(C, \xi_1^{-1} \otimes \eta_2(K_C)) = 0$.
Therefore, the hypercohomology spectral sequence proves the surjectivity of the required map.
\end{proof}

\subsection{Quasipolarized K3 surfaces}
\label{ss:qp-k3} 

In Section~\ref{sec:fano} we will have to deal with polarized K3 surfaces with du Val singularities;
they will appear as special members of generic pencils of hyperplane sections of terminal Fano threefolds.
A minimal resolution of such surface is a smooth quasipolarized K3 surface.
In this subsection we explain this relation and prove a few useful results. 
For readers only interested in the case where~$X$ is smooth, 
it is enough to consider the case of K3 surfaces that have at most one ordinary double point,
since in this case a generic pencil is a Lefschetz pencil. 

So, let~$(\baS,\baH)$ be a K3 surface with du Val singularities and an ample class~$\baH \in \Pic(\baS)$.
Let 
\begin{equation*}
\sigma \colon S \to \baS,
\qquad 
H \coloneqq \sigma^*(\baH)
\end{equation*}
be the minimal resolution of singularites and the pullback of the ample class.
Then~$S$ is a smooth K3 surface and~$H \in \Pic(S)$ is a {\sf quasipolarization}, i.e., a big and nef divisor class.
The exceptional locus of~$\sigma$ is formed by a finite number of smooth rational curves~$R_i \subset S$
that are characterized by the condition~$H \cdot R_i = 0$ and form an ADE configuration that we denote by~$\fR(S,H)$.

Conversely, if~$(S,H)$ is a quasipolarized K3 surface, 
all irreducible curves orthogonal to~$H$ are smooth and rational and they
form an ADE configuration~$\fR(S,H)$, see~\cite[(4.2)]{SD}; moreover,  a multiple of~$H$ defines a proper birational morphism~$\sigma \colon S \to \baS$ onto a K3 surface~$\baS$ with du Val singularities
contracting the configuration~$\fR(S,H)$ so that~$H$ is the pullback of an ample class~$\baH \in \Pic(\baS)$.
The morphism~$\sigma$ will be referred to as {\sf the contraction of~$\fR(S,H)$}.

The two above constructions are mutually inverse, so the language of polarized du Val K3 surfaces 
is equivalent to the language of quasipolarized smooth K3 surfaces.
Below we use the language that is more convenient, depending on the situation.

If~$H$ is a quasipolarization of a smooth (or a polarization of a du Val) K3 surface~$S$ then
\begin{equation}
\label{eq:genus-s}
H^2 = 2g - 2,
\qquad 
h^0(\cO_S(H)) = g + 1,
\qquad\text{and}\qquad 
h^1(\cO_S(H)) = h^2(\cO_S(H)) = 0,
\end{equation}
where~$g \ge 2$ is an integer called {\sf the genus} of~$(S,H)$.
The line bundle~$\cO_S(H)$ is globally generated if and only if the linear system~$|H|$ 
contains a smooth connected curve~$C \subset S$ of genus~$g$;
one direction follows from Bertini's Theorem and~\cite[Proposition~2.6]{SD}, the other is~\cite[Theorem~3.1]{SD}.

If~$(S,H)$ is a quasipolarized smooth K3 surface, 
the irreducible curves~$R_i$ that form the ADE configuration~$\fR(S,H)$ are called {\sf the simple roots}; 
they satisfy~$R_i^2 = -2$ and generate a root system in~$H^\perp \subset \Pic(S) \otimes \RR$;
its {\sf positive roots} are nonnegative linear combinations~$R = \sum a_i R_i$ such that~$R^2 = -2$,
and the reflections in~$R_i$ generate an action of the corresponding Weyl group.

A basic fact about Weyl groups (see, e.g., Corollary~2 to Proposition~IV.17 in~\cite{Bourbaki}) 
tells us that for any non-simple positive root~$R$ there is a simple root~$R_j$ such that~$R \cdot R_j = -1$.
Then~$R' = R - R_j$ is also a positive root and~$R' \cdot R_j = 1$. 
Therefore, considering~$R$ and~$R'$ as Cartier divisors on~$S$, we obtain exact sequences
\begin{equation}
\label{eq:root-seq-1}
0 \to \cO_{R_j}(-1) \to \cO_R \to \cO_{R'} \to 0
\end{equation}
and
\begin{equation}
\label{eq:root-seq-2}
0 \to \cO_{R'}(-R_j) \to \cO_R \to \cO_{R_j} \to 0.
\end{equation}
These sequences are useful for inductive arguments about positive roots.

\begin{lemma}\label{lem:h0-roots}
If~$D \in \Pic(S)$ and~$R = \sum a_iR_i$ is a positive root in~$\fR(S,H)$ then
\begin{equation*}
\upchi(\cO_R(D)) = 1 + D \cdot R.
\end{equation*}
Moreover, if~$D \cdot R_i \le 0$ for all~$R_i$ with~$a_i > 0$ and~$D \cdot R < 0$ then~$h^0(\cO_R(D)) = 0$.
\end{lemma}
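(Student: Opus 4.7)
My plan is to establish the Euler characteristic identity via Riemann--Roch on~$S$, and the vanishing by induction on the integer $\sum_i a_i$ using the sequence~\eqref{eq:root-seq-1}.

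For the Euler characteristic, I would apply $\upchi$ to the short exact sequence
\begin{equation*}
0 \to \cO_S(D - R) \to \cO_S(D) \to \cO_R(D) \to 0.
\end{equation*}
Since $\upchi(\cO_S(L)) = 2 + L^2/2$ on the K3 surface~$S$, this yields $\upchi(\cO_R(D)) = \tfrac{1}{2}(D^2 - (D-R)^2) = D\cdot R - R^2/2 = 1 + D\cdot R$, using that $R^2 = -2$ for a positive root.

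For the vanishing, I would first observe that the hypotheses force the existence of some index~$j$ with $a_j > 0$ and $D \cdot R_j < 0$: indeed, otherwise every term in $D \cdot R = \sum a_i (D \cdot R_i)$ would vanish, contradicting $D \cdot R < 0$. I would fix such a~$j$ and argue by induction on $\sum_i a_i$, keeping the simple root~$R_j$ in the support of the positive root throughout the induction. The base case $\sum a_i = 1$ forces $R = R_j$, so $\cO_R(D) \cong \cO_{\P^1}(D \cdot R_j)$ has no global sections. For the inductive step I would choose a simple root~$R_k$ with $a_k > 0$ and $R \cdot R_k = -1$, so that $R' \coloneqq R - R_k$ is again a positive root, and twist~\eqref{eq:root-seq-1} by $\cO_S(D)$ to obtain
\begin{equation*}
0 \to \cO_{\P^1}(D \cdot R_k - 1) \to \cO_R(D) \to \cO_{R'}(D) \to 0.
\end{equation*}
The leftmost term has $h^0 = 0$ since $D \cdot R_k \le 0$, and by induction the rightmost term has $h^0 = 0$, provided that~$R_j$ still belongs to the support of~$R'$ (i.e., either $k \ne j$, or $k = j$ and $a_j \ge 2$); this would close the induction.

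The hardest part of the plan is therefore the combinatorial sublemma that guarantees such a choice of~$k$: \emph{for any positive root~$R$ with $\sum_i a_i \ge 2$ and any index~$j$ with $a_j > 0$, there exists some~$k$ with $R \cdot R_k = -1$ and with either $k \ne j$ or $a_j \ge 2$.} To prove it I would use that $R \cdot R_i \in \{-1, 0, 1\}$ for every~$i$ in the support of a non-simple positive root~$R$ (since $R \cdot R_i = -2$ would force $R = R_i$). Setting $I \coloneqq \{i : a_i > 0\}$ and $J_\pm \coloneqq \{i \in I : R \cdot R_i = \pm 1\}$, the identity $-2 = R^2 = \sum_{i \in I} a_i (R \cdot R_i)$ rearranges to $\sum_{i \in J_-} a_i - \sum_{i \in J_+} a_i = 2$, in particular $\sum_{i \in J_-} a_i \ge 2$. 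If $J_- \not\subseteq \{j\}$ I would pick any $k \in J_- \setminus \{j\}$; otherwise $J_- = \{j\}$, which forces $a_j \ge 2$, and $k = j$ works.
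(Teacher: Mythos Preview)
Your proof is correct. The Euler characteristic computation is identical to the paper's, and the induction is sound: once you have chosen $R_k$ so that $R_j$ remains in the support of $R' = R - R_k$, the inequality $D \cdot R' \le a'_j\,(D \cdot R_j) < 0$ follows automatically, so the inductive hypothesis applies to $R'$. Your combinatorial sublemma is also fine; the bound $R \cdot R_i \in \{-1,0,1\}$ for $i$ in the support of a non-simple positive root is exactly the consequence of negative-definiteness you indicate, and the identity $\sum_{J_-} a_i - \sum_{J_+} a_i = 2$ then forces $\sum_{J_-} a_i \ge 2$, yielding the desired $k$.

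The paper argues slightly differently. It picks \emph{any} simple root $R_j$ with $R \cdot R_j = -1$ (the one supplied by Bourbaki) and sets $R' = R - R_j$, then splits into two cases: if $D \cdot R' < 0$ it uses~\eqref{eq:root-seq-1} exactly as you do, while if $D \cdot R' = 0$ (the only alternative, since all summands are $\le 0$) it observes that necessarily $a_j = 1$ and $D \cdot R_j < 0$, and then uses the \emph{other} sequence~\eqref{eq:root-seq-2} twisted by $D$ together with $(D - R_j) \cdot R' = -1$. So the paper trades your root-system sublemma for a two-case analysis invoking both exact sequences, whereas you trade the second sequence for a sharper choice of the simple root to peel off. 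Your route is arguably more streamlined (one sequence, one case) at the cost of the extra combinatorics; the paper's route avoids that combinatorics but needs sequence~\eqref{eq:root-seq-2} as well.
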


\begin{proof}
The formula for~$\upchi(\cO_R(D))$ follows immediately from the Riemann--Roch Theorem.

The second statement is clear if~$R$ is a simple root.
Otherwise, we consider exact sequences~\eqref{eq:root-seq-1} and~\eqref{eq:root-seq-2},
where~$R_j$ is a simple root such that~$R \cdot R_j = -1$ and~$R' = R- R_j$, so that~$R' \cdot R_j = 1$.
Since~$R'$ is a positive root, we have~$a_j > 0$, hence~$D \cdot R_j \le 0$ by the hypothesis of the lemma.

If~$D \cdot R' < 0$, twisting~\eqref{eq:root-seq-1} by~$D$ 
and noting that~$h^0(\cO_{R_j}(D \cdot R_j - 1)) = 0$ because~$D \cdot R_j \le 0$, 
we see that~$h^0(\cO_R(D)) \le h^0(\cO_{R'}(D))$, which is zero by induction, hence~$h^0(\cO_R(D)) = 0$.

On the other hand, the hypotheses of the lemma imply that~$D \cdot R' = \sum_i (a_i - \delta_{i,j}) D \cdot R_i \le 0$,
so if~$D \cdot R' \ge 0$, it follows that~$D \cdot R' = 0$ and~$D \cdot R_j < 0$, hence~$h^0(\cO_{R_j}(D)) = 0$.
It also follows that~$a_j = 1$; 
therefore, twisting~\eqref{eq:root-seq-2} by~$D$ we obtain~$h^0(\cO_R(D)) = h^0(\cO_{R'}(D - R_j))$,
and since~\mbox{$(D - R_j) \cdot R' = -R' \cdot R_j = -1$},
this is zero, again by induction.
\end{proof}

Using further the terminology of Weyl groups, 
we will say that a divisor class~$D$ on a quasipolarized K3 surface~$(S,H)$ is {\sf minuscule}, if
\begin{equation*}
D \cdot R \in \{-1,0,1\}
\end{equation*}
for any positive root~$R$ in the ADE configuration~$\fR(S,H)$.

If~$D$ is minuscule, by~\cite[Exercises 23, 24 to Ch.~VI.2]{Bourbaki} 
there is a sequence of minuscule divisor classes~$D_0 = D, D_1, \dots, D_n$ 
such that for each~$1 \le k \le n$ the difference~$R_{i_k} \coloneqq D_{k-1} - D_{k}$ 
is a simple root satisfying~$D_k \cdot R_{i_k} = - D_{k-1} \cdot R_{i_k} = 1$, 
so that there are exact sequences
\begin{equation}
\label{eq:minuscule-seq}
0 \to \cO_S(D_k) \to \cO_S(D_{k-1}) \to \cO_{R_{i_k}}(-1) \to 0,
\end{equation}
and~$D_n \cdot R \in \{0,1\}$ for all positive roots~$R$ in~$\fR(S,H)$;
in other words, $D_n$ is nef over the contraction~$\baS$ of~$\fR(S,H)$.
The sequence~$\{D_k\}$ as above is not unique, but the last element~$D_n$ is unique;
we will call it {\sf the dominant replacement} for~$D$ and denote~$D_+$.
Note that
\begin{equation}
\label{eq:dplus-d}
D_+^2 = D^2,
\qquad
H \cdot D_+ = H \cdot D,
\qquad\text{and}\qquad
\rH^\bullet(S, \cO_S(D_+)) \cong \rH^\bullet(S, \cO_S(D)).
\end{equation}
Moreover, if~$\sigma \colon S \to \baS$ is the contraction of~$\fR(S,H)$ then 
\begin{equation}
\label{eq:rsi-dplus}
\rR\sigma_*\cO_S(D) \cong \rR\sigma_*\cO_S(D_+).
\end{equation}
Indeed, both~\eqref{eq:dplus-d} and~\eqref{eq:rsi-dplus} follow
from the definition and sequences~\eqref{eq:minuscule-seq} by induction.

Recall that a coherent sheaf~$\cF$ on a scheme~$X$ is {\sf maximal Cohen--Macaulay} 
if~$\cExt^i(\cF, \cO_X) = 0$ for all~$i \ge 1$, see~\cite[Definition~4.2.1]{Buch};
in other words if the derived dual of~$\cF$ is a pure sheaf.
We refer to~\cite{Buch} for the basic properties of maximal Cohen--Macaulay sheaves.

\begin{lemma}
\label{lem:pf-gg-mcm}
Let~$(S,H)$ be a quasipolarized~$K3$ surface of genus~\mbox{$g = r \cdot s$}.
Let~$\sigma \colon S \to \bar{S}$ be the contraction of the ADE configuration~$\fR(S,H)$.
If~$D \in \Pic(S)$ is a minuscule divisor class on~$S$ 
then~$\rR^1\sigma_*\cO_S(D) = 0$ and
\begin{equation}
\label{eq:ssod-vee}
(\sigma_*\cO_S(D))^\vee \cong \sigma_*\cO_S(-D).
\end{equation}
In particular, $\sigma_*\cO_S(D)$ is a maximal Cohen--Macaulay sheaf on~$\bar{S}$.
If, moreover, the dominant replacement~$D_+$ is globally generated, then so is the sheaf~$\sigma_*\cO_S(D)$.
\end{lemma}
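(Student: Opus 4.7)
The plan is to reduce each of the three claims---the vanishing of $\rR^1\sigma_*\cO_S(D)$, the duality formula \eqref{eq:ssod-vee} together with the maximal Cohen--Macaulay property, and global generation---to the case where $D$ coincides with its dominant replacement $D_+$. This reduction is immediate from \eqref{eq:rsi-dplus}, and since $-D$ is minuscule whenever $D$ is, it applies equally to $-D$ via $(-D)_+$.

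For the vanishing $\rR^1\sigma_*\cO_S(D_+) = 0$, I would apply the theorem on formal functions to reduce to showing $\rH^1(Z, \cO_S(D_+)\vert_Z) = 0$ for every effective cycle $Z = \sum a_i R_i$ supported on an exceptional fiber $\sigma^{-1}(\bar s)$, and proceed by induction on $\sum a_i$. The base case $Z = R_i$ is immediate because $\cO_S(D_+)\vert_{R_i} = \cO_{\P^1}(D_+ \cdot R_i)$ has degree $0$ or $1$. For the step, negative definiteness of the intersection form on the exceptional lattice produces a simple root $R_j$ with $a_j > 0$ and $R_j \cdot Z \le -1$; setting $Z' = Z - R_j$ gives $R_j \cdot Z' \le 1$, and the decomposition $Z = R_j + Z'$ of Cartier divisors yields a short exact sequence whose leftmost term, after twisting by $\cO_S(D_+)$, is $\cO_{\P^1}(D_+ \cdot R_j - R_j \cdot Z')$ of degree $\ge -1$, hence $\rH^1$-acyclic. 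Combined with the inductive hypothesis, this finishes the step.

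For the duality formula together with the MCM property, the plan is to invoke Grothendieck--Serre duality. Since $\sigma$ is crepant ($\omega_S \cong \cO_S \cong \sigma^*\omega_{\bar S}$), we have $\sigma^{!}\cO_{\bar S} \cong \cO_S$, so
\[
\cRHom_{\bar S}(\rR\sigma_*\cO_S(D), \cO_{\bar S}) \cong \rR\sigma_*\cRHom_S(\cO_S(D), \cO_S) \cong \rR\sigma_*\cO_S(-D).
\]
The previous step, applied to both $D$ and $-D$, places both sides in cohomological degree $0$, simultaneously yielding \eqref{eq:ssod-vee} and the vanishing $\cExt^i(\sigma_*\cO_S(D), \cO_{\bar S}) = 0$ for $i \ge 1$, which is the MCM property.

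The main obstacle is the global generation statement. Setting $V = \rH^0(S, \cO_S(D_+)) = \rH^0(\bar S, \sigma_*\cO_S(D_+))$, the hypothesis on $D_+$ produces an exact sequence $0 \to \cK \to V \otimes \cO_S \to \cO_S(D_+) \to 0$ on $S$; applying $\rR\sigma_*$ identifies the cokernel of the map $V \otimes \cO_{\bar S} \to \sigma_*\cO_S(D_+)$ with $\rR^1\sigma_*\cK$. Global generation is thus equivalent to $\rR^1\sigma_*\cK = 0$, which by formal functions reduces to $\rH^1(Z, \cK\vert_Z) = 0$ for every effective exceptional cycle $Z$. The restriction $\cK\vert_{R_i}$ splits as $\cO_{\P^1}^{a_i} \oplus \cO_{\P^1}(-1)^{b_i}$ with $b_i = D_+ \cdot R_i \in \{0,1\}$, so the base case is fine; however, the inductive step is more delicate than in the vanishing argument, because after restricting to a subcycle and twisting by the relevant factor, some summands can acquire degree $-2$. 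I expect the cleanest approach is to choose carefully which simple root to peel off at each step---possibly alternating between the two types of sequences \eqref{eq:root-seq-1} and \eqref{eq:root-seq-2}---so that the connecting homomorphisms in the cohomology long exact sequences absorb the excess $\rH^1$-contributions.
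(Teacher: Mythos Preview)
Your arguments for the vanishing of~$\rR^1\sigma_*\cO_S(D)$ and for the duality formula with the maximal Cohen--Macaulay property are correct. For the vanishing, the paper takes a shorter route: since~$D_+$ is nef over~$\bar S$ by construction, relative Kawamata--Viehweg gives~$\rR^1\sigma_*\cO_S(D_+) = 0$ in one line, and~\eqref{eq:rsi-dplus} transfers this to~$D$. Your formal-functions induction is a valid, more elementary substitute. For the duality and MCM part, your argument coincides with the paper's.

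The global generation argument, however, is genuinely incomplete. You correctly identify that the induction breaks down when a summand~$\cO_{\P^1}(-1)$ of~$\cK\vert_{R_j}$ gets twisted to~$\cO_{\P^1}(-2)$, and your proposed fix---choosing the peeled root carefully---remains a hope rather than a proof. It is not clear that such a choice is always available, nor that connecting homomorphisms can be controlled in the way you suggest; at the very least, this would require exploiting the constraint (a consequence of the minuscule condition) that in each connected ADE component at most one simple root satisfies~$D_+ \cdot R_i = 1$, and even then the combinatorics is nontrivial.

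The paper sidesteps this entirely. Instead of analysing~$\rR^1\sigma_*\cK$ for the full evaluation kernel, it chooses two \emph{general} sections of~$\cO_S(D_+)$. Global generation of~$\cO_S(D_+)$ ensures their common zero locus is zero-dimensional and, by genericity, disjoint from~$\cup R_i$. Thus the Koszul complex
\[
0 \to \cO_S(-D_+) \to \cO_S \oplus \cO_S \to \cO_S(D_+) \to 0
\]
is exact in a neighbourhood of~$\cup R_i$. Pushing forward and using~$\rR^1\sigma_*\cO_S(-D_+) = 0$ (since~$-D_+$ is also minuscule) shows that~$\sigma_*\cO_S(D_+)$ is generated by these two sections near~$\Sing(\bar S)$; away from~$\Sing(\bar S)$ the map~$\sigma$ is an isomorphism and there is nothing to prove. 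This avoids any induction on exceptional cycles for~$\cK$.
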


\begin{proof}
The dominant replacement~$D_+$ for~$D$ is nef over~$\baS$;
hence we have~$\rR^1\sigma_*\cO_S(D_+) = 0$ by the Kawamata--Viehweg vanishing theorem.
Applying~\eqref{eq:rsi-dplus}, we obtain~$\rR^1\sigma_*\cO_S(D) = 0$.
Furthermore, 
\begin{equation*}
\cRHom(\sigma_*\cO_{S}(D), \cO_{\bar{S}}) \cong
\rR\sigma_*\cRHom(\cO_{S}(D), \sigma^!\cO_{\bar{S}}) \cong
\rR\sigma_*\cRHom(\cO_{S}(D), \cO_{S}) \cong
\rR\sigma_*\cO_{S}(-D)
\end{equation*}
(where the first isomorphism is the Grothendieck duality and the second is crepancy of~$\sigma$). 
Clearly, the divisor~$-D$ is also minuscule, hence the right-hand side is a pure sheaf, 
hence~$\sigma_*\cO_{S}(D)$ is a maximal Cohen--Macaulay sheaf and~\eqref{eq:ssod-vee} holds.

If~$D$ is minuscule and~$D_+$ is globally generated
then the zero locus of a general pair of global sections of~$\cO_S(D_+)$ 
is a zero-dimensional subscheme of~$S$, disjoint from the union~$\cup R_i$ of all simple roots in~$\fR(S,H)$.
Therefore, the corresponding Koszul complex
\begin{equation*}
0 \to \cO_S(-D_+) \to \cO_S \oplus \cO_S \to \cO_S(D_+) \to 0
\end{equation*}
is exact in a neighborhood of~$\cup R_i$.
Pushing it forward to~$\bar{S}$ and taking into account that~$-D_+$ is also minuscule, and hence~$\rR^1\sigma_*\cO_S(-D_+) = 0$, 
we see that~$\sigma_*\cO_S(D_+)$ is globally generated in a neighborhood of~$\Sing(\bar{S})$.
On the other hand, over the complement of~$\Sing(\bar{S})$ the morphism~$\sigma$ is an isomorphism,
hence~$\sigma_*\cO_S(D_+)$ is globally generated away from~$\Sing(\bar{S})$, hence everywhere.
Since~$\sigma_*\cO_S(D) \cong \sigma_*\cO_S(D_+)$ by~\eqref{eq:rsi-dplus}, it is also globally generated.
\end{proof}

\subsection{Brill--Noether theory for K3 surfaces}
\label{ss:bn-k3}

The following definition was introduced in~\cite[Definition~3.8]{Muk:New} for polarized K3 surfaces; we generalize it to the case of quasipolarizations.

\begin{definition}
A quasipolarized K3 surface~$(S,H)$ is called {\sf Brill--Noether general} ({\sf BN-general}) if
\begin{equation*}\label{eq:bn-inequality}
h^0(\cO_S(D)) \cdot h^0(\cO_S(H - D)) < h^0(\cO_S(H)) = g + 1
\end{equation*}
for all~$D \not\in \{0,H\}$, where~$g$ is the genus of~$(S,H)$, see~\eqref{eq:genus-s}.
\end{definition}

\begin{remark} 
In general, a quasipolarization~$H$ may have base points,
but if~$(S,H)$ is BN-general, it is base point free.
Indeed, if $H$ is not not base point free,
then by~\cite[Corollary~3.2]{SD} the linear system $|H|$ has a fixed component $D$. 
In particular, $h^0(\cO_S(H-D)) = h^0(\cO_S(H)) = g+1$ and
\begin{equation*}
h^0(\cO_S(D)) \cdot h^0(\cO_S(H-D)) = 1 \cdot (g+1) = g+1,
\end{equation*}
contradicting the assumption that $(S, H)$ is BN-general.
\end{remark}

The following theorem relates Brill--Noether properties of curves and K3 surfaces.

\begin{theorem}
\label{thm:bnpc}
Let~$(S,H)$ be a quasipolarized K3 surface of genus~$g \ge 2$.
\begin{aenumerate}
\item 
\label{it:bnp-bn}
If~$|H|$ contains a smooth connected BN-general curve~$C \subset S$ then~$(S,H)$ is BN-general.
\item 
\label{it:general-bnp}
If every curve in~$|H|$ is reduced and irreducible then a general curve in~$|H|$ is BNP-general.
\end{aenumerate}
\end{theorem}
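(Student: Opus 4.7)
For part~\ref{it:bnp-bn} the plan is to transport Brill--Noether generality from $C$ to $S$ through the restriction sequence. Given $D \in \Pic(S)$ with $D \notin \{0, H\}$, if either $h^0(\cO_S(D)) = 0$ or $h^0(\cO_S(H - D)) = 0$ then the desired bound is automatic; otherwise $D$ and $H - D$ are both effective. Effectiveness of $D$ together with $D \ne 0$ rules out effectiveness of $-D$ (two effective classes summing to $0$ must both vanish), and similarly $D \ne H$ rules out effectiveness of $D - H$, so $h^0(\cO_S(-D)) = h^0(\cO_S(D - H)) = 0$. Twisting the ideal sequence $0 \to \cO_S(-H) \to \cO_S \to \cO_C \to 0$ by $\cO_S(D)$ and by $\cO_S(H - D)$ and taking cohomology yields
\[
h^0(\cO_S(D)) \le h^0(\cO_C(D|_C)), \qquad h^0(\cO_S(H - D)) \le h^0(\cO_C((H - D)|_C)).
\]
Adjunction on the K3 gives $K_C \cong \cO_C(H)$, hence $\cO_C((H - D)|_C) \cong \cO_C(D|_C)^{-1}(K_C)$, and Serre duality converts the second bound into $h^1(\cO_C(D|_C))$. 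Multiplying and invoking BN-generality of $C$ for $L = \cO_C(D|_C)$ now produces
\[
h^0(\cO_S(D)) \cdot h^0(\cO_S(H - D)) \le h^0(L) \cdot h^1(L) \le g < g + 1.
\]

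For part~\ref{it:general-bnp}, the hypothesis that every divisor in $|H|$ is reduced and irreducible rules out the Saint-Donat decomposition $|H| = |2E| + R$ with $R$ a fixed $(-2)$-curve and $E$ an elliptic pencil, which is the only obstruction to base-point-freeness of a big and nef line bundle on a K3 surface; hence $|H|$ is base-point-free and by Bertini a general $C \in |H|$ is smooth and irreducible of genus $g$. That such a general smooth $C \in |H|$ is Brill--Noether--Petri general is then exactly the main result of Lazarsfeld~\cite{Laz}, which I would invoke directly.

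The real difficulty is concentrated entirely in part~\ref{it:general-bnp}: Lazarsfeld's theorem is proved precisely via the vector bundle construction $\bL_S(L)$ that is central later in this paper, by attaching to any hypothetical BN- or Petri-violating pair $(C, L)$ its Lazarsfeld bundle on $S$ and arguing, through a Chern-class computation together with a count of the dimension of the moduli of such bundles against $\dim |H| = g$, that no such pair can exist for a general $C$. Part~\ref{it:bnp-bn}, by contrast, is entirely elementary, and illustrates the general philosophy that BN-generality propagates from a single smooth BN-general member of $|H|$ to the ambient quasipolarized surface.
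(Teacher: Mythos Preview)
Your proposal is correct and follows essentially the same approach as the paper. For part~\ref{it:bnp-bn} the paper gives exactly your restriction-sequence argument (phrased by contradiction, and leaving the Serre duality/adjunction step implicit), and for part~\ref{it:general-bnp} the paper simply cites~\cite{Laz}; your additional remark that the hypothesis forces $|H|$ to be base-point-free via Saint-Donat is a reasonable expansion but not strictly needed beyond what Lazarsfeld already assumes.
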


\begin{proof}
Part~\ref{it:bnp-bn} is easy.
Assume~$h^0(\cO_S(D)) \cdot h^0(\cO_S(C-D)) \ge g + 1$.
Using the exact sequence
\begin{equation*}
0 \to \cO_S(D - C) \to \cO_S(D) \to \cO_C(D\vert_C) \to 0
\end{equation*}
and effectivity and non-triviality of~$C - D$ we obtain~$h^0(\cO_C(D\vert_C)) \ge h^0(\cO_S(D))$.
Using the similar inequality for~$C - D$, we obtain
\begin{equation*}
h^0(\cO_C(D\vert_C)) \cdot h^0(\cO_C((C-D)\vert_C)) \ge g + 1
\end{equation*}
which contradicts the BN-generality of~$C$.

Part~\ref{it:general-bnp} is much harder; it is proved in~\cite{Laz}.
\end{proof}

\begin{remark}
Part~\ref{it:general-bnp} has been partially extended in~\cite[Theorem~1]{Hubarcak} 
to BN-general K3 surfaces of genus~$g \le 19$, 
showing that in these cases a smooth curve in~$|C|$ is BN-general.
\end{remark}

The following definition is analogous to Definition~\ref{def:mukai-pair} of a Mukai pair on a curve;
it plays an important role in the rest of the paper.

\begin{definition}\label{def:xi-big}
Let~$r, s \ge 2$.
{\sf A special Mukai class of type~$(r,s)$} on a quasipolarized smooth K3 surface~$(S,H)$ of genus~$g = r\cdot s$
is a globally generated class~$\Xi \in \Pic(S)$ such that
\begin{equation}
\label{eq:d2-dh-h0d}
\Xi \cdot H = (r - 1)(s + 1),
\qquad
h^0(\cO_S(\Xi)) = r,
\qquad\text{and}\qquad 
h^1(\cO_S(\Xi)) = h^2(\cO_S(\Xi)) = 0.
\end{equation}
\end{definition}

Recall from Section~\ref{ss:qp-k3} the definition of minuscule divisor classes.

\begin{lemma}
\label{lem:xi-minuscule}
If~$\Xi$ is a special Mukai class of type~$(r,s)$ 
on a BN-general quasipolarized~$K3$ surface~$(S,H)$ of genus~$g = r \cdot s$ then
\begin{equation*}
(H - \Xi) \cdot H = (r + 1)(s - 1),
\quad
h^0(\cO_S(H - \Xi)) = s,
\quad\text{and}\quad 
h^1(\cO_S(H - \Xi)) = h^2(\cO_S(H - \Xi)) = 0.
\end{equation*}
Moreover, $\Xi$ and~$H - \Xi$ are minuscule.
\end{lemma}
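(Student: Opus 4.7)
The first equality is a one-line computation using $H^2 = 2rs - 2$ and $\Xi \cdot H = (r-1)(s+1)$:
\begin{equation*}
(H - \Xi) \cdot H = (2rs - 2) - (r - 1)(s + 1) = (r + 1)(s - 1).
\end{equation*}
For the cohomology of $\cO_S(H - \Xi)$, my plan is as follows. First, Riemann--Roch together with~\eqref{eq:d2-dh-h0d} yields $\Xi^2 = 2r - 4$, and expanding gives $(H - \Xi)^2 = 2s - 4$ and $\upchi(\cO_S(H - \Xi)) = s$. Next, Serre duality identifies $h^2(\cO_S(H - \Xi))$ with $h^0(\cO_S(\Xi - H))$, which vanishes because $(\Xi - H) \cdot H = -(r+1)(s-1) < 0$ while $H$ is big and nef, so $\Xi - H$ cannot be effective. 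Hence $h^0(\cO_S(H - \Xi)) \ge s$. Since $\Xi \notin \{0, H\}$ (their $H$-degrees differ for $r,s \ge 2$), the Brill--Noether generality of $(S,H)$ applied to $D = \Xi$ gives
\begin{equation*}
r \cdot h^0(\cO_S(H - \Xi)) = h^0(\cO_S(\Xi)) \cdot h^0(\cO_S(H - \Xi)) < rs + 1,
\end{equation*}
and thus $h^0(\cO_S(H - \Xi)) \le s$. Combining, $h^0 = s$ and $h^1 = 0$.

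For the minuscule property, let $R$ be a positive root of $\fR(S, H)$. Since $R$ is effective by definition and $\cO_S(\Xi)$ is globally generated, hence nef, I immediately get $\Xi \cdot R \ge 0$. The main step is the upper bound $\Xi \cdot R \le 1$, which I would prove by contradiction. Assume $\Xi \cdot R \ge 2$. The short exact sequence
\begin{equation*}
0 \to \cO_S(\Xi) \to \cO_S(\Xi + R) \to \cO_R(\Xi + R) \to 0
\end{equation*}
yields $h^0(\cO_S(\Xi + R)) \ge h^0(\cO_S(\Xi)) = r$. On the other hand,
\begin{equation*}
(H - \Xi - R)^2 = (H - \Xi)^2 + 2\,\Xi \cdot R - 2 \ge 2s - 2,
\end{equation*}
so $\upchi(\cO_S(H - \Xi - R)) \ge s + 1$; together with the vanishing $h^2(\cO_S(H - \Xi - R)) = h^0(\cO_S(\Xi + R - H)) = 0$ (again from negativity of the intersection with $H$), this gives $h^0(\cO_S(H - \Xi - R)) \ge s + 1$. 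But then
\begin{equation*}
h^0(\cO_S(\Xi + R)) \cdot h^0(\cO_S(H - \Xi - R)) \ge r(s + 1) \ge rs + 2 > rs + 1,
\end{equation*}
contradicting the Brill--Noether generality applied to $\Xi + R$, which is nonzero, effective, and not equal to $H$ since $(\Xi + R) \cdot H = (r-1)(s+1) \ne 2rs - 2$. Hence $\Xi \cdot R \in \{0, 1\}$, so $\Xi$ is minuscule; and then $(H - \Xi) \cdot R = -\Xi \cdot R \in \{-1, 0\}$ shows that $H - \Xi$ is minuscule too.

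The hardest part will be the minuscule bound: the trick is to exploit Brill--Noether generality twice, the second time with the divisor $\Xi + R$ as the witness, using that enlarging $\Xi$ by an effective root preserves the lower bound on $h^0$ of $\Xi$ while boosting the Euler characteristic of the complementary class $H - \Xi - R$. Everything else is routine bookkeeping with Riemann--Roch, Serre duality, and the nefness of globally generated line bundles.
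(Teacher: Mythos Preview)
Your proof is correct, and in fact both halves take a somewhat different and more streamlined route than the paper.

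For the cohomology of~$\cO_S(H-\Xi)$, the paper restricts to a smooth curve~$C \in |H|$: from the sequence~$0 \to \cO_S(\Xi - H) \to \cO_S(\Xi) \to \cO_C(\Xi\vert_C) \to 0$ it extracts~$h^0(\cO_C(\Xi\vert_C)) \ge r$, hence~$h^1(\cO_C(\Xi\vert_C)) \ge s$, and then identifies~$h^0(\cO_S(H-\Xi)) = h^2(\cO_S(\Xi - H))$ with this~$h^1$ to obtain the lower bound~$\ge s$. You instead compute~$\Xi^2 = 2r-4$ and~$(H-\Xi)^2 = 2s-4$ by Riemann--Roch and get~$h^0 \ge \upchi = s$ directly. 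Your route is shorter; the paper's has the side benefit of setting up the Mukai-pair statement used in the next proposition.

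For the minuscule bound, the paper invokes Lemma~\ref{lem:h0-roots} twice: once to show~$h^0(\cO_R(\Xi+R)) > 0$, giving the \emph{strict} inequality~$h^0(\cO_S(\Xi+R)) > r$, and once to show~$h^0(\cO_R(H-\Xi)) = 0$, giving the \emph{exact} equality~$h^0(\cO_S(H-\Xi-R)) = s$; the product is then~$> rs$. You reverse the asymmetry: you take only the trivial bound~$h^0(\cO_S(\Xi+R)) \ge r$ from effectivity of~$R$, and push the extra unit onto the complementary side via~$(H-\Xi-R)^2 \ge 2s-2$ and Riemann--Roch, obtaining~$h^0(\cO_S(H-\Xi-R)) \ge s+1$. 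The product bound~$r(s+1) \ge rs+2$ is just as good. Your version avoids Lemma~\ref{lem:h0-roots} entirely, which is a genuine simplification for this lemma in isolation.
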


\begin{proof}
We have~$H \cdot (H - \Xi) = 2rs - 2 - (r - 1)(s + 1) = (r + 1)(s - 1) > 0$, hence~$h^0(\cO_S(\Xi - H)) = 0$
and the natural exact sequence
\begin{equation*}
0 \to \cO_S(\Xi - H) \to \cO_S(\Xi) \to \cO_C(\Xi\vert_C) \to 0
\end{equation*}
shows that~$h^0(\cO_C(\Xi\vert_C)) \ge h^0(\cO_S(\Xi)) = r$.
Since, on the other hand~$\deg(\cO_S(\Xi)\vert_C) = (r - 1)(s + 1)$,
it follows that~$h^1(\cO_C(\Xi\vert_C)) \ge s$, and then the above exact sequence implies that
\begin{equation*}
h^0(\cO_S(H - \Xi)) = h^2(\cO_S(\Xi - H)) = h^1(\cO_C(\Xi\vert_C)) \ge s.
\end{equation*}
By BN-generality of~$S$, this must be an equality, and then, using Riemann--Roch 
and taking the vanishing~$h^2(\cO_S(H - \Xi)) = h^0(\cO_S(\Xi - H))$ into account,
we deduce the first part of the lemma.

For the second part, since~$(H - \Xi) \cdot R = - \Xi \cdot R$, it is enough to show that~$\Xi$ is minuscule.
So, let~$R$ be a positive root.
We have~$\Xi \cdot R \ge 0$ because~$\Xi$ is globally generated, hence nef, and~$R$ is effective, and it remains to show that~$\Xi \cdot R \le 1$.
So, assume~$\Xi \cdot R \ge 2$.
Then~$(\Xi + R) \cdot R \ge 0$, hence~$\upchi(\cO_R(\Xi + R)) > 0$ by Lemma~\ref{lem:h0-roots}, hence~$h^0(\cO_R(\Xi + R)) > 0$, and the exact sequence
\begin{equation*}
0 \to \cO_S(\Xi) \to \cO_S(\Xi + R) \to \cO_R(\Xi + R) \to 0
\end{equation*}
implies that~$h^0(\cO_S(\Xi + R)) > h^0(\cO_S(\Xi)) = r$.

On the other hand, $(H - \Xi) \cdot R_i = - \Xi \cdot R_i \le 0$ for any simple root~$R_i$, because~$\Xi$ is nef, 
and~$(H - \Xi) \cdot R = - \Xi \cdot R\le -2$, hence we have~$h^0(\cO_R(H - \Xi)) = 0$, again by Lemma~\ref{lem:h0-roots}. 
Now, the exact sequence
\begin{equation*}
0 \to \cO_S(H - \Xi - R) \to \cO_S(H - \Xi) \to \cO_R(H - \Xi) \to 0
\end{equation*}
implies that~$h^0(\cO_S(H - \Xi - R)) = h^0(\cO_S(H - \Xi)) = s$.
We conclude that
\begin{equation*}
h^0(\cO_S(\Xi + R)) \cdot h^0(\cO_S(H - \Xi - R)) > rs = g, 
\end{equation*}
contradicting  Brill--Noether generality of~$(S,H)$.
Thus, $\Xi$ is minuscule.
\end{proof}

The next result gives a precise relation between special Mukai classes and Mukai pairs.
Recall from Section~\ref{ss:qp-k3} the definition of the dominant replacement~$D_+$ of a minuscule divisor class~$D$.

\begin{proposition}
\label{prop:h-xi}
Let~$\Xi$ be a special Mukai class of type~$(r,s)$ 
on a BN-general quasipolarized $K3$ surface~$(S,H)$ of genus~$g  =r \cdot s \ge 4$.
If~$(S,H)$ contains a BNP-general curve~$C \in |H|$ then~$(H - \Xi)_+$ 
is a special Mukai class of type~$(s,r)$ and~$(\cO_S(\Xi)\vert_C, \cO_S(H - \Xi)\vert_C)$ is a Mukai pair.
\end{proposition}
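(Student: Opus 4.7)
The plan is to establish the two assertions separately: (a) that $(\xi, \eta) := (\cO_S(\Xi)|_C, \cO_S(H-\Xi)|_C)$ is a Mukai pair of type $(r,s)$, and (b) that $(H-\Xi)_+$ is a special Mukai class of type $(s,r)$ on $(S,H)$. Part~(a) is a short restriction computation combined with Lemma~\ref{lem:xi-eta}; the main obstacle will be proving global generation of $(H-\Xi)_+$ in part~(b).

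For~(a), I will use the restriction sequence
\begin{equation*}
0 \to \cO_S(\Xi - H) \to \cO_S(\Xi) \to \xi \to 0.
\end{equation*}
Since $(\Xi - H) \cdot H = -(r+1)(s-1) < 0$ the class $\Xi - H$ is not effective, so $h^0(\cO_S(\Xi - H)) = 0$, while Serre duality and Lemma~\ref{lem:xi-minuscule} give $h^1(\cO_S(\Xi - H)) = h^1(\cO_S(H - \Xi)) = 0$. Hence $h^0(\xi) = h^0(\cO_S(\Xi)) = r$ and $\deg \xi = \Xi \cdot H = (r-1)(s+1)$; the BN-generality of $C$ inherited from BNP-generality then lets me apply Lemma~\ref{lem:xi-eta} to obtain a Mukai pair $(\xi, \xi^{-1}(K_C))$, and adjunction identifies $\xi^{-1}(K_C)$ with $\eta$. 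In particular, $\eta$ is globally generated on $C$.

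For~(b), write $L := (H-\Xi)_+$. The numerical invariants required for $L$ to be a special Mukai class of type $(s,r)$ follow from Lemma~\ref{lem:xi-minuscule} and~\eqref{eq:dplus-d}, and a direct computation gives $L^2 = (H - \Xi)^2 = 2s - 4$. My first task is to show $L$ is nef. Since $(L - H) \cdot H = -(r-1)(s+1) < 0$, the kernel $H^0(\cO_S(L-H))$ of the restriction map $H^0(S, \cO_S(L)) \to H^0(C, \eta)$ vanishes, so this map is an injection between spaces of dimension $s$, hence an isomorphism. Global generation of $\eta$ on $C$ then makes $\cO_S(L)$ globally generated along $C$; in particular, the base locus of $|L|$ is disjoint from $C$. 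If some irreducible $(-2)$-curve $R'$ had $L \cdot R' < 0$, it would be a fixed component of $|L|$, hence disjoint from $C \in |H|$, giving $R' \cdot H = 0$ and thus $R' \in \fR(S,H)$ a simple root; but dominance of $(H-\Xi)_+$ then forces $L \cdot R' \geq 0$, a contradiction. So $L \cdot R' \geq 0$ for every $(-2)$-curve, which together with effectivity of $L$ yields nefness.

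The last step is to rule out base points of $L$, for which I appeal to Saint-Donat's classification of base-point-free linear systems on K3 surfaces (see, e.g., Huybrechts, Ch.~2): a nef line bundle $L$ on a K3 surface is base point free unless $L^2 > 0$ and $L \sim n E + R$ with $E^2 = 0$ elliptic, $R^2 = -2$, $E \cdot R = 1$, and $n = h^0(L) - 1$. For $s = 2$ we have $L^2 = 0$ and the claim is immediate. For $s \geq 3$, suppose $L \sim (s-1)E + R$ as above; writing $R \cdot H = k(s-1)$ with $k \in \ZZ_{\geq 0}$ yields $E \cdot H = r + 1 - k$. Using $h^0(\cO_S(E)) = 2$ together with the lower bound $h^0(\cO_S(H-E)) \geq r(s-1) + k$ from Riemann--Roch plus the vanishing $h^2(\cO_S(H-E)) = h^0(\cO_S(E-H)) = 0$ (since $(E-H) \cdot H < 0$), we find
\begin{equation*}
h^0(\cO_S(E)) \cdot h^0(\cO_S(H-E)) \geq 2\bigl(r(s-1) + k\bigr) \geq rs + 1
\end{equation*}
for all $r \geq 2$ and $s \geq 3$, contradicting Brill--Noether generality of $(S,H)$, which holds by Theorem~\ref{thm:bnpc}\ref{it:bnp-bn}. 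Hence $|L|$ is base point free, and $(H-\Xi)_+$ is a special Mukai class of type $(s, r)$, completing~(b).
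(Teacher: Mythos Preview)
Your proof is correct and follows essentially the same outline as the paper's: establish the Mukai pair on~$C$ via Lemma~\ref{lem:xi-eta}, verify the numerical conditions for~$(H-\Xi)_+$ via Lemma~\ref{lem:xi-minuscule} and~\eqref{eq:dplus-d}, show nefness by proving the base locus of~$|L|$ lies in~$\fR(S,H)$, and then use Saint-Donat/Reid to reduce the non--globally-generated case to a decomposition~$L = (s-1)E + R$.

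The one substantive difference is in the final contradiction. The paper observes that the fixed curve~$R$ must lie in the already-established base locus inside~$\fR(S,H)$, hence~$R\cdot H = 0$, and then restricts~$E$ to~$C$ to violate BN-generality of the \emph{curve}~$C$. You instead allow~$R\cdot H = k(s-1)$ to be arbitrary and violate BN-generality of the \emph{surface}~$(S,H)$ (available by Theorem~\ref{thm:bnpc}\ref{it:bnp-bn}) using the pair~$(E, H-E)$. Your route is slightly more robust in that it does not need to pin down~$R\cdot H$, while the paper's route uses only the weaker hypothesis that~$C$ is BN-general. Numerically the two computations are the same: both reduce to~$2r(s-1) \ge rs+1$, i.e.\ $r(s-2)\ge 1$, which holds for~$r\ge 2$, $s\ge 3$.

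One small step you leave implicit: the identification~$\cO_S(L)|_C \cong \eta$ uses that~$L$ and~$H-\Xi$ differ by a sum of simple roots, each disjoint from~$C$ (since~$C\cdot R_i = H\cdot R_i = 0$ and~$C$ is irreducible with~$C^2>0$). This is harmless but worth a sentence.
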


\begin{proof}
Using Lemma~\ref{lem:xi-minuscule} and~\eqref{eq:dplus-d}, 
we see that~$(H - \Xi)_+$ satisfies the conditions~\eqref{eq:d2-dh-h0d} 
(where the role of~$r$ and~$s$ is swapped);
in particular, Riemann--Roch implies that~$(H - \Xi)_+^2 = 2s - 4$.
So it remains to show that it is globally generated.

Assume to the contrary that~$(H - \Xi)_+$ is not globally generated.
We saw in the proof of Lemma~\ref{lem:xi-minuscule} that~$h^0(\cO_C(\Xi\vert_C)) \ge r$.
Since~$C$ is BN-general, $(\cO_S(\Xi)\vert_C, \cO_S(H - \Xi)\vert_C)$ is a Mukai pair by Lemma~\ref{lem:xi-eta},
and in particular~$\cO_C((H - \Xi)_+\vert_C) \cong \cO_C((H - \Xi)\vert_C)$ is globally generated.
Since~$h^1(\cO_S(-\Xi)) = 0$, 
the restriction morphism~$\rH^0(S, \cO_S((H - \Xi))) \to \rH^0(C, \cO_C((H - \Xi)\vert_C))$ is surjective,
and then~\eqref{eq:dplus-d} implies that~$\rH^0(S, \cO_S((H - \Xi)_{+})) \to \rH^0(C, \cO_C((H - \Xi)_{+}\vert_C))$ 
is surjective as well,
hence~$(H - \Xi)_+$ is globally generated in a neighborhood of~$C$.
Since~$C$ intersects any curve on~$S$ except for those in~$\fR(S,H)$
and since the base locus of a divisor on a smooth K3 surface is a Cartier divisor (see~\cite[Corollary~3.2]{SD}),
we conclude that the base locus of~$(H - \Xi)_+$ is contained in the ADE configuration~$\fR(S,H)$.
Thus,
\begin{equation*}
(H - \Xi)_+ \cdot \Gamma \ge 0
\end{equation*}
for any irreducible curve not contained in~$\fR(S,H)$.
But the same inequality also holds for any~$\Gamma$ contained in~$\fR(S,H)$ by the definition of dominant replacement.
Therefore, $(H - \Xi)_+$ is nef.
Since it is not globally generated by our assumption, 
\cite[Theorem~3.8(b)]{Reid93} implies that~$(H - \Xi)_+^2 > 0$, 
then~\cite[Theorem~3.8(c)]{Reid93} shows that~$(H - \Xi)_+$ is big,
and~\cite[Theorem~3.8(d)]{Reid93} proves that
\begin{equation*}
(H - \Xi)_+ = aE + R_i,
\end{equation*}
where~$R_i$ is a simple root, $|E|$ is an elliptic pencil, $E \cdot R_i = 1$, and~$a \ge 2$;
moreover, computing the square of both sides we find that~$a = s - 1$, in particular~$s \ge 3$.
Now, let~$\zeta \coloneqq \cO_S(E)\vert_C$. 
Then~$\cO_S((H - \Xi)_+\vert_C) \cong \zeta^{(s-1)}$, and~$\zeta$ is a line bundle with~$h^0(\zeta) \ge 2$ of degree
\begin{equation*}
\deg(\zeta) = \tfrac1{s-1} (H - \Xi)_+ \cdot H = \tfrac1{s-1}(r + 1)(s - 1) = r + 1.
\end{equation*}
Then~$\upchi(\zeta) = r + 2 - rs$, hence~$h^1(\zeta) \ge r(s - 1)$,
hence~$h^0(\zeta)h^1(\zeta) \ge 2r(s-1)$; since $s \ge 3$, this is larger than the genus~$g = rs$ of~$C$.
Thus, we obtain a contradiction with BN generality of~$C$.
\end{proof}

\begin{corollary}
\label{cor:bs-h-xi}
Let~$\Xi$ be a special Mukai class of type~$(r,s)$ 
on a BN-general quasipolarized~$K3$ surface~$(S,H)$ of genus~\mbox{$g = r \cdot s \ge 4$}.
If~$\sigma \colon S \to \bar{S}$ is the contraction of~$\fR(S,H)$ then
\begin{equation*}
(\sigma_*\cO_S(\Xi))^\vee \cong \sigma_*\cO_S(-\Xi).
\end{equation*}
Moreover, $\sigma_*\cO_S(\Xi)$ and~$\sigma_*\cO_S(H - \Xi)$ are globally generated maximal Cohen--Macaulay sheaves on~$\bar{S}$
with~$h^0(\sigma_*\cO_S(\Xi)) = r$, $h^0(\sigma_*\cO_S(H - \Xi)) = s$, and no higher cohomology.
\end{corollary}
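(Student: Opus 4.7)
The plan is to derive this corollary directly from Lemma~\ref{lem:xi-minuscule}, Lemma~\ref{lem:pf-gg-mcm}, and Proposition~\ref{prop:h-xi}, which together supply every required input. The heart of the argument is to show that the two divisor classes $\Xi$ and $H-\Xi$ individually fit the hypotheses of Lemma~\ref{lem:pf-gg-mcm}, so that the conclusions of that lemma transfer to the contraction $\bar{S}$.

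First I would record minusculeness: Lemma~\ref{lem:xi-minuscule} asserts that both $\Xi$ and $H-\Xi$ are minuscule on $(S,H)$ and computes their cohomology, giving $h^0(\cO_S(\Xi))=r$, $h^0(\cO_S(H-\Xi))=s$, with all higher cohomology vanishing. Next I would verify that each of these classes has a globally generated dominant replacement. For $\Xi$ this is free: $\Xi$ is already globally generated by Definition~\ref{def:xi-big}, hence nef, hence satisfies $\Xi\cdot R\in\{0,1\}$ for every positive root $R$, so $\Xi_+=\Xi$. For $H-\Xi$ this is exactly the content of Proposition~\ref{prop:h-xi}, which identifies $(H-\Xi)_+$ with a special Mukai class of type $(s,r)$ and therefore globally generated; this step tacitly relies on the existence of a BNP-general curve in $|H|$, which is the one nontrivial additional input.

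With these verifications in place, I would apply Lemma~\ref{lem:pf-gg-mcm} twice: once to $D=\Xi$, and once to $D=H-\Xi$. Each application yields the vanishing $R^1\sigma_*\cO_S(D)=0$, the maximal Cohen--Macaulay property of $\sigma_*\cO_S(D)$ together with the duality $(\sigma_*\cO_S(D))^\vee\cong\sigma_*\cO_S(-D)$, and global generation of $\sigma_*\cO_S(D)$. Specializing the duality to $D=\Xi$ gives the displayed isomorphism in the statement, while the two global generation conclusions give the asserted global generation of both $\sigma_*\cO_S(\Xi)$ and $\sigma_*\cO_S(H-\Xi)$.

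Finally, for the cohomology count on $\bar{S}$, I would combine the vanishing $R^1\sigma_*\cO_S(D)=0$ (for $D\in\{\Xi,H-\Xi\}$) with the fact that $\sigma$ is birational with connected fibres, so the Leray spectral sequence degenerates and $\rH^i(\bar{S},\sigma_*\cO_S(D))=\rH^i(S,\cO_S(D))$. Invoking the cohomology values from Definition~\ref{def:xi-big} for $\Xi$ and from Lemma~\ref{lem:xi-minuscule} for $H-\Xi$ then yields $h^0=r$ (resp.\ $s$) together with vanishing of higher cohomology. The only genuinely nontrivial step is global generation of $(H-\Xi)_+$, and this is precisely where Proposition~\ref{prop:h-xi} does the heavy lifting; the remainder of the argument is a bookkeeping combination of the two lemmas.
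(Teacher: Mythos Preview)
Your proposal is correct and follows essentially the same approach as the paper, which simply says the corollary is ``a combination of Lemma~\ref{lem:pf-gg-mcm} and~\eqref{eq:dplus-d} with Lemma~\ref{lem:xi-minuscule} and Proposition~\ref{prop:h-xi}.'' You have unpacked this one-line proof accurately, and your observation that Proposition~\ref{prop:h-xi} tacitly requires a BNP-general curve in~$|H|$ (a hypothesis not stated in the corollary itself) is a fair point; in the paper's applications this hypothesis is always satisfied.
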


\begin{proof}
This is a combination of Lemma~\ref{lem:pf-gg-mcm} and~\eqref{eq:dplus-d} with Lemma~\ref{lem:xi-minuscule} and Proposition~\ref{prop:h-xi}.
\end{proof}

%%%%%%%%%%%%%%%%%%%%%%%%%

\section{Mukai bundles on K3 surfaces}\label{sec:mbs}

The goal of this section is to prove Theorem~\ref{thm:mb-s}, a criterion for stability 
of Lazarsfeld bundles on K3 surfaces in terms of the existence of special Mukai classes. 

Recall from Section~\ref{sec:notation} that, for a sheaf~$\cF$ of finite projective dimension on a K3 surface with du Val singularities,
we write~$\rv(\cF)$ for its {\sf Mukai vector}. 
Also recall that the Euler bilinear form~$\upchi(\cF,\cF)$ is even, and that
a sheaf~$\cF$ is called
\begin{itemize}
\item 
{\sf simple}, if~$\Hom(\cF, \cF) = \kk$, hence~$\Ext^2(\cF, \cF) = \kk$,
\item 
{\sf rigid}, if~$\Ext^1(\cF,\cF) = 0$, and
\item 
{\sf spherical}, if it is simple and rigid, i.e., $\Ext^\bullet(\cF, \cF) = \kk \oplus \kk[-2]$.
\end{itemize}
In particular, if~$\cF$ is rigid then~$\upchi(\cF, \cF) \ge 2$, and a rigid sheaf is spherical if and only if~$\upchi(\cF, \cF) < 4$.
Similarly, if~$\cF$ is simple then~$\upchi(\cF, \cF) \le 2$, and a simple sheaf is spherical if and only if~$\upchi(\cF, \cF) > 0$.

\begin{definition}
\label{def:mb-s}
Let~$(\baS,\baH)$ be a polarized K3 surface of genus~$g = r \cdot s$ with du Val singularities.
Let~\mbox{$\sigma \colon S \to \baS$} be the minimal resolution
and let~$H = \sigma^*(\baH)$ be the induced quasipolarization.
\begin{enumerate}[wide, label={\textup{(\roman*)}}]
\item 
A vector bundle~$\bcU$ on~$(\baS,\baH)$ with Mukai vector
\begin{equation}
\label{eq:mv-bcus}
\rv(\bcU) = (r, -\baH, s),
\end{equation}
is called a {\sf Mukai bundle of type~$(r,s)$} if it is $\baH$-Gieseker stable.
\item 
A vector bundle~$\cU$ on ~$(S,H)$ is called a {\sf Mukai bundle of type~$(r,s)$} if~$\cU \cong \sigma^*(\bcU)$, 
where~$\bcU$ is a Mukai bundle on~$(\baS,\baH)$ of type~$(r,s)$.
\end{enumerate}
\end{definition}

Note that~\eqref{eq:mv-bcus} is equivalent to the equalities~$\rank(\bcU) = r$, $\rc_1(\bcU) = -\baH$, and~$\upchi(\bcU) = r + s$,
and implies~$\upchi(\bcU, \bcU) = 2rs - \baH^2 = 2$.
Moreover, if~$\cU \cong \sigma^*(\bcU)$ is a Mukai bundle on~$(S,H)$ then
\begin{equation*}
\label{eq:mv-cus}
\rv(\cU) = (r, -H, s)
\end{equation*} 
and~$\bcU \cong \sigma_*(\cU)$ by the projection formula.

A standard argument proves the uniqueness of a Mukai bundle.

\begin{lemma}
\label{lem:mbs-unique}
If a Mukai bundle of type~$(r,s)$ on~$(S,H)$ or~$(\baS,\baH)$ exists, it is unique.
\end{lemma}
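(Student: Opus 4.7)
The plan is to prove uniqueness first on the singular surface $(\baS,\baH)$, then deduce it on $(S,H)$ directly from the definition. So let $\bcU_1$ and $\bcU_2$ be two Mukai bundles of type $(r,s)$ on $(\baS,\baH)$. Both have Mukai vector $(r,-\baH,s)$, and hence share the same reduced Hilbert polynomial with respect to $\baH$. Since $\baH^2 = 2rs-2$, the Mukai pairing gives $\langle \rv(\bcU_1),\rv(\bcU_2)\rangle = -2rs + \baH^2 = -2$, and so by Riemann--Roch
\[
\upchi(\bcU_1,\bcU_2) = -\langle \rv(\bcU_1),\rv(\bcU_2)\rangle = 2.
\]

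Now I run the standard uniqueness argument for stable sheaves whose Mukai vector is a $(-2)$-class. Assume for contradiction that $\bcU_1 \not\cong \bcU_2$. Since both sheaves are $\baH$-Gieseker stable with equal reduced Hilbert polynomial, any nonzero morphism between them would have to be an isomorphism, so
\[
\Hom(\bcU_1,\bcU_2) = \Hom(\bcU_2,\bcU_1) = 0.
\]
Next apply Serre duality on $\baS$, which is legitimate because du Val singularities are Gorenstein with $K_{\baS} = 0$, and the $\bcU_i$ are locally free; this yields $\Ext^2(\bcU_1,\bcU_2) \cong \Hom(\bcU_2,\bcU_1)^\vee = 0$. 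Combining, $\upchi(\bcU_1,\bcU_2) \le 0$, contradicting $\upchi(\bcU_1,\bcU_2) = 2$. Hence $\bcU_1 \cong \bcU_2$.

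For uniqueness on $(S,H)$, suppose $\cU_i \cong \sigma^*\bcU_i$ for $i=1,2$ are two Mukai bundles, where $\bcU_i$ are Mukai bundles on~$(\baS,\baH)$. The previous step gives $\bcU_1 \cong \bcU_2$, and therefore $\cU_1 \cong \cU_2$.

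There is no serious obstacle in this argument: it is the customary spherical-class uniqueness for stable sheaves on a K3 surface. The only minor point requiring attention is the validity of Serre duality on the possibly singular surface $\baS$, which is handled via the Gorenstein property of du Val singularities and the local freeness of the Mukai bundles.
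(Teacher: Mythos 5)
Your proof is correct and follows essentially the same route as the paper: compute $\upchi(\bcU_1,\bcU_2)=2$ from the Mukai pairing, use Serre duality on the Gorenstein surface $\baS$ to relate $\Ext^2(\bcU_1,\bcU_2)$ to $\Hom(\bcU_2,\bcU_1)$, and conclude via stability that a nonzero morphism must be an isomorphism. The only cosmetic difference is that you argue by contradiction while the paper argues directly that one of the two Hom spaces is nonzero.
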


\begin{proof}
By our definitions, it is enough to prove uniqueness of the latter, which is standard.
Indeed, if~$\bcU_1$ and~$\bcU_2$ are two Mukai bundles on~$(\baS,\baH)$ of type~$(r,s)$, we have
\begin{equation*}
\upchi(\bcU_1,\bcU_2) = -\langle \rv(\bcU_1), \rv(\bcU_2) \rangle = - \langle \rv(\bcU_1), \rv(\bcU_1)\rangle = \upchi(\bcU_1,\bcU_1) = 2,
\end{equation*}
hence either~$\Hom(\bcU_1,\bcU_2) \ne 0$ or~$\Hom(\bcU_2,\bcU_1) \cong \Ext^2(\bcU_1,\bcU_2)^\vee \ne 0$. 
So we may assume that there is a morphism~$\bcU_1 \to \bcU_2$,
and since~$\bcU_1$ and~$\bcU_2$ are $\baH$-stable, it is an isomorphism.
\end{proof}

If~$\bcU$ is a Mukai bundle of type~$(r,s)$ and the dual Mukai bundle~$\bcU^\vee$ 
is globally generated and satisfies~$\rH^{> 0}(\baS,\bcU^\vee) = 0$ (which is often the case) then the bundle
\begin{equation*}\label{eq:def-uperp}
\bcU^\perp \coloneqq \Ker(\rH^0(\baS, \bcU^\vee) \otimes \cO_{\baS} \twoheadrightarrow \bcU^\vee)
\end{equation*}
has Mukai vector~$(s, -\baH,r)$, so if it is $\baH$-stable (which is also often the case), it is a Mukai bundle of type~$(s,r)$.
Thus, the definition is almost symmetric.
Moreover, the Mukai bundle of type~$(1,g)$ is isomorphic to~$\cO_{\baS}(-\baH)$, so from now on we assume
\begin{equation}
\label{eq:rs-assumption}
s \ge r \ge 2.
\end{equation} 

\subsection{The Lazarsfeld bundle}
\label{ss:lazarseld}

In this section, we explain a construction, due to Lazarsfeld, 
of a vector bundle on~$\baS$ with Mukai vector~\eqref{eq:mv-bcus} and state a criterion for stability of this bundle.
This will give us a construction of a Mukai bundle.

Note that if~$(\baS,\baH)$ is a polarized du Val K3 surface,
a smooth curve~$C \subset \baS$ in~$|\baH|$ is contained in the smooth locus of~$\baS$ (because it is a Cartier divisor).
Similarly, if~$(S,H)$ is a quasipolarized smooth K3 surface, 
a smooth curve~$C \subset S$ in~$|H|$ does not intersect the simple roots~$R_i$.
Thus, if~$\sigma \colon S \to \baS$ is the contraction of~$\fR(S,H)$ and~$H = \sigma^*(\baH)$
we have a bijection between smooth curves on~$\baS$ in~$|\baH|$ and smooth curves on~$S$ in~$|H|$.
Given such a curve~$C$ we will denote its embedding into both~$S$ and~$\baS$ by~$j$, 
hoping that this will not cause any confusion.

Now assume that~$C$ is a smooth BNP-general curve.
We fix a factorization~$g = r \cdot s$, where~$r$ and~$s$ are as in~\eqref{eq:rs-assumption}.
By Lemma~\ref{lem:xi-eta} the curve~$C$ has a Mukai pair~$(\xi,\eta)$ of type~$(r,s)$.
If~$j \colon C \to S$ is an embedding into a smooth or du Val K3 surface,
we define, following~\cite{Laz} the {\sf Lazarsfeld bundle}~$\bL_S(\xi)$ by the exact sequence
\begin{equation}
\label{eq:def-cus}
0 \to \bL_{S}(\xi) \xrightarrow\quad \rH^0(C,\xi) \otimes \cO_{S} \xrightarrow{\ \ev\ } j_*\xi \to 0,
\end{equation}
where~$\ev$ is the evaluation morphism (note that~$\xi$ is globally generated by Definition~\ref{def:mukai-pair}).
Note that~$\bL_{S}(\xi) \cong \sigma^*\bL_{\baS}(\xi)$.

The most important properties of~$\bL_{\baS}(\xi)$ are summarized below.

\begin{lemma}
\label{lem:lazarsfeld-bundle}
Let~$(S,H)$ be a smooth quasipolarized or du Val polarized $K3$ surface of genus~$g = r \cdot s$
and let~$C$ be a smooth BNP-general curve in~$|H|$.
If~$(\xi,\eta)$ is a Mukai pair of type~$(r,s)$ on~$C$ 
then the sheaf~$\bL_{S}(\xi)$ defined by~\eqref{eq:def-cus} is locally free, spherical, 
has Mukai vector $(r, -H, s)$ and satisfies
\begin{equation}
\label{eq:cus-h}
h^0(\bL_{S}(\xi)) = h^1(\bL_{S}(\xi)) = 0
\qquad\text{and}\qquad 
h^2(\bL_{S}(\xi)) = r + s.
\end{equation} 
The dual Lazarsfeld bundle~$\bL_{S}(\xi)^\vee$ fits into an exact sequence
\begin{equation}
\label{eq:cus-dual}
0 \to \rH^0(C,\xi)^\vee \otimes \cO_{S} \to \bL_{S}(\xi)^\vee \to j_*\eta \to 0.
\end{equation}
It is globally generated and satisfies 
\begin{equation}
\label{eq:cus-h-dual}
h^0(\bL_{S}(\xi)^\vee) = r + s
\qquad\text{and}\qquad 
h^1(\bL_{S}(\xi)^\vee) = h^2(\bL_{S}(\xi)^\vee) = 0.
\end{equation} 
\end{lemma}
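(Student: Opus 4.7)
The plan is to deduce all the claimed properties from the defining sequence~\eqref{eq:def-cus} and its Grothendieck dual, noting that since the smooth curve~$C$ lies in the smooth locus of~$S$ in both the smooth and du Val cases, all local computations near~$C$ reduce to the smooth case. The first step is local freeness: a local equation of the divisor~$C \subset S$ gives a two-term resolution of~$\cO_C$, and hence of~$j_*\xi$, showing that~$j_*\xi$ has projective dimension at most one, so~$\bL_{S}(\xi)$ is locally free. The Mukai vector is then immediate from~\eqref{eq:def-cus}: rank~$r$ and~$\rc_1 = -[C] = -H$ are clear, while Riemann--Roch on~$C$ gives~$\upchi(j_*\xi) = r - s$, whence~$\upchi(\bL_{S}(\xi)) = 2r - (r-s) = r+s$. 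The cohomology values~\eqref{eq:cus-h} follow from the long exact sequence of~\eqref{eq:def-cus}, using that the evaluation map~$\rH^0(C,\xi) \otimes \rH^0(\cO_S) \to \rH^0(j_*\xi)$ is the identity together with~$h^\bullet(\cO_S) = (1,0,1)$.

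For the dual sequence~\eqref{eq:cus-dual}, the plan is to apply~$\cHom(-, \cO_S)$ to~\eqref{eq:def-cus}. The key input is Grothendieck duality along~$j$: since~$\omega_S = \cO_S$ and~$N_{C/S} \cong K_C$ by adjunction, one has~$j^!\cO_S \cong K_C[-1]$ and hence
\begin{equation*}
\cRHom(j_*\xi, \cO_S) \cong j_*(\xi^{-1}(K_C))[-1] = j_*\eta[-1],
\end{equation*}
so~$\cExt^0 = 0$ and~$\cExt^1(j_*\xi, \cO_S) \cong j_*\eta$, yielding~\eqref{eq:cus-dual}. The cohomology values~\eqref{eq:cus-h-dual} then follow from Serre duality on the Gorenstein K3 surface~$S$ applied to the locally free sheaf~$\bL_{S}(\xi)$. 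For global generation of~$\bL_{S}(\xi)^\vee$, the vanishing~$h^1(\cO_S) = 0$ combined with~\eqref{eq:cus-dual} shows that~$\rH^0(\bL_{S}(\xi)^\vee) \twoheadrightarrow \rH^0(C,\eta)$ is surjective, and since~$\eta$ is globally generated by Definition~\ref{def:mukai-pair}, so is~$\bL_{S}(\xi)^\vee$.

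The main obstacle is proving that~$\bL_{S}(\xi)$ is spherical. My plan is to apply~$\Hom(-, \bL_{S}(\xi))$ to~\eqref{eq:def-cus} and use~$\rH^0(\bL_{S}(\xi)) = 0$ from the first paragraph to get an injection
\begin{equation*}
\Hom(\bL_{S}(\xi), \bL_{S}(\xi)) \hookrightarrow \Ext^1(j_*\xi, \bL_{S}(\xi)) \cong \rH^0(C, \eta \otimes \bL_{S}(\xi)\vert_C),
\end{equation*}
where the last identification reuses the Grothendieck duality computation above. To evaluate this cohomology, I would restrict~\eqref{eq:def-cus} to~$C$ using~$\Tor_1^S(j_*\xi, \cO_C) \cong \xi \otimes K_C^{-1}$ (from the local Koszul resolution) to obtain the four-term sequence, which splits as
\begin{equation*}
0 \to \xi \otimes K_C^{-1} \to \bL_{S}(\xi)\vert_C \to \cK \to 0, \qquad 0 \to \cK \to \rH^0(C,\xi) \otimes \cO_C \to \xi \to 0,
\end{equation*}
where~$\cK$ is the Lazarsfeld--Mukai kernel on~$C$. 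Tensoring the first sequence by~$\eta$ and using~$\xi \otimes K_C^{-1} \otimes \eta \cong \cO_C$ produces~$0 \to \cO_C \to \bL_{S}(\xi)\vert_C \otimes \eta \to \cK \otimes \eta \to 0$; tensoring the second by~$\eta$ and taking sections identifies~$\rH^0(\cK \otimes \eta)$ with the kernel of the Petri map~$\rH^0(C,\xi) \otimes \rH^0(\eta) \to \rH^0(K_C)$. By BNP-generality of~$C$ the Petri map is injective, so~$\rH^0(\cK \otimes \eta) = 0$, whence~$\rH^0(\bL_{S}(\xi)\vert_C \otimes \eta) = \rH^0(\cO_C) = \kk$. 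This forces~$\Hom(\bL_{S}(\xi), \bL_{S}(\xi)) = \kk$, i.e., $\bL_{S}(\xi)$ is simple; together with~$\upchi(\bL_{S}(\xi), \bL_{S}(\xi)) = 2rs - H^2 = 2$ from the Mukai vector and Serre duality, this forces~$\Ext^1(\bL_{S}(\xi), \bL_{S}(\xi)) = 0$, completing the sphericity claim.
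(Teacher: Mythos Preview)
Your proof is correct. The arguments for local freeness, the Mukai vector, the cohomology~\eqref{eq:cus-h}, the dual sequence~\eqref{eq:cus-dual}, global generation, and~\eqref{eq:cus-h-dual} are essentially identical to the paper's.

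For sphericity the paper takes a slightly different route: it sets up the full~$\bE_1$ page of the spectral sequence obtained by applying the defining sequence~\eqref{eq:def-cus} in both variables of~$\Ext^\bullet(-,-)$, and then shows that the relevant arrow in the middle row is injective because its composition with a projection is the Petri map. Your approach is more direct: a single application of~$\Hom(-,\bL_S(\xi))$ to~\eqref{eq:def-cus} together with~$\rH^0(\bL_S(\xi))=0$ gives the injection~$\Hom(\bL_S(\xi),\bL_S(\xi)) \hookrightarrow \rH^0(C,\eta\otimes \bL_S(\xi)\vert_C)$, and you compute the target by restricting~\eqref{eq:def-cus} to~$C$ and splitting the resulting four-term sequence. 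Both arguments hinge on the same point~--- injectivity of the Petri map~--- but yours avoids the spectral-sequence bookkeeping and only uses Petri injectivity (the paper's formulation uses that the Petri map is an isomorphism, which of course follows for a Mukai pair). The trade-off is that the paper's spectral sequence, once set up, makes the self-duality of the computation visible, whereas your argument is leaner but more ad hoc.
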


\begin{proof}
The sheaf~$\bL_{S}(\xi)$ is locally free because~$j_*\xi$ has projective dimension~$1$ and the evaluation morphism is surjective.
The computation of the Mukai vector of~$\bL_{S}(\xi)$ is obvious.

The equalities~\eqref{eq:cus-h} follow from the cohomology exact sequence of~\eqref{eq:def-cus} and~\eqref{eq:h0h1-xi}.
Applying Serre duality we deduce~\eqref{eq:cus-h-dual}.
The sequence~\eqref{eq:cus-dual} follows from~\eqref{eq:def-cus} by dualizing, 
and the global generation of~$\bL_{S}(\xi)^\vee$ follows from $\rH^1(S, \cO_S) = 0$ and the global generation of~$\cO_{S}$ and~$\eta$.

To compute~$\Ext^\bullet(\bL_{S}(\xi), \bL_{S}(\xi))$ and check that~$\bL_{S}(\xi)$ is spherical, we use the defining short exact sequence~\eqref{eq:def-cus}.
Taking into account the Grothendieck duality isomorphism
\begin{equation*}
\Ext^p(j_*\xi, \cO_{S}) \cong 
\Ext^p(\xi, j^!\cO_{S}) \cong 
\Ext^p(\xi, \cO_C(K_C)[-1]) \cong 
\Ext^{p-1}(\xi, \cO_C(K_C)),
\end{equation*}
we obtain a self-dual spectral sequence with the~$\bE_1$ page
\begin{equation*}
\xymatrix@R=0ex{
\rH^0(\xi) \otimes \Ext^1(\xi, \cO_C(K_C)) \ar[r] &
\rH^0(\xi) \otimes \rH^0(\xi)^\vee \oplus \Ext^2(j_*\xi, j_*\xi) &
0
\\
\rH^0(\xi) \otimes \Hom(\xi, \cO_C(K_C)) \ar[r] &
\Ext^1(j_*\xi, j_*\xi) \ar[r] &
\rH^1(\xi) \otimes \rH^0(\xi)^\vee
\\
0 &
\rH^0(\xi) \otimes \rH^0(\xi)^\vee \oplus \Hom(j_*\xi, j_*\xi) \ar[r] &
\rH^0(\xi) \otimes \rH^0(\xi)^\vee
}
\end{equation*}
that converges to~$\Ext^\bullet(\bL_{S}(\xi), \bL_{S}(\xi))$.
Obviously, the arrow of the bottom row induces an isomorphism~$\rH^0(\xi) \otimes \rH^0(\xi)^\vee \xrightiso{} \rH^0(\xi) \otimes \rH^0(\xi)^\vee$.
Since~$\Hom(j_*\xi, j_*\xi) \cong \Hom(\xi, \xi) \cong \kk$, to check that~$\bL_{S}(\xi)$ is simple, 
it is enough to show that the first arrow in the middle row is injective.
For this note that we have a natural self-dual exact sequence
\begin{equation*}
0 \to \Ext^1(\xi, \xi) \to \Ext^1(j_*\xi, j_*\xi) \to \Hom(\xi, \xi(K_C)) \to 0
\end{equation*}
and the composition 
\begin{equation*}
\rH^0(\xi) \otimes \Hom(\xi, \cO_C(K_C)) \to \Ext^1(j_*\xi, j_*\xi)  \to \Hom(\xi, \xi(K_C))
\end{equation*}
of the first arrow in the middle row of the spectral sequence with the second arrow in the exact sequence above 
is the Petri map of~$\xi$, hence it is an isomorphism by Lemma~\ref{lem:xi-eta}.
Therefore, the first arrow in the middle row of the spectral sequence is injective, hence~$\Hom(\bL_{S}(\xi), \bL_{S}(\xi)) = \kk$.
Finally, by~\eqref{eq:rr-k3} we have~$\upchi(\bL_{S}(\xi), \bL_{S}(\xi)) = 2rs - H^2 = 2$, hence~$\bL_{S}(\xi)$ is spherical.
\end{proof}

Lemma~\ref{lem:lazarsfeld-bundle} shows that~$\bL_{\baS}(\xi)$ is a Mukai bundle if and only if it is $\baH$-Gieseker stable.
The main result of this section is a criterion for stability of~$\bL_{\baS}(\xi)$ 
in terms of special Mukai classes on the minimal resolution~$S$ of~$\bar{S}$
defined in~\S\ref{ss:bn-k3} (see Definition~\ref{def:xi-big}).

\begin{theorem}\label{thm:mb-s}
Let~$(\baS,\baH)$ be a polarized du Val~$K3$ surface of genus~$g = r \cdot s$ with
\begin{equation*}
r \in \{2,3\} 
\qquad\text{and}\qquad 
s \ge r.
\end{equation*}
Let~$\sigma \colon S \to \baS$ be the minimal resolution and let~$H \coloneqq \sigma^*(\baH)$.
Assume~$|\baH|$ contains a \textup{(}smooth\textup{)} BNP-general curve~$C \subset \baS$.
Then the following conditions are equivalent:
\begin{aenumerate}
\item
\label{it:lazarseld-stable}
The Lazarsfeld bundle~$\bL_{\baS}(\xi)$ associated with 
a Mukai pair~$(\xi,\eta)$ of type~$(r,s)$ on~$C$ is not $\baH$-Gieseker stable.
\item
\label{it:extremal-class}
The surface~$(S,H)$ has a special Mukai class~$\Xi$ of type~$(r,s)$ such that~$\cO_S(\Xi)\vert_C \cong \xi$.
\end{aenumerate}
Moreover, if~$(S,H)$ does not have special Mukai classes of type~$(r,s)$ 
then~$\bL_{\baS}(\xi)$ does not depend on the choice of the curve~$C$ and Mukai pair~$(\xi,\eta)$ 
and it is a Mukai bundle of type~$(r,s)$ on~$\baS$.
\end{theorem}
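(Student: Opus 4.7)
The plan is to prove the equivalence (a)$\Leftrightarrow$(b); the final independence claim then follows formally from this equivalence together with Lemma~\ref{lem:mbs-unique}.

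For (b)$\Rightarrow$(a), suppose $\Xi$ is a special Mukai class with $\cO_S(\Xi)\vert_C \cong \xi$. The globally generated line bundle $\cO_S(\Xi)$ with $h^0 = r$ yields an exact sequence
\begin{equation*}
0 \to M \to \rH^0(\cO_S(\Xi)) \otimes \cO_S \to \cO_S(\Xi) \to 0,
\end{equation*}
with $M$ locally free of rank $r-1$ and $\rc_1(M) = -\Xi$. The vanishing $h^1(\cO_S(\Xi - H)) = 0$ from Lemma~\ref{lem:xi-minuscule} makes the restriction map $\rH^0(\cO_S(\Xi)) \to \rH^0(C,\xi)$ an isomorphism, so composing the displayed evaluation with the surjection $\cO_S(\Xi) \twoheadrightarrow j_*\xi$ realizes $M$ as a subsheaf of $\bL_S(\xi)$, fitting into a destabilizing sequence
\begin{equation*}
0 \to M \to \bL_S(\xi) \to \cO_S(\Xi - H) \to 0.
\end{equation*}
A direct computation shows $\upmu_H(M) = -(s+1) \ge \upmu_H(\bL_S(\xi)) = -(2rs-2)/r$, strictly except at $(r,s) = (2,2)$, where the reduced Euler characteristics $\updelta(M)$ and $\updelta(\bL_S(\xi))$ also agree; thus $M$ destabilizes $\bL_S(\xi)$ in the Gieseker sense in every case. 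Pushing the sequence forward along $\sigma$ and using $\rR^1\sigma_* = 0$ for the minuscule classes $-\Xi$ and $\Xi - H$ (Lemma~\ref{lem:pf-gg-mcm}) transfers the destabilization to $\bL_{\baS}(\xi) \cong \sigma_*\bL_S(\xi)$.

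For (a)$\Rightarrow$(b), I would extract $\Xi$ from the Harder--Narasimhan filtration. Pulling $\bL_{\baS}(\xi)$ back to $\bL_S(\xi)$ on $S$, choose a saturated Gieseker-destabilizing subsheaf $\cF \subset \bL_S(\xi)$; because $r \in \{2,3\}$, after replacing $\cF$ by the reflexive hull of the quotient and dualizing when $r = 3$ and $\rank \cF = 2$ (using the dual Lazarsfeld sequence~\eqref{eq:cus-dual}), one may assume $\cF$ is a line bundle $\cO_S(-\Xi)$. The slope inequality combined with sphericality $\upchi(\bL_S(\xi),\bL_S(\xi)) = 2$ from Lemma~\ref{lem:lazarsfeld-bundle} and the Brill--Noether generality of $(S,H)$ inherited from~$C$ via Theorem~\ref{thm:bnpc}\ref{it:bnp-bn} pins down $\Xi \cdot H = (r-1)(s+1)$ and $\Xi^2 = 2(r-2)$. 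The injection $\cO_S(-\Xi) \hookrightarrow \rH^0(C,\xi) \otimes \cO_S$ whose composition into $j_*\xi$ vanishes produces $r$ sections of $\cO_S(\Xi)$ whose restrictions to $C$ span $\rH^0(C,\xi)$, so $h^0(\cO_S(\Xi)) \ge r$; BN-generality then forces equality together with the vanishings $h^1(\cO_S(\Xi)) = h^2(\cO_S(\Xi)) = 0$ and the global generation required by~\eqref{eq:d2-dh-h0d}. Finally, restricting the destabilizing sequence to $C$ identifies $\cO_S(\Xi)\vert_C$ with $\xi$.

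The hard part is (a)$\Rightarrow$(b): the slope bound alone only constrains $\Xi \cdot H$ from one side, and ruling out spurious candidates for $\Xi$---whose intersection numbers might satisfy the slope condition without satisfying~\eqref{eq:d2-dh-h0d}---requires a delicate combination of sphericality, the minuscule analysis of~\S\ref{ss:qp-k3}, and Brill--Noether generality of $(S,H)$ granted by the presence of a BNP-general curve in $|\baH|$. The restriction $r \in \{2,3\}$ is essential at exactly this step: it is what allows the destabilizer to be reduced to a line bundle, while in higher rank the first Chern class of a rank-$\ge 2$ destabilizing piece is significantly harder to identify. Once both directions are in place, the final assertion is immediate: in the absence of a special Mukai class of type~$(r,s)$, every Lazarsfeld bundle $\bL_{\baS}(\xi)$ is $\baH$-Gieseker stable, hence a Mukai bundle of type~$(r,s)$ by Lemma~\ref{lem:lazarsfeld-bundle}, and Lemma~\ref{lem:mbs-unique} supplies the asserted independence from the choice of $C$ and the Mukai pair $(\xi,\eta)$.
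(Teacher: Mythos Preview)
Your direction (b)$\Rightarrow$(a) is essentially the paper's argument and is fine (the paper also pushes the destabilizing sequence forward to~$\baS$ using global generation of~$\sigma_*\cO_S(\Xi)$ from Corollary~\ref{cor:bs-h-xi}).

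The direction (a)$\Rightarrow$(b), however, has real gaps. Two main points:

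\textbf{(1) The reduction to a line bundle destabilizer with spherical cofactor is not automatic.} Saying ``because $r\in\{2,3\}$ one may assume~$\cF$ is a line bundle'' skips the substance of the argument. An arbitrary saturated destabilizing subsheaf of~$\bL_S(\xi)$ need not be rigid, let alone spherical, and the identification of~$\Xi$ hinges on sphericality of \emph{both} factors (the equation~$D_i^2 = 2r_is_i - 2$ is what drives the numerics). The paper obtains this via a \emph{multispherical filtration} (Proposition~\ref{prop:filtration-msph}): one first passes to~$(H,A)$-stability on~$S$ for a suitable $\sigma$-ample~$A$ (Lemma~\ref{lem:barh-ha}), then uses Mukai's lemma (Lemma~\ref{lem:mukai-lemma}) repeatedly to show every factor is rigid and, with care, spherical. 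For~$r=3$ a genuinely three-step filtration can occur, and collapsing it to a two-term sequence of spherical bundles requires a separate argument (Lemma~\ref{lem:r23-instability}); one must also rule out multiplicities $(m_1,m_2)\in\{(1,2),(2,1)\}$ (Lemma~\ref{lem:two-plus-one}). Your ``dualize when the destabilizer has rank~$2$'' is not a substitute for this: after dualizing you are working with~$\bL_S(\xi)^\vee$, and you still need both pieces of that sequence to be spherical.

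\textbf{(2) The numerical identification of~$\Xi$ is the heart of the proof and you have not supplied it.} Even granting a short exact sequence~$0\to\cF_1\to\bL_S(\xi)^\vee\to\cF_2\to 0$ with spherical factors and~$\upmu_H(\cF_1)\ge\upmu_H(\cF_2)$, the slope inequality alone only bounds~$\rc_1(\cF_2)\cdot H$ from above. The claim that~$\rank(\cF_1)=1$ and~$\Xi\coloneqq\rc_1(\cF_2)$ satisfies~\eqref{eq:d2-dh-h0d} is the content of Proposition~\ref{prop:rs1}: one combines the three sphericality equations with the slope inequality to get~\eqref{eq:inequality}, then uses global generation of~$\det(\cF_2)$ (Corollary~\ref{cor:gg-quotient}) together with BN-generality of~$(S,H)$ to force~$(r_1s_2-1)(r_2s_1-1)\le 0$, whence~$r_1=s_2=1$. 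This is the ``delicate combination'' you allude to in your final paragraph, but it is not optional---without it there is no proof. Finally, your identification~$\cO_S(\Xi)\vert_C\cong\xi$ via ``$r$ sections of~$\cO_S(\Xi)$'' is unclear; the paper instead shows the composition~$\cO_S(H-\Xi)\hookrightarrow\bL_S(\xi)^\vee\to j_*\eta$ from~\eqref{eq:cus-dual} is nonzero, giving a degree-zero map of line bundles on~$C$ which must be an isomorphism.
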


We will prove this theorem in~\S\ref{ss:proof-surface} after some preparation.

\begin{remark}
We expect that Theorem~\ref{thm:mb-s} holds for any~$r,s \ge 2$.
\end{remark}

\subsection{Multispherical filtration}
\label{ss:srs-sheaves}

In this and the next subsections we work on a smooth quasipolarized K3 surface~$S$.
We start with a few well-known results.

\begin{lemma}
\label{lem:hom-back}
Let~$0 \to \cF_1 \to \cF \to \cF_2 \to 0$ be an exact sequence.
If~$\cF$ is a simple sheaf
then~$\Hom(\cF_2,\cF_1) = 0$.
\end{lemma}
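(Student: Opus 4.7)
The plan is to argue by contradiction: suppose there exists a nonzero morphism $\varphi \colon \cF_2 \to \cF_1$, and derive a contradiction to simplicity of $\cF$. The natural object to build is the composition
\[
\psi \colon \cF \twoheadrightarrow \cF_2 \xrightarrow{\ \varphi\ } \cF_1 \hookrightarrow \cF,
\]
which is an endomorphism of $\cF$. Since~$\cF$ is simple, $\End(\cF) = \kk$, so $\psi$ must be a scalar multiple of the identity; the goal is then to show that $\psi$ is both nonzero and nilpotent, which will force the required contradiction.

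First I would check that $\psi \neq 0$. This is immediate from the fact that the quotient map $\cF \twoheadrightarrow \cF_2$ is surjective and the inclusion $\cF_1 \hookrightarrow \cF$ is injective: a zero composition would force $\varphi = 0$, contrary to assumption.

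Next I would verify that $\psi^2 = 0$. By construction the image of $\psi$ lies inside the subsheaf $\cF_1 \subset \cF$, while $\psi$ itself vanishes on $\cF_1$ because the first arrow of the composition is the projection $\cF \twoheadrightarrow \cF_2$, which kills $\cF_1$. Therefore $\psi \circ \psi$ factors as $\cF \xrightarrow{\psi} \cF_1 \hookrightarrow \cF \xrightarrow{\psi} \cF$, and the middle portion is zero.

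Finally, simplicity of $\cF$ means $\psi = c \cdot \id_{\cF}$ for some $c \in \kk$; combining this with $\psi^2 = 0$ forces $c^2 = 0$, hence $c = 0$, contradicting $\psi \neq 0$. This argument is entirely formal and there is no real obstacle; the only point requiring any care is the observation that $\psi$ factors through~$\cF_1$ \emph{and} annihilates~$\cF_1$, which is what makes $\psi$ nilpotent of order two.
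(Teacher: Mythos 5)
Your proof is correct and is the standard argument; the paper states this lemma without proof, and the composition $\cF \twoheadrightarrow \cF_2 \xrightarrow{\varphi} \cF_1 \hookrightarrow \cF$, which is nonzero yet squares to zero because the projection kills $\cF_1$, is exactly the intended reasoning.
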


\begin{lemma}
\label{lem:chern0}
Let~$S$ be a smooth projective surface.
\begin{aenumerate}
\item
\label{it:mono}
If~$\cF \hookrightarrow \cO_S^{\oplus n}$ is a monomorphism from a sheaf~$\cF$ with~$\rc_1(\cF) \ge 0$ 
then~$\cF^{\vee\vee} \cong \cO_S^{\oplus m}$.
\item
\label{it:epi}
If~$\cO_S^{\oplus n} \twoheadrightarrow \cF$ is an epimorphism onto a torsion free sheaf~$\cF$ with~$\rc_1(\cF) \le 0$ then~$\cF \cong \cO_S^{\oplus m}$.
\end{aenumerate}
\end{lemma}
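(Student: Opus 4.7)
My plan is to prove part~\ref{it:mono} first, and then to deduce part~\ref{it:epi} from it by dualizing.

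For part~\ref{it:mono}, I would begin by noting that $\cF$ is automatically torsion-free as a subsheaf of $\cO_S^{\oplus n}$, so its reflexive hull $\cF^{\vee\vee}$ is locally free of rank $m \coloneqq \rank(\cF)$ on the smooth surface~$S$. I would next verify that the inclusion extends to $\cF^{\vee\vee} \hookrightarrow \cO_S^{\oplus n}$: the kernel of the induced map $\cF^{\vee\vee} \to (\cO_S^{\oplus n})^{\vee\vee} = \cO_S^{\oplus n}$ injects into $\cF^{\vee\vee}/\cF$, which has zero-dimensional support, whereas $\cF^{\vee\vee}$ is torsion-free, so the kernel must vanish. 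Taking $\wedge^m$ of this inclusion yields a nonzero morphism from the line bundle $\det\cF^{\vee\vee}$ to $\wedge^m\cO_S^{\oplus n} = \cO_S^{\binom{n}{m}}$; any such map from a line bundle to a locally free sheaf is automatically injective, so it produces a nonzero section of $(\det\cF^{\vee\vee})^{-1}$. Hence $-\rc_1(\cF^{\vee\vee})$ is effective, and combined with $\rc_1(\cF^{\vee\vee}) = \rc_1(\cF) \ge 0$ this forces $\det\cF^{\vee\vee} \cong \cO_S$.

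To finish (a), I would dualize $\cF^{\vee\vee} \hookrightarrow \cO_S^{\oplus n}$ to obtain a generically surjective map $\cO_S^{\oplus n} \to (\cF^{\vee\vee})^\vee$ and select $m$ of the resulting sections that are linearly independent at the generic point. The associated morphism $\phi\colon \cO_S^{\oplus m} \to (\cF^{\vee\vee})^\vee$ has determinant $\det\phi\colon \cO_S \to \det((\cF^{\vee\vee})^\vee) = \cO_S$ that is generically nonzero, hence multiplication by a nonzero scalar. Therefore $\phi$ has invertible determinant at every point and is an isomorphism of sheaves, and dualizing gives $\cF^{\vee\vee} \cong \cO_S^{\oplus m}$.

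For part~\ref{it:epi}, dualizing the surjection $\cO_S^{\oplus n} \twoheadrightarrow \cF$ produces an injection $\cF^\vee \hookrightarrow (\cO_S^{\oplus n})^\vee = \cO_S^{\oplus n}$ with $\rc_1(\cF^\vee) = -\rc_1(\cF) \ge 0$, so part~\ref{it:mono} applied to $\cF^\vee$ yields $\cF^{\vee\vee} \cong \cO_S^{\oplus m}$ with $m = \rank(\cF)$. Since $\cF$ is torsion-free, the natural map $\cF \hookrightarrow \cF^{\vee\vee} \cong \cO_S^{\oplus m}$ is an inclusion, and the composition $\cO_S^{\oplus n} \twoheadrightarrow \cF \hookrightarrow \cO_S^{\oplus m}$ is a morphism of trivial sheaves, hence given by an $m \times n$ scalar matrix. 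Its image is then a trivial subbundle of $\cO_S^{\oplus m}$ that coincides with $\cF$, and having rank~$m$ it must be all of $\cO_S^{\oplus m}$, yielding $\cF \cong \cO_S^{\oplus m}$.

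The main obstacle is the bookkeeping needed to ensure that injectivity is preserved under the passage to the reflexive hull, under $\wedge^m$, and under dualization; each of these relies on the fact that on a smooth surface, reflexive sheaves are locally free and therefore torsion-free, so any torsion kernel vanishes. Once these technical points are handled, the result reduces to the line-bundle case used to trivialize the determinant together with the elementary observation that morphisms between direct sums of $\cO_S$ are given by scalar matrices.
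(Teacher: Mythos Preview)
Your proof is correct and follows essentially the same strategy as the paper: reduce to a determinant argument for part~\ref{it:mono}, then deduce part~\ref{it:epi} by dualizing and using that the composition~$\cO_S^{\oplus n} \twoheadrightarrow \cF \hookrightarrow \cF^{\vee\vee} \cong \cO_S^{\oplus m}$ forces~$\cF \to \cF^{\vee\vee}$ to be an isomorphism. The only notable difference is that in~\ref{it:mono} the paper does your two steps in one: rather than first proving~$\det\cF^{\vee\vee} \cong \cO_S$ via~$\wedge^m$ and then separately building an isomorphism from generic sections of the dual, it composes~$\cF^{\vee\vee} \hookrightarrow \cO_S^{\oplus n}$ with a sufficiently general projection~$\cO_S^{\oplus n} \twoheadrightarrow \cO_S^{\oplus m}$ to get an injective map~$\varphi \colon \cF^{\vee\vee} \hookrightarrow \cO_S^{\oplus m}$ of locally free sheaves of the same rank, and then the single determinant~$\det(\varphi)\colon \det(\cF^{\vee\vee}) \hookrightarrow \cO_S$ together with~$\rc_1(\cF) \ge 0$ immediately shows~$\varphi$ is an isomorphism.
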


\begin{proof}
\ref{it:mono}
Set~$m \coloneqq \rank(\cF)$. 
The embedding~$\cF \hookrightarrow \cO_S^{\oplus n}$ induces an embedding~$\cF^{\vee\vee} \hookrightarrow \cO_S^{\oplus n}$.
Composing it with a sufficiently general projection~\mbox{$\cO_S^{\oplus n} \twoheadrightarrow \cO_S^{\oplus m}$}
we obtain an injective morphism~$\varphi \colon \cF^{\vee\vee} \hookrightarrow \cO_S^{\oplus m}$ of locally free sheaves of rank~$m$.
Its determinant~$\det(\varphi) \colon \det(\cF^{\vee\vee}) \to \cO_S$ is then also injective, 
and since~$\rc_1(\cF^{\vee\vee}) = \rc_1(\cF) \ge 0$, 
it follows that~$\det(\varphi)$ is an isomorphism, hence~$\varphi$ is an isomorphism as well.

\ref{it:epi}
Dualizing the epimorphism, we obtain a monomorphism~$\cF^\vee \hookrightarrow \cO_S^{\oplus n}$.
Since~$\cF$ is torsion free, we have~$\rc_1(\cF^\vee) = - \rc_1(\cF) \ge 0$.
Therefore, \ref{it:mono} implies that~$\cF^\vee \cong \cO_S^{\oplus m}$, hence~$\cF^{\vee\vee} \cong \cO_S^{\oplus m}$.
The composition
\begin{equation*}
\cO_S^{\oplus n} \twoheadrightarrow \cF \to \cF^{\vee\vee} \cong \cO_S^{\oplus m} 
\end{equation*}
is generically surjective, hence it is surjective everywhere.
Finally, the middle map is injective because~$\cF$ is torsion free, 
hence it is an isomorphism, and therefore~$\cF \cong \cO_S^{\oplus m}$.
\end{proof}

\begin{corollary}
\label{cor:gg-quotient}
Let~$\cF$ be a globally generated sheaf on a smooth $K3$ surface.
If~$\cF \to \cF'$ is an epimorphism of vector bundles, the line bundle~$\det(\cF')$ is globally generated 
and~$h^2(\det(\cF')) = 0$ unless~$\cF' \cong \cO^{\oplus m}$ and the surjection~$\cF \twoheadrightarrow \cF'$ splits.
\end{corollary}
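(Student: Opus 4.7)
The plan is a short case analysis based on whether $\det(\cF')$ is trivial, reducing the nontrivial case to global generation and Serre duality, and the trivial case to Lemma~\ref{lem:chern0}\ref{it:epi}.

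First, I would set $m \coloneqq \rank(\cF')$ and observe that $\cF'$ inherits global generation from $\cF$: composing the evaluation surjection $\rH^0(S,\cF) \otimes \cO_S \twoheadrightarrow \cF$ with $\cF \twoheadrightarrow \cF'$ gives a surjection $\rH^0(S,\cF) \otimes \cO_S \twoheadrightarrow \cF'$ of locally free sheaves. Taking the $m$-th exterior power preserves surjectivity, yielding $\wedge^m \rH^0(S,\cF) \otimes \cO_S \twoheadrightarrow \det(\cF')$, which proves that $\det(\cF')$ is globally generated.

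Next, I would split into two cases. If $\det(\cF') \not\cong \cO_S$, then any nonzero section of $\det(\cF')$ must vanish somewhere (otherwise it would trivialize the line bundle), so it defines a nonzero effective Cartier divisor $D$ with $\det(\cF') \cong \cO_S(D)$. Hence $\rH^0(S, \det(\cF')^{-1}) = \rH^0(S, \cO_S(-D)) = 0$, and Serre duality on the K3 surface gives $h^2(\det(\cF')) = h^0(\det(\cF')^{-1}) = 0$, as required.

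If instead $\det(\cF') \cong \cO_S$, then $\cF'$ is a globally generated locally free (in particular torsion free) sheaf with $\rc_1(\cF') = 0$. Applying Lemma~\ref{lem:chern0}\ref{it:epi} to the evaluation surjection $\rH^0(S,\cF') \otimes \cO_S \twoheadrightarrow \cF'$ yields $\cF' \cong \cO_S^{\oplus m}$. For the splitting, note that the composed surjection $\rH^0(S,\cF) \otimes \cO_S \twoheadrightarrow \cF \twoheadrightarrow \cO_S^{\oplus m}$ is a map of trivial sheaves, so it is determined by a surjective linear map $\rH^0(S,\cF) \twoheadrightarrow \kk^m$ on global sections; lifting the standard basis of $\kk^m$ to $m$ sections of $\cF$ produces a morphism $\cO_S^{\oplus m} \to \cF$ whose composition with $\cF \twoheadrightarrow \cF'$ is the identity.

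I do not anticipate a serious obstacle: the only subtle point is verifying that a nontrivial globally generated line bundle on $S$ has a section with nonempty zero locus, which is immediate since $S$ is integral. The argument is otherwise a direct assembly of the evaluation sequence, Lemma~\ref{lem:chern0}\ref{it:epi}, and Serre duality on a K3.
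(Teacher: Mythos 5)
Your proposal is correct and follows essentially the same route as the paper: both obtain global generation of $\det(\cF')$ by taking the $m$-th wedge power of the composite surjection from a trivial bundle, both invoke Lemma~\ref{lem:chern0}\ref{it:epi} to identify $\cF'$ with $\cO_S^{\oplus m}$ in the exceptional case, and both produce the splitting by lifting through the surjection of trivial bundles. The only (cosmetic) difference is that you case-split on whether $\det(\cF')$ is trivial and argue directly via an effective divisor, whereas the paper argues contrapositively, using Serre duality to turn $h^2(\det(\cF'))\neq 0$ into a map $\det(\cF')\to\cO_S$ and hence $\rc_1(\cF')\le 0$.
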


\begin{proof}
By assumptions we have epimorphisms~$\cO^{\oplus n} \twoheadrightarrow \cF \twoheadrightarrow \cF'$.
If~$\rank(\cF') = m$, taking the $m$-th wedge power of the composition, we conclude that~$\det(\cF')$ is globally generated.

Assume~$h^2(\det(\cF')) \ne 0$.
By Serre duality there is a generically injective morphism~\mbox{$\det(\cF') \to \cO$}, hence~$\rc_1(\cF') \le 0$.
Applying Lemma~\ref{lem:chern0}\ref{it:epi} we conclude that~$\cF' \cong \cO^{\oplus m}$.
Finally, since the composition~$\cO^{\oplus n} \twoheadrightarrow \cF \twoheadrightarrow \cF' \cong \cO^{\oplus m}$ is surjective, 
there is a splitting~$\cO^{\oplus m} \hookrightarrow \cO^{\oplus n}$ of the composition, 
which induces a splitting~$\cF' \hookrightarrow \cF$ of the epimorphism~$\cF \twoheadrightarrow \cF'$.
\end{proof}

We will also need the following three fundamental results of Mukai.

\begin{proposition}[{\cite[Proposition~3.14]{Mukai:vectorbundles}}]
\label{prop:Picardrankonestable}
Let~$S$ be a smooth $K3$ surface of Picard rank one. 
Then every spherical bundle on~$S$ is slope-stable, hence also Gieseker-stable.
\end{proposition}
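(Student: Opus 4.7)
The plan is to argue by contradiction, combining the long exact sequences attached to a destabilizing short exact sequence with a numerical calculation that uses Picard rank one crucially.

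First I would reduce to the case that $\cE$ is not even slope-semistable. Writing $\rv(\cE) = (r, aH, s)$ with $H^2 = 2g-2$, the sphericity equation $\langle \rv(\cE), \rv(\cE)\rangle = -2$ reads $rs = a^2(g-1)+1$, which forces $\gcd(r,a) = 1$. Hence any subsheaf $\cF \subset \cE$ with $\upmu_H(\cF) = \upmu_H(\cE)$ would have $a_\cF r = a r_\cF$ with $\gcd(a,r) = 1$, forcing $r \mid r_\cF$ and contradicting $0 < r_\cF < r$. Thus slope-semistability already upgrades to slope-stability for a spherical $\cE$, and only the genuinely non-semistable case remains.

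Assume then that $\cE$ is not slope-semistable and pick a short exact sequence $0 \to \cF \to \cE \to \cG \to 0$ where $\cF$ is a slope-stable subsheaf contained in the first Harder--Narasimhan piece of $\cE$. So $\cF$ is simple and $\upmu_H(\cF) > \upmu_H(\cG')$ for every nonzero subsheaf $\cG' \subseteq \cG$. The first step is to extract three vanishings. Any nonzero element of $\Hom(\cE, \cF) \hookrightarrow \Hom(\cE, \cE) = \kk$ would split the sequence, contradicting simplicity of $\cE$; thus $\Hom(\cE, \cF) = 0$, and in particular $\Hom(\cG, \cF) = 0$. The slope inequality combined with stability of $\cF$ gives $\Hom(\cF, \cG) = 0$, and hence by Serre duality $\Ext^2(\cG, \cF) = 0$. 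Next, the canonical map $\Ext^2(\cE, \cF) \to \Ext^2(\cE, \cE) = \kk$ is Serre-dual to the nonzero map $\Hom(\cE, \cE) \to \Hom(\cF, \cE)$ given by composition with the inclusion, hence is an isomorphism; combined with $\Ext^1(\cE, \cE) = 0$, the long exact sequence $\Hom(\cE, -)$ yields $\Ext^1(\cE, \cG) = 0$. Feeding this into the long exact sequences $\Hom(-, \cG)$ and $\Hom(-, \cF)$ gives $\Ext^1(\cG, \cG) = 0$ and $\Ext^1(\cF, \cF) = 0$, so $\cG$ is rigid and $\cF$ is spherical.

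Being torsion-free and rigid on a K3 surface, $\cG$ decomposes as $\bigoplus \cG_i^{\oplus n_i}$ with pairwise non-isomorphic simple rigid (hence spherical) summands, since $\Ext^1(\cG, \cG) = 0$ forces $\End(\cG)$ to be semisimple. For each summand, $\upmu_H(\cG_i) < \upmu_H(\cF)$ gives $\Hom(\cF, \cG_i) = 0$, while $\Hom(\cG_i, \cF) = 0$ follows from $\Hom(\cG, \cF) = 0$; together with Serre duality these force $\upchi(\cF, \cG_i) = -\dim\Ext^1(\cF, \cG_i) \leq 0$. On the other hand, writing $\rv(\cF) = (r_\cF, a_\cF H, s_\cF)$ and $\rv(\cG_i) = (r_i, a_i H, s_i)$, the sphericity equations $r_\cF s_\cF = a_\cF^2(g-1)+1$ and $r_i s_i = a_i^2(g-1)+1$ combine with the formula for the Mukai pairing to yield
\begin{equation*}
\upchi(\cF, \cG_i) \;=\; \frac{H^2(r_\cF a_i - r_i a_\cF)^2 + 2(r_\cF^2 + r_i^2)}{2\, r_\cF\, r_i} \;\geq\; \frac{r_\cF}{r_i} + \frac{r_i}{r_\cF} \;\geq\; 2,
\end{equation*}
which contradicts $\upchi(\cF, \cG_i) \leq 0$.

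The main obstacle is the homological bookkeeping required to extract $\Ext^1(\cG, \cG) = 0$: one has to identify the map $\Ext^2(\cE, \cF) \to \Ext^2(\cE, \cE)$ as an isomorphism via Serre duality and simplicity of $\cE$, in order to translate the rigidity of $\cE$ into rigidity of $\cG$. Once this is done, the decomposition of $\cG$ into spherical summands is automatic, and the real payoff of Picard rank one is the strict positivity of the Mukai pairing between two spherical classes, which produces the final contradiction.
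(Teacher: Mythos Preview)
The paper does not give its own proof of this proposition; it simply cites Mukai's original article. So there is nothing to compare against, and your argument stands or falls on its own.

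Your overall strategy---reduce to the non-semistable case via $\gcd(r,a)=1$, extract a short exact sequence, show both pieces are rigid, decompose into spherical summands, and finish with the positivity identity
\[
\upchi(\cF,\cG_i) = \frac{H^2(r_\cF a_i - r_i a_\cF)^2 + 2(r_\cF^2 + r_i^2)}{2\,r_\cF\,r_i} \ge 2
\]
---is correct and is essentially Mukai's argument. The numerical computation is right.

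There is, however, a genuine gap in your choice of $\cF$. You take $\cF$ to be a slope-stable subsheaf contained in the first Harder--Narasimhan piece $\cE_1$ and then assert $\upmu_H(\cF) > \upmu_H(\cG')$ for every nonzero subsheaf $\cG' \subseteq \cG$. This fails when $\cE_1$ is strictly slope-semistable and $\cF \subsetneq \cE_1$: then $\cE_1/\cF$ is a nonzero subsheaf of $\cG$ with slope exactly $\upmu_H(\cF)$, so your argument for $\Hom(\cF,\cG)=0$ breaks down. A second, smaller gap: your derivation of $\Ext^1(\cF,\cF)=0$ from ``the long exact sequence $\Hom(-,\cF)$'' needs $\Ext^1(\cE,\cF)=0$, which in turn requires $\Hom(\cG,\cG)=\kk$; you have not shown $\cG$ is simple.

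Both are easily fixed. Take $\cF = \cE_1$ to be the \emph{entire} first HN piece (semistable, possibly not stable). Then $\Hom(\cF,\cG)=0$ holds by the HN property, and Lemma~\ref{lem:mukai-lemma} (Mukai's lemma, which you are in effect rederiving) gives both $\cF$ and $\cG$ rigid. Now decompose \emph{both} $\cF$ and $\cG$ into stable spherical summands $\cF_j$ and $\cG_i$. From $\Hom(\cF,\cG)=0$ and $\Hom(\cG,\cF)=0$ you get $\Hom(\cF_j,\cG_i)=\Hom(\cG_i,\cF_j)=0$, hence $\upchi(\cF_j,\cG_i)\le 0$; your positivity identity applied to the spherical pair $(\cF_j,\cG_i)$ then gives the contradiction.
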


\begin{proposition}[{\cite[Proposition~3.3]{Mukai:vectorbundles}}]\label{prop:rigid-lf}
Any rigid torsion free sheaf on a smooth $K3$ surface is locally free.
\end{proposition}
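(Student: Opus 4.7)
The plan is to argue by contradiction. Consider the canonical exact sequence
\[
0 \to \cF \to \cF^{\vee\vee} \to Q \to 0,
\]
where $\cF^{\vee\vee}$ is reflexive and hence locally free on a smooth surface, and $Q$ is the torsion cokernel supported on the non-locally-free locus of $\cF$; by the Auslander--Buchsbaum formula applied stalkwise, this support has codimension at least $2$, and is therefore $0$-dimensional, of some length $\ell \geq 0$. Suppose for contradiction that $\ell \geq 1$. Since a rigid coherent sheaf on a $K3$ surface decomposes as a direct sum of simple rigid (hence spherical) summands, by a standard argument using semisimplicity of the endomorphism algebra of a rigid object in a Calabi--Yau $2$ category, it suffices to treat the case where $\cF$ is simple, i.e.\ $\End(\cF) = \kk$; rigidity and Serre duality then give $\upchi(\cF, \cF) = 2$.

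The main dimension estimate is the following. From $\rv(\cF) = \rv(\cF^{\vee\vee}) - (0, 0, \ell)$ a direct computation with the Mukai pairing yields
\[
\upchi(\cF^{\vee\vee}, \cF^{\vee\vee}) = \upchi(\cF, \cF) + 2r\ell = 2 + 2r\ell,
\]
where $r = \rank(\cF)$. Serre duality on the $K3$ surface gives $\Ext^2(\cF^{\vee\vee}, \cF^{\vee\vee}) \cong \End(\cF^{\vee\vee})^\vee$, so
\[
2\dim \End(\cF^{\vee\vee}) \geq \upchi(\cF^{\vee\vee}, \cF^{\vee\vee}) = 2 + 2r\ell,
\]
and therefore $\dim \End(\cF^{\vee\vee}) \geq 1 + r\ell$. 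Applying $\Hom(\cF^{\vee\vee}, -)$ to the canonical exact sequence yields
\[
0 \to \Hom(\cF^{\vee\vee}, \cF) \to \End(\cF^{\vee\vee}) \to \Hom(\cF^{\vee\vee}, Q) \to \cdots,
\]
and since $\Hom(\cF^{\vee\vee}, Q) \cong H^0(\cF^\vee \otimes Q)$ has dimension exactly $r\ell$ (as $Q$ is $0$-dimensional of length $\ell$ and $\cF^\vee$ is locally free of rank $r$), one obtains the reverse bound $\dim \End(\cF^{\vee\vee}) \leq \dim \Hom(\cF^{\vee\vee}, \cF) + r\ell$. Combining the two inequalities gives $\Hom(\cF^{\vee\vee}, \cF) \neq 0$.

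To derive the contradiction, pick a nonzero $\psi \colon \cF^{\vee\vee} \to \cF$ and consider the composition $\psi \circ \iota \in \End(\cF)$ with the canonical inclusion $\iota \colon \cF \hookrightarrow \cF^{\vee\vee}$. Since $\End(\cF) = \kk$, this equals $c \cdot \id_\cF$ for some scalar $c$. If $c \neq 0$, then $\psi/c$ is a left inverse of $\iota$, so $\iota$ is a split injection and $\cF$ is a direct summand of the locally free sheaf $\cF^{\vee\vee}$, hence itself locally free, contradicting $\ell \geq 1$. If $c = 0$, then $\psi$ vanishes on $\cF$ and therefore factors through $\cF^{\vee\vee}/\cF = Q$; but $\Hom(Q, \cF) = 0$, because $\cF$ is torsion free and $Q$ is torsion, forcing $\psi = 0$ and contradicting its nonvanishing. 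The main subtlety I anticipate is the reduction to the simple case, which uses that rigidity on a K3 propagates through direct summands; once that is in place, the argument is essentially a dimension count powered by Serre duality and simplicity of $\cF$.
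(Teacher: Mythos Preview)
The paper does not give its own proof of this proposition; it is quoted as~\cite[Proposition~3.3]{Mukai:vectorbundles}. Your approach via the double-dual sequence and a dimension count is essentially Mukai's original argument, and the computation from step~3 onward is correct.

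The weak point is the reduction to the simple case. Passing to indecomposable summands is fine (rigidity and torsion-freeness are inherited by direct summands), but indecomposable does not a priori mean simple: the endomorphism algebra of an indecomposable sheaf is local, not necessarily~$\kk$. Your appeal to ``semisimplicity of the endomorphism algebra of a rigid object in a CY2 category'' is the claim that indecomposable rigid implies simple; this is true, but it is not a one-liner, and in Mukai's development it comes \emph{after} the local-freeness result, so invoking it here risks circularity.

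Fortunately the reduction is unnecessary, and your own ingredients already give a clean proof without it. Drop the simplicity assumption and keep~$\dim\End(\cF)$ in the count: rigidity gives~$\upchi(\cF,\cF) = 2\dim\End(\cF)$, hence
\[
2\dim\End(\cF^{\vee\vee}) \ \ge\ \upchi(\cF^{\vee\vee},\cF^{\vee\vee}) \ =\ 2\dim\End(\cF) + 2r\ell,
\]
so~$\dim\End(\cF^{\vee\vee}) \ge \dim\End(\cF) + r\ell$. Combined with your bound~$\dim\End(\cF^{\vee\vee}) \le \dim\Hom(\cF^{\vee\vee},\cF) + r\ell$ this yields~$\dim\Hom(\cF^{\vee\vee},\cF) \ge \dim\End(\cF)$. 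But the map~$\iota^* \colon \Hom(\cF^{\vee\vee},\cF) \to \End(\cF)$, $\psi \mapsto \psi\circ\iota$, is injective (this is exactly your ``$c=0$'' argument: if~$\psi\circ\iota = 0$ then~$\psi$ factors through the torsion sheaf~$Q$, hence vanishes). Therefore~$\iota^*$ is an isomorphism, so~$\id_\cF$ lifts to some~$\psi \colon \cF^{\vee\vee}\to\cF$ with~$\psi\circ\iota = \id_\cF$, i.e.\ $\iota$ splits. Then~$Q$ is a direct summand of the locally free sheaf~$\cF^{\vee\vee}$, forcing~$Q=0$.
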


\begin{lemma}[{\cite[Proposition~2.7 and Corollary~2.8]{Mukai:vectorbundles}}]\label{lem:mukai-lemma}
Let~$\cF$ be a rigid sheaf on a smooth~$K3$ surface.
If
\begin{equation*}
0 \to \cF_1 \to \cF \to \cF_2 \to 0
\end{equation*}
is an exact sequence and~$\Hom(\cF_1,\cF_2) = 0$ then~$\cF_1$ and~$\cF_2$ are both rigid.

Moreover, if~$\eps \in \Ext^1(\cF_2,\cF_1)$ is the extension class of the sequence then
\begin{aenumerate}
\item 
if~$\cF_1$ is simple then the map~$\Hom(\cF_2,\cF_2) \xrightarrow{\ \eps\ } \Ext(\cF_2,\cF_1)$ is surjective;
\item 
if~$\cF_2$ is simple then the map~$\Hom(\cF_1,\cF_1) \xrightarrow{\ \eps\ } \Ext(\cF_2,\cF_1)$ is surjective.
\end{aenumerate}
\end{lemma}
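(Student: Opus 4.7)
The plan is to feed the short exact sequence $0 \to \cF_1 \xrightarrow{\iota} \cF \xrightarrow{q} \cF_2 \to 0$ into the four standard long exact sequences of $\Ext$, and to combine two inputs: first, Serre duality on the K3, which turns the hypothesis $\Hom(\cF_1,\cF_2)=0$ into the vanishing $\Ext^2(\cF_2,\cF_1)=0$; second, the rigidity $\Ext^1(\cF,\cF)=0$. The conceptual core is the \emph{triangle identity} $\iota_*(\eps)=0$ in $\Ext^1(\cF_2,\cF)$ (and its mirror $q^*(\eps)=0$ in $\Ext^1(\cF,\cF_1)$), which expresses that two consecutive arrows in the distinguished triangle $\cF_1 \to \cF \to \cF_2 \xrightarrow{\eps} \cF_1[1]$ compose to zero.

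For rigidity of $\cF_2$: apply $\Hom(-,\cF)$ to the short exact sequence; the vanishing $\Ext^1(\cF,\cF)=0$ yields a surjection $\Hom(\cF_1,\cF) \twoheadrightarrow \Ext^1(\cF_2,\cF)$. Applying $\Hom(\cF_1,-)$ and using $\Hom(\cF_1,\cF_2)=0$ upgrades post-composition with $\iota$ to an isomorphism $\Hom(\cF_1,\cF_1) \xrightiso \Hom(\cF_1,\cF)$. By Yoneda associativity, the resulting surjection $\Hom(\cF_1,\cF_1) \twoheadrightarrow \Ext^1(\cF_2,\cF)$ factors as $\iota_* \circ \eps$, where $\eps\colon \phi_1 \mapsto \phi_1 \circ \eps$. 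Hence $\iota_*\colon \Ext^1(\cF_2,\cF_1) \to \Ext^1(\cF_2,\cF)$ is surjective. Feeding this into the long exact sequence of $\Hom(\cF_2,-)$,
\begin{equation*}
\Hom(\cF_2,\cF_2) \xrightarrow{\ \eps\ } \Ext^1(\cF_2,\cF_1) \xrightarrow{\ \iota_*\ } \Ext^1(\cF_2,\cF) \to \Ext^1(\cF_2,\cF_2) \to \Ext^2(\cF_2,\cF_1) = 0,
\end{equation*}
the surjectivity of $\iota_*$ forces $\Ext^1(\cF_2,\cF_2)=0$, so $\cF_2$ is rigid. The mirror argument using $\Hom(\cF,-)$ and $\Hom(-,\cF_1)$ proves rigidity of $\cF_1$.

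For (a): if $\cF_1$ is simple then $\Hom(\cF_1,\cF_1) = \kk \cdot \id$, so the image of the surjection $\Hom(\cF_1,\cF_1) \twoheadrightarrow \Ext^1(\cF_2,\cF)$ from the previous paragraph is spanned by $\iota_*(\id \circ \eps) = \iota_*(\eps)$, which vanishes by the triangle identity. Hence $\Ext^1(\cF_2,\cF)=0$, and the displayed long exact sequence now forces $\iota_* = 0$, which is equivalent to $\eps\colon \Hom(\cF_2,\cF_2) \to \Ext^1(\cF_2,\cF_1)$ being surjective. Part (b) is perfectly symmetric, using $q^*(\eps)=0$ and the long exact sequences from $\Hom(\cF,-)$ and $\Hom(-,\cF_1)$.

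I do not anticipate a genuine obstacle: the only non-mechanical input is recognizing the connecting homomorphisms as Yoneda composition with $\eps$ and then invoking the triangle identities. Everything else reduces to bookkeeping in the four long exact sequences together with one instance of Serre duality.
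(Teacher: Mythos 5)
Your proof is correct: the paper does not prove this lemma but cites Mukai's original argument, which is exactly the same combination of the four long exact $\Ext$-sequences, the Serre-duality vanishing $\Ext^2(\cF_2,\cF_1)\cong\Hom(\cF_1,\cF_2)^\vee=0$, and the vanishing of the compositions $\iota_*(\eps)$ and $q^*(\eps)$. All the individual steps (the factorization of the connecting map through Yoneda composition with $\eps$, the surjectivity of $\iota_*$ and $q^*$, and the final diagram chases) check out.
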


The main goal of this subsection is to construct a filtration of any rigid sheaf on a smooth quasipolarized K3 surface~$(S,H)$ into stable spherical sheaves. 
Since Gieseker-stability for a quasi-polarization is not well--defined, we use a notion of stability 
that is equivalent to Gieseker stability with respect to a small ample perturbation~$H + \epsilon A$ of the quasipolarization~$H$.
By Riemann--Roch the reduced Hilbert polynomial~$\bp_{H + \epsilon A}(\cF)(t)$ of a sheaf~$\cF$ 
with respect to polarization~$H + \epsilon A$ can be expressed 
in terms of the slope and the reduced Euler characteristic of~$\cF$ defined in~\eqref{eq:Paris20231230} as
\begin{equation*}
\bp_{H + \epsilon A}(\cF)(t) = \tfrac12 (H + \epsilon A)^2 t^2 + \upmu_H(\cF) t + \epsilon\,\upmu_A(\cF) t + \updelta(\cF).
\end{equation*}
Consequently, $(H + \epsilon A)$-stability for~$0 < \epsilon \ll 1$ can be rephrased in the following terms.

\begin{definition}
\label{def:ha-stability}
Let~$S$ be a smooth K3 surface, let~$H$ be a quasipolarization, 
let~$\sigma \colon S \to \baS$ be the contraction of~$\fR(S,H)$,
and let~$A$ be a $\sigma$-ample divisor class on~$S$.

Given two sheaves~$\cF_1, \cF_2$ on~$S$ of positive rank we write~$\bp_{H, A}(\cF_1) \prec \bp_{H, A}(\cF_2)$ if 
\begin{itemize}
\item 
$\upmu_H(\cF_1) < \upmu_H(\cF_2)$, or
\item 
$\upmu_H(\cF_1) = \upmu_H(\cF_2)$ and $\upmu_A(\cF_1) < \upmu_A(\cF_2)$, or
\item 
$\upmu_H(\cF_1) = \upmu_H(\cF_2)$ and $\upmu_A(\cF_1) = \upmu_A(\cF_2)$ 
and $\updelta(\cF_1) < \updelta(\cF_2)$.
\end{itemize}
Similarly, we write~$\bp_{H, A}(\cF_1) = \bp_{H, A}(\cF_2)$ if
\begin{equation*}
\upmu_H(\cF_1) = \upmu_H(\cF_2),
\qquad
\upmu_A(\cF_1) = \upmu_A(\cF_2),
\qquad\text{and}\qquad 
\updelta(\cF_1) = \updelta(\cF_2).
\end{equation*}
We say that a torsion free sheaf~$\cF$ is {\sf $(H, A)$-(semi)stable} 
if~$\bp_{H, A}(\cF') \prec (\preceq)\ \bp_{H, A}(\cF)$ for every subsheaf~$\cF' \subset \cF$.
\end{definition}

\begin{remark} 
\label{rem:saturatedness}
If~$\cF_1 \subset \cF_2$ is a subsheaf of positive rank such that~$\cF_2/\cF_1$ is torsion and nonzero, 
then~$\bp_{H, A}(\cF_1) \prec \bp_{H, A}(\cF_2)$. Indeed, 
we have~$H \cdot \rc_1(\cF_2/\cF_1) \ge 0$ and $A \cdot \rc_1(\cF_2/\cF_1) \ge 0$, 
and if both~$H \cdot \rc_1(\cF_2/\cF_1) = 0$ and $A\cdot \rc_1(\cF_2/\cF_1) = 0$ 
then~$\cF_2/\cF_1$ has $0$-dimensional support, and therefore~$\upchi(\cF_2/\cF_1) > 0$.
Thus, in the definition of $(H,A)$-(semi)stability it is enough to assume that the respective inequality holds 
only for saturated subsheaves of~$\cF$, i.e., subsheaves~$\cF'$ such that~$\cF/\cF'$ is torsion free.
\end{remark}

The following proposition is the main result of this section.

\begin{proposition}
\label{prop:filtration-msph}
Let~$S$ be a smooth $K3$ surface, let~$H$ be a quasipolarization, 
let~$\sigma \colon S \to \baS$ be the contraction of~$\fR(S,H)$,
and let~$A$ be a $\sigma$-ample divisor class on~$S$.
Every rigid torsion free sheaf~$\cF$ on~$S$ has a filtration such that its factors~$\cF_1,\dots,\cF_n$ have the following properties:
\begin{aenumerate}
\item\label{it:factors-msph}
for all~$i$ we have~$\cF_i \cong \cG_i^{\oplus m_i}$, where~$\cG_i$ is an $(H, A)$-stable spherical sheaf, and
\item\label{it:factors-hom-zero}
for all~$ i \le j$ we have~$\upmu_H(\cF_i) \ge \upmu_H(\cF_{j})$
and~$\Hom(\cF_i, \cF_{i+1}) = 0$.
\end{aenumerate}
Moreover, $\sum m_i = 1$ if and only if~$\cF$ is $(H, A)$-stable.
\end{proposition}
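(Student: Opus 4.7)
The plan is to build the filtration in two stages: first apply a Harder--Narasimhan filtration with respect to $(H,A)$-stability, and then refine each semistable piece. In the first stage, HN produces $0 = \cF^{(0)} \subsetneq \cF^{(1)} \subsetneq \cdots \subsetneq \cF^{(k)} = \cF$ whose quotients $\cE^{(i)} = \cF^{(i)}/\cF^{(i-1)}$ are $(H,A)$-semistable with strictly decreasing $\bp_{H,A}$; the standard see-saw argument for the lexicographic order of Definition~\ref{def:ha-stability} then forces $\Hom(\cE^{(i)}, \cE^{(j)}) = 0$ for $i < j$. Next I would apply Mukai's Lemma~\ref{lem:mukai-lemma} inductively---first to $0 \to \cF^{(i)} \to \cF \to \cF/\cF^{(i)} \to 0$ and then to $0 \to \cF^{(i-1)} \to \cF^{(i)} \to \cE^{(i)} \to 0$, with the necessary Hom-vanishings supplied by the previous sentence---to conclude that each $\cE^{(i)}$ is rigid.

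The second stage refines an arbitrary $(H,A)$-semistable rigid sheaf $\cE$ of reduced polynomial $p$. The category $\cC_p$ of $(H,A)$-semistable sheaves with this reduced polynomial is abelian, every object has finite length, and its simple objects are precisely the $(H,A)$-stable sheaves of reduced polynomial $p$. I would pick any stable subsheaf $\cG \subset \cE$ in $\cC_p$ and let $\cE_\cG \subset \cE$ be the maximal subsheaf admitting a filtration in $\cC_p$ whose successive quotients are all isomorphic to $\cG$; this is well defined because such subsheaves are closed under finite sums. By maximality $\Hom(\cG, \cE/\cE_\cG) = 0$, and stepping through the filtration of $\cE_\cG$ in Hom long exact sequences upgrades this to $\Hom(\cE_\cG, \cE/\cE_\cG) = 0$. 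Mukai's Lemma then gives rigidity of $\cE_\cG$ and $\cE/\cE_\cG$, while Remark~\ref{rem:saturatedness} combined with the semistability of $\cE$ ensures that $\cE/\cE_\cG$ is $(H,A)$-semistable and torsion free. The hardest step is to prove $\cE_\cG \cong \cG^{\oplus m}$, where $m$ is the length of $\cE_\cG$ in $\cC_p$: additivity of $\upchi$ yields $\upchi(\cE_\cG, \cE_\cG) = m^2\, \upchi(\cG, \cG)$; rigidity yields $\upchi(\cE_\cG, \cE_\cG) = 2 \dim \End(\cE_\cG) \ge 2$; and since the Mukai pairing is even-valued on any K3 surface (because $D^2$ is even for every $D \in \Pic(S)$), the integer $\upchi(\cG, \cG)$ is even. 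Combined with $\upchi(\cG, \cG) \le 2$ (valid for any simple sheaf) this forces $\upchi(\cG, \cG) = 2$, so $\cG$ is spherical. Then $\Ext^1(\cG, \cG) = 0$ makes every iterated self-extension of $\cG$ split, so $\cE_\cG \cong \cG^{\oplus m}$ by induction on $m$.

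Iterating the construction on $\cE/\cE_\cG$ refines each HN factor into pieces $\cG_i^{\oplus m_i}$ with $\cG_i$ stable and spherical, and concatenating produces the desired filtration of $\cF$. Property~(a) is then immediate. Property~(b) breaks into two cases: across HN boundaries $\upmu_H$ is monotone and the Hom vanishing is standard from stability; inside a single HN factor, our maximality choice forces consecutive $\cG_i$'s to be pairwise non-isomorphic stable sheaves of the same reduced polynomial, so again $\Hom(\cG_i, \cG_{i+1}) = 0$. The moreover clause follows: if $\cF$ is $(H,A)$-stable then HN is trivial and the construction forces $\cE_\cG = \cG_1 = \cF$, whence $n = m_1 = 1$; conversely, $\sum m_i = 1$ forces $n = 1$ and $m_1 = 1$, so $\cF = \cG_1$ is stable. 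The principal obstacle is the parity computation forcing $\cG$ to be spherical; everything else is a combination of standard HN/JH theory with Mukai's Lemma.
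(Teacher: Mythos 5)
Your proposal is correct and follows essentially the same route as the paper: Harder--Narasimhan for $(H,A)$-stability plus Mukai's Lemma to make each factor rigid, then within a semistable factor a maximal subsheaf filtered by a fixed stable $\cG$ (the paper picks one of maximal rank, you take the isotypic maximum in $\cC_p$), with maximality giving $\Hom(\cG,\cE/\cE_\cG)=0$, Mukai's Lemma giving rigidity, and the resulting inequality $\upchi(\cG,\cG)>0$ combined with simplicity and evenness forcing $\cG$ to be spherical so that $\cE_\cG\cong\cG^{\oplus m}$. The parity step you single out as the main obstacle is exactly how the paper argues as well (it is folded into the remark at the start of Section~3 that a simple sheaf with $\upchi>0$ is spherical).
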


We will call a filtration as in Proposition~\ref{prop:filtration-msph} {\sf a multispherical filtration} of~$\cF$. 
Note that we do not claim~$\Hom(\cF_i, \cF_j) = 0$ for~$j > i+1$; 
indeed, a priori the same stable factor could appear multiple times in the filtration.

\begin{proof}
We first treat the case where the sheaf~$\cF$ is $(H, A)$-semistable. 
If $\cF$ is $(H, A)$-stable, there is nothing to prove, 
because~$\cF$ is rigid (by assumption) and simple (by stability), hence spherical.
Otherwise, let~$\cG \subsetneq \cF$ be an $(H, A)$-stable subsheaf such that~$\bp_{H, A}(\cG) = \bp_{H, A}(\cF)$.

Now among all short exact sequences
\begin{equation*}
0 \to \cF' \to  \cF \to \cF'' \to 0 
\end{equation*}
with the property that~$\cF'$ admits a filtration with all associated factors isomorphic to the sheaf~$\cG$
(hence~$\bp_{H,A}(\cF') = \bp_{H,A}(\cG) = \bp_{H,A}(\cF)$), 
we consider one where~$\cF'$ has maximal possible rank.

Note that~$\cF''$ is torsion free: 
otherwise, by Remark~\ref{rem:saturatedness} the saturation~$\tcF'$ in~$\cF$ of the subsheaf~$\cF'$ 
has~$\bp_{H, A}(\tcF') \succ \bp_{H, A}(\cF') = \bp_{H, A}(\cF)$ and destabilizes~$\cF$. 
It is also easy to see that $\cF''$ is $(H, A)$-semistable:
otherwise the preimage in~$\cF$ of a destabilizing subsheaf of~$\cF''$ destabilizes~$\cF$.
We claim that~$\Hom(\cG, \cF'') = 0$. 
Indeed, as~$\cG$ is $(H, A)$-stable with~$\bp_{H, A}(\cG) = \bp_{H, A}(\cF'')$ 
any morphism~\mbox{$\cG \to \cF''$} would be injective, 
hence the preimage of~$\cG$ under the surjection~$\cF \onto \cF''$ 
would be an extension of~$\cG$ by~$\cF'$ of bigger rank than~$\cF'$, 
in contradiction to our assumption on~$\cF'$.

It follows that~$\Hom(\cF', \cF'') = 0$, and therefore by Lemma~\ref{lem:mukai-lemma} both~$\cF'$ and~$\cF''$ are rigid. 
This implies that~$\upchi(\cF', \cF') > 0$. 
Since~$\cF'$ admits a filtration with factors isomorphic to~$\cG$,
its Mukai vector~$\rv(\cF')$ is proportional to~$\rv(\cG)$, hence we also have~$\upchi(\cG, \cG) > 0$. 
Since~$\cG$ is $(H,A)$-stable, hence simple, it must be spherical. 
In particular, $\Ext^1(\cG, \cG) = 0$, hence~$\cF' \cong \cG^{\oplus m}$.

By induction on the rank, this concludes the case where~$\cF$ is $(H, A)$-semistable. 

Now assume that~$\cF$ is not $(H, A)$-semistable. 
The same arguments as for Gieseker stability (see, e.g., \cite[Theorem~1.3.4]{HL}) 
prove the existence of a  Harder--Narasimhan filtration for $\cF$ with respect to $(H, A)$-stability. 
Applying Lemma~\ref{lem:mukai-lemma} again, we see that every Harder--Narasimhan filtration factor of~$\cF$ is rigid. 
Combined with the filtrations of the semistable factors proven in the previous case, 
this immediately gives a filtration satisfying~\ref{it:factors-msph}. 

For part~\ref{it:factors-hom-zero}, just note that our construction in fact implies~$\bp_{H,A}(\cF_i) \succeq \bp_{H,A}(\cF_{i+1})$, 
hence a fortiori~$\upmu_{H}(\cF_i) \ge \upmu_{H}(\cF_{i+1})$, and, moreover, $\Hom(\cF_i,\cF_{i+1}) = 0$.
\end{proof}

\subsection{Brill--Noether inequality}\label{ss:bn}

The following is the key result for the proof of Theorem~\ref{thm:mb-s}.

\begin{proposition}\label{prop:rs1}
Let~$(S,H)$ be a Brill--Noether general quasipolarized smooth~$K3$ surface.
Let~$\cF$ be a globally generated spherical bundle on~$S$ with~$\rv(\cF) = (r, H, s)$.
If
\begin{equation*}
0 \to \cF_1 \to \cF \to \cF_2 \to 0
\end{equation*}
is an exact sequence of nontrivial spherical bundles with~$\upmu_H(\cF_1) \ge \upmu_H(\cF) \ge \upmu_H(\cF_2)$ then
\begin{equation*}
\rank(\cF_1) =  \rch_2(\cF_2) + \rank(\cF_2) = 1.
\end{equation*}
Moreover, the class~$\Xi \coloneqq \rc_1(\cF_2)$ is a special Mukai class of type~$(r,s)$.
\end{proposition}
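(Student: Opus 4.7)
The plan is to deduce, via the Brill--Noether generality of $(S,H)$, that $r_1 := \rank(\cF_1) = 1$ and $t_2 := \rch_2(\cF_2) + \rank(\cF_2) = 1$, and then verify numerically that $\Xi := \rc_1(\cF_2)$ satisfies the axioms of a special Mukai class of type $(r,s)$. Throughout, write $D_i := \rc_1(\cF_i)$, $r_i := \rank(\cF_i)$ and $t_i := \rch_2(\cF_i) + r_i$ for $i = 1, 2$; sphericity gives $D_i^2 = 2 r_i t_i - 2$ by~\eqref{eq:rr-k3}.

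First, since $\cF_2$ is a quotient of the globally generated bundle $\cF$, it is itself globally generated; by Corollary~\ref{cor:gg-quotient} either $\cO_S(D_2) = \det(\cF_2)$ is globally generated with $h^2(\cO_S(D_2)) = 0$, or $\cF_2 \cong \cO_S^{\oplus r_2}$ and the short exact sequence splits. The latter contradicts the simplicity of the spherical bundle $\cF$, so the former holds; in particular $D_2 \ne 0$ and, being nef, $D_2 \cdot H > 0$. The slope hypothesis $\upmu_H(\cF_1) \ge \upmu_H(\cF) > 0$ gives $D_1 \cdot H > 0$, so $-D_1$ is not effective and $h^2(\cO_S(D_1)) = 0$ as well. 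Riemann--Roch then yields
\[
h^0(\cO_S(D_i)) \ge \upchi(\cO_S(D_i)) = r_i t_i + 1 \qquad (i = 1, 2).
\]

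Next I claim $t_1, t_2 \ge 1$. Since $\cO_S(D_2)$ is nef, $D_2^2 \ge 0$, forcing $t_2 \ge 1$. Suppose instead $t_1 = 0$; then $t_2 = s$, and using $D_1 + D_2 = H$ together with $D_1^2 = -2$ and $D_2^2 = 2 r_2 s - 2$, a short computation gives $D_1 \cdot H = s r_1 - 1$. Comparing with the slope bound $D_1 \cdot H \ge r_1(2rs - 2)/r$ yields $s \le 2/r - 1/r_1 \le 1$, contradicting $s \ge 2$. Brill--Noether generality applied to the pair $(D_1, D_2 = H - D_1)$ now gives
\[
(r_1 t_1 + 1)(r_2 t_2 + 1) \le h^0(\cO_S(D_1)) \cdot h^0(\cO_S(D_2)) \le g = rs.
\]
Expanding $rs = (r_1 + r_2)(t_1 + t_2)$ reduces this to $(r_1 t_2 - 1)(r_2 t_1 - 1) \le 0$, and since both factors are nonnegative the product vanishes: either $r_1 = t_2 = 1$ or $r_2 = t_1 = 1$.

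The alternative $r_2 = t_1 = 1$ forces $D_1 \cdot H = (r-1)(s+1)$, and the slope condition $\upmu_H(\cF_1) \ge \upmu_H(\cF)$ becomes $s + 1 \ge 2s - 2/r$, i.e., $s \le 1 + 2/r$; combined with $s \ge r \ge 2$ this leaves only $(r,s) = (2,2)$, where both alternatives coincide. Hence $r_1 = 1$ and $t_2 = 1$ in all cases, and direct substitution gives $\Xi \cdot H = D_2 \cdot H = (r-1)(s+1)$. The Brill--Noether inequality must now be tight, so $h^0(\cO_S(\Xi)) = r$; Riemann--Roch together with $h^2(\cO_S(\Xi)) = 0$ then gives $h^1(\cO_S(\Xi)) = 0$. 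Combined with the global generation from the first step, this shows that $\Xi$ is a special Mukai class of type $(r, s)$. The main technical obstacle is the elimination of $t_1 = 0$: this is not forced by Brill--Noether generality or Hodge index alone, but requires combining the slope inequality with the sphericity of the \emph{other} factor $\cF_2$; the remainder of the argument is a clean application of the Riemann--Roch lower bound and the Brill--Noether inequality.
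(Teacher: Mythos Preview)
Your overall strategy matches the paper's proof exactly: use Corollary~\ref{cor:gg-quotient} to control~$D_2$, kill~$h^2(\cO_S(D_1))$ via the slope assumption, apply Riemann--Roch and the Brill--Noether inequality to get~$(r_1t_2-1)(r_2t_1-1)\le 0$, then eliminate the wrong branch with the slope bound. The numerics you record (the value of~$\Xi\cdot H$, tightness of the BN inequality, and the cohomology of~$\cO_S(\Xi)$) are all correct.

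There is, however, one genuine gap. To apply the Brill--Noether inequality usefully you need~$r_it_i+1\ge 1$, and for this you must rule out \emph{all} of~$t_1\le 0$, not only~$t_1=0$. As written, ``Suppose instead~$t_1=0$'' leaves the cases~$t_1\le -1$ unaddressed; a priori nothing prevents~$\cF_1$ from being a spherical bundle with~$D_1^2<-2$. The fix is immediate and uses the same idea: for arbitrary~$t_1$ one computes (from~$D_1^2=2r_1t_1-2$, $D_2^2=2r_2t_2-2$, $D_1+D_2=H$) that
\[
D_1\cdot H = 2r_1t_1 + r_1t_2 + r_2t_1 - 1 = rt_1 + r_1s - 1,
\]
and the slope inequality~$D_1\cdot H \ge r_1(2s-2/r)$ rearranges to~$t_1 \ge r_1(s-2/r)/r + 1/r > 0$ whenever~$r,s\ge 2$, forcing~$t_1\ge 1$. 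This is exactly what the paper does via its inequality~\eqref{eq:inequality}: assuming~$s_1\le 0$ makes the right side of~\eqref{eq:inequality} positive and the left side nonpositive. Your self-assessment that eliminating~$t_1=0$ is ``the main technical obstacle'' is thus slightly off --- the obstacle is really~$t_1\le 0$, and the slope inequality handles the whole range at once.

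A minor remark: when ruling out the branch~$r_2=t_1=1$ you invoke~$s\ge r$, but in fact only~$r,s\ge 2$ is needed (from~$s\le 1+2/r\le 2$ and~$s\ge 2$ you get~$s=2$, whence~$r\le 2$). The paper's argument here uses~\eqref{eq:inequality} again rather than the explicit bound, which avoids any appeal to the standing assumptions.
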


\begin{proof}
Let~$\rv(\cF_i) = (r_i, D_i, s_i)$, so that~$\rv(\cF) = (r_1 + r_2, D_1 + D_2, s_1 + s_2)$; in particular~$D_1 + D_2 = H$.
Since~$\cF_1$, $\cF_2$, and~$\cF$ are spherical, \eqref{eq:rr-k3} implies that
\begin{equation}
\label{eq:sph}
D_1^2 = 2r_1s_1 - 2,
\qquad 
D_2^2 = 2r_2s_2 - 2,
\qquad 
(D_1 + D_2)^2 = 2(r_1 + r_2)(s_1 + s_2) - 2.
\end{equation}
Subtracting the sum of the first two equalities from the last one and dividing by~$2$, we obtain
\begin{equation*}
D_1 \cdot D_2 = r_1s_2 + r_2s_1 + 1.
\end{equation*}
This implies~$H \cdot D_1 = 2r_1s_1 + r_1s_2 + r_2s_1  - 1$ and~$H \cdot D_2 = 2r_2s_2  + r_1s_2 + r_2s_1  - 1$, hence
\begin{equation*}
2s_1 +  s_2 + \frac{r_2s_1 - 1}{r_1} =
\frac1{r_1}H \cdot D_1 = 
\upmu_H(\cF_1) \ge
\upmu_H(\cF_2) =
\frac1{r_2}H \cdot D_2 = 
s_1 + 2s_2 + \frac{r_1s_2 - 1}{r_2}.
\end{equation*}
Equivalently,
\begin{equation}
\label{eq:inequality}
s_1 - s_2 \ge \frac{r_1s_2 - 1}{r_2} - \frac{r_2s_1 - 1}{r_1}.
\end{equation} 
This inequality will be used a few times below.

By Corollary~\ref{cor:gg-quotient} the line bundle~$\det(\cF_2) \cong \cO_S(D_2)$ is globally generated.
Therefore, $D_2^2 \ge 0$, hence~$r_2s_2 \ge 1$ by~\eqref{eq:sph}, and since~$r_2 \ge 1$, we have~$s_2 \ge 1$.

Assume~$s_1 \le 0$.
Since~$r_1$ and~$r_2$ are positive, we obtain~$r_1s_2 - 1 \ge 0$, $r_2s_1 - 1 < 0$, 
hence the right side of~\eqref{eq:inequality} is positive, while the left side is negative, which is absurd.
Therefore, $s_1 \ge 1$.

Further, $h^2(\cO_S(D_2)) = 0$ by Corollary~\ref{cor:gg-quotient}.
Moreover, $h^2(\cO_S(D_1)) = 0$; indeed, otherwise Serre duality shows that~$-D_1$ is effective, hence~$\upmu_H(\cF_1) \le 0$, 
contradicting to the assumption.
Therefore,
\begin{equation*}
h^0(\cO_S(D_i)) \ge \upchi(\cO_S(D_i)) = \tfrac12D_i^2 + 2 = r_is_i + 1.
\end{equation*}
Since~$r_i \ge 1$ and~$s_i \ge 1$, this is positive, and since~$D_1 + D_2 = H$ and~$g = (r_1 + r_2)(s_1 + s_2)$ 
by the last equality in~\eqref{eq:sph}, the Brill--Noether property of~$S$ implies that
\begin{equation*}
(r_1s_1 + 1)(r_2s_2 + 1) \le h^0(\cO_S(D_1)) \cdot h^0(\cO_S(D_2)) \le (r_1 + r_2)(s_1 + s_2).
\end{equation*}
Expanding the products we obtain~$r_1r_2s_1s_2 + r_1s_1 + r_2s_2 + 1 \le r_1s_1 + r_2s_2 + r_1s_2 + r_2s_1$, hence 
\begin{equation*}
\label{eq:factors}
(r_1s_2 - 1)(r_2s_1 - 1) = r_1r_2s_1s_2 - r_1s_2 - r_2s_1 + 1 \le 0.
\end{equation*}
But as we have shown above, both factors in the left side are nonnegative, therefore one of them is zero.
If~$r_1s_2 - 1 \ge 1$, hence~$r_2s_1 - 1 = 0$, then~$r_2 = s_1 = 1$, 
the right side in~\eqref{eq:inequality} is positive, hence~$s_2 < s_1$, in contradiction to~$s_2 \ge 1$.
Therefore, $r_1s_2 - 1 = 0$, hence~$r_1 = s_2 = 1$, i.e.,
\begin{equation*}
\rank(\cF_1) =  \rch_2(\cF_2) + \rank(\cF_2) = 1.
\end{equation*}
Moreover, $h^0(\cO_S(D_1)) \ge r_1s_1 + 1 = s_1 + s_2 = s$ and~$h^0(\cO_S(D_2)) \ge r_2s_2 + 1 = r_2 + r_1 = r$, 
so the Brill--Noether property of~$S$ implies that these are equalities, hence~$h^1(\cO_S(D_2)) = 0$.

Finally, it follows that
\begin{equation*}
D_2 \cdot H = 2r_2s_2  + r_1s_2 + r_2s_1 - 1 = 
2r_2 + 1 + r_2s_1 - 1 =
r_2(s_1 + 2) = 
(r - 1)(s + 1),
\end{equation*}
hence the divisor class~$\Xi \coloneqq D_2$ is a special Mukai class of type~$(r,s)$.
\end{proof}

We will also need the following similar result.

\begin{lemma}\label{lem:two-plus-one}
Let~$(S,H)$ be a Brill--Noether general quasipolarized smooth~$K3$ surface.
Let~$\cF$ be a globally generated spherical bundle on~$S$ with~$\rank(\cF) = 3$ and~$\rc_1(\cF) = H$.
Then~$\cF$ cannot fit into an exact sequence
\begin{equation*}
0 \to \cF_1^{\oplus 2} \to \cF \to \cF_2 \to 0
\qquad\text{or}\qquad 
0 \to \cF_1 \to \cF \to \cF_2^{\oplus 2} \to 0,
\end{equation*}
where~$\cF_1$ and~$\cF_2$ are line bundles and~$\upmu_H(\cF_1) \ge \upmu_H(\cF_2)$.
\end{lemma}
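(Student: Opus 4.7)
The plan is to adapt the proof of Proposition~\ref{prop:rs1} to the situation where one factor has multiplicity two. Setting $\cF_i = \cO_S(D_i)$, the sphericality of $\cF$ with $\rv(\cF) = (3, H, s)$ forces $g = 3s$ and $H^2 = 6s - 2$, and comparing Mukai vectors along the exact sequence yields the identity
\begin{equation*}
2 \, D_1 \cdot D_2 = D_1^2 + D_2^2 + 8,
\end{equation*}
which is symmetric in $D_1, D_2$ and therefore identical in the two cases. Translating the slope hypothesis $H \cdot D_1 \geq H \cdot D_2$ through this identity produces the parity-corrected bounds $D_1^2 - D_2^2 \geq 4$ in the first case and $D_1^2 \geq D_2^2 - 2$ in the second.

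Next, I apply Corollary~\ref{cor:gg-quotient} to the natural surjections $\cF \twoheadrightarrow \cF_2$ in the first case, and to both $\cF \twoheadrightarrow \cF_2^{\oplus 2}$ and its projection to one factor in the second case. The alternative clause of the Corollary would force $\cF_2 \cong \cO_S$ together with the surjection splitting, producing a direct summand $\cO_S^{\oplus k}$ of $\cF$ with $k \geq 1$, paired with a $\cF_1^{\oplus 2}$ or $\cO_S^{\oplus 2}$ summand; in every subcase $\End(\cF)$ strictly contains $\kk$, contradicting the simplicity of the spherical bundle $\cF$. Hence $\cO_S(D_2)$ is globally generated with $h^2 = 0$. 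Moreover, the slope hypothesis implies $H \cdot D_1 \geq H^2/3 > 0$ in both cases, so Serre duality gives $h^2(\cO_S(D_1)) = 0$ as well.

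Finally, I apply the Brill--Noether generality of $(S, H)$. In the generic subcase $D_2^2 \geq 2$, the BN pair $(D_2, H - D_2)$, which is $(D_2, 2 D_1)$ in the first case and $(D_2, D_1 + D_2)$ in the second, together with the Riemann--Roch lower bounds $h^0 \geq D^2/2 + 2$ and the Chern-class identity, yields a product $h^0(\cO_S(D_2)) \cdot h^0(\cO_S(H - D_2))$ that strictly exceeds $g + 1 = 3s + 1$, contradicting BN; in the second case the same test still works when $D_2^2 = 0$. In the borderline subcase $D_2^2 = 0$ of the first exact sequence, however, this test becomes slack for small $D_1^2$, and I switch to the BN pair $(D_1, D_1 + D_2)$, where the same analysis produces a positive gap $D_1^4/2 + 2 D_1^2 + 2$ and again violates BN. The remaining case $D_2 = 0$ is excluded by the Corollary step. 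The main obstacle is precisely this borderline subcase: since neither $\cF_1$ nor $\cF_2$ is spherical, there is no closed-form expression for the $D_i^2$, only Riemann--Roch bounds, and the correct choice of alternative BN pair is dictated by the symmetry of the Chern-class identity above.
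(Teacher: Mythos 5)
Your argument is correct and follows essentially the same route as the paper's proof: the sphericality identity (your $2\,D_1\cdot D_2 = D_1^2+D_2^2+8$ is the paper's $D_1\cdot D_2 = s_1+s_2+2$ in disguise), Corollary~\ref{cor:gg-quotient} applied to the quotient, the slope inequality, and a Brill--Noether contradiction. The only cosmetic difference is that in the first exact sequence the paper tests the single pair $(D_1, D_1+D_2)$, which already works uniformly for all $D_2^2\ge 0$, whereas you start with $(D_2, 2D_1)$ and fall back to $(D_1, D_1+D_2)$ in the borderline subcase $D_2^2=0$.
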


\begin{proof}
Let~$\rv(\cF_i) = (1, D_i, s_i)$, so that~$D_i^2 = 2s_i - 2$.

Assume we have the first sequence. 
Then~$\rv(\cF) = (3, 2D_1 + D_2, 2s_1 + s_2)$, and since~$\cF$ is spherical, we have~$(2D_1 + D_2)^2 = 12s_1 + 6s_2 - 2$, which implies
\begin{equation}
\label{eq:d1-d2}
D_1 \cdot D_2 = s_1 + s_2 + 2.
\end{equation}
Moreover, as in Proposition~\ref{prop:rs1} the global generation of~$\cF_2$ implies~$s_2 \ge 1$, and since
\begin{equation*}
2(2s_1 - 2) + (s_1 + s_2 + 2) = \upmu_H(\cF_1) \ge \upmu_H(\cF_2) = 2(s_1 + s_2 + 2) + (2s_2 - 2),
\end{equation*}
we see that~$3s_1 \ge 3s_2 + 4$, hence~$s_1 \ge 3$.
Finally, we have~$h^2(\cO_S(D_1 + D_2)) = h^2(\cO_S(D_1)) = 0$ by the argument of Proposition~\ref{prop:rs1}, hence~$h^0(\cO_S(D_1 + D_2)) \ge \upchi(\cO_S(D_1 + D_2)) = 2s_1 + 2s_2 + 2$ and~$h^0(\cO_S(D_1)) \ge \upchi(\cO_S(D_1)) = s_1 + 1$, and the Brill--Noether inequality gives
\begin{equation*}
2(s_1 + s_2 + 1)(s_1 + 1) \le 6s_1 + 3s_2.
\end{equation*}
This can be rewritten as~$(2s_1 + 2s_2 - 1)(2s_1 - 1) + 3 \le 0$, contradicting~$s_1 \ge 3$ and~$s_2 \ge 1$.

Similarly, assuming the second sequence, we obtain~$\rv(\cF) = (3, D_1 + 2D_2, s_1 + 2s_2)$, which again implies~\eqref{eq:d1-d2}.
Arguing as before, we obtain~$s_2 \ge 1$ and~$3s_1 \ge 3s_2 - 4$, hence~$s_1 \ge 0$, and this time the Brill--Noether inequality gives
\begin{equation*}
2(s_1 + s_2 + 1)(s_2 + 1) \le 3s_1 + 6s_2
\end{equation*}
which can be rewritten as~$(2s_2 - 1)(2s_1 + 2s_2 - 1) + 3 \le 0$, contradicting $s_1 \ge 0$ and~$s_2 \ge 1$.
\end{proof}

\subsection{Proof of the theorem}\label{ss:proof-surface}

Now we can finally start proving Theorem~\ref{thm:mb-s}, so we return to the setup of Section~\ref{ss:lazarseld}.
We use the notation introduced therein.
Also recall Definition~\ref{def:ha-stability} and notation~\eqref{eq:Paris20231230}.

\begin{lemma}
\label{lem:barh-ha}
If the Lazarsfeld bundle~$\bL_{\baS}(\xi)$ is not $\baH$-stable then there is a $\sigma$-ample divisor class~$A \in \Pic(S)$ 
such that~$\bL_S(\xi) \cong \sigma^*(\bL_{\baS}(\xi))$ is not~$(H,A)$-stable.
\end{lemma}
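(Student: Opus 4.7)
The plan is to lift a destabilizing subsheaf from $\baS$ to $S$ via $\sigma^*$ and adjust the auxiliary class $A$ among $\sigma$-ample divisors so that destabilization persists. If $\bL_{\baS}(\xi)$ is not $\baH$-Gieseker stable, I pick a proper nonzero saturated subsheaf $\bar\cF \subsetneq \bL_{\baS}(\xi)$ with $\bp_{\baH}(\bar\cF) \succeq \bp_{\baH}(\bL_{\baS}(\xi))$, and define $\cF \subset \bL_S(\xi)$ to be the unique saturated subsheaf whose restriction to $S \setminus \bigcup R_i$ agrees with $\sigma^*\bar\cF$ under the isomorphism $\sigma \colon S \setminus \bigcup R_i \xrightiso \baS_{\mathrm{sm}}$. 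Since line bundles on the smooth surface $S$ are determined on codimension-$2$ complements, this forces $\rank(\cF) = \rank(\bar\cF)$ and $\rc_1(\cF) = \sigma^*\rc_1(\bar\cF)$; combined with $H = \sigma^*\baH$ and the projection formula, this yields
\[
\upmu_H(\cF) = \upmu_{\baH}(\bar\cF) \ge \upmu_{\baH}(\bL_{\baS}(\xi)) = \upmu_H(\bL_S(\xi)).
\]

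If this inequality is strict, then $\bp_{H,A}(\cF) \succ \bp_{H,A}(\bL_S(\xi))$ for every $\sigma$-ample $A$ and we are done. In the equal-slope case the class $\bar\delta \coloneqq \rc_1(\bar\cF)/\rank(\bar\cF) - \rc_1(\bL_{\baS}(\xi))/r$ lies in $\baH^\perp \subset \Pic(\baS)_\QQ$. I would then exploit the orthogonal decomposition $\Pic(S)_\QQ = \sigma^*\Pic(\baS)_\QQ \oplus \langle R_i \rangle_\QQ$ to write any $\sigma$-ample $A$ as $A = \sigma^*\bar A_0 + A_1$, where $A_1 \cdot R_i > 0$ for every simple root; orthogonality then gives $A \cdot \sigma^*\bar\delta = \bar A_0 \cdot \bar\delta$. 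When $\bar\delta \ne 0$, the Hodge index theorem on $\baS$ forces $\bar\delta^2 < 0$, so neither $\bar\delta$ nor $-\bar\delta$ is nef, and I can pick an ample class $\bar A_0 \in \Pic(\baS)_\QQ$ with $\bar A_0 \cdot \bar\delta > 0$; any corresponding $\sigma$-ample $A$ satisfies $\upmu_A(\cF) > \upmu_A(\bL_S(\xi))$, and $\cF$ destabilizes.

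The hard part will be the residual subcase $\bar\delta = 0$, in which $\rc_1(\bar\cF)$ is a rational multiple of $\baH$ and every $\sigma$-ample $A$ gives $\upmu_A(\cF) = \upmu_A(\bL_S(\xi))$; destabilization must then come from a reduced-Euler-characteristic comparison. Here I would use the projection formula $R\sigma_*\cO_S = \cO_{\baS}$ (valid since $\sigma$ is a crepant resolution of rational singularities) to obtain $\upchi(\bL_S(\xi)) = \upchi(\bL_{\baS}(\xi))$, together with the identification $\sigma_*\cF = \bar\cF$ (both are reflexive on $\baS$ and agree on the smooth locus) to compute $\upchi(\cF) = \upchi(\bar\cF) - \mathrm{length}(R^1\sigma_*\cF)$. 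The desired inequality $\updelta(\cF) \ge \updelta(\bL_S(\xi))$ then follows from $\updelta(\bar\cF) \ge \updelta(\bL_{\baS}(\xi))$ once one arranges $R^1\sigma_*\cF = 0$; this last vanishing is the main technical hurdle and is secured by choosing $\bar\cF$ within the Jordan--H\"older filtration of $\bL_{\baS}(\xi)$ so that it is locally free in a neighborhood of $\Sing(\baS)$.
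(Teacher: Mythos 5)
Your overall strategy (lift a destabilizer, then tune $A$) is reasonable, but there are two genuine gaps. The first is the assertion that $\rc_1(\cF)=\sigma^*\rc_1(\bar\cF)$ because ``line bundles on $S$ are determined on codimension-$2$ complements'': the exceptional locus $\bigcup R_i$ is a \emph{divisor} in $S$, not a codimension-$2$ subset, so the saturation $\cF$ of $\sigma^*\bar\cF$ inside $\bL_S(\xi)$ only satisfies $\rc_1(\cF)=\sigma^*\rc_1(\bar\cF)+\sum b_iR_i$ for some a priori unknown integers $b_i$. These exceptional components are exactly what a $\sigma$-ample class pairs nontrivially with, so your identity $A\cdot\sigma^*\bar\delta=\bar A_0\cdot\bar\delta$ does not compute the relevant slope difference $\upmu_A(\cF)-\upmu_A(\bL_S(\xi))$; the term $A_1\cdot\sum b_iR_i$ has been silently dropped and has no controlled sign in your setup. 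The missing ingredient is a bound on $\rc_1(\cF)\cdot R_i$: since $\cF$ is saturated, $\cF\vert_{R_i}$ embeds into $\bL_S(\xi)\vert_{R_i}$, which is \emph{trivial} because $\bL_S(\xi)$ is a pullback along $\sigma$, whence $\rc_1(\cF)\cdot R_i\le 0$ for every $i$. This is the key point of the paper's proof, which then takes $A=\sum a_iR_i$ with all $a_i<0$ (such a class is $\sigma$-ample because the inverse of an ADE Cartan matrix has positive entries), so that $\upmu_A(\cF)\ge 0=\upmu_A(\bL_S(\xi))$ holds automatically, with no case division on $\bar\delta$.

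The second gap is your residual case. The claim that one can choose $\bar\cF$ ``within the Jordan--H\"older filtration so that it is locally free in a neighborhood of $\Sing(\baS)$'' is not justified and cannot in general be arranged; and both $\sigma_*\cF=\bar\cF$ and $\rR^1\sigma_*\cF=0$ are asserted rather than proved. In the paper this step is handled inside the equality case of the $A$-slope comparison: if $\rc_1(\cF)\cdot A=0$ with the above choice of $A$, then $\rc_1(\cF)\cdot R_i=0$ for all $i$, so $\cF$ is trivial on each $R_i$, hence $\cF\cong\sigma^*(\sigma_*\cF)$ and $\upchi(\cF)=\upchi(\sigma_*\cF)$, which gives $\updelta(\cF)\ge\updelta(\bL_S(\xi))$ directly from Gieseker instability downstairs. (The paper also sidesteps your construction of $\cF$ altogether by invoking Esnault's bijection between saturated subsheaves of $\bL_S(\xi)$ and of $\bL_{\baS}(\xi)$ under $\sigma_*$.) As written, your argument does not establish the lemma.
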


\begin{proof}
Note that the functor~$\sigma_*$ defines a bijection between the set of all saturated subsheaves in~$\bL_S(\xi)$
and the set of all saturated subsheaves in~$\bL_{\baS}(\xi)$ 
(this can be seen directly; for the general result, see~\cite[Lemma and definition~(2.2)]{Esnault}).
Therefore, if~$\bL_{\baS}(\xi)$ is not $\baH$-stable 
there is a saturated subsheaf~$\cF \subset \bL_S(\xi)$ such that~$\sigma_*\cF \subset \bL_{\baS}(\xi)$ 
is a destabilizing saturated subsheaf.
Let~$A = \sum a_i R_i$ be a $\sigma$-ample class such that~$a_i < 0$ for all~$i$.
We will show that~$\cF$ destabilizes~$\bL_S(\xi)$ with respect to~$(H,A)$-stability.

First, we have~$H = \sigma^*(\baH)$ and~$\sigma_*(\rc_1(\cF)) = \rc_1(\sigma_*\cF)$, hence the projection formula implies
\begin{equation*}
\label{eq:muh-ineq}
\upmu_H(\cF) = 
\tfrac1{\rank(\cF)} \rc_1(\cF) \cdot \sigma^*(\baH) =
\tfrac1{\rank(\sigma_*\cF)} \rc_1(\sigma_*\cF) \cdot \baH =
\upmu_{\baH}(\sigma_*\cF) \ge 
\upmu_{\baH}(\bL_{\baS}(\xi)) =
\upmu_H(\bL_S(\xi)).
\end{equation*}
Moreover, if this is an equality 
then we must have~$\updelta(\sigma_*\cF) \ge \updelta(\bL_{\baS}(\xi)) = \updelta(\bL_{S}(\xi))$.

Next, for any curve~$R_i$ in~$\fR(S,H)$
we have~$H \cdot R_i = 0$, hence~$\rc_1(\bL_S(\xi)) \cdot A = 0$.
Note also that~$\bL_S(\xi)$ is trivial on~$R_i$, because it is a pullback along~$\sigma$.
Thus, $\cF\vert_{R_i}$ is a subsheaf in a trivial vector bundle, hence we have~$\rc_1(\cF) \cdot R_i \le 0$ for all~$i$.
Therefore, we have~$\rc_1(\cF) \cdot A \ge 0$, hence
\begin{equation}
\label{eq:mua-ineq}
\upmu_{A}(\cF) = 
\tfrac1{\rank(\cF)}{\rc_1(\cF)\cdot A} \ge 
0 = 
\tfrac1{\rank(\bL_S(\xi))}{\rc_1(\bL_S(\xi))\cdot A} =
\upmu_{A}(\bL_S(\xi)).
\end{equation}
Finally, if~\eqref{eq:mua-ineq} is an equality, 
we have~$\rc_1(\cF) \cdot R_i = 0$ for each~$R_i$,
hence~$\cF$ is trivial on each of these curves, hence~$\cF \cong \sigma^*(\sigma_*\cF)$,
hence~$\upchi(\cF) = \upchi(\sigma_*\cF)$, 
hence~$\updelta(\cF) = \updelta(\sigma_*\cF)$, 
and we conclude that~$\bp_{H,A}(\cF) \succeq \bp_{H,A}(\bL_S(\xi))$,
so that~$\bL_S(\xi)$ is not $(H,A)$-stable.
\end{proof}

\begin{lemma}
\label{lem:r23-instability}
If~$r \in \{2,3\}$ and the Lazarsfeld bundle~$\bL_{\baS}(\xi)$ is not $\baH$-Gieseker stable, 
then its pullback~$\bL_S(\xi) \cong \sigma^*\bL_{\baS}(\xi)$ fits into an exact sequence
\begin{equation*}
0 \to \cG_1^{\oplus m_1} \to \bL_S(\xi) \to \cG_2^{\oplus m_2} \to 0,
\end{equation*}
where~$\cG_1$ and~$\cG_2$ are spherical bundles with~$\upmu_H(\cG_1) \ge \upmu_H(\cG_2)$ and~$(m_1,m_2) \in \{(1,1), (1,2), (2,1)\}$.
\end{lemma}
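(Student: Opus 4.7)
First, by Lemma~\ref{lem:barh-ha}, the failure of $\baH$-Gieseker stability of $\bL_{\baS}(\xi)$ supplies a $\sigma$-ample class $A \in \Pic(S)$ such that $\bL_S(\xi) \cong \sigma^*\bL_{\baS}(\xi)$ is not $(H,A)$-stable. Since $\bL_S(\xi)$ is spherical (hence rigid and torsion free) by Lemma~\ref{lem:lazarsfeld-bundle}, I would apply Proposition~\ref{prop:filtration-msph} to obtain a multispherical filtration
\[
0 = \cF^{(0)} \subset \cF^{(1)} \subset \cdots \subset \cF^{(n)} = \bL_S(\xi)
\]
with successive quotients $\cG_k^{\oplus m_k}$, each $\cG_k$ an $(H,A)$-stable spherical sheaf, satisfying $\upmu_H(\cG_1) \ge \cdots \ge \upmu_H(\cG_n)$ and $\Hom(\cG_k,\cG_{k+1})=0$. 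Simplicity of $\bL_S(\xi)$ excludes $n=1$ with $m_1 \ge 2$, while its $(H,A)$-instability excludes $n=1$ with $m_1=1$, so $n \ge 2$.

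For $r = 2$, the rank equation $\sum m_k \rank(\cG_k) = 2$ together with $n \ge 2$ leaves only $n = 2$, $m_1 = m_2 = 1$, and both $\cG_k$ line bundles, which is the required sequence. For $r = 3$ with $n = 2$, enumerating solutions to $m_1 \rank(\cG_1) + m_2 \rank(\cG_2) = 3$ gives exactly $(m_1,m_2) \in \{(1,1),(1,2),(2,1)\}$---with one $\cG_k$ of rank $2$ in the $(1,1)$ subcase, and both $\cG_k$ line bundles in the $(1,2)$ and $(2,1)$ subcases---and the multispherical filtration itself is the desired sequence.

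The main obstacle is the case $r = 3$, $n = 3$, where all three factors $\cG_1,\cG_2,\cG_3$ are line bundles with $m_k = 1$. To collapse this to a two-step factorization, I would aim to show that the rank-$2$ quotient $\cE \coloneqq \bL_S(\xi)/\cG_1$ is itself a spherical bundle; the sequence $0 \to \cG_1 \to \bL_S(\xi) \to \cE \to 0$ then provides an $(m_1,m_2) = (1,1)$ factorization with $\cG_2$ replaced by the rank-$2$ spherical $\cE$, or symmetrically one works with the rank-$2$ subsheaf $\cF^{(2)}$ and quotient $\cG_3$. The plan for proving sphericality of $\cE$ has three ingredients: \textbf{(i)} apply Lemma~\ref{lem:hom-back} to $0 \to \cG_1 \to \bL_S(\xi) \to \cE \to 0$ and use simplicity of $\bL_S(\xi)$ to deduce $\Hom(\cE,\cG_1) = 0$ and hence $\Hom(\cG_3,\cG_1) = 0$; \textbf{(ii)} combine the multispherical vanishing $\Hom(\cG_1,\cG_2) = 0$ with a vanishing $\Hom(\cG_1,\cG_3) = 0$---extracted from the global generation of $\bL_S(\xi)^\vee$ via Corollary~\ref{cor:gg-quotient} applied to the dual filtration---to get $\Hom(\cG_1,\cE) = 0$, and then invoke Lemma~\ref{lem:mukai-lemma} to transfer rigidity from $\bL_S(\xi)$ to $\cE$; \textbf{(iii)} verify $\upchi(\cE,\cE) = 2$ by Mukai-vector additivity starting from $\upchi(\bL_S(\xi),\bL_S(\xi)) = 2$ and $\upchi(\cG_i,\cG_i) = 2$, ensuring $\cE$ is spherical rather than merely rigid. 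I expect step~(ii) to be the most delicate: the required $\Hom$-vanishings between the three line bundles must be extracted from the special structure of the Lazarsfeld bundle, rather than from the generic multispherical formalism alone.
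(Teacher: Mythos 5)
Your overall strategy is the same as the paper's: reduce to the multispherical filtration via Lemma~\ref{lem:barh-ha} and Proposition~\ref{prop:filtration-msph}, dispose of $r=2$ and of $r=3$ with two filtration steps by rank counting, and in the remaining case ($r=3$ with three line-bundle factors $\cG_1,\cG_2,\cG_3$) merge two consecutive factors into a rank-two spherical sheaf. That skeleton is correct, but both mechanisms you propose for the merging step have genuine gaps. To transfer rigidity to $\cE=\bL_S(\xi)/\cG_1$ via Lemma~\ref{lem:mukai-lemma} you need the \emph{forward} vanishing $\Hom(\cG_1,\cE)=0$, hence $\Hom(\cG_1,\cG_3)=0$; your ingredient~(i) only yields the backward vanishing $\Hom(\cG_3,\cG_1)=0$, and the route sketched in~(ii) does not produce the forward one: among the duals of the factors only $\cG_1^\vee$ (and $\cG_{1,2}^\vee$) is a quotient of the globally generated bundle $\bL_S(\xi)^\vee$, so Corollary~\ref{cor:gg-quotient} tells you that $-\rc_1(\cG_1)$ is globally generated, which does not preclude $\rc_1(\cG_3)-\rc_1(\cG_1)$ from being effective. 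The paper gets $\Hom(\cG_1,\cG_3)=0$ by a different, elementary device: either $\upmu_H(\cG_1)>\upmu_H(\cG_3)$ (and then no nonzero map between these $(H,A)$-stable line bundles exists), or all three slopes equal $\upmu_H(\bL_S(\xi))=\tfrac{2}{3}-2s$, which is not an integer and hence cannot be the $H$-degree of a line bundle.

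Ingredient~(iii) is wrong as stated. Additivity of Mukai vectors gives $\upchi(\cE,\cE)=\upchi(\cG_2,\cG_2)+\upchi(\cG_3,\cG_3)+2\upchi(\cG_2,\cG_3)=4+2\upchi(\cG_2,\cG_3)$, and the constraint $\upchi(\bL_S(\xi),\bL_S(\xi))=2$ only pins down the \emph{sum} of the three cross-terms, not the individual one you need to equal $-1$. Indeed it can happen that $\bL_S(\xi)/\cG_1\cong\cG_2\oplus\cG_3$ is rigid but not spherical, in which case one must instead use the other merger $\cG_{1,2}=\Ker(\bL_S(\xi)\to\cG_3)$; so one cannot fix $\cE=\bL_S(\xi)/\cG_1$ in advance. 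Deciding which merger is spherical is precisely the content of the paper's case analysis, which is the missing core of your argument: Lemma~\ref{lem:mukai-lemma} bounds $\ext^1(\cG_2,\cG_1)$ and $\ext^1(\cG_3,\cG_2)$ by $1$, Lemma~\ref{lem:hom-back} excludes $\Hom(\cG_2,\cG_1)$ and $\Hom(\cG_3,\cG_2)$ from being simultaneously nonzero, simplicity of $\bL_S(\xi)$ excludes both extensions from splitting, and in each resulting case exactly one of $\upchi(\cG_2,\cG_1)$, $\upchi(\cG_3,\cG_2)$ is computed to be $-1$, making the corresponding $\cG_{i-1,i}$ spherical.
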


\begin{proof}
By Lemma~\ref{lem:barh-ha} the sheaf~$\bL_S(\xi)$ is not $(H,A)$-stable for appropriate~$A$,
hence the multispherical filtration of~$\bL_S(\xi)$ provided by Proposition~\ref{prop:filtration-msph} is not trivial.
In the case~$r = 2$, therefore, it has the above form with~$(m_1,m_2) = (1,1)$.

Now assume~$r = 3$. 
The only case where the conclusion is not immediately obvious 
is where the multispherical filtration of~$\bL_S(\xi)$ has three spherical factors~$\cG_1 \not\cong \cG_2 \not\cong \cG_3$,
By Proposition~\ref{prop:rigid-lf} the sheaves~$\cG_i$ are locally free, hence they are line bundles.
Note also that
\begin{equation*}
\Hom(\cG_1,\cG_2) = \Hom(\cG_2,\cG_3) = \Hom(\cG_1,\cG_3) = 0.
\end{equation*}
Indeed, the first two spaces are zero by Proposition~\ref{prop:filtration-msph}.
Moreover, if~$\upmu_H(\cG_1) > \upmu_H(\cG_3)$, the third space is zero by $(H,A)$-stability of~$\cG_1$ and~$\cG_3$, 
and otherwise we would have~$\upmu_H(\cG_i) = \upmu_H(\bL_S(\xi))$ for all~$i$, hence~$\upmu_H(\bL_S(\xi))$ would be integral, 
while in fact~$\upmu_H(\bL_S(\xi)) = \tfrac13H^2 = 2s - \tfrac23$.

Now, consider the sheaves~$\cG_{1,2} \coloneqq \Ker(\bL_S(\xi) \to \cG_3)$ and~$\cG_{2,3} \coloneqq \bL_S(\xi)/\cG_1$. 
It follows that
\begin{equation*}
\Hom(\cG_{1,2}, \cG_3) = \Hom(\cG_1, \cG_{2,3}) = 0
\end{equation*}
hence~$\cG_{1,2}$ and~$\cG_{2,3}$ are rigid by Lemma~\ref{lem:mukai-lemma}.
Furthermore, Lemma~\ref{lem:mukai-lemma} implies that the maps
\begin{equation*}
\kk = \Hom(\cG_i,\cG_i) \to \Ext^1(\cG_i,\cG_{i-1})
\qquad\text{and}\qquad 
\kk = \Hom(\cG_i,\cG_i) \to \Ext^1(\cG_{i+1},\cG_i)
\end{equation*}
are surjective, in particular, the spaces~$\Ext^1(\cG_2,\cG_1)$ and~$\Ext^1(\cG_3,\cG_2)$ are at most 1-dimensional.

If~$\Hom(\cG_2,\cG_1) \ne 0$ and~$\Hom(\cG_3,\cG_2) \ne 0$, 
the composition of nontrivial morphisms~\mbox{$\cG_3 \to \cG_2$} and~\mbox{$\cG_2 \to \cG_1$} is nontrivial, 
hence~$\Hom(\cG_3,\cG_1) \ne 0$, which is impossible by Lemma~\ref{lem:hom-back} because~$\bL_S(\xi)$ is spherical.
Thus, one of the above spaces must be zero.

Assume~$\Hom(\cG_2,\cG_1) = 0$ and~$\Hom(\cG_3,\cG_2) \ne 0$.
If~$\Ext^1(\cG_2,\cG_1) = 0$ then~$\cG_{1,2} \cong \cG_1 \oplus \cG_2$, 
hence we have~$\Hom(\cG_3, \cG_{1,2}) \ne 0$, which is impossible by Lemma~\ref{lem:hom-back}. 
Therefore, $\Ext^1(\cG_2,\cG_1) = \kk$, hence we have~$\upchi(\cG_{1,2},\cG_{1,2}) = 2$,
and since the sheaf~$\cG_{1,2}$ is rigid, it is spherical.
Therefore the filtration~\mbox{$0 \to \cG_{1,2} \to \bL_S(\xi) \to \cG_3 \to 0$} has the required properties.

The case where~$\Hom(\cG_2,\cG_1) \ne 0$ and~$\Hom(\cG_3,\cG_2) = 0$ is considered analogously; 
in this case the filtration~$0 \to \cG_1 \to \bL_S(\xi) \to \cG_{2,3}\to 0$ has the required properties.

Finally, assume~$\Hom(\cG_2,\cG_1) = \Hom(\cG_3,\cG_2) = 0$.
If also~$\Ext^1(\cG_2,\cG_1) = \Ext^1(\cG_3,\cG_2) = 0$ then it is easy to see that~$\cG_2$ is a direct summand of~$\bL_S(\xi)$, which is impossible because~$\bL_S(\xi)$ is spherical.
Therefore, $\Ext^1(\cG_i,\cG_{i-1}) = \kk$ for~$i = 2$ or~$i = 3$, 
hence~$\cG_{i-1,i}$ is spherical and as in one of the two previous cases 
we obtain a filtration of~$\bL_S(\xi)$ with the required properties.
\end{proof}

\begin{proof}[Proof of Theorem~\textup{\ref{thm:mb-s}}]
Since the surface~$S$ contains a BNP-general curve, it is BN-general (see Theorem~\ref{thm:bnpc}\ref{it:bnp-bn}).
Assume~$\bL_{\baS}(\xi)$ is not $\baH$-Gieseker stable.
Dualizing the sequence produced by Lemma~\ref{lem:r23-instability}, we obtain an exact sequence
\begin{equation*}
0 \to \cF_1 \to \bL_S(\xi)^\vee \to \cF_2 \to 0
\end{equation*}
of multispherical sheaves,
and we deduce from Lemma~\ref{lem:two-plus-one} that 
the cases where~\mbox{$(m_1,m_2) = (1,2)$} or~\mbox{$(m_1,m_2) = (2,1)$ are impossible},
hence both~$\cF_1$ and~$\cF_2$ must be spherical.
Therefore, Proposition~\ref{prop:rs1} proves that~\mbox{$\Xi \coloneqq \rc_1(\cF_2)$} is a special Mukai class of type~$(r,s)$ on~$S$
and~$\cF_1$ is a line bundle, hence~\mbox{$\cF_1 \cong \cO_S(H - \Xi)$}.
Consider the composition
\begin{equation*}
\cO_S(H - \Xi) \cong \cF_1 \hookrightarrow \bL_S(\xi)^\vee \to j_*\eta,
\end{equation*}
where the last arrow comes from exact sequence~\eqref{eq:cus-dual}.
If the composition vanishes, sequence~\eqref{eq:cus-dual} implies 
that the embedding~$\cO_S(H - \Xi) \hookrightarrow \bL_S(\xi)^\vee$ factors through~$\rH^0(C,\xi)^\vee \otimes \cO_{S}$,
which is absurd because~$H \cdot (H-\Xi) = (r + 1)(s - 1) > 0$.
Therefore, the composition is nonzero, so it factors through a nonzero morphism
\begin{equation*}
\cO_S(H - \Xi)\vert_C \to \eta.
\end{equation*}
The source and target are line bundles of the same degree~$(r + 1)(s - 1)$, hence the above morphism is an isomorphism,
and we conclude that~$\cO_S(\Xi)\vert_C \cong \eta^{-1}(K_C) \cong \xi$.

Conversely, assume~$\Xi$ is a special Mukai class of type~$(r,s)$ on~$S$.
Let~$C \subset S$ be a BNP-general curve in~$|H|$ and let 
\begin{equation*}
\xi \coloneqq \cO_S(\Xi)\vert_C,
\qquad 
\eta \coloneqq \cO_S(H - \Xi)\vert_C.
\end{equation*}
We know from Proposition~\ref{prop:h-xi} that~$(\xi,\eta)$ is a Mukai pair of type~$(r,s)$ 
and the restriction induces an isomorphism~$\rH^0(S,\cO_S(\Xi)) \cong \rH^0(C,\xi)$.
Consider the commutative diagram with exact rows
\begin{equation*}
\xymatrix{
0 \ar[r] & 
0 \ar[r] \ar[d] &
\rH^0(S,\cO_S(\Xi)) \otimes \cO_S \ar@{=}[r] \ar[d] & 
\rH^0(C,\xi) \otimes \cO_S \ar[r] \ar[d] &
0
\\
0 \ar[r] &
\cO_S(\Xi-H) \ar[r] &
\cO_S(\Xi) \ar[r] &
j_*\xi \ar[r] &
0.
}
\end{equation*}
The middle vertical arrow is surjective, because~$\Xi$ is globally generated,
and the kernel of the right vertical arrow is~$\sigma^*\bL_{S}(\xi)$.
It follows that there is an epimorphism~$\bL_{S}(\xi) \twoheadrightarrow \cO_S(\Xi-H)$ and
\begin{equation*}
\Ker\bigl(\bL_{S}(\xi) \twoheadrightarrow \cO_S(\Xi-H)\bigr) \cong \Ker\bigl(\rH^0(S,\cO_S(\Xi)) \otimes \cO_S \twoheadrightarrow \cO_S(\Xi)\bigr).
\end{equation*}
Moreover, as~$\sigma_*\cO_S(\Xi)$ is globally generated by Corollary~\ref{cor:bs-h-xi}, 
the first direct image of the right-hand side vanishes, hence the same is true for the first direct image of the left-hand side, 
hence the induced morphism~$\bL_{\baS}(\xi) \cong \sigma_*\bL_{S}(\xi) \to \sigma_*\cO_S(\Xi-H)$ is surjective,
and since
\begin{equation*}
\upmu_{\baH}(\bL_{\baS}(\xi)) = \tfrac{2}{r} - 2s \ge -(r+1)(s-1) = \upmu_{\baH}(\sigma_*\cO_S(\Xi-H)),
\end{equation*}
such an epimorphism violates stability of~$\bL_{\baS}(\xi)$.

Now assume that~$(S,H)$ does not have special Mukai classes of type~$(r,s)$.
Then for any BNP-general curve~$C$ and any Mukai pair~$(\xi,\eta)$ on it the bundle~$\bL_{\baS}(\xi)$ is stable, 
hence it is a Mukai bundle of type~$(r,s)$.
Finally, since a Mukai bundle is unique by Lemma~\ref{lem:mbs-unique}, 
$\bL_{\baS}(\xi)$ does not depend on~$C$ or~$\xi$.
\end{proof}

%%%%%%%%%%%%%%%%%%%%%%%%%

\section{Mukai extension classes}

In this section we define and study Mukai extension classes on curves~$C \subset S$. 
In particular, when~$\Pic(S) = \bZ \cdot H$ we will show 
that the defining property of a Mukai extension class (see Definition~\ref{def:mukai-extension})
uniquely characterises the restriction of the (dual of) the Lazarsfeld bundle on~$S$, 
see Definition~\ref{def:lec} and Theorem~\ref{thm:mukai-extension}. 

Throughout this section we work on a smooth quasipolarized K3 surface~$S$.

\subsection{Restriction of Lazarsfeld bundles}

Here we establish some cohomology vanishings for the Lazarsfeld bundles and deduce some corollaries about their restrictions to curves.

The first result is elementary.

\begin{lemma}
\label{lem:l-m-h}
Let~$(S,H)$ be a quasipolarized~$K3$ surface of genus~$g = r \cdot s \ge 4$
and let~$C \subset S$ be a BNP-general curve in~$|H|$.
Assume~$s \ge r \in \{2,3\}$.
If~$(\xi,\eta)$ is a Mukai pair of type~$(r,s)$ on~$C$ then
\begin{alignat*}{3}
\rH^2(S,\bL_S(\xi) \otimes \cO_S(H)) &= \rH^1(S,\bL_S(\xi) \otimes \cO_S(H)) && = 0 \quad \text{and} \\
\rH^1(S,\bL_S(\xi)^\vee \otimes \cO_S(-H)) &= \rH^0(S,\bL_S(\xi)^\vee \otimes \cO_S(-H)) && = 0.
\end{alignat*}
\end{lemma}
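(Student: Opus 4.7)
The plan is to note first that the two pairs of vanishings are equivalent under Serre duality on the K3 surface $S$: we have
\[
\rH^i(S, \bL_S(\xi) \otimes \cO_S(H)) \cong \rH^{2-i}(S, \bL_S(\xi)^\vee \otimes \cO_S(-H))^\vee
\]
since $K_S = 0$. So it suffices to prove the top two vanishings.

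To do this, I would tensor the defining exact sequence~\eqref{eq:def-cus} by $\cO_S(H)$. Using the adjunction isomorphism $\cO_S(H)\vert_C \cong \cO_C(K_C)$ (which follows from $C \in |H|$ and $K_S = 0$), this gives
\[
0 \to \bL_S(\xi)(H) \to \rH^0(C,\xi) \otimes \cO_S(H) \to j_*(\xi(K_C)) \to 0.
\]
I would then pass to the long exact sequence of cohomology. By~\eqref{eq:genus-s} we have $\rH^i(S,\cO_S(H)) = 0$ for $i \ge 1$, and since $\deg\xi = (r-1)(s+1) > 0$ the line bundle $\xi(K_C)$ has degree larger than $2g-2$, so $\rH^1(C,\xi(K_C)) = 0$. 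Threading these vanishings through the long exact sequence, $\rH^2(S,\bL_S(\xi)(H))$ vanishes for free, and $\rH^1(S,\bL_S(\xi)(H))$ is exactly the cokernel of the multiplication map
\[
m \colon \rH^0(C,\xi) \otimes \rH^0(S,\cO_S(H)) \longrightarrow \rH^0(C,\xi(K_C)).
\]

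The remaining task is to show that $m$ is surjective. I would factor it as
\[
\rH^0(C,\xi) \otimes \rH^0(S,\cO_S(H)) \xrightarrow{\ \id\otimes \rho\ } \rH^0(C,\xi) \otimes \rH^0(C,\cO_C(K_C)) \xrightarrow{\ \mu\ } \rH^0(C,\xi(K_C)),
\]
where $\rho$ is the restriction map. The restriction $\rho$ is surjective because the sequence $0 \to \cO_S \to \cO_S(H) \to \cO_C(K_C) \to 0$ gives a surjection onto $\rH^0(C,K_C)$ thanks to $\rH^1(S,\cO_S) = 0$. The multiplication $\mu$ is the map considered in Proposition~\ref{prop:xi-eta}\ref{it:xi-k}, which is surjective precisely in our range $r \in \{2,3\}$ and $s \ge r$ under the BNP-generality hypothesis on $C$. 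This gives the desired vanishings.

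The potentially delicate step is, of course, surjectivity of $\mu$, but it is already done for us in Proposition~\ref{prop:xi-eta}\ref{it:xi-k}, and it is precisely the reason for the hypothesis $r \in \{2,3\}$ in the statement of the lemma. Everything else is essentially formal manipulation of the long exact sequence, so there is no serious obstacle.
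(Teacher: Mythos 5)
Your proposal is correct and follows essentially the same route as the paper: twist the defining sequence~\eqref{eq:def-cus} by $\cO_S(H)$, use the vanishings $\rH^{\ge 1}(S,\cO_S(H))=0$ and $\rH^1(C,\xi(K_C))=0$, reduce to the surjectivity of the multiplication map handled by Proposition~\ref{prop:xi-eta}\ref{it:xi-k}, and deduce the second pair of vanishings by Serre duality. The only addition is your explicit factorization through the restriction map $\rH^0(S,\cO_S(H)) \to \rH^0(C,\cO_C(K_C))$, which the paper leaves implicit.
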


\begin{proof}
Twisting~\eqref{eq:def-cus} by~$\cO_S(H)$ we obtain an exact sequence
\begin{equation*}
0 \to \bL_S(\xi) \otimes \cO_S(H) \xrightarrow\quad \rH^0(C,\xi) \otimes \cO_S(H) \xrightarrow{\ \ev\ } j_*(\xi(K_C)) \to 0.
\end{equation*}
Since~$\rH^1(S, \cO_S(H)) = \rH^2(S, \cO_S(H)) = 0$ by~\eqref{eq:genus-s}
and~$\rH^1(C, \xi(K_C)) = 0$ because~$\deg(\xi) > 0$,
the vanishing of~$\rH^2(S,\bL_S(\xi) \otimes \cO_S(H))$ is obvious,
and to prove the vanishing of~$\rH^1(S,\bL_S(\xi) \otimes \cO_S(H))$ it is enough to check that the morphism
\begin{equation*}
\rH^0(C,\xi) \otimes \rH^0(C, \cO_C(K_C)) \to \rH^0(C, \xi(K_C))
\end{equation*}
is surjective, which is proved in Proposition~\ref{prop:xi-eta}\ref{it:xi-k}.
The remaining two vanishings follow from the first two by Serre duality.
\end{proof}

The second vanishing result is more complicated; in particular, it relies on Theorem~\ref{thm:mb-s}.

\begin{proposition}
\label{prop:l-l-m-h}
Let~$(S,H)$ be a quasipolarized~$K3$ surface of genus~$g = r \cdot s \ge 6$
and let~$C \subset S$ be a BNP-general curve in~$|H|$.
Assume~$s \ge r \in \{2,3\}$.
If~$S$ does not have special Mukai classes of type~$(r,s)$, then for any Mukai pair~$(\xi,\eta)$ of type~$(r,s)$ on~$C$ we have
\begin{equation*}
\rH^1(S, \bL_S(\xi)^\vee \otimes \bL_S(\xi) \otimes \cO_S(-H)) = 
\rH^1(S, \bL_S(\xi)^\vee \otimes \bL_S(\xi) \otimes \cO_S(H)) = 0.
\end{equation*}
\end{proposition}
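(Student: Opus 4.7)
By Serre duality on $S$ (using $K_S = \cO_S$ and the self-duality of $\cEnd(\bL_S(\xi))$), the two claimed vanishings are equivalent, so I focus on the $(-H)$ case. Tensoring the defining sequence~\eqref{eq:def-cus} for $\bL_S(\xi)$ with $\bL_S(\xi)^\vee(-H)$ and invoking the vanishings of $\rH^0(\bL_S(\xi)^\vee(-H))$ and $\rH^1(\bL_S(\xi)^\vee(-H))$ from Lemma~\ref{lem:l-m-h}, the associated long exact sequence yields
\[
\rH^1(S, \bL_S(\xi)^\vee \otimes \bL_S(\xi)(-H)) \;\cong\; \rH^0(C, \bL_S(\xi)^\vee\vert_C \otimes \eta^{-1}) \;=\; \Hom_C(\eta, \bL_S(\xi)^\vee\vert_C),
\]
where I use $\cO_S(-H)\vert_C \cong \cO_C(-K_C)$ and the Mukai pair identity $K_C \cong \xi \otimes \eta$.

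Restricting the dual~\eqref{eq:cus-dual} of the defining sequence to $C$ (via a standard $\Tor$ calculation using the same adjunction) produces the short exact sequence
\[
0 \to M_\xi^\vee \to \bL_S(\xi)^\vee\vert_C \to \eta \to 0, \tag{$\star$}
\]
where $M_\xi$ is the Lazarsfeld--Mukai bundle on $C$ defined by $0 \to M_\xi \to \rH^0(C,\xi) \otimes \cO_C \to \xi \to 0$. Applying $\Hom_C(\eta,-)$ shows that the above $\Hom$ vanishes if and only if \textup{(a)} $\Hom_C(\eta, M_\xi^\vee) = 0$ and \textup{(b)} the sequence $(\star)$ does not split. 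For part~\textup{(a)}, Serre duality on $C$ reduces the vanishing to surjectivity of the multiplication map $\rH^0(\xi) \otimes \rH^0(K_C^2 \xi^{-1}) \to \rH^0(K_C^2)$. For $r = 2$ this is the base-point-free pencil trick combined with the easy vanishing $h^1(K_C^2 \xi^{-2}) = 0$ (a degree check for $s \ge 3$); for $r = 3$ it follows from pulling back the Koszul complex on $\P(\rH^0(\xi)^\vee)$ as in Proposition~\ref{prop:xi-eta}\ref{it:xi-k}, the required $h^1(K_C^2 \xi^{-2}) = 0$ again being a degree check for $s \ge 4$, while the boundary case $s = r = 3$ is handled by combining the Petri isomorphism (Lemma~\ref{lem:xi-eta}) with Noether's theorem $\Sym^2 \rH^0(K_C) \twoheadrightarrow \rH^0(K_C^2)$, applicable since any BNP-general curve of genus $g \ge 5$ is neither hyperelliptic nor trigonal.

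The main obstacle is part~\textup{(b)}, which is where the no-special-Mukai-class hypothesis enters. The plan is to show that a splitting of $(\star)$ would force $S$ to carry a special Mukai class of type $(r, s)$, contradicting the hypothesis. Under our hypothesis, $\bL_S(\xi)$ is $H$-stable by Theorem~\ref{thm:mb-s}. A hypothetical splitting of $(\star)$ would exhibit $\eta$ as a direct summand of $\bL_S(\xi)^\vee\vert_C$; combining this with the defining sequence~\eqref{eq:cus-dual} on $S$ and running the destabilization argument from the proof of Theorem~\ref{thm:mb-s} in reverse, one aims to extract a line subbundle $\cO_S(H - \Xi) \hookrightarrow \bL_S(\xi)^\vee$ on $S$ whose restriction to $C$ recovers $\eta$. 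The resulting class $\Xi \in \Pic(S)$ satisfies $\Xi\vert_C \cong \xi$, and combined with Lemma~\ref{lem:xi-eta}, Corollary~\ref{cor:bs-h-xi}, and the Brill--Noether generality of $(S, H)$ (inherited from BNP-generality of $C$ via Theorem~\ref{thm:bnpc}\ref{it:bnp-bn}), verifies the cohomological and global-generation conditions of Definition~\ref{def:xi-big}, yielding the desired special Mukai class and the sought contradiction.
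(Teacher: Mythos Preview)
Your reduction to~$\Hom_C(\eta, \bL_S(\xi)^\vee\vert_C) = 0$ and your treatment of part~(a) are correct; the sequence~$(\star)$ is exactly~\eqref{eq:rxi-blxi} in the paper's notation (with~$M_\xi^\vee = \bR_C(\xi^{-1})$), and your multiplication-map computations check out, including the~$r=s=3$ case via Petri plus Max Noether.

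The gap is in part~(b). You assert that a splitting of~$(\star)$ on~$C$ would let you ``extract a line subbundle~$\cO_S(H-\Xi) \hookrightarrow \bL_S(\xi)^\vee$'' on~$S$, but you give no mechanism for this. A splitting of~$(\star)$ only gives a copy of~$\eta$ inside~$\bL_S(\xi)^\vee\vert_C$; lifting this to a line subbundle on~$S$ is not automatic and is in fact the entire difficulty. Your phrase ``running the destabilization argument from the proof of Theorem~\ref{thm:mb-s} in reverse'' does not work as stated: that argument starts from a destabilizing subsheaf of~$\bL_S(\xi)$ on~$S$, whereas here you only have data on~$C$, and moreover under your hypothesis~$\bL_S(\xi)$ \emph{is} stable by Theorem~\ref{thm:mb-s}, so there is no destabilizing subsheaf to reverse-engineer. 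The invocation of Corollary~\ref{cor:bs-h-xi} is also premature, since that corollary presupposes the special Mukai class you are trying to construct.

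The paper's proof takes a completely different route that bypasses~(b). Since~$S$ has no special Mukai class, Theorem~\ref{thm:mb-s} gives not just stability but \emph{uniqueness}: the Lazarsfeld bundle~$\bL_S(\xi)$ is independent of the choice of Mukai pair. Because the number~$\rN(r,s)$ of Mukai pairs exceeds~$2$ for~$g \ge 6$, one can pick a second pair~$(\xi',\eta')$ with~$\xi' \not\cong \xi$ and~$\xi' \not\cong \eta$, and then compute~$\rH^1(C, j^*\bL_S(\xi) \otimes \eta(K_C))$ using the presentation~$\bL_S(\xi) \cong \bL_S(\xi')$. The restricted defining sequence for~$\bL_S(\xi')$, tensored by~$\eta(K_C)$, reduces the vanishing to Proposition~\ref{prop:xi-eta}\ref{it:xi-eta-k} applied to~$(\xi',\eta)$ together with~$\rH^1(C, \xi'\xi^{-1}(K_C)) = 0$ (nontrivial degree-zero twist of~$K_C$), both of which hold precisely because~$\xi' \not\cong \xi$ and~$\xi' \not\cong \eta$. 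This trick of swapping to a different Mukai pair---which is exactly where the no-special-Mukai-class hypothesis enters---replaces your delicate lifting problem by a direct cohomology computation.
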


\begin{proof}
First, tensoring~\eqref{eq:cus-dual} by~$\bL_S(\xi) \otimes \cO_S(H)$ we obtain an exact sequence
\begin{equation*}
0 \to 
\rH^0(C, \xi)^\vee \otimes \bL_S(\xi) \otimes \cO_S(H) \to 
\bL_S(\xi)^\vee \otimes \bL_S(\xi) \otimes \cO_S(H) \to j_*(j^*\bL_S(\xi) \otimes \eta(K_C)) \to 0.
\end{equation*}
Its first term has no first cohomology by Lemma~\ref{lem:l-m-h}, 
so to prove the vanishing of the first cohomology of the middle term, 
it is enough to check that~$\rH^1(C, j^*\bL_S(\xi) \otimes \eta(K_C)) = 0$.

Now we note that the number of Mukai pairs on~$C$ of type~$(r,s)$ 
is equal to the degree~$\rN(r,s)$ of~$\Gr(r,r + s)$, see Remark~\ref{rem:number-mp}.
Since~$r,s \ge 2$ and~$(r,s) \ne (2,2)$, this number is greater than~$2$; in particular, we can choose a Mukai pair~$(\xi',\eta')$ of type~$(r,s)$ such that~$\xi \not\cong \xi'$, $\eta \not\cong \xi'$.
Applying Theorem~\ref{thm:mb-s} we obtain an isomorphism
\begin{equation*}
\bL_S(\xi) \cong \bL_S(\xi'),
\end{equation*}
hence~$j^*\bL_S(\xi) \cong j^*\bL_S(\xi')$, so it is enough to check that~$\rH^1(C, j^*\bL_S(\xi') \otimes \eta(K_C)) = 0$.

Now consider sequence~\eqref{eq:def-cus} for the line bundle~$\xi'$ and its restriction to~$C$:
\begin{equation*}
0 \to \xi'(-K_C) \to j^*\bL_S(\xi') \to \rH^0(C, \xi') \otimes \cO_C \to \xi' \to 0.
\end{equation*}
Tensoring this by~$\eta(K_C)$, we obtain the following exact sequence
\begin{equation*}
0 \to \xi' \otimes \eta \to j^*\bL_S(\xi') \otimes \eta(K_C) \to \rH^0(C, \xi') \otimes \eta(K_C) \to \xi' \otimes \eta(K_C) \to 0.
\end{equation*}
We have~$\rH^1(C, \xi' \otimes \eta) \cong \rH^1(C, \xi' \otimes \xi^{-1} \otimes \cO_C(K_C))$, and since~$\xi' \otimes \xi^{-1}$ is a nontrivial line bundle of degree~$0$, this space vanishes.
Moreover, $\rH^1(C, \eta(K_C)) = 0$ because~$\deg(\eta) > 0$.
Therefore,
\begin{equation*}
\label{eq:h1-coker}
\rH^1(C, j^*\bL_S(\xi') \otimes \eta(K_C)) \cong 
\Coker\Big(\rH^0(C, \xi') \otimes \rH^0(C, \eta(K_C)) \to \rH^0(C, \xi' \otimes \eta(K_C))\Big),
\end{equation*}
and it remains to note that the morphism in the right side is surjective by Proposition~\ref{prop:xi-eta}\ref{it:xi-eta-k}.
Therefore, $\rH^1(S, \bL_S(\xi)^\vee \otimes \bL_S(\xi) \otimes \cO_S(H)) = 0$, and the other vanishing follows by Serre duality.
\end{proof}

\subsection{Lazarsfeld extension class}

For any globally generated line bundle~$\xi$ on a smooth curve~$C$ we consider the evaluation morphism~$\ev \colon \rH^0(C, \xi) \otimes \cO_C \twoheadrightarrow \xi$.
Its dual gives an exact sequence
\begin{equation}
\label{eq:def-rxi}
0 \to \xi^{-1} \xrightarrow{\ \ev^\vee\ } \rH^0(C,\xi)^\vee \otimes \cO_C \xrightarrow{\quad} \bR_C(\xi^{-1}) \to 0,
\end{equation}
defining a vector bundle~$\bR_C(\xi^{-1})$ on~$C$.

\begin{lemma}\label{lem:rxivee-h0}
For any globally generated line bundle~$\xi$ on a smooth curve~$C$ the bundle~$\bR_C(\xi^{-1})$ is globally generated, 
$\rH^0(C,\bR_C(\xi^{-1})^\vee) = 0$, and~$\rH^1(C,\bR_C(\xi^{-1}) \otimes \cO_C(K_C)) = 0$.
\end{lemma}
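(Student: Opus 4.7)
The plan is to read off all three assertions directly from the defining sequence~\eqref{eq:def-rxi}, its dual, and Serre duality; no further geometric input is required.

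For global generation of $\bR_C(\xi^{-1})$, I would simply note that~\eqref{eq:def-rxi} exhibits it as a quotient of the trivial bundle $\rH^0(C,\xi)^\vee \otimes \cO_C$, which is visibly globally generated, and therefore so is $\bR_C(\xi^{-1})$.

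For the vanishing $\rH^0(C,\bR_C(\xi^{-1})^\vee) = 0$, the key observation is that the dual of~\eqref{eq:def-rxi} is precisely the evaluation sequence for $\xi$:
\begin{equation*}
0 \to \bR_C(\xi^{-1})^\vee \to \rH^0(C,\xi) \otimes \cO_C \xrightarrow{\ \ev\ } \xi \to 0.
\end{equation*}
Here I use that $\ev^\vee$ is pointwise injective (the statement dual to $\xi$ being globally generated), so~\eqref{eq:def-rxi} is a short exact sequence of locally free sheaves and dualizes to a short exact sequence with $\bR_C(\xi^{-1})^\vee$ locally free. Taking global sections, the induced map $\rH^0(C,\xi) \otimes \rH^0(C,\cO_C) \to \rH^0(C,\xi)$ is the canonical identification, and its kernel $\rH^0(C,\bR_C(\xi^{-1})^\vee)$ must vanish.

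The third assertion follows at once by Serre duality: $\rH^1(C,\bR_C(\xi^{-1}) \otimes \cO_C(K_C))$ is dual to $\rH^0(C,\bR_C(\xi^{-1})^\vee)$, which has just been shown to be zero. There is no substantive obstacle at any step; the lemma amounts to a formal manipulation of the defining sequence~\eqref{eq:def-rxi}.
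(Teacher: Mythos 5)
Your proof is correct and follows essentially the same route as the paper: global generation is read off from the quotient presentation, the vanishing of $\rH^0(C,\bR_C(\xi^{-1})^\vee)$ comes from identifying $\bR_C(\xi^{-1})^\vee$ with the kernel of the evaluation morphism of $\xi$, and the last vanishing is Serre duality. The extra care you take in checking that~\eqref{eq:def-rxi} dualizes to the evaluation sequence is implicit in the paper but harmless.
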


\begin{proof}
First, the sheaf~$\bR_C(\xi^{-1})^\vee$ is the kernel of the evaluation morphism~$\rH^0(C,\xi) \otimes \cO_C \xrightarrow{\ \ev\ } \xi$, hence~$\rH^0(C, \bR_C(\xi^{-1})^\vee) = 0$.
Applying Serre duality we obtain~$\rH^1(C,\bR_C(\xi^{-1}) \otimes \cO_C(K_C)) = 0$.
Global generation of~$\bR_C(\xi^{-1})$ is immediate from~\eqref{eq:def-rxi}.
\end{proof}

If~$(\xi,\eta)$ is a Mukai pair, the sheaf~$\bR_C(\xi^{-1})$ has stronger properties.

\begin{lemma}
\label{lem:rxi-h0}
If~$(\xi,\eta)$ is a Mukai pair of type~$(r,s)$ with~$s \ge r \in \{2,3\}$ 
on a BNP-general curve~$C$ of genus~$g = r \cdot s$, then
\begin{equation*}
\rH^0(C,\bR_C(\xi^{-1})) \cong \rH^0(C,\xi)^\vee
\qquad\text{and}\qquad  
\Hom(\bR_C(\xi^{-1}), \eta) = 0.
\end{equation*}
Moreover, the sheaf~$\bR_C(\xi^{-1})$ is simple.
\end{lemma}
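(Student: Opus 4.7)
Set $V \coloneqq \rH^0(C,\xi)$ and let $\pi \colon V^\vee \otimes \cO_C \twoheadrightarrow \bR_C(\xi^{-1})$ be the defining surjection. My plan is to extract all three claims from~\eqref{eq:def-rxi} and its dual $0 \to \bR_C(\xi^{-1})^\vee \to V \otimes \cO_C \to \xi \to 0$, using as main inputs Proposition~\ref{prop:xi-eta}\ref{it:xi-k}, the Petri isomorphism from Lemma~\ref{lem:xi-eta}, and Lemma~\ref{lem:rxivee-h0}.

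For the isomorphism $\rH^0(C,\bR_C(\xi^{-1})) \cong V^\vee$, the long exact cohomology sequence of~\eqref{eq:def-rxi} together with $\rH^0(\xi^{-1}) = 0$ (as $\deg\xi^{-1} < 0$) yields an injection $V^\vee \hookrightarrow \rH^0(\bR_C(\xi^{-1}))$, and it suffices to show that the next connecting map $\rH^1(\xi^{-1}) \to V^\vee \otimes \rH^1(\cO_C)$ is injective. By Serre duality this map is dual to the multiplication $V \otimes \rH^0(C, \cO_C(K_C)) \to \rH^0(C, \xi(K_C))$, which is surjective by Proposition~\ref{prop:xi-eta}\ref{it:xi-k}. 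For $\Hom(\bR_C(\xi^{-1}),\eta) = 0$, I tensor the dual sequence with $\eta$, invoke $\xi \otimes \eta \cong \cO_C(K_C)$, and take global sections; this identifies $\Hom(\bR_C(\xi^{-1}),\eta)$ with the kernel of the map $V \otimes \rH^0(C,\eta) \to \rH^0(C, \cO_C(K_C))$. This is precisely the Petri map of $\xi$, which is an isomorphism by Lemma~\ref{lem:xi-eta} (source and target both have dimension $r \cdot s = g$), so its kernel vanishes.

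For simplicity, my strategy is to lift any $\phi \in \End(\bR_C(\xi^{-1}))$ along~$\pi$. Since $\rH^0(\bR_C(\xi^{-1})) \cong V^\vee$ by the first claim, the natural identification $\Hom(V^\vee \otimes \cO_C, \bR_C(\xi^{-1})) \cong \End(V^\vee)$ produces a unique $\Phi \in \End(V^\vee)$ with $\phi \circ \pi = \pi \circ (\Phi \otimes \mathrm{id}_{\cO_C})$. Since $\phi \circ \pi$ vanishes on $\xi^{-1} = \Ker(\pi)$, the endomorphism $\Phi \otimes \mathrm{id}_{\cO_C}$ must preserve the subsheaf $\xi^{-1} \subset V^\vee \otimes \cO_C$; hence it acts on $\xi^{-1}$ by a scalar $\lambda \in \kk = \End(\xi^{-1})$. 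Then $(\Phi - \lambda\,\mathrm{id}_{V^\vee}) \otimes \mathrm{id}_{\cO_C}$ vanishes on $\xi^{-1}$ and therefore factors through $\bR_C(\xi^{-1})$, yielding a morphism $\bR_C(\xi^{-1}) \to V^\vee \otimes \cO_C$. This morphism lies in $V^\vee \otimes \rH^0(\bR_C(\xi^{-1})^\vee)$, which vanishes by Lemma~\ref{lem:rxivee-h0}; hence $\Phi = \lambda\,\mathrm{id}_{V^\vee}$, and consequently $\phi = \lambda\,\mathrm{id}_{\bR_C(\xi^{-1})}$.

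The argument is largely formal, with the only nontrivial inputs being Proposition~\ref{prop:xi-eta}\ref{it:xi-k} for the cohomology computation and the Petri isomorphism for the $\Hom$-vanishing; the simplicity argument uses nothing from Brill--Noether theory beyond Lemma~\ref{lem:rxivee-h0}. I do not anticipate any real conceptual obstacles---the only point requiring care is to correctly identify the multiplication maps arising from the long exact sequences with the classical Petri-style maps, but this is a routine unwinding of definitions.
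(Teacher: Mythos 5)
Your proposal is correct and follows essentially the same route as the paper: the first claim via the connecting map being dual to the multiplication map of Proposition~\ref{prop:xi-eta}\ref{it:xi-k}, the $\Hom$-vanishing via identifying the relevant left-exact sequence's second arrow with the Petri map, and simplicity by lifting an endomorphism to the trivial middle term and using $\Hom(\bR_C(\xi^{-1}),\cO_C)=0$ from Lemma~\ref{lem:rxivee-h0}. Your simplicity argument is just a more explicit unwinding of the paper's statement that an endomorphism of the sequence is determined by its restriction to~$\xi^{-1}$.
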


\begin{proof}
Applying the functor~$\Hom(-, \eta)$ to~\eqref{eq:def-rxi} we obtain a left-exact sequence
\begin{equation*}
0 \to \Hom(\bR_C(\xi^{-1}), \eta) \to \rH^0(C,\xi) \otimes \rH^0(C, \eta) \to \rH^0(C, \xi \otimes \eta).
\end{equation*}
Its second arrow is induced by the evaluation morphism of~$\xi$, hence it coincides with the Petri map.
Since the curve~$C$ is BNP-general and~$(\xi,\eta)$ is a Mukai pair, this map is an isomorphism (see Lemma~\ref{lem:xi-eta}), hence~$\Hom(\bR_C(\xi^{-1}), \eta) = 0$.

Furthermore, since~$\rH^0(C,\xi^{-1}) = 0$, the long exact sequence of cohomology of~\eqref{eq:def-rxi} looks like
\begin{equation*}
0 \to 
\rH^0(C,\xi)^\vee \to 
\rH^0(C,\bR_C(\xi^{-1})) \to 
\rH^1(C,\xi^{-1}) \to 
\rH^0(C,\xi)^\vee \otimes \rH^1(C,\cO_C) \to 
\rH^1(C,\bR_C(\xi^{-1})) \to 
0.
\end{equation*}
We need to check that the connecting map~$\rH^0(C,\bR_C(\xi^{-1})) \to \rH^1(C,\xi^{-1})$ is zero, 
i.e., that the map~$\rH^1(C,\xi^{-1}) \to \rH^0(C,\xi)^\vee \otimes \rH^1(C,\cO_C)$ is injective, 
i.e., that its dual map is surjective.
But the dual map is nothing but the map 
\begin{equation*}
\rH^0(C,\xi) \otimes \rH^0(C, \cO_C(K_C)) \to \rH^0(C, \xi(K_C))
\end{equation*}
whose surjectivity was established in Proposition~\ref{prop:xi-eta}\ref{it:xi-k}.
Thus, $\rH^0(C,\bR_C(\xi^{-1})) \cong \rH^0(C,\xi)^\vee$.

Finally, since the second arrow in~\eqref{eq:def-rxi} induces an isomorphism of global sections, it is the evaluation morphism for~$\bR_C(\xi^{-1})$, hence any endomorphism of~$\bR_C(\xi^{-1})$ extends to an endomorphism of the exact sequence~\eqref{eq:def-rxi}. 
On the other hand, since~$\Hom(\bR_C(\xi^{-1}), \cO_C) = 0$ by Lemma~\ref{lem:rxivee-h0}, an endomorphism of the exact sequence~\eqref{eq:def-rxi} is determined uniquely by its endomorphism of~$\xi^{-1}$.
As~$\xi^{-1}$ is simple, the same holds for~$\bR_C(\xi^{-1})$.
\end{proof}

The sheaf~$\bR_C(\xi^{-1})$ is naturally related to the restriction~$j^*\bL_S(\xi)$ of the Lazarsfeld bundle.

\begin{lemma}\label{lem:restricted-lb}
Let~$(\xi,\eta)$ be a Mukai pair on a BNP-general curve~$C$ 
which lies on a smooth quasipolarized $K3$ surface~$S$. 
If~$\phi \colon \bL_S(\xi)^\vee \to j_*\eta$ is the epimorphism from~\eqref{eq:cus-dual} there is a canonical exact sequence
\begin{equation}
\label{eq:rxi-blxi}
0 \to \bR_C(\xi^{-1}) \xrightarrow{\qquad} j^*\bL_S(\xi)^\vee \xrightarrow{\ j^*(\phi)\ } \eta \to 0.
\end{equation}
\end{lemma}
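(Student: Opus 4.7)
The plan is to derive \eqref{eq:rxi-blxi} by applying the derived pullback $\rL j^*$ to the defining sequence \eqref{eq:def-cus} and then dualizing on $C$. First I would apply $\rL j^*$ to
\begin{equation*}
0 \to \bL_S(\xi) \to V \otimes \cO_S \to j_*\xi \to 0,
\end{equation*}
where $V \coloneqq \rH^0(C,\xi)$. The first two terms are locally free, while the Koszul resolution $0 \to \cO_S(-C) \to \cO_S \to \cO_C \to 0$ gives
\begin{equation*}
\cH^0(\rL j^* j_*\xi) \cong \xi,
\qquad
\cH^{-1}(\rL j^* j_*\xi) \cong \xi \otimes N_{C/S}^{-1} \cong \xi(-K_C) \cong \eta^{-1},
\end{equation*}
using $K_S = 0$, adjunction $N_{C/S} \cong \cO_C(K_C)$, and $\xi \otimes \eta \cong \cO_C(K_C)$. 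The long exact cohomology sequence of the resulting triangle yields the 4-term exact sequence
\begin{equation*}
0 \to \eta^{-1} \to j^*\bL_S(\xi) \to V \otimes \cO_C \xrightarrow{\ \ev\ } \xi \to 0,
\end{equation*}
in which the last arrow is manifestly the evaluation morphism, being the pullback of $V \otimes \cO_S \twoheadrightarrow j_*\xi$.

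Next, the defining sequence \eqref{eq:def-rxi} identifies $\ker(\ev) \cong \bR_C(\xi^{-1})^\vee$, so the 4-term sequence splits into the short exact sequence
\begin{equation*}
0 \to \eta^{-1} \to j^*\bL_S(\xi) \to \bR_C(\xi^{-1})^\vee \to 0.
\end{equation*}
Dualizing this on $C$ (all three terms are locally free, so $\cExt^1$ vanishes and exactness is preserved), and using $(\eta^{-1})^\vee \cong \eta$ together with $\bR_C(\xi^{-1})^{\vee\vee} \cong \bR_C(\xi^{-1})$, produces
\begin{equation*}
0 \to \bR_C(\xi^{-1}) \to j^*\bL_S(\xi)^\vee \xrightarrow{\ \pi\ } \eta \to 0.
\end{equation*}

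It remains to identify the quotient map $\pi$ with $j^*(\phi)$. For this I would apply $\rL j^*$ directly to \eqref{eq:cus-dual}: the same Koszul analysis yields the 4-term exact sequence
\begin{equation*}
0 \to \xi^{-1} \to V^\vee \otimes \cO_C \to j^*\bL_S(\xi)^\vee \xrightarrow{\ j^*(\phi)\ } \eta \to 0,
\end{equation*}
whose last map is $j^*(\phi)$ by construction of the cohomology long exact sequence. The compatibility of $\rL j^*$ with the $\cO_S$-linear duality (which realizes \eqref{eq:cus-dual} as the Grothendieck dual of \eqref{eq:def-cus}) identifies this sequence with the dual of the one obtained above, and in particular forces $\pi = j^*(\phi)$ and the first map to be $\ev^\vee$, which is consistent with $\text{coker}(\ev^\vee) = \bR_C(\xi^{-1})$ by \eqref{eq:def-rxi}. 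The main anticipated obstacle is precisely this last compatibility, which is a standard but somewhat delicate unwinding of Grothendieck duality for the divisorial embedding $j$; once verified, the lemma follows at once.
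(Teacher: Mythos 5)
Your proposal is correct and follows essentially the same route as the paper: both pull back the two defining sequences \eqref{eq:def-cus} and \eqref{eq:cus-dual} to $C$, obtain the two four-term exact sequences (with $\cH^{-1}(\rL j^*j_*\xi)\cong\xi(-K_C)\cong\eta^{-1}$ supplying the extra kernel term), and identify the result via the duality between them. The compatibility you flag as the main obstacle is dispatched in the paper by the simple observation that the middle maps of the two four-term sequences are mutually dual morphisms of locally free sheaves on $C$, which already forces the first map of the pulled-back \eqref{eq:cus-dual} to be $\ev^\vee$ and hence its cokernel to be $\bR_C(\xi^{-1})$ --- no genuine unwinding of Grothendieck duality is needed.
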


\begin{proof}
Pulling back~\eqref{eq:cus-dual} and using an isomorphism~$\eta(-K_C) \cong \xi^{-1}$, we obtain an exact sequence
\begin{equation*}
0 \to 
\xi^{-1} \xrightarrow{\qquad} 
\rH^0(C,\xi)^\vee \otimes \cO_C \xrightarrow{\qquad} 
j^*\bL_S(\xi)^\vee \xrightarrow{\ j^*(\phi)\ } 
\eta \to 0,
\end{equation*}
in particular~$j^*(\phi)$ is surjective.
Moreover, this sequence is dual to the exact sequence
\begin{equation*}
0 \to \eta^{-1} \xrightarrow{\quad} j^*\bL_S(\xi) \xrightarrow{\quad} \rH^0(C,\xi) \otimes \cO_C \xrightarrow{\ \ev\ } \xi \to 0
\end{equation*}
obtained by pulling back~\eqref{eq:def-cus} (because their middle maps are mutually dual), 
hence the first map in the first sequence is the dual evaluation morphism~$\ev^\vee$.
Therefore, its cokernel is isomorphic to~$\bR_C(\xi^{-1})$, 
hence~$\Ker(j^*(\phi)) \cong \bR_C(\xi^{-1})$, and we obtain~\eqref{eq:rxi-blxi}.
\end{proof}

\begin{definition}
\label{def:lec}
We call the extension class 
\begin{equation*}
\epsilon_{\bL_S(\xi)} \in \Ext^1(\eta, \bR_C(\xi^{-1}))
\end{equation*}
of the exact sequence~\eqref{eq:rxi-blxi} the {\sf Lazarsfeld extension class}.
\end{definition}

\begin{remark}
\label{rem:lec}
Since the sheaves~$\eta$ and~$\bR_C(\xi^{-1})$ are both simple, 
the extension class~$\epsilon_{\bL_S(\xi)}$ of~\eqref{eq:rxi-blxi} is well-defined up to rescaling.
\end{remark}

The main result of this subsection is the following.

\begin{proposition}
\label{prop:lec}
Let~$(S,H)$ be a smooth quasipolarized~$K3$ surface of genus~$g = r \cdot s \ge 6$ 
and let~$C \subset S$ be a BNP-general curve in~$|H|$.
Assume~$s \ge r \in \{2,3\}$.
If~$S$ does not have special Mukai classes of type~$(r,s)$ then for any Mukai pair~$(\xi,\eta)$ of type~$(r,s)$ on~$C$ 
the restriction~$j^*\bL_S(\xi)$ of the Lazarsfeld bundle to~$C$ is simple and
the Lazarsfeld extension class~$\epsilon_{\bL_S(\xi)}$ is nonzero, but the connecting morphism
\begin{equation*}
\epsilon_{\bL_S(\xi)} \colon \rH^0(C,\eta) \to \rH^1(C, \bR_C(\xi^{-1}))
\end{equation*}
of the exact sequence~\eqref{eq:rxi-blxi} vanishes.
\end{proposition}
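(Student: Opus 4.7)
The plan is to bootstrap from two cohomology vanishings already proved: Lemma~\ref{lem:l-m-h} (for twists of $\bL_S(\xi)^\vee$) and Proposition~\ref{prop:l-l-m-h} (for twists of the endomorphism bundle $\bL_S(\xi)^\vee \otimes \bL_S(\xi)$). The assumption that $S$ has no special Mukai classes of type $(r,s)$ enters only through the latter, via Theorem~\ref{thm:mb-s} and Proposition~\ref{prop:xi-eta}\ref{it:xi-eta-k}; everything else is a diagram chase.

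First I would establish the simplicity of $j^*\bL_S(\xi)$. Tensoring the restriction sequence
\[
0 \to \cO_S(-H) \to \cO_S \to j_*\cO_C \to 0
\]
with $\bL_S(\xi)^\vee \otimes \bL_S(\xi)$ and passing to cohomology yields
\[
\End(\bL_S(\xi)) \to \End(j^*\bL_S(\xi)) \to \rH^1(S, \bL_S(\xi)^\vee \otimes \bL_S(\xi) \otimes \cO_S(-H)),
\]
in which the last term vanishes by Proposition~\ref{prop:l-l-m-h}. Since $\bL_S(\xi)$ is spherical (Lemma~\ref{lem:lazarsfeld-bundle}), the first term equals $\kk$, and this surjection forces $\End(j^*\bL_S(\xi)) = \kk$.

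Next, for the nonvanishing of $\epsilon_{\bL_S(\xi)}$: if sequence~\eqref{eq:rxi-blxi} were to split, then $j^*\bL_S(\xi)^\vee \cong \bR_C(\xi^{-1}) \oplus \eta$. Both summands are simple ($\bR_C(\xi^{-1})$ by Lemma~\ref{lem:rxi-h0}, and $\eta$ because it is a line bundle), and they cannot be isomorphic: the ranks differ when $r = 3$, whereas for $r = 2$ the degrees $s+1$ and $3(s-1)$ agree only when $g = 4$, which is excluded by the assumption $g \ge 6$. Hence $\dim \End(j^*\bL_S(\xi)^\vee) \ge 2$, contradicting the simplicity just obtained.

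Finally, the connecting map $\rH^0(C, \eta) \to \rH^1(C, \bR_C(\xi^{-1}))$ of~\eqref{eq:rxi-blxi} vanishes if and only if $j^*(\phi)$ is surjective on $\rH^0$. Tensoring the restriction sequence above with $\bL_S(\xi)^\vee$ and invoking both vanishings of Lemma~\ref{lem:l-m-h} identifies $\rH^0(S, \bL_S(\xi)^\vee) \xrightiso \rH^0(C, j^*\bL_S(\xi)^\vee)$; meanwhile the cohomology sequence of~\eqref{eq:cus-dual}, together with $\rH^1(S, \cO_S) = 0$, shows that $\phi$ induces a surjection $\rH^0(S, \bL_S(\xi)^\vee) \twoheadrightarrow \rH^0(C, \eta)$. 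Adjunction makes these two maps compose to $\rH^0(j^*(\phi))$ on the common source, which is therefore surjective. The main obstacle has thus already been packaged into Proposition~\ref{prop:l-l-m-h}; granted that, the present proposition drops out cleanly.
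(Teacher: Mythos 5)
Your proof is correct and follows essentially the same route as the paper: simplicity of $j^*\bL_S(\xi)$ via the restriction sequence for $\bL_S(\xi)^\vee\otimes\bL_S(\xi)$ and Proposition~\ref{prop:l-l-m-h}, non-splitting of~\eqref{eq:rxi-blxi} as an immediate consequence, and the vanishing of the connecting map from the two vanishings in Lemma~\ref{lem:l-m-h}. The only (harmless) cosmetic differences are that the non-isomorphism check on the two summands is unnecessary---any nontrivial direct sum decomposition already contradicts simplicity---and that the paper concludes the last step by a dimension count using $h^0(\bR_C(\xi^{-1}))=r$ rather than by tracking the surjectivity of $\rH^0(\phi)$ through adjunction.
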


\begin{proof}
Consider the exact sequence
\begin{equation*}
0 \to 
\bL_S(\xi)^\vee \otimes \bL_S(\xi) \otimes \cO_S(-H) \to 
\bL_S(\xi)^\vee \otimes \bL_S(\xi) \to 
j_*j^*(\bL_S(\xi)^\vee \otimes \bL_S(\xi)) \to 0.
\end{equation*}
We have~$h^0(\bL_S(\xi)^\vee \otimes \bL_S(\xi) \otimes \cO_S(-H)) = 0$ because $\bL_S(\xi)$ is spherical, and in particular simple, by Lemma~\ref{lem:lazarsfeld-bundle}, and $h^1(\bL_S(\xi)^\vee \otimes \bL_S(\xi) \otimes \cO_S(-H)) = 0$ by Proposition~\ref{prop:l-l-m-h}. 
Therefore 
\begin{equation*}
\End(j^*\bL_S(\xi)) =
\rH^0(C, j^*(\bL_S(\xi)^\vee \otimes \bL_S(\xi))) = 
\rH^0(S, \bL_S(\xi)^\vee \otimes \bL_S(\xi)) =
\End(\bL_S(\xi)).
\end{equation*}
Since $\bL_S(\xi)$ is simple, so is~$j^*\bL_S(\xi)$, and a fortiori $j^*\bL_S(\xi)$ is indecomposable.
Thus the exact sequence~\eqref{eq:rxi-blxi} does not split, hence~$\epsilon_{\bL_S(\xi)} \ne 0$.

Furthermore, consider the exact sequence
\begin{equation*}
0 \to \bL_S(\xi)^\vee \otimes \cO_S(-H) \to \bL_S(\xi)^\vee \to j_*j^*(\bL_S(\xi)^\vee) \to 0.
\end{equation*}
We have~$h^0(\bL_S(\xi)^\vee \otimes \cO_S(-H)) = h^1(\bL_S(\xi)^\vee \otimes \cO_S(-H)) = 0$ by Lemma~\ref{lem:l-m-h}, hence
\begin{equation*}
h^0(j^*\bL_S(\xi)^\vee) = h^0(\bL_S(\xi)^\vee) = r + s,
\end{equation*}
where the second equality uses~\eqref{eq:cus-h-dual}.
Finally, consider the cohomology exact sequence of~\eqref{eq:rxi-blxi}:
\begin{equation*}
0 \to \rH^0(C,\bR_C(\xi^{-1})) \to \rH^0(C, j^*\bL_S(\xi)^\vee) \to \rH^0(C, \eta) \to \rH^1(C,\bR_C(\xi^{-1})).
\end{equation*}
Its first three terms have dimension~$r$ (by Lemma~\ref{lem:rxi-h0}), 
$r+s$ (proved above), and~$s$, respectively, and therefore the last map vanishes.
\end{proof}

\subsection{Mukai extension class}

In this subsection we show that the properties of the Lazarsfeld extension class 
established in Proposition~\ref{prop:lec} characterize it uniquely if~$\Pic(S) = \ZZ \cdot H$.
We axiomatize these properties in the following

\begin{definition}
\label{def:mukai-extension}
Let~$(\xi,\eta)$ be a Mukai pair on~$C$.
{\sf A Mukai extension} is a non-split exact sequence
\begin{equation}
\label{eq:mec}
0 \to \bR_C(\xi^{-1}) \to \cG \to \eta \to 0
\end{equation}
such that the connecting morphism~$\rH^0(C, \eta) \to \rH^1(C, \bR_C(\xi^{-1}))$ vanishes.
The extension class
\begin{equation*}
\epsilon \in \Ext^1(\eta,\bR_C(\xi^{-1}))
\end{equation*}
of a Mukai extension is called {\sf a Mukai extension class}.
\end{definition}

To study Mukai extension classes we use the following generalization of the Lazarsfeld construction.
In~\S\ref{ss:lazarseld} we defined the Lazarsfeld bundle~$\bL_S(\xi)$ of a globally generated line bundle~$\xi$.
The same construction can be applied to any globally generated vector bundle~$\cG$ on~$C$; 
it defines a vector bundle~$\bL_S(\cG)$ that fits into an exact sequence
\begin{equation*}\label{eq:bl-cg}
0 \to \bL_S(\cG) \xrightarrow{\quad} \rH^0(C, \cG) \otimes \cO_S \xrightarrow{\ \ev\ } j_*\cG \to 0.
\end{equation*}
We will apply this construction to all bundles in~\eqref{eq:mec}.

\begin{lemma}\label{lem:blbrxi}
If~$(\xi,\eta)$ is a Mukai pair of type~$(r,s)$ with~$s \ge r \in \{2,3\}$
on a BNP-general curve~$C \subset S$ in a linear system~$|H|$ then
\begin{equation*}
\bL_S(\bR_C(\xi^{-1})) \cong \bL_S(\xi)^\vee \otimes \cO_S(-H).
\end{equation*}
\end{lemma}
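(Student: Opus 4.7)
The plan is to construct a canonical short exact sequence
\begin{equation*}
0 \to \bL_S(\xi)^\vee \otimes \cO_S(-H) \to \rH^0(C,\xi)^\vee \otimes \cO_S \to j_*\bR_C(\xi^{-1}) \to 0,
\end{equation*}
and then recognise it, via a universal property of the Lazarsfeld construction, as the defining sequence of $\bL_S(\bR_C(\xi^{-1}))$; this immediately yields the claimed isomorphism.

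To produce this sequence, I would first tensor the structure sequence $0 \to \cO_S(-H) \to \cO_S \to j_*\cO_C \to 0$ with the locally free sheaf $\bL_S(\xi)^\vee$ to obtain
\begin{equation*}
0 \to \bL_S(\xi)^\vee \otimes \cO_S(-H) \to \bL_S(\xi)^\vee \xrightarrow{\ u\ } j_*j^*\bL_S(\xi)^\vee \to 0,
\end{equation*}
where $u$ is the canonical restriction map. Let $\phi \colon \bL_S(\xi)^\vee \twoheadrightarrow j_*\eta$ denote the epimorphism from~\eqref{eq:cus-dual} and $j^*\phi \colon j^*\bL_S(\xi)^\vee \twoheadrightarrow \eta$ its restriction as in Lemma~\ref{lem:restricted-lb}. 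By naturality of $u$, together with the identity $j^*j_*\eta \cong \eta$ for a closed immersion, one has $j_*(j^*\phi) \circ u = \phi$. Consequently, the kernel of this composition is $\ker(\phi) \cong \rH^0(C,\xi)^\vee \otimes \cO_S$ by~\eqref{eq:cus-dual}, whereas the kernel of $j_*(j^*\phi)$ itself equals $j_*\bR_C(\xi^{-1})$ by applying $j_*$ to~\eqref{eq:rxi-blxi}. A diagram chase (or snake lemma) then extracts the short exact sequence above.

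For the identification with the Lazarsfeld sequence I will use the following observation: since $\Hom_{\cO_S}(V \otimes \cO_S, j_*\cG) = \Hom_\kk(V, \rH^0(C,\cG))$, any $\cO_S$-linear surjection $V \otimes \cO_S \to j_*\cG$ whose induced map $V \to \rH^0(C,\cG)$ is an isomorphism coincides, after precomposition with a linear automorphism of $V$, with the evaluation map, so its kernel is isomorphic to $\bL_S(\cG)$. Applied with $\cG = \bR_C(\xi^{-1})$, both $\rH^0(C,\xi)^\vee$ and $\rH^0(C,\bR_C(\xi^{-1}))$ have dimension $r$ by Lemma~\ref{lem:rxi-h0}; moreover, the cohomology long exact sequence of the extension above, together with the vanishing $\rH^0(S, \bL_S(\xi)^\vee \otimes \cO_S(-H)) = 0$ provided by Lemma~\ref{lem:l-m-h}, forces the induced map $\rH^0(C,\xi)^\vee \to \rH^0(C,\bR_C(\xi^{-1}))$ to be injective and hence an isomorphism. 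This completes the identification.

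The only subtle point is the commutativity $j_*(j^*\phi) \circ u = \phi$, which rests on the naturality of $u$ combined with the fact, established in Lemma~\ref{lem:restricted-lb}, that~\eqref{eq:rxi-blxi} arises by pulling back~\eqref{eq:cus-dual} along $j$; everything else is formal.
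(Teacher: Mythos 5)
Your proof is correct, and it runs in the opposite direction from the paper's. The paper starts from the \emph{defining} sequence of $\bL_S(\bR_C(\xi^{-1}))$, compares the evaluation map of $\bR_C(\xi^{-1})$ with the pushforward of~\eqref{eq:def-rxi} in a commutative diagram, extracts $0 \to \rH^0(C,\xi)^\vee \otimes \cO_S(-H) \to \bL_S(\bR_C(\xi^{-1})) \to j_*(\xi^{-1}) \to 0$ by the snake lemma, and then dualizes (using Grothendieck duality for $j_*(\xi^{-1})$ and an $\rH^0$-vanishing deduced from Lemma~\ref{lem:rxivee-h0} via Serre duality) to recognize the result as the defining sequence of $\bL_S(\xi)$. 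You instead start from $\bL_S(\xi)^\vee$, factor $\phi$ through the restriction map $u$ via the adjunction triangle identity, take kernels of the composition to produce $0 \to \bL_S(\xi)^\vee(-H) \to \rH^0(C,\xi)^\vee\otimes\cO_S \to j_*\bR_C(\xi^{-1}) \to 0$ directly, and then recognize the surjection as the evaluation map of $\bR_C(\xi^{-1})$. Both arguments hinge on the same final mechanism (a surjection $V\otimes\cO_S\twoheadrightarrow j_*\cG$ inducing an isomorphism $V\xrightarrow{\sim}\rH^0(C,\cG)$ is the evaluation map up to an automorphism of $V$, which you justify correctly via adjunction), but your route avoids the dualization step and the Grothendieck duality computation entirely, at the price of the kernel-of-a-composition bookkeeping and the triangle identity $j_*(j^*\phi)\circ u = \phi$, both of which you handle correctly; note also that the $\rH^0$-vanishing you import from Lemma~\ref{lem:l-m-h} only uses the easy half of that lemma (Serre duality plus $\rH^1(C,\xi(K_C))=0$), not the Petri-type surjectivity.
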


\begin{proof}
Recall that~$\bR_C(\xi^{-1})$ is globally generated by Lemma~\ref{lem:rxivee-h0}, hence~$\bL_S(\bR_C(\xi^{-1}))$ is well-defined.
Moreover, $\rH^0(C,\bR_C(\xi^{-1})) \cong \rH^0(C,\xi)^\vee$ by Lemma~\ref{lem:rxi-h0}.
Therefore, twisting the defining exact sequence of~$\bL_S(\bR_C(\xi^{-1}))$ by~$\cO_S(H)$ we obtain
\begin{equation*}
0 \to \bL_S(\bR_C(\xi^{-1})) \otimes \cO_S(H) \to \rH^0(C,\xi)^\vee \otimes \cO_S(H) \to j_*(\bR_C(\xi^{-1}) \otimes \cO_C(K_C)) \to 0.
\end{equation*}
By~\eqref{eq:genus-s}, we have~$\rH^2(S, \bL_S(\bR_C(\xi^{-1})) \otimes \cO_S(H)) = \rH^1(C, \bR_C(\xi^{-1}) \otimes \cO_C(K_C))$, 
and by Lemma~\ref{lem:rxivee-h0} this is zero.
Applying Serre duality we deduce~$\rH^0(S, \bL_S(\bR_C(\xi^{-1}))^\vee \otimes \cO_S(-H)) = 0$.

Furthermore, consider the commutative diagram
\begin{equation*}
\xymatrix@R=3ex{
0 \ar[r] & 
0 \ar[r] \ar[d] & 
\rH^0(C,\xi)^\vee \otimes \cO_S \ar@{=}[r] \ar[d] & 
\rH^0(C,\bR_C(\xi^{-1})) \otimes \cO_S \ar[r] \ar[d]^{\ev} & 
0
\\
0 \ar[r] & 
j_*(\xi^{-1}) \ar[r] & 
\rH^0(C,\xi)^\vee \otimes j_*\cO_C \ar[r]^-{\ev} & 
j_*\bR_C(\xi^{-1}) \ar[r] & 
0,
}
\end{equation*}
where the bottom row is the pushforward of~\eqref{eq:def-rxi}.
The middle vertical arrow is induced by the natural morphism~$\cO_S \to j_*\cO_C$, 
hence it is surjective, therefore the exact sequence of kernels and cokernels takes the form 
\begin{equation*}
0 \to \rH^0(C,\xi)^\vee \otimes \cO_S(-H) \to \bL_S(\bR_C(\xi^{-1})) \to j_*(\xi^{-1}) \to 0,
\end{equation*}
and since~$\cRHom(j_*(\xi^{-1}), \cO_S) \cong j_*\xi \otimes \cO_S(H)[-1]$ by Grothendieck duality,
its dual sequence twisted by~$\cO_S(-H)$ takes the form
\begin{equation*}
0 \to \bL_S(\bR_C(\xi^{-1}))^\vee \otimes \cO_S(-H) \to \rH^0(C,\xi) \otimes \cO_S \to j_*\xi \to 0.
\end{equation*}
As we checked before, $\rH^0(S, \bL_S(\bR_C(\xi^{-1}))^\vee \otimes \cO_S(-H)) = 0$, hence the second arrow is the evaluation morphism, hence its kernel is isomorphic to the Lazarsfeld bundle~$\bL_S(\xi)$
and the required isomorphism~$\bL_S(\bR_C(\xi^{-1})) \cong \bL_S(\xi)^\vee \otimes \cO_S(-H)$ follows.
\end{proof}

\begin{proposition}\label{prop:lazarseld-mukai}
Let~$(S,H)$ be a quasipolarized~$K3$ surface of genus~$g = r \cdot s \ge 6$ 
and let~$C \subset S$ be a BNP-general curve in~$|H|$.
Assume~$s \ge r \in \{2,3\}$.
If~$S$ does not have special Mukai classes of type~$(r,s)$ 
then for any Mukai pair~$(\xi,\eta)$ of type~$(r,s)$ on~$C$
and any Mukai extension~\eqref{eq:mec} there is an exact sequence
\begin{equation}
\label{eq:blxi-bluc-bleta}
0 \to \bL_S(\xi)^\vee \otimes \cO_S(-H) \to \bL_S(\cG) \to \bL_S(\eta) \to 0.
\end{equation} 
of the corresponding Lazarsfeld bundles.
\end{proposition}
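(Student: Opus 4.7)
The plan is to obtain \eqref{eq:blxi-bluc-bleta} by applying the Lazarsfeld construction term by term to the Mukai extension \eqref{eq:mec} and reading the result off a $3 \times 3$ diagram.

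First I would check that the middle term $\cG$ of \eqref{eq:mec} is globally generated, so that the Lazarsfeld sheaf $\bL_S(\cG)$ is defined by the usual exact sequence. Both $\bR_C(\xi^{-1})$ (by Lemma~\ref{lem:rxivee-h0}) and $\eta$ (as part of a Mukai pair) are globally generated on $C$. The vanishing of the connecting morphism in Definition~\ref{def:mukai-extension} gives the short exact sequence
\begin{equation*}
0 \to \rH^0(C, \bR_C(\xi^{-1})) \to \rH^0(C, \cG) \to \rH^0(C, \eta) \to 0,
\end{equation*}
and a snake lemma chase on the evaluation maps of the three terms of \eqref{eq:mec} then forces the middle evaluation $\rH^0(C, \cG) \otimes \cO_C \twoheadrightarrow \cG$ to be surjective as well.

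Next I would form the $3 \times 3$ commutative diagram on $S$ whose three columns are the defining sequences of the Lazarsfeld bundles $\bL_S(\bR_C(\xi^{-1}))$, $\bL_S(\cG)$, $\bL_S(\eta)$, whose middle row is the tensor of the displayed sequence of global sections with $\cO_S$, and whose bottom row is the pushforward of \eqref{eq:mec} along $j$. The bottom row is exact because $j \colon C \into S$ is a closed immersion, hence $j_*$ is exact; the middle row is exact by the previous step; and the three columns are exact by the definition of the Lazarsfeld sheaves (each evaluation map being surjective by global generation). Viewing this as a morphism of short exact sequences from the middle row to the bottom row with surjective vertical arrows, the snake lemma identifies the kernels and produces a short exact sequence
\begin{equation*}
0 \to \bL_S(\bR_C(\xi^{-1})) \to \bL_S(\cG) \to \bL_S(\eta) \to 0.
\end{equation*}
Invoking Lemma~\ref{lem:blbrxi} to rewrite the leftmost term as $\bL_S(\xi)^\vee \otimes \cO_S(-H)$ yields \eqref{eq:blxi-bluc-bleta}.

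I do not expect a serious obstacle here: all of the substantive geometric content (Brill--Noether generality of $C$, absence of special Mukai classes on $S$, the precise shape of the restriction of the Lazarsfeld bundle, and the Serre-duality-based identification of $\bL_S(\bR_C(\xi^{-1}))$) is already encapsulated in the preceding Lemmas~\ref{lem:rxivee-h0}--\ref{lem:blbrxi}, and what remains is a purely homological argument. The only mildly delicate point is the verification that $\cG$ is globally generated, which is exactly where the vanishing of the connecting map in the definition of a Mukai extension is used.
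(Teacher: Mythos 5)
Your proposal is correct and follows essentially the same route as the paper: establish global generation of $\cG$ from the exactness of the global-sections sequence (which uses the vanishing of the connecting map in the definition of a Mukai extension), assemble the $3\times 3$ diagram with middle row $\rH^0\otimes\cO_S$ and bottom row the pushforward of~\eqref{eq:mec}, extract the top row of Lazarsfeld bundles, and identify the left term via Lemma~\ref{lem:blbrxi}. The only cosmetic difference is that the paper explicitly invokes Lemma~\ref{lem:rxi-h0} to rewrite $\rH^0(C,\bR_C(\xi^{-1}))$ as $\rH^0(C,\xi)^\vee$, which you use implicitly when matching the left column to the defining sequence of $\bL_S(\bR_C(\xi^{-1}))$.
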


\begin{proof}
If~\eqref{eq:mec} is a Mukai extension, we have an exact sequence of global sections
\begin{equation*}
0 \to \rH^0(C, \xi)^\vee \to \rH^0(C, \cG) \to \rH^0(C, \eta) \to 0
\end{equation*}
(where we use Lemma~\ref{lem:rxi-h0} to identify the first term and Definition~\ref{def:mukai-extension} to prove exactness on the right).
Moreover, since~$\eta$ and~$\bR_C(\xi^{-1})$ are globally generated 
(by definition of a Mukai pair and Lemma~\ref{lem:rxivee-h0}, respectively), 
we conclude that~$\cG$ is also globally generated, hence we have a commutative diagram
\begin{equation}
\label{eq:big-diagram}
\vcenter{\xymatrix@R=3ex{
&
0 \ar[d] & 
0 \ar[d] & 
0 \ar[d] 
\\
0 \ar[r] & 
\bL_S(\xi)^\vee \otimes \cO_S(-H) \ar[r] \ar[d] & 
\bL_S(\cG) \ar[r] \ar[d] & 
\bL_S(\eta) \ar[r] \ar[d] & 
0 
\\
0 \ar[r] & 
\rH^0(C, \xi)^\vee \otimes \cO_S \ar[r] \ar[d] & 
\rH^0(C, \cG) \otimes \cO_S \ar[r] \ar[d] & 
\rH^0(C, \eta) \otimes \cO_S \ar[r] \ar[d] & 
0
\\
0 \ar[r] & 
j_*\bR_C(\xi^{-1}) \ar[r] \ar[d] & 
j_*\cG \ar[r] \ar[d] & 
j_*\eta \ar[r] \ar[d] & 
0
\\
& 0 & 0 & 0 
}}
\end{equation}
with exact rows and columns, where the first term of the left column is identified in Lemma~\ref{lem:blbrxi}.
Now the top row of the diagram gives~\eqref{eq:blxi-bluc-bleta}.
\end{proof}

From now on we work under the assumption that~$\Pic(S) = \ZZ \cdot H$.

\begin{lemma}\label{lem:unstable-extensions}
Assume~$\Pic(S) = \ZZ \cdot H$ and~$g = r \cdot s \ge 6$ with~$s \ge r \in \{2,3\}$.
Let~$(\xi,\eta)$ be a Mukai pair of type~$(r,s)$ on a curve~$C$ in~$|H|$ and let
\begin{equation}
\label{eq:cf-filtration}
0 \to \bL_S(\xi)^\vee \otimes \cO_S(-H) \to \cF \to \bL_S(\eta) \to 0
\end{equation} 
be an exact sequence.
If the sheaf~$\cF$ is not $H$-Gieseker stable then the sequence splits.
\end{lemma}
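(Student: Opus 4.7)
The plan is to show that any destabilizing saturated subsheaf $\cG \subsetneq \cF$ with $p_H(\cG) \ge p_H(\cF)$ must give rise to a splitting of the exact sequence. The argument proceeds in three logical steps.

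First, I would establish that $\cF_1 \coloneqq \bL_S(\xi)^\vee \otimes \cO_S(-H)$ and $\cF_2 \coloneqq \bL_S(\eta)$ are both $H$-slope stable, and verify the slope inequalities $\upmu_H(\cF_1) < \upmu_H(\cF) < \upmu_H(\cF_2)$. Indeed, under the assumption $\Pic(S) = \ZZ \cdot H$ there are no special Mukai classes of type $(r,s)$, since the intersection $\Xi \cdot H = (r-1)(s+1)$ cannot be realized by an integer multiple of $H$. Hence Theorem~\ref{thm:mb-s} applies (for both the pair $(\xi,\eta)$ and the swapped pair $(\eta,\xi)$) and both $\bL_S(\xi)$ and $\bL_S(\eta)$ are Mukai bundles, thus $H$-Gieseker stable. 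The coprimality of their rank and first Chern class coefficient upgrades this to slope stability, which is preserved by dualizing and twisting. The slope inequalities are a direct numerical check using $g > r + s$ for $g \ge 6$.

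Next, let $\cG$ be a destabilizing saturated subsheaf, set $\cK = \cG \cap \cF_1$, and let $\cH$ denote the saturated image of $\cG$ in $\cF_2$; note that $\cK$ is saturated in $\cF_1$ because $\cF_1/\cK$ embeds into the torsion-free quotient $\cF/\cG$. I distinguish three situations. If $\cH = 0$, then $\cG \subset \cF_1$ and $\upmu_H(\cG) \le \upmu_H(\cF_1) < \upmu_H(\cF)$ contradicts destabilization. If $\cH = \cF_2$ and $\cK = 0$, the composition $\cG \xrightarrow{\sim} \cF_2 \hookrightarrow \cF$ produces the desired splitting. Otherwise, using $\Pic(S) = \ZZ \cdot H$ to write $c_1(\cK) = \alpha H$ and $c_1(\cH) = \beta H$, the slope-stability of $\cF_1$ and $\cF_2$ gives the integer bounds $\alpha r + \rank(\cK)(r-1) \le -1$ and $\beta s + \rank(\cH) \le -1$ whenever the respective subsheaves are proper, while the destabilization condition $p_H(\cG) \ge p_H(\cF)$ yields $(\alpha+\beta)(r+s) + r(\rank \cK + \rank \cH) \ge 0$. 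A case-by-case arithmetic check over the admissible configurations for $r \in \{2,3\}$ and $s \ge r$ eliminates all but a handful of borderline possibilities.

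The main obstacle is exactly these borderline configurations---most notably $r = 3$ with $\rank(\cK) = 1$, $\alpha = -1$, $\cH = \cF_2$, and the ``dual'' situation $\cK = 0$ with $\rank(\cH) = s-1$, $\beta = -1$ (for $s$ large enough). For each such configuration, the existence of $\cG$ as a subsheaf of $\cF$ with the prescribed $(\cK,\cH)$ translates, via the long exact sequence of $\Ext$ applied to $0 \to \cK \to \cF_1 \to \cF_1/\cK \to 0$ (resp.\ to $0 \to \cH \to \cF_2 \to \cF_2/\cH \to 0$), into the statement that $[\cF] \in \Ext^1(\cF_2, \cF_1)$ lies in the image of $\Ext^1(\cF_2, \cK) \to \Ext^1(\cF_2, \cF_1)$ (resp.\ of $\Ext^1(\cF_2/\cH, \cF_1) \to \Ext^1(\cF_2, \cF_1)$). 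Combining Serre duality with the cohomology vanishings $H^1(\cF_1) = 0$ of Lemma~\ref{lem:l-m-h} and the Ext-vanishings of Proposition~\ref{prop:l-l-m-h}, I would compute these images to be zero, forcing $[\cF] = 0$ and contradicting the non-split assumption. Hence the only remaining possibility is the splitting case, concluding the proof.
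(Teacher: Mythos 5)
Your strategy---intersecting a destabilizing subsheaf $\cG$ with the two-step filtration and doing arithmetic on $(\alpha,\beta,\rank\cK,\rank\cH)$---is genuinely different from the paper's. The paper instead refines the Harder--Narasimhan filtration of $\cF$ to a filtration with Gieseker-stable factors, uses simplicity of each stable factor in the form $\upchi(\cF_i,\cF_i)\le 2$ as an \emph{extra numerical constraint} on the invariants $(x_i,y_i,z_i)$, and closes the argument with a convexity estimate; that route needs no $\Ext$ computations at all, and the constraint $\upchi(\cF_i,\cF_i)\le 2$ is precisely what kills the configurations that survive your slope arithmetic. (A small slip in your first step: stability of $\bL_S(\eta)$ for $s>3$ is not covered by Theorem~\ref{thm:mb-s}, whose hypotheses put the first index of the type in $\{2,3\}$; quote Proposition~\ref{prop:Picardrankonestable} instead, since $\bL_S(\eta)$ is spherical and $\Pic(S)=\ZZ\cdot H$.)

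The genuine gap is in your third step, which is where the content of the lemma lies. First, the list of borderline configurations is incomplete: for example, for $r=2$, $s=6$ the arithmetic also admits $\cK=0$ with $\rank\cH=4$, $\beta=-1$ (slopes equal, so the comparison of $\upchi/\rank$ must be carried out), in addition to $\rank\cH=5$; similarly $r=2$, $s=4$ admits $\rank\cH=3$ with equal slopes. Second, and more seriously, the vanishings you invoke do not follow from Lemma~\ref{lem:l-m-h} or Proposition~\ref{prop:l-l-m-h}: those control $\rH^\bullet(\bL_S(\xi)\otimes\cO_S(\pm H))$, $\rH^\bullet(\bL_S(\xi)^\vee\otimes\cO_S(-H))$ and $\rH^1(\bL_S(\xi)^\vee\otimes\bL_S(\xi)\otimes\cO_S(\pm H))$, whereas your borderline cases require, e.g., that the image of $\Ext^1(\cF_2,\cK)\to\Ext^1(\cF_2,\cF_1)$ avoid $[\cF]$ for a saturated line subbundle $\cK\cong\cO_S(-H)\subset\bL_S(\xi)^\vee(-H)$, or control of $\Ext^1(\cF_2/\cH,\cF_1)$ for an unknown torsion-free quotient $\cF_2/\cH$. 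Even the most accessible such group, $\Ext^1(\bL_S(\eta),\cO_S(-H))=\rH^1(\bL_S(\eta)^\vee(-H))$, would require Lemma~\ref{lem:l-m-h} for the swapped pair $(\eta,\xi)$ of type $(s,r)$, which the paper only proves when the first index lies in $\{2,3\}$, i.e., when $s\le 3$. Finally, the image of $\cG$ in $\cF_2$ need not be saturated even when $\cG$ is saturated in $\cF$, so in your splitting case $\cG\to\cF_2$ is a priori only an isomorphism onto a full-rank subsheaf with zero-dimensional cokernel $T$; one must separately argue, via $\cExt^{\le 1}(T,\cF_1)=0$, that this still forces $[\cF]=0$ (and the same torsion correction pollutes all the other $\Ext$ computations). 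As written, the borderline destabilizing subsheaves are not actually ruled out.
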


\begin{proof}
By Lemma~\ref{lem:lazarsfeld-bundle} and Proposition~\ref{prop:Picardrankonestable}, the subbundle~$\bL_S(\xi)^\vee \otimes \cO_S(-H)$ and quotient bundle~$\bL_S(\eta)$ of~$\cF$ are stable with Mukai vectors
\begin{align}
\label{eq:rv-subfactors}
&\rv(\bL_S(\xi)^\vee \otimes \cO_S(-H)) = (r,-(r - 1)H,s + (rs - 1)(r - 2)), 
&
&\rv(\bL_S(\eta)) = (s,-H,r)
\intertext{and slopes}
\notag
&\upmu_H(\bL_S(\xi)^\vee \otimes \cO_S(-H)) = \tfrac1r - 1,
&
&\upmu_H(\bL_S(\eta)) = -\tfrac1s.
\end{align}
Note that~$\tfrac1r - 1 \le -\tfrac12 < - \tfrac1s$ because~$r \ge 2$ and~$s \ge 3$.

Assume~$\cF$ is not $H$-Gieseker stable.
Consider the Harder--Narasimhan filtration of a sheaf~$\cF$
and refine it (via a Jordan--H\"older filtration of every semistable factor)
to a filtration with $H$-Gieseker stable factors~$\cF_1,\dots,\cF_m$.
We will show that~$m = 2$, $\cF_1 = \bL_S(\eta)$, and~$\cF_2 = \bL_S(\xi)^\vee \otimes \cO_S(-H)$, 
so that this filtration is opposite to filtration~\eqref{eq:cf-filtration}.

First, \eqref{eq:cf-filtration} implies that the slopes of~$\cF_i$ are bounded by the slopes of~$\bL_S(\xi)^\vee \otimes \cO_S(-H)$ and~$\bL_S(\eta)$:
\begin{equation}
\label{eq:slopes-cfi}
-\tfrac1s \ge \upmu_H(\cF_1) \ge \dots \ge \upmu_H(\cF_m) \ge \tfrac1r - 1.
\end{equation} 
Furthermore, since~$\Pic(S) = \ZZ \cdot H$, we can write~$\rv(\cF_i) = (x_i, - y_iH,  z_i)$. 
Then~$\upmu_H(\cF_i) \le -\tfrac1s < 0$ implies~$y_i > 0$ and~$\upmu_H(\cF_1) \ge \tfrac1r - 1 > -1$ implies~$x_i > y_i$.
Moreover, a combination of~\eqref{eq:rv-subfactors} and~\eqref{eq:slopes-cfi} gives the relations
\begin{equation*}
\sum x_i = r + s,
\qquad 
\sum y_i = r,
\qquad 
\sum z_i = (r-1)^2s + 2,
\qquad 
x_i > y_i{} \ge 1.
\end{equation*}
The inequalities~$x_i > y_i \ge 1$ with the equality~$\sum y_i = r$ imply, in particular, that
\begin{equation}
\label{eq:x-2}
m \le r
\qquad\text{and}\qquad
x_1, \dots, x_m \ge 2.
\end{equation} 
On the other hand, by Riemann--Roch we have~$\upchi(\cF_i,\cF_i) = 2x_iz_i - (2rs - 2)y_i^2$, and since the sheaves~$\cF_i$ are stable, they are simple, hence~$\upchi(\cF_i,\cF_i) \le 2$, hence
\begin{equation*}
\label{eq:xyz-9}
x_iz_i \le (rs - 1)y_i^2 + 1,
\qquad 
1 \le i \le m.
\end{equation*}
Dividing the $i$-th inequality by~$x_i$ and summing them up, we obtain
\begin{equation}
\label{eq:xyz-inequality}
(r-1)^2s + 2 = \sum z_i \le \sum \tfrac{(rs - 1)y_i^2 + 1}{x_i}. 
\end{equation}

From now on we assume~$r \in \{2,3\}$.
If~$m = 3$ then~\eqref{eq:x-2} implies~$r = 3$ and~$y_i = 1$ for all~$i$, 
hence the right-hand side of~\eqref{eq:xyz-inequality} equals~$3s \sum \tfrac1{x_i}$, and we obtain
\begin{equation}
\label{eq:sum-1x-inequality}
\sum \tfrac1{x_i} \ge 
\tfrac{4s + 2}{3s} =
\tfrac43 + \tfrac2{3s} > 
\tfrac43.
\end{equation}
Since~$x_i \ge 2$ by~\eqref{eq:x-2}, we must have~$x_1 = x_2 = x_3 = 2$, but then~$s = 3$, and~\eqref{eq:sum-1x-inequality} fails.
Therefore, we must have~$m = 2$; we assume this from now on.

As~$m = 2$, the inequality~\eqref{eq:xyz-inequality} can be rewritten as
\begin{equation*}
(rs - 1)(y_1^2x_2 + y_2^2x_1) + (x_1 + x_2) \ge ((r-1)^2s + 2)x_1x_2.
\end{equation*}
Since~$r \in \{2,3\}$, one of the~$y_i$ is~$1$ and the other is~$r - 1$.
If we denote the~$x_i$ that corresponds to~$y_i = 1$ by~$x$, so that the other is~$r + s - x$, 
then~$y_1^2x_2 + y_2^2x_1 = (r-1)^2 x + (r + s - x)$ and the above inequality takes the form
\begin{equation*}
f(x) \coloneqq (rs - 1)\Big((r-1)^2 x + (r + s - x)\Big) + (r + s) - \Big((r-1)^2s + 2\Big)x(r + s - x) \ge 0.
\end{equation*}
Note that
\begin{align*}
f(s) &= (rs - 1)((r - 1)^2 s + r) + (r + s) - ((r-1)^2s + 2)rs = 0,
\\
f(2) &= (rs - 1)(2(r-1)^2 + (r + s - 2)) + (r + s) - 2((r-1)^2s + 2)(r + s - 2)
\\
&= -s^2(r - 2)(2r - 1) - 5rs(r -2) - 2(r^2 - 4) \le 0.
\end{align*}
and the second inequality is strict unless~$r = 2$.
On the other hand, the function~$f$ is convex, hence the inequality~$f(x) \ge 0$ implies
\begin{equation*}
x \ge s
\qquad\text{or}\qquad
x \le 2
\end{equation*}
and the second inequality is strict unless~$r = 2$.
Now, if~$x \ge s$ the slope of~$\cF_i$ is greater or equal than~$-\tfrac1s$, if~$x \le 1$ 
then the slope of~$\cF_i$ is less than~$\tfrac1r - 1$, and if~$x = 2 = r$, it is equal to~$\tfrac1r - 1$.
In either case, \eqref{eq:slopes-cfi} implies that the slopes of the~$\cF_i$ are~$-\tfrac1s$ and~$\tfrac1r - 1$.
Since these coincide with the slopes of the stable bundles~$\bL_S(\eta)$ and~$\bL_S(\xi)^\vee \otimes \cO_S(-H)$, 
we conclude that 
\begin{equation*}
\cF_1 \cong \bL_S(\eta)
\qquad\text{and}\qquad 
\cF_2 \cong \bL_S(\xi)^\vee \otimes \cO_S(-H).
\end{equation*}
As we noticed above, $\upmu_H(\cF_1) = -\tfrac1s > \tfrac1r - 1 = \upmu_H(\cF_2)$,
and since both~$\cF_1$ and~$\cF_2$ are stable, we have~$\Hom(\cF_1,\cF_2) = 0$, hence
the exact sequence~\eqref{eq:cf-filtration} splits.
\end{proof}

Combining the above observations we obtain the main result of this section.

\begin{theorem}
\label{thm:mukai-extension}
Let~$(S,H)$ be a polarized $K3$ surface of genus~$g = r \cdot s \ge 6$ with~$\Pic(S) = \ZZ \cdot H$.
If~$C \in |H|$ is a BNP-general curve, $(\xi,\eta)$ is a Mukai pair of type~$(r,s)$ on~$C$ with~$s \ge r \in \{2,3\}$, and~$\epsilon \in \Ext^1(\eta, \bR_C(\xi^{-1}))$ is a Mukai extension class then~$\epsilon = \epsilon_{\bL_S(\xi)}$ is the Lazarsfeld class.
In particular, the Mukai extension class is unique.
\end{theorem}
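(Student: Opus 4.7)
The plan is to translate the Mukai extension on $C$ into a short exact sequence on $S$ via Proposition~\ref{prop:lazarseld-mukai}, and then combine Lemma~\ref{lem:unstable-extensions} with an explicit analysis of the Lazarsfeld class $\epsilon_0 = \epsilon_{\bL_S(\xi)}$ to conclude. A preliminary observation is that $\Pic(S) = \ZZ \cdot H$ forces $S$ to have no special Mukai classes of type $(r,s)$: any such class is a multiple $nH$, but $n(2rs - 2) = (r-1)(s+1)$ has no solution with $n \ge 1$ when $rs \ge 6$, and $n = 0$ contradicts $h^0 = r \ge 2$. Hence Propositions~\ref{prop:lec} and~\ref{prop:lazarseld-mukai} apply, so that $\epsilon_0$ is itself a nonzero Mukai extension class; the theorem is equivalent to the statement that the subspace of Mukai extension classes in $\Ext^1(\eta, \bR_C(\xi^{-1}))$ is one-dimensional.

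Given any Mukai extension class $\epsilon$ with extension sheaf $\cG$, Proposition~\ref{prop:lazarseld-mukai} yields
\begin{equation*}
0 \to \bL_S(\xi)^\vee \otimes \cO_S(-H) \to \bL_S(\cG) \to \bL_S(\eta) \to 0.
\end{equation*}
I would first handle the Lazarsfeld case $\cG_0 = j^*\bL_S(\xi)^\vee$: the isomorphism $\rH^0(\bL_S(\xi)^\vee) \cong \rH^0(\cG_0)$ established in the proof of Proposition~\ref{prop:lec} gives a natural factorization $\rH^0(\cG_0) \otimes \cO_S \twoheadrightarrow \bL_S(\xi)^\vee \twoheadrightarrow j_*\cG_0$; applying the snake lemma produces a complementary sequence $0 \to \bL_S(\eta) \to \bL_S(\cG_0) \to \bL_S(\xi)^\vee(-H) \to 0$ in which $\bL_S(\eta)$ destabilizes $\bL_S(\cG_0)$ (the slope inequality $\upmu_H(\bL_S(\eta)) > \upmu_H(\bL_S(\cG_0))$ reduces to $(r-1)(s-1) > 1$, a consequence of $rs \ge 6$). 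By Lemma~\ref{lem:unstable-extensions}, the Proposition~\ref{prop:lazarseld-mukai} sequence for $\cG_0$ must split: $\bL_S(\cG_0) \cong \bL_S(\xi)^\vee(-H) \oplus \bL_S(\eta)$.

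The main obstacle is extending this splitting to arbitrary Mukai extensions. The assignment $\epsilon \mapsto \bL_S(\cG)$ is functorial, defining a linear map $\Psi \colon \Ext^1(\eta, \bR_C(\xi^{-1})) \to \Ext^1(\bL_S(\eta), \bL_S(\xi)^\vee(-H))$ with $\Psi(\epsilon_0) = 0$ by the previous step. For a Mukai extension class with $\Psi(\epsilon) \ne 0$, Lemma~\ref{lem:unstable-extensions} would make $\bL_S(\cG)$ strictly $H$-Gieseker stable; I would derive a contradiction by comparing $\bL_S(\cG)$ with the split $\bL_S(\cG_0)$ via $\Hom$-vanishing and Euler-form computations, exploiting the slope chain $\upmu_H(\bL_S(\eta)) > \upmu_H(\bL_S(\cG_0)) > \upmu_H(\bL_S(\xi)^\vee(-H))$ together with stability of each piece to force $\Hom(\bL_S(\cG), \bL_S(\xi)^\vee(-H)) = \Hom(\bL_S(\eta), \bL_S(\cG)) = 0$ and obtain an incompatibility with $\upchi(\bL_S(\cG), \bL_S(\cG_0)) = -\langle \rv, \rv\rangle$ computed from the known Mukai vector. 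The conclusion is that $\bL_S(\cG) \cong \bL_S(\cG_0)$ for every Mukai extension.

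Finally, with $\bL_S(\cG) \cong \bL_S(\cG_0)$ in hand, one recovers $\cG$ by restricting the defining Lazarsfeld sequence to $C$: the resulting long exact sequence (using $\Tor_1(j_*\cG, \cO_C) \cong \cG(-K_C)$ from the adjunction $N_{C/S} = \cO_C(K_C)$) characterizes $\cG$ as a specific subquotient of $j^*\bL_S(\cG)$. Its class as an extension of $\eta$ by $\bR_C(\xi^{-1})$ is thereby determined up to scalar by the isomorphism class of $\bL_S(\cG) = \bL_S(\cG_0)$; since $\epsilon_0$ already realizes this class, we conclude $\epsilon = c\,\epsilon_0$ for some $c \in \kk^\times$, as required.
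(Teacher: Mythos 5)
Your first half follows the paper's route (translate the Mukai extension into the sequence of Proposition~\ref{prop:lazarseld-mukai} and split it via Lemma~\ref{lem:unstable-extensions}), but the detour through the special case $\cG_0=j^*\bL_S(\xi)^\vee$ and the map $\Psi$ is both unnecessary and, as written, does not close. The paper gets non-stability of $\bL_S(\cG)$ uniformly for \emph{every} Mukai extension: the Mukai vector of $\bL_S(\cG)$ is independent of $\epsilon$, a Riemann--Roch computation gives $\upchi(\bL_S(\cG),\bL_S(\cG))>2$, so $\bL_S(\cG)$ cannot be simple, hence not stable, and Lemma~\ref{lem:unstable-extensions} splits~\eqref{eq:blxi-bluc-bleta} for all $\epsilon$ at once. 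Your proposed contradiction for $\Psi(\epsilon)\ne 0$ does not work as described: stability of $\bL_S(\cG)$ only forces $\Hom(\bL_S(\cG),\bL_S(\xi)^\vee(-H))=0$ and $\Hom(\bL_S(\eta),\bL_S(\cG))=0$, which leaves $\Hom(\bL_S(\cG),\bL_S(\eta))$ and $\Ext^2(\bL_S(\cG),\bL_S(\xi)^\vee(-H))$ unconstrained, so the positivity of $\upchi(\bL_S(\cG),\bL_S(\cG_0))$ yields no contradiction. (The correct contradiction is simply that a stable sheaf is simple, incompatible with $\upchi>2$.)

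The genuine gap is in your last paragraph. Knowing $\bL_S(\cG)\cong\bL_S(\cG_0)$ as abstract sheaves does \emph{not} determine $\cG$: to recover $\cG$ from the restriction of the defining sequence to $C$ you need the specific evaluation embedding $\bL_S(\cG)\hookrightarrow \rH^0(C,\cG)\otimes\cO_S$, i.e.\ the subspace $\rH^0(C,\cG)^\vee\subset\Hom(\bL_S(\cG),\cO_S)=\rH^0(S,\bL_S(\cG)^\vee)$. Dualizing the defining sequence shows $\rH^0(S,\bL_S(\cG)^\vee)$ also contains $\rH^0(C,\cG^\vee(K_C))\cong\rH^1(C,\cG)^\vee\ne 0$, so this subspace is proper and is genuinely extra data beyond the isomorphism class; your assertion that the extension class is ``thereby determined up to scalar'' is exactly the statement that needs proof. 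This is where the paper's argument does its real work: it chooses a splitting $\psi$ of $\bL_S(\cG)\twoheadrightarrow\bL_S(\eta)$, forms $\cF=\Coker(\tilde\psi)$ sitting in the diagram~\eqref{eq:small-diagram}, shows the bottom row of that diagram is non-split (else $\epsilon=0$), restricts both rows to $C$ to produce an isomorphism $j^*\bL_S(\xi)^\vee\cong\cG$, and finally uses $\Hom(\bR_C(\xi^{-1}),\eta)=0$ to see that $\Hom(j^*\bL_S(\xi)^\vee,\eta)$ is one-dimensional, so this isomorphism is compatible with the two surjections onto $\eta$ and $\epsilon=\epsilon_{\bL_S(\xi)}$. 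Some version of this identification is indispensable and is missing from your proposal.
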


\begin{proof}
Let~$\epsilon \ne 0$ be a Mukai extension class and let~\eqref{eq:mec} be the corresponding extension.
By Proposition~\ref{prop:lazarseld-mukai} we obtain the exact sequence~\eqref{eq:blxi-bluc-bleta}.
Now a simple Riemann--Roch computation shows that~$\upchi(\bL_S(\cG), \bL_S(\cG)) > 2$, hence the bundle~$\bL_S(\cG)$ is not stable.
Applying Lemma~\ref{lem:unstable-extensions} we conclude that~\eqref{eq:blxi-bluc-bleta} splits, hence
\begin{equation*}
\bL_S(\cG) \cong \big(\bL_S(\xi)^\vee \otimes \cO_S(-H)\big) \oplus \bL_S(\eta).
\end{equation*}
Let~$\psi \colon \bL_S(\eta) \to \bL_S(\cG)$ be a splitting of the projection from~\eqref{eq:blxi-bluc-bleta}.
Consider the upper right square 
\begin{equation*}
\xymatrix{
\bL_S(\cG) \ar[r] \ar[d] &
\bL_S(\eta) \ar@/^/@{-->}[l]^\psi \ar@{..>}[ld]^(.4){\tilde\psi} \ar[d]
\\
\rH^0(C, \cG) \otimes \cO_S \ar[r] &
\rH^0(C, \eta) \otimes \cO_S 
}
\end{equation*}
of the diagram~\eqref{eq:big-diagram}, where~$\tilde\psi$ is defined 
as the composition~$\bL_S(\eta) \xrightarrow{\ \psi\ } \bL_S(\cG) \hookrightarrow \rH^0(C, \cG) \otimes \cO_S$.
Let~$\cF \coloneqq \Coker(\tilde\psi)$.
It is easy to see that it fits into a commutative diagram with exact rows
\begin{equation}
\label{eq:small-diagram}
\vcenter{\xymatrix{
0 \ar[r] & 
\bL_S(\xi)^\vee \otimes \cO_S(-H) \ar[r] \ar[d] & 
\cF \ar[r] \ar@{=}[d] & 
j_*\cG \ar[r] \ar[d] & 
0
\\
0 \ar[r] & 
\rH^0(C, \xi)^\vee \otimes \cO_S \ar[r] & 
\cF \ar[r] & 
j_*\eta \ar[r] & 
0
}}
\end{equation}
where the left and right vertical arrows coincide with the arrows in~\eqref{eq:big-diagram}.
If the bottom row of~\eqref{eq:small-diagram} splits, then the composition~$j_*\eta \to \cF \to j_*\cG$ 
provides a splitting of the morphism~$j_*\cG \to j_*\eta$ from the bottom row of~\eqref{eq:big-diagram}, 
hence also a splitting of~\eqref{eq:mec}, which contradicts to the assumption~$\epsilon \ne 0$; 
therefore, the bottom row of~\eqref{eq:small-diagram} does not split.

Now, pulling back the bottom row of~\eqref{eq:small-diagram} to~$C$, we obtain an exact sequence
\begin{equation*}
0 \to \rL_1j^*\cF \to \eta(-K_C) \to \rH^0(C, \xi)^\vee \otimes \cO_C \to j^*\cF \to \eta \to 0.
\end{equation*}
The natural isomorphism~$\Ext^1(j_*\eta, \rH^0(C, \xi)^\vee \otimes \cO_S) \cong \Hom(\eta(-K_C), \rH^0(C, \xi)^\vee \otimes \cO_C)$ shows that the morphism~$\eta(-K_C) \to \rH^0(C, \xi)^\vee \otimes \cO_C$ in it is nontrivial, hence it is injective, hence~$\rL_1j^*\cF = 0$.
Therefore, using the bottom row of~\eqref{eq:small-diagram} to compute the invariants of~$j^*\cF$, we obtain
\begin{equation*}
\rank(j^*\cF) = \rank(\cF) = r = \rank(\cG)
\qquad\text{and}\qquad 
\rc_1(j^*\cF) = \rc_1(\cF)\vert_C = H\vert_C = K_C = \rc_1(\cG).
\end{equation*}
Now, restricting the top row of~\eqref{eq:small-diagram} to~$C$, we obtain
\begin{equation*}
0 \to \cG(-K_C) \to j^*\bL_S(\xi)^\vee \otimes \cO_C(-K_C) \to j^*\cF \to \cG \to 0.
\end{equation*}
The last arrow in it is an epimorphism of sheaves with the same rank and degree, hence it is an isomorphism.
Therefore, the first arrow is also an isomorphism, i.e., $j^*\bL_S(\xi)^\vee \cong \cG$.

Finally, since~$\Hom(\bR_C(\xi^{-1}), \eta) = 0$ by Lemma~\ref{lem:rxi-h0}, it follows from~\eqref{eq:mec} that the space
\begin{equation*}
\Hom(j^*\bL_S(\xi)^\vee, \eta) \cong \Hom(\cG, \eta)
\end{equation*}
is 1-dimensional, hence the composition~$j^*\bL_S(\xi)^\vee \xrightiso{} \cG \twoheadrightarrow \eta$ of the constructed isomorphism and the surjection of~\eqref{eq:mec}
coincides with the map~$j^*(\phi)$ in sequence~\eqref{eq:rxi-blxi}, hence~$\epsilon = \epsilon_{\bL_S(\xi)}$. 
\end{proof}

%%%%%%%%%%%%%%%%%%%%%%%%%

\section{Mukai bundles on Fano threefolds}
\label{sec:fano}

In this section we prove Theorem~\ref{thm:Main}. 
In Section~\ref{ss:mb-sing} we consider the more general situation 
of a prime Fano threefold with factorial terminal singularities,
and prove in this case Theorem~\ref{thm:mb-x}, a weaker version of Theorem~\ref{thm:Main},
and in Section~\ref{ss:mb-smooth} we consider the case where~$X$ is smooth.

Recall that a normal variety~$X$ is {\sf factorial} if any Weil divisor on~$X$ is Cartier;
in particular, the canonical divisor is Cartier. 
Note also that terminal singularities are Cohen--Macaulay, 
hence any variety with factorial terminal singularities is Gorenstein.
Finally, recall that a Fano threefold with factorial terminal singularities is {\sf prime} 
if its anticanonical class
\begin{equation*}
H \coloneqq -K_X,
\end{equation*}
is the ample generator of~$\Pic(X)$.
Recall that the genus~$g$ of~$X$ is defined by the equality
\begin{equation*}
H^3 = 2g - 2.
\end{equation*}
We assume in this section that~$g \ge 4$.

\begin{definition}\label{def:mb-x}
Let~$X$ be a prime Fano threefold~$X$ with factorial terminal singularities 
of genus~$g = r \cdot s$ with~$r,s \ge 2$.
A {\sf Mukai sheaf}~$\cU_X$ on~$X$ of type~$(r,s)$ is a maximal Cohen--Macaulay sheaf such that
\begin{aenumerate}
\item 
\label{it:v}
$\rank(\cU_X) = r$, $\rc_1(\cU_X) = -H$, 
\item 
\label{it:h}
$\rH^\bullet(X,\cU_X) = 0$, and
\item 
\label{it:dual}
the dual sheaf~$\cU_X^\vee$ is globally generated with~$\dim \rH^0(X, \cU_X^\vee)=r+s$ and~$\rH^{>0}(X, \cU_X^\vee) = 0$.
\end{aenumerate}
\end{definition}

Note that a Mukai sheaf satisfies all the properties of the bundle~$\cU_r$ from Theorem~\ref{thm:Main},
except for local freeness (which is replaced by the maximal Cohen--Macaulay property) 
and the equality~$\Ext^\bullet(\cU_X,\cU_X) = \kk$.

It is clear that Mukai sheaves with~$r = 1$ (hence~$s = g$) do not exist;
indeed, property~\ref{it:v} implies that~$\cU_X \cong \cO_X(-H) \cong \cO_X(K_X)$, and then property~\ref{it:h} fails.
For this reason we exclude the case~$r = 1$ and the symmetric case~$s = 1$ from the definition.

In contrast to Definition~\ref{def:mb-s} we do not include stability in the definition of a Mukai sheaf on~$X$, 
because it is automatic.

\begin{lemma}
\label{lem:mbx-mbs}
Let~$X$ be a prime Fano threefold~$X$ with factorial terminal singularities and~$g \ge 4$.
The anticanonical linear system~$|H|$ is very ample, $h^0(X, \cO_X(H)) = g + 2$, $h^{>0}(X, \cO_X(H)) = 0$,
and a very general anticanonical divisor in~$X$ 
is a smooth $K3$ surface~$S$ with~$\Pic(S) = \ZZ \cdot H_S$, where~$H_S \coloneqq H\vert_S$.

Moreover, if~$g = r \cdot s$ with~$r,s \ge 2$ and~$\cU_X$ is a Mukai sheaf of type~$(r,s)$ on~$X$ 
then~$\cU_X$ is $H$-Gieseker stable
and~$\cU_X\vert_S$ is a Mukai bundle on~$S$ for any smooth~$S \subset X$ with~$\Pic(S) = \ZZ \cdot H_S$.

Finally, if~$\cU_X$ is a Mukai sheaf of type~$(r,s)$ then
\begin{equation*}
\cU_X^\perp \coloneqq \Ker \Big(\rH^0(X, \cU_X^\vee) \otimes \cO_X \to \cU_X^\vee\Big)
\end{equation*}
is a Mukai sheaf of type~$(s,r)$ on~$X$.
\end{lemma}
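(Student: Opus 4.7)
The plan is to treat the three assertions in turn; the stability claim in Part 2 is the main obstacle.

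\textbf{Part 1.} For a prime Fano threefold $X$ with factorial terminal singularities and $g \ge 4$, the very ampleness of $H = -K_X$ is classical in the smooth case (Iskovskikh) and extended to the factorial terminal case in~\cite{Fano-book}. Since $X$ has rational singularities, Kawamata--Viehweg gives $\rH^{>0}(X, \cO_X(H)) = 0$, and Riemann--Roch yields $h^0 = g + 2$. A general $S \in |H|$ is smooth by Bertini (using that $\Sing(X)$ is zero-dimensional), satisfies $K_S = (K_X + H)|_S = 0$ by adjunction, and inherits $h^1(\cO_S) = 0$ from Kodaira vanishing, hence is a K3 surface. Noether--Lefschetz, applied to the universal family of smooth sections over the relevant locus in $|H|$, then gives $\Pic(S) = \ZZ \cdot H_S$ for very general $S$.

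\textbf{Part 2: Mukai vector.} Fix smooth $S \in |H|$ with $\Pic(S) = \ZZ \cdot H_S$. Since $\cU_X$ is MCM on the Gorenstein threefold $X$ and $S$ is Cartier, the defining equation of $S$ is a non-zero-divisor on $\cU_X$, so $\cU_X|_S$ is a vector bundle of rank $r$ on $S$ with $\rc_1(\cU_X|_S) = -H_S$. Grothendieck--Serre duality for MCM sheaves on Gorenstein $X$ gives $\upchi(\cU_X(-H)) = -\upchi(\cU_X^\vee) = -(r + s)$, while $\upchi(\cU_X) = 0$ by (b), so the short exact sequence $0 \to \cU_X(-H) \to \cU_X \to \cU_X|_S \to 0$ yields $\upchi(\cU_X|_S) = r + s$. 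Thus $\rv(\cU_X|_S) = (r, -H_S, s)$. The same sequence together with the vanishing $\rH^1(\cU_X(-H)) = \rH^2(\cU_X^\vee)^\vee = 0$ gives $\rH^0(S, \cU_X|_S) = 0$; dually, the restriction sequence for $\cU_X^\vee$ combined with Serre duality shows that $(\cU_X|_S)^\vee$ is globally generated with $h^0 = r + s$ and no higher cohomology.

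\textbf{Part 2: stability (the main step).} Suppose $\cF \subsetneq \cU_X|_S$ is a saturated destabilising subsheaf of rank $r' \in \{1, \ldots, r - 1\}$. Writing $\rc_1(\cF) = a H_S$ with $a \in \ZZ$, the slope inequality $a/r' \ge -1/r$ and $r' < r$ force $a \ge 0$. Dualising $0 \to \cF \to \cU_X|_S \to \cQ \to 0$ and using that $\cExt^1(\cQ, \cO_S)$ is zero-dimensional on the smooth surface $S$, the image $\cF'$ of $(\cU_X|_S)^\vee \to \cF^\vee$ is a rank $r'$ torsion-free quotient of $\cO_S^{r+s}$ with $\rc_1(\cF') = -a H_S \le 0$, so Lemma~\ref{lem:chern0}(b) forces $\cF' \cong \cO_S^{r'}$. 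When $a \ge 1$ this already contradicts $\rc_1(\cF') = -aH_S \ne 0$. When $a = 0$, comparing determinants shows the inclusion $\cF' \hookrightarrow \cF^\vee$ is an isomorphism, so $(\cU_X|_S)^\vee \twoheadrightarrow \cO_S^{r'}$; dualising the resulting short exact sequence of locally free sheaves produces $\cO_S^{r'} \hookrightarrow \cU_X|_S$, contradicting $\rH^0(S, \cU_X|_S) = 0$. Hence $\cU_X|_S$ is slope-stable, and since $\Pic(S) = \ZZ \cdot H_S$ this coincides with $H_S$-Gieseker stability, proving that $\cU_X|_S$ is a Mukai bundle. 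Stability of $\cU_X$ on $X$ follows by the same restriction argument: since $H^3 = H_S^2$, the slopes $\upmu_H(\cF)$ and $\upmu_{H_S}(\cF|_S)$ coincide, so any saturated destabiliser $\cF \subset \cU_X$ restricts on a general $S$ to a destabiliser of $\cU_X|_S$, contradicting its stability. I expect the $a = 0$ case to be the most delicate, as it requires careful comparison of reflexive hulls and determinants on the smooth surface.

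\textbf{Part 3.} By (c) the evaluation $V \otimes \cO_X \twoheadrightarrow \cU_X^\vee$ (with $V = \rH^0(X, \cU_X^\vee)$ of dimension $r + s$) is surjective, with kernel $\cU_X^\perp$ by definition. Since $\cU_X$ is MCM so is $\cU_X^\vee$, with $\cU_X^{\vee\vee} = \cU_X$. Dualising the defining sequence thus produces an exact sequence $0 \to \cU_X \to V^\vee \otimes \cO_X \to (\cU_X^\perp)^\vee \to 0$ and shows $\cExt^{>0}(\cU_X^\perp, \cO_X) = 0$, i.e., $\cU_X^\perp$ is MCM. From these two sequences and the cohomology of $\cU_X$ and $\cU_X^\vee$, one reads off $\rank(\cU_X^\perp) = s$, $\rc_1(\cU_X^\perp) = -H$, $\rH^\bullet(X, \cU_X^\perp) = 0$, and that $(\cU_X^\perp)^\vee$ is globally generated with $h^0 = r + s$ and $h^{>0} = 0$, verifying Definition~\ref{def:mb-x} for the pair $(s, r)$.
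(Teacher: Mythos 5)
Your proof is correct and follows essentially the same route as the paper: very ampleness plus a Noether--Lefschetz statement (the paper cites~\cite{RS09}) for the first part, the restriction sequences and Serre duality for the invariants of~$\cU_X\vert_S$, global generation of the dual together with Lemma~\ref{lem:chern0} and~$\rH^0(S,\cU_X\vert_S)=0$ for stability, and dualization of the defining sequence for~$\cU_X^\perp$. The only cosmetic difference is in the stability step, where the paper embeds~$\cU_X\vert_S$ into~$\cO_S^{\oplus n}$ and applies Lemma~\ref{lem:chern0}\ref{it:mono} directly to the destabilizing subsheaf, rather than dualizing and applying part~\ref{it:epi} to a quotient as you do.
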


\begin{proof}
The very ampleness of~$|H|$ is established in~\cite[Theorem~4.2]{Prokhorov:rationality} and~\cite[Theorem~4.4]{KP23},
the cohomology of~$\cO_X(H)$ can be computed by the Kawamata--Viehweg vanishing and Riemann--Roch
and the smoothness of a general anticanonical divisor is proved in~\cite[Theorem~2.9]{Mella}. 
The pushforward of~\mbox{$\cO_X(K_X) = \cO_X(-H)$} along the anticanonical embedding~$X \hookrightarrow \P^{g+1}$ 
is trivial after tensoring with~$\cO(1)$, 
and therefore the assumptions of~\cite[Theorem~1]{RS09} are satisfied; 
this implies that the very general anticanonical divisor~$S$ has Picard group generated by~$H_S$.

Now let~$i \colon S \hookrightarrow X$ be a very general anticanonical divisor and~$\cU_S \coloneqq \cU_X\vert_S$.
It is a maximal Cohen--Macaulay sheaf on~$S$, and since~$S$ is smooth, 
$\cU_S$ is locally free by the Auslander--Buchsbaum formula.
The restriction sequence
\begin{equation*}
0 \to \cU_X^\vee(-H) \to \cU_X^\vee \to i_*\cU_S^\vee \to 0
\end{equation*} 
combined with Definition~\ref{def:mb-x}\ref{it:h} and~\ref{it:dual} and Serre duality on~$X$ 
implies that~\mbox{$h^0(\cU_S^\vee) = r + s$}, \mbox{$\rH^{>0}(S, \cU_S^\vee) = 0$}, 
and~$\cU_S^\vee$ is globally generated.
Combining this with Definition~\ref{def:mb-x}\ref{it:v} and Riemann--Roch,
we conclude that~$\rv(\cU_S) = (r, -H_S, s)$.
To show that~$\cU_S$ is a Mukai bundle it remains to check its stability, which is standard.
Indeed, since~$\Pic(S) = \ZZ \cdot H_S$, the slope of~$\cU_S$ is the maximal negative slope for vector bundles of rank at most~$r$,
hence a saturated destabilizing subsheaf~$\cF \subset \cU_S$ must have nonnegative slope.
But global generation of~$\cU_S^\vee$ shows that~$\cU_S \subset \cO_S^{\oplus n}$, hence~$\cF \subset \cO_S^{\oplus n}$,
and Lemma~\ref{lem:chern0}\ref{it:mono} implies~$\cF \cong \cO_S^{\oplus m}$, hence~$H^0(S,\cU_S) \ne 0$,
which contradicts the vanishing of~$\rH^2(S, \cU_S^\vee)$ proved above.

Stability of~$\cU_X$ easily follows:
if~$\cF \subset \cU_X$ is a saturated destabilizing subsheaf 
then~$\cF\vert_S \subset \cU_S$ is a destabilizing subsheaf for~$\cU_S$, 
which is impossible because, as we observed above, $\cU_S$ is stable.

The last part is obvious from the definition
(for the maximal Cohen--Macaulay property of~$\cU_X^\perp$ see~\cite[Lemma~4.2.2]{Buch}).
\end{proof}

\subsection{Mukai sheaves on singular Fano threefolds}
\label{ss:mb-sing}

Throughout this subsection we assume that~$X$ is a prime Fano threefold 
with factorial terminal singularities of genus~$g \ge 4$.
The main result of this section is the following theorem (it will be used to deduce Theorem~\ref{thm:Main}).

\begin{theorem}\label{thm:mb-x}
If~$X$ is a prime Fano threefold 
with factorial terminal singularities of genus~$g = r \cdot s$ with~$r,s \ge 2$
then there exists a Mukai sheaf\/~$\cU_X$ of type~$(r,s)$ on~$X$,
which is unique if~$g \ge 6$.
\end{theorem}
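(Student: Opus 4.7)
The plan is to execute the strategy sketched in Section~\ref{ss:argument}, passing back and forth between $X$ and its smooth anticanonical sections. Since $g = rs \le 12$ with $r, s \ge 2$, at least one of $r, s$ lies in $\{2, 3\}$; by the symmetry $\cU_X \leftrightarrow \cU_X^\perp$ of Lemma~\ref{lem:mbx-mbs}, I may assume $r \in \{2, 3\}$ throughout, so that Theorems~\ref{thm:mb-s} and~\ref{thm:mukai-extension} apply. A numerical check shows that any quasipolarized K3 surface with $\Pic = \ZZ \cdot H$ carries no special Mukai class of type $(r, s)$, since the equation $k H^2 = (r-1)(s+1)$ has no positive integer solution $k$ when $r, s \ge 2$ (the ratio lies in $(0,1)$).

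For existence, by Lemma~\ref{lem:mbx-mbs} I pick a very general smooth anticanonical divisor $S_0 \subset X$ with $\Pic(S_0) = \ZZ \cdot H_{S_0}$; by Theorem~\ref{thm:bnpc}\ref{it:general-bnp} and Bertini I choose a BNP-general curve $C \in |H_{S_0}|$ such that the pencil $\{S_t\}_{t \in \P^1}$ of anticanonical sections of $X$ through $C$ is Lefschetz, with a standard countability argument keeping $\Pic(S_t) = \ZZ \cdot H_{S_t}$ for very general $t$. Fix a Mukai pair $(\xi, \eta)$ of type $(r, s)$ on $C$ via Lemma~\ref{lem:xi-eta}, and form the blowup $\pi \colon \tX = \Bl_C X \to \P^1$ with exceptional divisor $E \cong C \times \P^1$ meeting every proper transform $\widetilde{S_t}$ along $C$. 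A relative Lazarsfeld construction applied to $\xi \boxtimes \cO_{\P^1}$ yields a $\pi$-flat sheaf $\bL_{\tX/\P^1}(\xi \boxtimes \cO)$ on $\tX$ restricting fibrewise to the surface Lazarsfeld bundle $\bL_{S_t}(\xi)$. For very general $t$, Theorem~\ref{thm:mb-s} ensures $\bL_{S_t}(\xi)$ is a Mukai bundle, and Theorem~\ref{thm:mukai-extension} identifies its Lazarsfeld extension class $\epsilon_t \in \Ext^1(\eta, \bR_C(\xi^{-1}))$ as the unique Mukai extension class up to scalar. The classes $\epsilon_t$ thus vary algebraically in $t$ along a fixed one-dimensional subspace, forming a section of a line bundle on $\P^1$ that must vanish at some $t_0 \in \P^1$; hence $\bL_{S_{t_0}}(\xi)$ is unstable, and by the other direction of Theorem~\ref{thm:mb-s} the minimal resolution $\sigma \colon \tS_{t_0} \to S_{t_0}$ carries a special Mukai class $\Xi$ of type $(r, s)$.

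Setting $\cF \coloneqq \sigma_*\cO_{\tS_{t_0}}(\Xi)$---a globally generated maximal Cohen--Macaulay sheaf on $S_{t_0}$ with $h^0(\cF) = r$, no higher cohomology, and $\cF^\vee \cong \sigma_*\cO_{\tS_{t_0}}(-\Xi)$ by Corollary~\ref{cor:bs-h-xi}---I define the candidate Mukai sheaf by the evaluation sequence on $X$:
\begin{equation*}
0 \to \bL_X(\Xi) \to \rH^0(S_{t_0}, \cF) \otimes \cO_X \to j_*\cF \to 0,
\end{equation*}
with $j \colon S_{t_0} \hookrightarrow X$ the inclusion. A diagram chase combining the long exact sequence with Kawamata--Viehweg vanishing on $X$, Grothendieck duality, and the properties of $\cF$ verifies the axioms of Definition~\ref{def:mb-x}: the numerical invariants and $\rH^\bullet(X, \bL_X(\Xi)) = 0$ follow readily, the maximal Cohen--Macaulay property of $\bL_X(\Xi)$ descends from that of $\cF$, and dualizing yields globally generated $\bL_X(\Xi)^\vee$ with $h^0 = r + s$ and no higher cohomology.

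For uniqueness when $g \ge 6$, given two Mukai sheaves $\cU_X$ and $\cU_X'$, their restrictions to a very general smooth $S \in |H|$ with $\Pic(S) = \ZZ \cdot H_S$ are Mukai bundles by Lemma~\ref{lem:mbx-mbs}, and both are isomorphic to $\bL_S(\xi)$ by Theorem~\ref{thm:mb-s} and Lemma~\ref{lem:mbs-unique}. Proposition~\ref{prop:l-l-m-h} then gives $\rH^1(S, \cU_S^\vee \otimes \cU_S' \otimes \cO_S(-H_S)) = 0$, and a slope argument rules out $\Hom(\cU_X, \cU_X'(-H))$. Applying $\Hom(\cU_X, -)$ to the restriction triangle $\cU_X'(-H) \to \cU_X' \to i_*\cU_S'$ and using these vanishings (together with an iterated restriction and Serre vanishing to control the relevant $\Ext^1$) lifts the isomorphism $\cU_S \cong \cU_S'$ to a nonzero morphism $\cU_X \to \cU_X'$, an isomorphism by stability. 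The main technical obstacle is to establish rigorously that $\epsilon_t$ acquires a zero on $\P^1$: this requires showing the family is non-constant (i.e., ruling out the degenerate possibility that the relative Lazarsfeld bundle restricts to a globally split extension on $E$) and treating singular fibres of the pencil by passing to their minimal resolution before invoking Theorem~\ref{thm:mb-s}.
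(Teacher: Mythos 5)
Your existence argument follows the paper's route essentially verbatim: reduce to $r\in\{2,3\}$ by the $\cU_X\leftrightarrow\cU_X^\perp$ symmetry, run the relative Lazarsfeld construction over a good pencil through a BNP-general curve, use Theorem~\ref{thm:mukai-extension} to see that the extension classes $\epsilon(t)$ sweep out a fixed line, locate a zero $t_0$ of the resulting section of $\cO_{\P^1}(1)$, extract a special Mukai class on $\tS_{t_0}$, and feed $\sigma_*\cO_{\tS_{t_0}}(\Xi)$ back into the Lazarsfeld construction on $X$. The ``main technical obstacle'' you flag at the end is not actually an obstacle: since $\epsilon\in\Ext^1(\eta,\bR_C(\xi^{-1}))\otimes\rH^0(\P^1,\cO_{\P^1}(1))$ and the evaluations at very general $t$ all lie on one line, $\epsilon=\epsilon_0\otimes f_0$ with $f_0$ a linear form, and \emph{any} section of $\cO_{\P^1}(1)$ vanishes somewhere; non-constancy is irrelevant, and the contrapositive of Proposition~\ref{prop:lec} then produces the special Mukai class at a zero of $f_0$. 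The singular fibres are handled exactly as you say, by working on minimal resolutions, which is what the quasipolarized formulation of Theorem~\ref{thm:mb-s} and Definition~\ref{def:xi-big} is for.

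The genuine gap is in your uniqueness argument. You propose to lift the isomorphism $\cU_S\cong\cU_S'$ along the restriction triangle by killing $\Ext^1(\cU_X,\cU_X'(-H))$ via ``iterated restriction and Serre vanishing.'' That argument (the paper's Lemma~\ref{lem:ext1-f1-f2}) requires $\cU_X$ and $\cU_X'$ to be \emph{locally free on $X$}: the final step deduces $\Ext^1(\cU_X,\cU_X'(-kH))=0$ for $k\gg0$ from Serre vanishing, which by Serre duality and the local-to-global spectral sequence reduces to $\cExt^{\ge1}(\cU_X',\cU_X)=0$. For merely maximal Cohen--Macaulay sheaves on a singular factorial terminal $X$ these local $\cExt$ sheaves can be nonzero finite-length sheaves supported at the singular points, and then $\Ext^1(\cU_X,\cU_X'(-kH))$ does \emph{not} vanish for large $k$. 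Local freeness is precisely what is not available at this stage (it is only established in the follow-up paper, and Remark~\ref{rem:g4} shows genuinely non-locally-free Mukai sheaves exist). The paper circumvents this entirely: it restricts both sheaves to all very general members $S_t$ of a pencil, normalizes the resulting isomorphisms $\phi_t$ so that they agree on the base curve $C$ (using simplicity of $\cU_i\vert_C$ from Proposition~\ref{prop:lec}, which is where $g\ge6$ enters), transports them to a single isomorphism $\rH^0(X,\cU_2^\vee)\cong\rH^0(X,\cU_1^\vee)$, and shows the induced map $\cU_2^\perp\to\cU_1^\vee$ vanishes on every $S_t$ and hence identically, forcing $\cU_1^\vee\twoheadrightarrow\cU_2^\vee$ to be an isomorphism. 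You would need to replace your $\Ext^1$-vanishing step by an argument of this kind (or restrict your claim to smooth $X$, where MCM implies locally free by Auslander--Buchsbaum).
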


To prove this, we will show that in any nice pencil of anticanonical divisors on~$X$ 
there is a du Val K3 surface~$S_0$ such that its minimal resolution~$\tS_0$ carries a special Mukai class (Definition~\ref{def:xi-big}).
In the next lemma we explain what do we mean by ``nice pencils''.

\begin{lemma}
\label{lem:pencil}
There is a pencil~$\cS \subset |H|$ of anticanonical divisors in~$X$ such that
\begin{aenumerate}
\item
\label{it:base-bnp}
the base locus~$C \coloneqq \Bs(\cS)$ is a smooth BNP-general curve;
\item
\label{it:pic1}
a very general member~$S_t$ of~$\cS$  has $\Pic(S_t) = \ZZ \cdot H_{S_t}$;
\item
\label{it:smooth}
a general member~$S_t$ of~$\cS$ is smooth;
\item 
\label{it:duval}
any singular member~$S_t$ of~$\cS$ has at worst du Val singularities.
\end{aenumerate}
\end{lemma}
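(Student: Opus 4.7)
The plan is to construct the pencil starting from a very general smooth anticanonical K3 section of $X$ and a general BNP-general curve on it, and then to verify the remaining properties via Noether--Lefschetz upper semicontinuity and Lefschetz pencil arguments.

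By Lemma~\ref{lem:mbx-mbs} we may pick a very general smooth $S \in |H|$ with $\Pic(S) = \ZZ \cdot H_S$. Since $H_S$ is a primitive ample class on the K3 surface~$S$, every divisor in $|H_S|$ is reduced and irreducible, so Theorem~\ref{thm:bnpc}\ref{it:general-bnp} produces a dense locus of BNP-general curves in $|H_S|$. Choose a general $C$ in this locus, smooth (Bertini on~$S$) and disjoint from the finite set $\Sing(X)$; this yields~\ref{it:base-bnp}. Let $\cS \subset |H|$ be the pencil of anticanonical divisors containing $C$. From the exact sequence
\begin{equation*}
0 \to \cI_{S/X}(H) \to \cI_{C/X}(H) \to \cI_{C/S}(H) \to 0,
\end{equation*}
the identifications $\cI_{S/X}(H) \cong \cO_X$ (since $S \in |H|$) and $\cI_{C/S}(H) \cong \cO_S$ (since $C \in |H_S|$), and the vanishing $\rH^1(X, \cO_X) = 0$, we obtain $h^0(X, \cI_{C/X}(H)) = 2$, so $\cS$ is a $\P^1$. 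For any two distinct $S_1, S_2 \in \cS$, the scheme-theoretic intersection $S_1 \cap S_2$ is an effective 1-cycle on $X$ of class $H^2 \in \CH^2(X)$ containing the 1-cycle $[C]$, which itself has class $H^2$; hence $S_1 \cap S_2 = C$ as 1-cycles, forcing $\Bs(\cS) = C$.

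For~\ref{it:pic1}, smoothness of the fibers of $\cS$ is an open condition on $\P^1$ and is non-empty since it contains the point $t_0$ corresponding to the smooth member $S$; hence it holds on a dense open $U \subset \P^1$. On $U$ the pencil is a smooth family of K3 surfaces, and by Noether--Lefschetz-type upper semicontinuity of the Picard rank, the locus $\{t \in U : \rho(S_t) \ge 2\}$ is a countable union of proper closed subsets of $U$. Since $\rho(S) = 1$ by construction, $t_0$ lies outside this locus, so a very general $t \in \P^1$ satisfies $\rho(S_t) = 1$ and therefore $\Pic(S_t) = \ZZ \cdot H_{S_t}$.

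Finally, for~\ref{it:smooth} and~\ref{it:duval}, we refine the choice of $C$ so that the pencil $\cS$ is a Lefschetz pencil in $|H|$: all but finitely many members are smooth, and each exceptional member has a single ordinary double point in the smooth locus of $X$. The Lefschetz condition is open dense on the Grassmannian of anticanonical pencils in $|H|$, and the induced condition on $C \in |H_S|$ is compatible with BNP-generality, which is itself dense in $|H_S|$; hence we may pick $C$ satisfying both. Assertion~\ref{it:smooth} is then immediate, and each singular member has only a single $A_1$-node, which is a du Val singularity, proving~\ref{it:duval}. The main obstacle is this last compatibility step, combining the BNP-generality of $C$ inside $|H_S|$ (a Lazarsfeld-type condition intrinsic to the K3 surface $S$) with the Lefschetz property of $\cS$ (a condition on the pencil in the anticanonical embedding of $X$): both are individually dense in their respective parameter spaces, but the simultaneous satisfaction requires a density argument matching the map $C \mapsto \cS$ with the Lefschetz locus in the Grassmannian.
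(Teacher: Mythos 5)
Your construction is sound for smooth $X$ and follows essentially the same route as the paper (very general $S$ with $\Pic(S)=\ZZ\cdot H_S$, a general BNP-general $C\in|H_S|$, the pencil through $C$, Noether--Lefschetz for (b), a Lefschetz-type genericity for (c)). However, the lemma is stated and used for prime Fano threefolds with factorial \emph{terminal} singularities, and your argument for (d) breaks down exactly there. If $p\in\Sing(X)$, the hyperplanes through $p$ form a hyperplane $\cP_p\subset\check\P^{g+1}$, which every line meets; so \emph{every} pencil $\cS\subset|H|$ has a member passing through $p$, and that member is necessarily singular at $p$ (its Zariski tangent space there has dimension $\ge 3$). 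Hence your claim that each singular member ``has a single ordinary double point in the smooth locus of $X$'' is false for singular $X$, and no Lefschetz-pencil statement controls the singularity type at $p$. The paper supplies precisely the missing ingredient: a terminal Gorenstein threefold point is compound du Val (Reid), so a general hyperplane section through $p$ is du Val at $p$, and the hyperplanes through $p$ cutting a worse-than-du Val singularity form a subset $\cP_p^{\mathrm{ndV}}$ of codimension $\ge 2$ in $\check\P^{g+1}$; a general line through $[S]$ avoids all the $\cP_p^{\mathrm{ndV}}$ as well as $\Sing(X^\vee)$, which yields (c) and (d) simultaneously. You need to add this step (or restrict the lemma to smooth $X$, which would not suffice for Theorem~\ref{thm:mb-x}).

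Secondly, the ``compatibility'' you flag as the main obstacle is not an obstacle, but you leave it unresolved. The space of pencils through the fixed point $[S]\in|H|$ is canonically identified with $|H_S|$: from $0\to\cO_X\to\cO_X(H)\to\cO_S(H_S)\to 0$ and $\rH^1(X,\cO_X)=0$ one gets $\rH^0(X,\cO_X(H))/\kk\, s_S\cong\rH^0(S,\cO_S(H_S))$, and under this identification a pencil corresponds to its base curve $C$ (the restriction to $S$ of any other member). So ``general line through $[S]$'' and ``general $C\in|H_S|$'' are the \emph{same} genericity condition, and the BNP locus automatically meets the locus of lines avoiding the bad codimension-$2$ subsets. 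This is exactly how the paper orders the choices: pick the general line first, then observe that its base curve is a general member of $|H_S|$, hence BNP-general by Theorem~\ref{thm:bnpc}\ref{it:general-bnp}.
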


\begin{proof}
By Lemma~\ref{lem:mbx-mbs} the anticanonical morphism of~$X$ is a closed embedding $X \subset \P^{g + 1}$.
Consider the dual projective space~$\check\P^{g+1}$,
the projectively dual variety~$X^\vee \subset \check\P^{g+1}$, 
and the hyperplanes~\mbox{$\cP_1, \dots, \cP_n \subset \check\P^{g+1}$} that correspond to the singular points~$p_1,\dots,p_n \in X$
(recall that terminal singularities of threefolds are isolated).
Since~$p_i \in X$ is a terminal Gorenstein singularity, 
it is a compound du Val singularity by~\cite[Main Theorem]{Reid},
and therefore a general hyperplane section of~$X$ containing~$p_i$ is du Val at~$p_i$ by~\cite[Corollary~2.10]{Reid:canonical}.
Let~$\cP^{\mathrm{ndV}}_i \subset \cP_i$ be the closure of the subset that corresponds to hyperplanes~$\P^g \subset \P^{g+1}$ 
that contain the point~$p_i$ and the singularity of~$X \cap \P^g$ at~$p_i$ is worse than du Val.
Then the subsets
\begin{equation}
\label{eq:bad-loci}
\Sing(X^\vee) \subset \check\P^{g+1}
\qquad\text{and}\qquad 
\cP^{\mathrm{ndV}}_i \subset \check\P^{g+1}
\end{equation}
all have codimension at least~$2$ in~$\check\P^{g+1}$.

On the other hand, let~$S \subset X$ be a very general anticanonical divisor.
Then~$S$ is a smooth K3 surface with~$\Pic(S) = \ZZ \cdot H_S$ by Lemma~\ref{lem:mbx-mbs}, 
in particular every curve in~$|H_S|$ is reduced and irreducible,
hence by Theorem~\ref{thm:bnpc}\ref{it:general-bnp} a general curve~$C \in |H_S|$ is BNP-general.
Now consider a general line in~$\check\P^{g+1}$ passing through the point that corresponds to~$S$ 
and avoiding all subsets in~\eqref{eq:bad-loci} and let~$\cS \subset |H|$ be the corresponding pencil.
Then it is clear that all the properties of the lemma are satisfied.
\end{proof}

Let~$\cS$ be a pencil of hyperplane sections of~$X$ as in Lemma~\ref{lem:pencil}.
Consider the blowup
\begin{equation*}
\pi \colon \tX \coloneqq \Bl_C(X) \to X
\end{equation*}
of the base locus~$C \subset X$ of~$\cS$ and denote by
\begin{equation*}
p \colon \tX \to \P^1
\end{equation*}
the induced morphism.
The exceptional divisor~$E$ of~$\pi$ has the form
\begin{equation*}
E \cong C \times \P^1,
\end{equation*}
the restriction of~$\pi$ to~$E$ is the projection to~$C$, the restriction of~$p$ to~$E$ is the projection to~$\P^1$, and the normal bundle of~$E$ is
\begin{equation}
\label{eq:ee}
\cO_E(E) \cong \cO_C(K_C) \boxtimes \cO_{\P^1}(-1).
\end{equation}
We denote by~$\iota \colon E \hookrightarrow \tX$ the embedding.

Since~$C$ is BNP-general (by Lemma~\ref{lem:pencil}\ref{it:base-bnp}), 
Lemma~\ref{lem:xi-eta} shows that~$C$ has a Mukai pair~$(\xi,\eta)$ of type~$(r,s)$.
Consider {\sf the relative Lazarsfeld bundle}~$\bL_{\tX/\P^1}(\xi)$ on~$\tX$ defined by the exact sequence
\begin{equation}
\label{eq:tce}
0 \to \bL_{\tX/\P^1}(\xi) \xrightarrow{\quad} \rH^0(C,\xi) \otimes \cO_\tX \xrightarrow{\ \ev\ } \iota_*(\xi \boxtimes \cO_{\P^1}) \to 0.
\end{equation} 
Below we investigate the pullbacks of~$\bL_{\tX/\P^1}(\xi)$ to the fibers of~$p$ and their minimal resolutions.

Note that the fiber~$S_t \coloneqq p^{-1}(t)$ of~$p$ over a point~$t \in \P^1$ 
is a polarized K3 surface with du Val singularities and its minimal resolution $\sigma_t \colon \tS_t \to S_t$
is a smooth quasipolarized K3 surface (where the quasipolarization~$H_{\tS_t}$ 
is the pullback of~$H_{S_t}$); sometimes we will think of~$\sigma_t$ as a map~$\tS_t \to \tX$.
Note also that the curve~$S_t \cap E = C$ is contained in the smooth locus of~$S_t$, 
hence the embedding~$C \hookrightarrow S_t$ lifts to an embedding~$j_t \colon C \hookrightarrow \tS_t$, 
so that~$\sigma_t(j_t(C)) = S_t \cap E \subset \tX$.

Pulling back the sequence~\eqref{eq:tce} to~$\tS_t$ we obtain an exact sequence
\begin{equation*}
	0 \to \sigma_t^*\bL_{\tX/\P^1}(\xi) \xrightarrow{\quad} \rH^0(C,\xi) \otimes \cO_{\tS_t} \xrightarrow{\ \ev\ } {j_t}_*\xi \to 0,
\end{equation*}
which coincides with the defining sequence of the Lazarsfeld bundle~$\bL_{\tS_t}(\xi)$ on the surface~$\tS_t$ 
associated with the curve~$j_t(C)$ and the Mukai pair~$(\xi,\eta)$ on it, i.e., we obtain an isomorphism
\begin{equation}
\label{eq:laz-laz}
\sigma_t^*\bL_{\tX/\P^1}(\xi) \cong \bL_{\tS_t}(\xi).
\end{equation}

Recall that special Mukai classes on quasipolarized K3 surfaces were introduced in Definition~\ref{def:xi-big}. We will use the relative Lazarsfeld bundle to prove the following crucial observation.

\begin{proposition}
\label{prop:special-s}
Assume~$g = r \cdot s$ with~$s \ge r \in \{2,3\}$.
Let~$\cS \in |H|$ be a pencil as in Lemma~\textup{\ref{lem:pencil}}.
There is a point~\mbox{$t_0 \in \P^1$} 
such that the corresponding surface~$\tS_{t_0}$ is BN-general 
and has a special Mukai class~\mbox{$\Xi \in \Pic(\tS_{t_0})$} of type~$(r,s)$.
\end{proposition}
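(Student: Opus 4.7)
The plan is to globalize the Lazarsfeld construction over $\cS$ and exploit Theorem~\ref{thm:mukai-extension} to force the fibrewise Lazarsfeld extension class to vanish at some point of $\P^1$. Since $C$ is BNP-general by Lemma~\ref{lem:pencil}\ref{it:base-bnp}, Lemma~\ref{lem:xi-eta} produces a Mukai pair $(\xi,\eta)$ of type $(r,s)$ on $C$. Set $\bL \coloneqq \bL_{\tX/\P^1}(\xi)$ as in~\eqref{eq:tce}; by~\eqref{eq:laz-laz}, its restriction to $\tS_t$ is the Lazarsfeld bundle $\bL_{\tS_t}(\xi)$.

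First I would analyze $\bL^\vee|_E$. Dualizing~\eqref{eq:tce} using Grothendieck duality together with the normal bundle identification~\eqref{eq:ee} yields
\begin{equation*}
0 \to \rH^0(C,\xi)^\vee \otimes \cO_{\tX} \to \bL^\vee \to \iota_*(\eta \boxtimes \cO_{\P^1}(-1)) \to 0.
\end{equation*}
Tensoring with $\cO_E$, the $\Tor_1$ term equals $\eta \boxtimes \cO_{\P^1}(-1) \otimes \cO_E(-E) = \xi^{-1} \boxtimes \cO_{\P^1}$ by~\eqref{eq:ee}, and comparison with~\eqref{eq:def-rxi} identifies the intermediate cokernel with $\bR_C(\xi^{-1}) \boxtimes \cO_{\P^1}$, producing on $E = C \times \P^1$ the exact sequence
\begin{equation*}
0 \to \bR_C(\xi^{-1}) \boxtimes \cO_{\P^1} \to \bL^\vee|_E \to \eta \boxtimes \cO_{\P^1}(-1) \to 0.
\end{equation*}
Its specialization to each slice $C \times \{t\}$ reproduces the Lazarsfeld extension sequence~\eqref{eq:rxi-blxi} for $\bL_{\tS_t}(\xi)$.

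Since $\rH^1(\P^1, \cO_{\P^1}(1)) = 0$, the K\"unneth formula gives
\begin{equation*}
\Ext^1_E(\eta \boxtimes \cO_{\P^1}(-1), \bR_C(\xi^{-1}) \boxtimes \cO_{\P^1}) \cong \Ext^1_C(\eta, \bR_C(\xi^{-1})) \otimes \rH^0(\P^1, \cO_{\P^1}(1)),
\end{equation*}
so the extension class of the displayed sequence is an element $\epsilon \in \Ext^1_C(\eta, \bR_C(\xi^{-1})) \otimes \rH^0(\cO_{\P^1}(1))$, specializing at each $t$ to the Lazarsfeld extension class $\epsilon_t$. For very general $t \in \P^1$, Lemma~\ref{lem:pencil}\ref{it:pic1} gives $\Pic(\tS_t) = \ZZ \cdot H_{\tS_t}$; since $H_{\tS_t}^2 = 2rs - 2 \ne (r-1)(s+1)$, no integer multiple of $H_{\tS_t}$ satisfies~\eqref{eq:d2-dh-h0d}, so $\tS_t$ carries no special Mukai class of type $(r,s)$. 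Proposition~\ref{prop:lec} then implies that each such $\epsilon_t$ is a nonzero Mukai extension class, and Theorem~\ref{thm:mukai-extension} implies that all Mukai extension classes in $\Ext^1_C(\eta, \bR_C(\xi^{-1}))$ are mutually proportional. Hence the linear map $\rH^0(\cO_{\P^1}(1))^\vee \to \Ext^1_C(\eta, \bR_C(\xi^{-1}))$ encoded by $\epsilon$ has rank exactly one, and therefore $\epsilon = \epsilon_0 \otimes f$ for a nonzero $\epsilon_0 \in \Ext^1_C(\eta, \bR_C(\xi^{-1}))$ and a nonzero $f \in \rH^0(\cO_{\P^1}(1))$.

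Since $\deg \cO_{\P^1}(1) = 1$, the section $f$ vanishes at some $t_0 \in \P^1$, at which $\epsilon_{t_0} = 0$. The BNP-general curve $C$ lies in $|H_{\tS_{t_0}}|$, so $\tS_{t_0}$ is BN-general by Theorem~\ref{thm:bnpc}\ref{it:bnp-bn}. Since $\epsilon_{t_0}$ is the Lazarsfeld extension class of $\bL_{\tS_{t_0}}(\xi)$ and it vanishes, the contrapositive of Proposition~\ref{prop:lec} delivers the sought special Mukai class $\Xi \in \Pic(\tS_{t_0})$ of type $(r,s)$. The chief technical difficulty is the restriction-to-$E$ computation: the twist by $\cO_{\P^1}(-1)$ produced by Grothendieck duality on the quotient is essential, since it is precisely the positivity of the Künneth factor $\rH^0(\cO_{\P^1}(1))$ that allows the section $f$ to exist and to acquire a zero in $\P^1$.
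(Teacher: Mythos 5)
Your proposal is correct and follows essentially the same route as the paper: globalize the Lazarsfeld construction over the pencil, restrict to $E \cong C\times\P^1$ to obtain the extension of $\eta\boxtimes\cO_{\P^1}(-1)$ by $\bR_C(\xi^{-1})\boxtimes\cO_{\P^1}$, use Proposition~\ref{prop:lec} and Theorem~\ref{thm:mukai-extension} to force $\epsilon=\epsilon_0\otimes f$, and locate $t_0$ at the zero of $f$. The only cosmetic difference is that you dualize on $\tX$ (via Grothendieck duality) before restricting to $E$, whereas the paper restricts first and then dualizes; the resulting sequence on $E$ and the rest of the argument are identical.
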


\begin{proof}
We restrict the sequence~\eqref{eq:tce} to~$E \cong C \times \P^1$.
Using~\eqref{eq:ee} we obtain an exact sequence
\begin{equation*}
0 \to 
\xi(-K_C) \boxtimes \cO_{\P^1}(1) \xrightarrow{\quad} 
\iota^*\bL_{\tX/\P^1}(\xi) \xrightarrow{\quad} 
\rH^0(C,\xi) \otimes (\cO_C \boxtimes \cO_{\P^1}) \xrightarrow{\ \ev\ } 
\xi \boxtimes \cO_{\P^1} \to 
0.
\end{equation*}
Its last morphism is the pullback of~$\ev \colon \rH^0(C,\xi) \otimes \cO_C \to \xi$ along~$p$, 
therefore its kernel is isomorphic to~$\bR_C(\xi^{-1})^\vee \boxtimes \cO_{\P^1}$, 
where~$\bR_C(\xi^{-1})$ is defined in~\eqref{eq:def-rxi}.
Thus, we obtain an exact sequence
\begin{equation*}
0 \to 
\xi(-K_C) \boxtimes \cO_{\P^1}(1) \xrightarrow{\quad} 
\iota^*\bL_{\tX/\P^1}(\xi) \xrightarrow{\quad} 
\bR_C(\xi^{-1})^\vee \boxtimes \cO_{\P^1} \to 
0
\end{equation*}
and dualizing it, we obtain an exact sequence
\begin{equation*}
0 \to 
\bR_C(\xi^{-1}) \boxtimes \cO_{\P^1} \to
\iota^*\bL_{\tX/\P^1}(\xi)^\vee \to
\eta \boxtimes \cO_{\P^1}(-1) \to
0.
\end{equation*}
Consider its extension class 
\begin{equation*}
\epsilon \in 
\Ext^1(\eta \boxtimes \cO_{\P^1}(-1), \bR_C(\xi^{-1}) \boxtimes \cO_{\P^1}) \cong 
\Ext^1(\eta, \bR_C(\xi^{-1})) \otimes \rH^0(\P^1,\cO_{\P^1}(1)).
\end{equation*}
For each~$t \in \P^1$ we can evaluate~$\epsilon$ at~$t$ and obtain an extension class~$\epsilon(t) \in \Ext^1(\eta, \bR_C(\xi^{-1}))$.
Using~\eqref{eq:laz-laz} we obtain an isomorphism
\begin{equation*}
(\iota^*\bL_{\tX/\P^1}(\xi))\vert_{C \times \{t\}} \cong
j_{t}^*\sigma_{t}^*\bL_{\tX/\P^1}(\xi) \cong
j_{t}^*\bL_{\tS_{t}}(\xi).
\end{equation*}
It follows that the extension of~$\eta$ by~$\bR_C(\xi^{-1})$ corresponding to the class~$\epsilon(t)$ 
is isomorphic to~$j_{t}^*\bL_{\tS_{t}}(\xi)$,
which means that~$\epsilon(t)$ is the Lazarsfeld extension class for all~$t$, see Definition~\ref{def:lec}.

We claim that there is a unique point~$t_0 \in \P^1$ such that~$\epsilon(t_0) = 0$.
Indeed, for a very general~$t$ we have~$\Pic(\tS_t) = \Pic(S_t) = \ZZ \cdot H_{\tS_t}$ 
(by Lemma~\ref{lem:pencil}\ref{it:pic1}, \ref{it:smooth}), 
in particular~$\tS_t$ does not have special Mukai classes.
Therefore, for such~$t$ Proposition~\ref{prop:lec} shows 
that~$\epsilon(t)$ is a Mukai extension class as in Definition~\ref{def:mukai-extension},
and then Theorem~\ref{thm:mukai-extension} proves that the classes~$\epsilon(t)$ for different very general~$t$ are all proportional.
Thus, we can write
\begin{equation*}
\epsilon = \epsilon_0 \otimes f_0,
\end{equation*}
where~$\epsilon_0 \in \Ext^1(\eta, \bR_C(\xi^{-1}))$ and~$f_0 \in \rH^0(\P^1,\cO_{\P^1}(1))$ is a linear function on~$\P^1$.
Hence there is a unique point~$t_0 \in \P^1$ such that~$f_0(t_0) = 0$, i.e., the Lazarsfeld extension class~$\epsilon(t_0)$ vanishes.
Applying Proposition~\ref{prop:lec} we conclude that~$\tS_{t_0}$ must have a special Mukai class~$\Xi$ of type~$(r,s)$.
Finally, $\tS_{t_0}$ is BN-general by Theorem~\ref{thm:bnpc}\ref{it:bnp-bn} 
because it contains the BN-general curve~$C$.
\end{proof}

\begin{remark*} 
The existence of the special member~$S_{t_0}$ in the pencil may look mysterious, 
but its existence has a simple a posteriori explanation 
in terms of the morphism~$X \to \Gr(r,r+s)$ from Theorem~\ref{thm:descriptions}. 
The restriction of this morphism to~$C$ is induced 
by the globally generated vector bundle~$\cU_X^\vee\vert_C = \bL_{S_t}(\xi)\vert_C^\vee$.
The exact sequence~\eqref{eq:rxi-blxi} implies that the composition
\begin{equation*}
\rH^0(C, \bR_C(\xi^{-1})) \otimes \cO_C \to 
\rH^0(C, \bL_{S_t}(\xi)\vert_C^\vee) \otimes \cO_C \to 
\bL_{S_t}(\xi)\vert_C^\vee \to 
\eta
\end{equation*}
vanishes, hence the image of the curve~$C$ in~$\Gr(r, r+s)$ 
is contained in the Schubert divisor of~$\Gr(r, r+s)$ 
associated with the subspace~$\rH^0(C, \bR_C(\xi^{-1})) \subset \rH^0(C, \bL_{S_t}(\xi)\vert_C^\vee)$ of dimension~$r$.
Then~$S_{t_0}$ is the intersection of~$X$ with the pullback of this Schubert divisor.

Along these lines one can show that there are bijections between the following three sets 
of cardinality~$\rN(r,s)$ (see Remark~\ref{rem:number-mp}): 
the set of Mukai pairs on~$C$, the set of Schubert divisors in~$\Gr(r,r+s)$ containing~$C$, 
and the set of members of the pencil~$\cS$ carrying a special Mukai class.
\end{remark*}

In Proposition~\ref{prop:special-s} we constructed an anticanonical divisor~$S_0 \subset X$
such that its minimal resolution~$\sigma \colon \tS_0 \to S_0$ is BN-general and has a special Mukai class~$\Xi$.
Recall from Corollary~\ref{cor:bs-h-xi} that~$\sigma_*\cO_{\tS_0}(\Xi)$ 
is a maximal Cohen--Macaulay globally generated sheaf on~$S_0$.

Now we define a coherent sheaf~$\bL_X(\Xi)$ on~$X$ from the exact sequence
\begin{equation}
\label{eq:laz-x}
0 \to \bL_X(\Xi) \xrightarrow{\quad} \rH^0(\tS_0, \cO_{\tS_0}(\Xi)) \otimes \cO_{X} \xrightarrow{\ \ev\ } \sigma_*\cO_{\tS_0}(\Xi) \to 0.
\end{equation} 
This is an incarnation of the Lazarsfeld bundle construction in dimension three rather than two.

\begin{corollary}
\label{cor:mb-x-existence}
If~$S_0 \subset X$ is an anticanonical divisor with du Val singularities 
such that its minimal resolution~$\sigma \colon \tS_0 \to S_0$ 
is BN-general and has a special Mukai class~$\Xi$ of type~$(r,s)$, 
then the sheaf~$\bL_X(\Xi)$ defined in~\eqref{eq:laz-x} is a Mukai sheaf on $X$.
\end{corollary}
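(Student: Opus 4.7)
The plan is to verify the three defining properties (a)--(c) of Definition~\ref{def:mb-x} directly from the defining sequence~\eqref{eq:laz-x}, exploiting the properties of $\sigma_*\cO_{\tS_0}(\Xi)$ collected in Corollary~\ref{cor:bs-h-xi} and the symmetric role of $H_{\tS_0} - \Xi$ coming from Proposition~\ref{prop:h-xi}.

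First, I would read off the rank and first Chern class from~\eqref{eq:laz-x}: since $\sigma_*\cO_{\tS_0}(\Xi)$ has rank one and is supported on the divisor $S_0 \in |H|$, this gives $\rank(\bL_X(\Xi)) = r$ and $\rc_1(\bL_X(\Xi)) = -H$. For the maximal Cohen--Macaulay property, I would apply $\cRHom(-,\cO_X)$ to~\eqref{eq:laz-x}: Corollary~\ref{cor:bs-h-xi} asserts that $\sigma_*\cO_{\tS_0}(\Xi)$ is maximal Cohen--Macaulay on $S_0$, which, via the Grothendieck duality computation below, gives $\cExt^i_X(\sigma_*\cO_{\tS_0}(\Xi),\cO_X) = 0$ for $i \ne 1$; combined with the local freeness of the middle term of~\eqref{eq:laz-x}, the long exact sequence of $\cExt$ sheaves then forces $\cExt^{>0}_X(\bL_X(\Xi), \cO_X) = 0$. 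Property (b) follows from the long exact sequence in cohomology of~\eqref{eq:laz-x}: the vanishings $\rH^{>0}(X, \cO_X) = 0$ (since $X$ is Fano) and $\rH^{>0}(S_0, \sigma_*\cO_{\tS_0}(\Xi)) = 0$ (Corollary~\ref{cor:bs-h-xi}), together with the tautological fact that the induced map of $r$-dimensional spaces $\rH^0(\tS_0, \cO_{\tS_0}(\Xi)) \otimes \kk \to \rH^0(S_0, \sigma_*\cO_{\tS_0}(\Xi))$ is the identity, force $\rH^\bullet(X, \bL_X(\Xi)) = 0$.

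For property (c), I would dualize~\eqref{eq:laz-x}. Since $S_0 \in |H|$ is an anticanonical divisor on the Gorenstein threefold $X$, adjunction gives $\omega_{S_0/X} \cong \cO_{S_0}(H_{S_0})$, and combining this with the crepancy of $\sigma$, the projection formula, and the self-duality isomorphism~\eqref{eq:ssod-vee} of Corollary~\ref{cor:bs-h-xi}, Grothendieck--Serre duality produces an isomorphism
\[
\cExt^1_X\big(\sigma_*\cO_{\tS_0}(\Xi), \cO_X\big) \cong \sigma_*\cO_{\tS_0}(H_{\tS_0} - \Xi),
\]
so that dualizing~\eqref{eq:laz-x} yields a short exact sequence
\[
0 \to \rH^0(\tS_0, \cO_{\tS_0}(\Xi))^\vee \otimes \cO_X \to \bL_X(\Xi)^\vee \to \sigma_*\cO_{\tS_0}(H_{\tS_0} - \Xi) \to 0.
\]
By Proposition~\ref{prop:h-xi} the class $(H_{\tS_0} - \Xi)_+$ is a special Mukai class of type $(s,r)$, hence a second application of Corollary~\ref{cor:bs-h-xi} shows that the quotient above is globally generated with $h^0 = s$ and no higher cohomology. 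Taking the long exact sequence in cohomology and combining with the Fano vanishing on $X$ then gives $h^0(\bL_X(\Xi)^\vee) = r+s$ and $\rH^{>0}(X, \bL_X(\Xi)^\vee) = 0$, and a standard stalkwise lifting argument using the resulting surjection $\rH^0(\bL_X(\Xi)^\vee) \twoheadrightarrow \rH^0(\sigma_*\cO_{\tS_0}(H_{\tS_0}-\Xi))$ upgrades this to global generation of $\bL_X(\Xi)^\vee$, completing the verification of~(c).

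The only step requiring real care is the Grothendieck--Serre duality calculation identifying $\cExt^1_X(\sigma_*\cO_{\tS_0}(\Xi), \cO_X)$ with $\sigma_*\cO_{\tS_0}(H_{\tS_0}-\Xi)$; once this is in hand, the rest of the argument amounts to routine diagram chasing using the vanishings already established in Section~\ref{ss:bn-k3}.
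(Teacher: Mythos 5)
Your proposal is correct and follows essentially the same route as the paper: read off properties \ref{it:v} and \ref{it:h} of Definition~\ref{def:mb-x} from the defining sequence~\eqref{eq:laz-x}, deduce the maximal Cohen--Macaulay property from the fact that the last term of~\eqref{eq:laz-x} is maximal Cohen--Macaulay on a Cartier divisor, and obtain property \ref{it:dual} by dualizing~\eqref{eq:laz-x} and identifying the resulting quotient with~$\sigma_*\cO_{\tS_0}(H-\Xi)$ via Grothendieck duality and Corollary~\ref{cor:bs-h-xi}. You merely spell out in more detail the duality computation and the cohomology chases that the paper leaves implicit.
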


\begin{proof}
Since the last term of~\eqref{eq:laz-x} is a maximal Cohen--Macaulay sheaf on a Cartier divisor in~$X$,
it follows that~$\bL_X(\Xi)$ is maximal Cohen--Macaulay on~$X$.
Further, properties~\ref{it:v} and~\ref{it:h} of Definition~\ref{def:mb-x}
follow immediately from the defining exact sequence~\eqref{eq:laz-x} and Corollary~\ref{cor:bs-h-xi}.

To prove property~\ref{it:dual} of Definition~\ref{def:mb-x}, we dualize~\eqref{eq:laz-x} to obtain an exact sequence
\begin{equation*}\label{eq:laz-x-dual}
0 \to 
\rH^0(\tS_0, \cO_{\tS_0}(\Xi))^\vee \otimes \cO_{X} \to 
\bL_X(\Xi)^\vee \to 
\sigma_*\cO_{\tS_0}(H - \Xi) \to 0,
\end{equation*}
where the last term is identified in Corollary~\ref{cor:bs-h-xi}. 
In the same corollary~$\sigma_*\cO_{\tS_0}(H - \Xi)$ is proved to be globally generated 
with~$h^0(\sigma_*\cO_{\tS_0}(H - \Xi)) = s$ and~$h^{>0}(\sigma_*\cO_{\tS_0}(H - \Xi)) = 0$.
Since~$h^{>0}(\cO_{X}) = 0$, it follows that~$\bL_X(\Xi)^\vee$ is globally generated, 
$h^0(\bL_X(\Xi)^\vee) = r + s$ and~$h^{>0}(\bL_X(\Xi)^\vee) = 0$.
\end{proof}

\begin{proof}[Proof of Theorem~\textup{\ref{thm:mb-x}}]
First, we prove the existence.

Assume~$s \ge r$.
Since Gorenstein Fano threefolds with terminal singularities are smoothable, 
their genus has the same bound~$g \le 12$ as for smooth prime Fano threefolds,
hence~$r \in \{2,3\}$.
Therefore, Corollary~\ref{cor:mb-x-existence} (whose hypotheses are satisfied by Proposition~\ref{prop:special-s}) applies, 
proving that~$\cU_X \coloneqq \bL_X(\Xi)$ is a Mukai sheaf on~$X$.

Now assume~$r > s$.
The above argument proves the existence of a Mukai sheaf of type~$(s,r)$,
and then Lemma~\ref{lem:mbx-mbs} implies the existence of a Mukai sheaf of type~$(r,s)$.

Now we prove the uniqueness.
So, assume~$g = r \cdot s \ge 6$ and let~$\cU_1$, $\cU_2$ be two Mukai sheaves of type~$(r,s)$ on~$X$.
Consider the pencil~$\cS = \{S_t\}_{t \in \P^1}$ constructed in Lemma~\ref{lem:pencil} with base curve~$C$.
By Lemma~\ref{lem:mbx-mbs} if~$t \in \P^1$ is very general, $\cU_i\vert_{S_t}$ are Mukai bundles, 
hence by Lemma~\ref{lem:mbs-unique} there is an isomorphism~$\phi_t \colon \cU_1\vert_{S_t} \xrightiso{} \cU_2\vert_{S_t}$, 
unique up to rescaling.
Restricting any of the isomorphisms~$\phi_t$ to~$C$, 
we obtain an isomorphism~$\phi_C \colon \cU_1\vert_C \xrightiso{}  \cU_2\vert_C$,
and since the bundles~$\cU_i\vert_C \cong (\cU_i\vert_{S_t})\vert_C$ are simple 
by Proposition~\ref{prop:lec} (it applies because~$g \ge 6$), 
it is also unique up to rescaling.
Therefore, we can fix an isomorphism~$\phi_C$ and rescale~$\phi_t$ in such a way 
that~$\phi_t\vert_C = \phi_C$ for all very general~$t$.

On the other hand, Lemmas~\ref{lem:mbx-mbs} and~\ref{lem:l-m-h} imply that the restriction maps
\begin{equation*}
\rH^0(X, \cU_i^\vee) \to \rH^0(S_t, \cU_i^\vee\vert_{S_t}) \to \rH^0(C, \cU_i^\vee\vert_C)
\end{equation*}
are isomorphism, hence we have a commutative diagram
\begin{equation*}
\xymatrix{
\rH^0(X, \cU_2^\vee) \ar[r]^-\sim & 
\rH^0(S_t, \cU_2^\vee\vert_{S_t}) \ar[d]^{\phi_t^\vee}_\cong \ar[r]^-\sim & 
\rH^0(C, \cU_2^\vee\vert_C) \ar[d]^{\phi_C^\vee}_\cong
\\
\rH^0(X, \cU_1^\vee) \ar[r]^-\sim & 
\rH^0(S_t, \cU_1^\vee\vert_{S_t}) \ar[r]^-\sim & 
\rH^0(C, \cU_1^\vee\vert_C).
}
\end{equation*}
It follows that there is 
a unique isomorphism~$\rH^0(X, \cU_2^\vee) \xrightiso{} \rH^0(X, \cU_1^\vee)$ compatible with~$\phi_C^\vee$,
and, moreover, the diagram shows that it is compatible with~$\phi_t^\vee$ for all very general~$t$.

Now consider the composition of morphisms
\begin{equation*}
\psi \colon
\cU_2^\perp \hookrightarrow 
\rH^0(X, \cU_2^\vee) \otimes \cO_X \xrightiso{}
\rH^0(X, \cU_1^\vee) \otimes \cO_X \twoheadrightarrow 
\cU_1^\vee,
\end{equation*}
where~$\cU_2^\perp$ is the Mukai sheaf of type~$(s,r)$ associated with~$\cU_2$ in Lemma~\ref{lem:mbx-mbs}.
For each very general~$t \in \P^1$, restricting~$\psi$ to~$S_t$, we obtain a chain of maps
\begin{equation*}
\cU_2^\perp\vert_{S_t} \hookrightarrow 
\rH^0(S_t, \cU_2^\vee) \otimes \cO_{S_t} \xrightarrow{\ \phi_t^\vee\ }
\rH^0(S_t, \cU_1^\vee) \otimes \cO_{S_t} \twoheadrightarrow 
\cU_1^\vee\vert_{S_t},
\end{equation*}
and by functoriality of the evaluation morphism, its composition is zero.
Thus, the morphism~$\psi$ vanishes after restriction to~$S_t$ for all very general~$t$, hence it vanishes everywhere.
It follows that the composition~$\rH^0(X, \cU_2^\vee) \otimes \cO_X \xrightiso{} \rH^0(X, \cU_1^\vee) \otimes \cO_X \twoheadrightarrow \cU_1^\vee$
factors through an epimorphism~$\cU_1^\vee \twoheadrightarrow \cU_2^\vee$,
and since the sheaves~$\cU_1^\vee$ and~$\cU_2^\vee$ are torsion free of the same rank, it is an isomorphism.
\end{proof}

\subsection{The Mukai bundle}
\label{ss:mb-smooth}

In this section we show that if~$\g(X) \ge 6$ and the Mukai sheaf on~$X$ is locally free, 
it is exceptional (see Remark~\ref{rem:g4} for a discussion of the case~$g = 4$).
In particular, we prove Theorem~\ref{thm:Main}.

We will need the following simple observation.

\begin{lemma}
\label{lem:hilb-mbx}
If~$\cF$ is a vector bundle on~$X$ then~$\upchi(\cF, \cF) = \tfrac12\upchi(\cF\vert_S, \cF\vert_S)$.
In particular, if~$\cF\vert_S$ is numerically spherical, i.e., $\upchi(\cF\vert_S, \cF\vert_S) = 2$,
then~$\cF$ is numerically exceptional, i.e., $\upchi(\cF, \cF) = 1$.
\end{lemma}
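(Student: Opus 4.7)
The plan is to reduce the identity to Serre duality on~$X$ via the anticanonical restriction short exact sequence. Since~$S \in |H| = |-K_X|$, we have~$\cO_X(-S) \cong \omega_X$, and tensoring the structure sequence~$0 \to \omega_X \to \cO_X \to \cO_S \to 0$ with the locally free sheaf~$\cF^\vee \otimes \cF$ yields
\begin{equation*}
0 \to \cF^\vee \otimes \cF \otimes \omega_X \to \cF^\vee \otimes \cF \to \cF^\vee\vert_S \otimes \cF\vert_S \to 0.
\end{equation*}
Taking Euler characteristics, I obtain
\begin{equation*}
\upchi(\cF, \cF) = \upchi(X, \cF^\vee \otimes \cF \otimes \omega_X) + \upchi(\cF\vert_S, \cF\vert_S).
\end{equation*}

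Next, since~$X$ is a Gorenstein Cohen--Macaulay threefold (factorial terminal singularities are Cohen--Macaulay, and~$\omega_X$ is invertible), Serre duality applies: for any locally free sheaf~$\cG$ on~$X$,
\begin{equation*}
\upchi(X, \cG \otimes \omega_X) = (-1)^3 \upchi(X, \cG^\vee) = -\upchi(X, \cG^\vee).
\end{equation*}
I apply this with~$\cG = \cF^\vee \otimes \cF$. The crucial observation is that this sheaf is self-dual: $(\cF^\vee \otimes \cF)^\vee \cong \cF \otimes \cF^\vee \cong \cF^\vee \otimes \cF$. Therefore
\begin{equation*}
\upchi(X, \cF^\vee \otimes \cF \otimes \omega_X) = -\upchi(X, \cF^\vee \otimes \cF) = -\upchi(\cF, \cF).
\end{equation*}

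Substituting back yields~$\upchi(\cF, \cF) = -\upchi(\cF, \cF) + \upchi(\cF\vert_S, \cF\vert_S)$, hence~$2\upchi(\cF, \cF) = \upchi(\cF\vert_S, \cF\vert_S)$, which is the first assertion. The second assertion then follows by direct substitution: $\upchi(\cF\vert_S, \cF\vert_S) = 2$ forces~$\upchi(\cF, \cF) = 1$. I do not expect any genuine obstacle here; the entire argument reduces to combining the restriction sequence with Serre duality, and the key ingredient is simply the self-duality of the endomorphism bundle~$\cF^\vee \otimes \cF$ for locally free~$\cF$.
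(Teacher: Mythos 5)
Your proof is correct and is essentially the paper's argument: both rest on the anticanonical restriction sequence along $S$ combined with Serre duality for the Gorenstein threefold $X$ (the paper applies $\upchi(\cF,-)$ to $0 \to \cF(-H) \to \cF \to \cF\vert_S \to 0$ and uses $\upchi(\cF,\cF(-H)) = -\upchi(\cF,\cF)$, while you tensor the structure sequence with the self-dual bundle $\cF^\vee\otimes\cF$ — the same computation in slightly different packaging).
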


\begin{proof}
Let~$i \colon S \hookrightarrow X$ be the embedding.
The short exact sequence~$0 \to \cF(-H) \to \cF \to i_* i^* \cF \to 0$
combined with Serre duality and adjunction gives
\begin{equation*}
\upchi(\cF, \cF) = \upchi(\cF, \cF(-H)) + \upchi(\cF, i_* i^* \cF) = 
	- \upchi(\cF, \cF) + \upchi(\cF\vert_S, \cF\vert_S),
\end{equation*}
which shows our claim.
\end{proof}

The following lemma reduces verification of some $\Ext$-vanishing on~$X$ to $\Ext$-vanishing on~$S$.

\begin{lemma}\label{lem:ext1-f1-f2}
Let~$\cF_1,\cF_2$ be vector bundles on~$X$ and let~$S \subset X$ be a divisor in~$|H|$ such that
\begin{equation*}
\Hom(\cF_1\vert_S, \cF_2\vert_S) = \kk
\qquad\text{and}\qquad 
\Ext^1(\cF_1\vert_S, \cF_2(-H)\vert_S) = 0.
\end{equation*}
Then~$\Hom(\cF_1, \cF_2) = \kk$ and~$\Ext^1(\cF_1, \cF_2(-kH)) = 0$ for all~$k \ge 1$.
\end{lemma}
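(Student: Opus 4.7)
Plan. The strategy is a descending induction on $k$ using the short exact sequence
\begin{equation*}
0 \to \cF_2(-(k+1)H) \to \cF_2(-kH) \to i_*\bigl(\cF_2\vert_S(-kH_S)\bigr) \to 0,
\end{equation*}
where $H_S \coloneqq H\vert_S$ and $i \colon S \hookrightarrow X$ is the inclusion, obtained by tensoring $0 \to \cO_X(-H) \to \cO_X \to i_*\cO_S \to 0$ with $\cF_2(-kH)$. Applying $\Hom(\cF_1,-)$ and using the adjunction $\Ext^j(\cF_1, i_*\cG) \cong \Ext^j(\cF_1\vert_S, \cG)$ produces, for each $k$, the segment
\begin{equation*}
\Hom(\cF_1\vert_S, \cF_2\vert_S(-kH_S)) \to \Ext^1(\cF_1, \cF_2(-(k+1)H)) \to \Ext^1(\cF_1, \cF_2(-kH)) \to \Ext^1(\cF_1\vert_S, \cF_2\vert_S(-kH_S)).
\end{equation*}
For the base of the induction, Serre duality on the Fano threefold $X$ (with $K_X=-H$) and Serre vanishing for the ample divisor $H$ give $\Ext^i(\cF_1, \cF_2(-kH)) = 0$ for $i \in \{0,1\}$ and $k$ sufficiently large.

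The main ingredient is the auxiliary vanishing on $S$: for every $m \ge 1$,
\begin{equation*}
\Hom(\cF_1\vert_S, \cF_2\vert_S(-mH_S)) = 0 \quad\text{and}\quad \Ext^1(\cF_1\vert_S, \cF_2\vert_S(-mH_S)) = 0.
\end{equation*}
The case $m=1$ for $\Ext^1$ is exactly the standing hypothesis. To obtain the general case, I would run a further induction on $m$ on the K3 surface $S$, taking a smooth curve $C \in |H_S|$ and using
\begin{equation*}
0 \to (\cF_1^\vee \otimes \cF_2)\vert_S(-(m+1)H_S) \to (\cF_1^\vee \otimes \cF_2)\vert_S(-mH_S) \to (\cF_1^\vee \otimes \cF_2)\vert_C(-mH_C) \to 0.
\end{equation*}
Serre vanishing on $S$ (applied via Serre duality using $K_S=0$) supplies the base case for large $m$, and the passage between consecutive $m$'s is reduced, through the associated long exact sequence, to vanishings of $\Hom$ and $\Ext^1$ on the curve $C$, where the adjunction $K_C = H_C$ and the ampleness of $H_C$ allow one to conclude via Serre duality on $C$.

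Granted the auxiliary claim, the descending induction on $X$ closes: for each $k \ge 1$ both outer terms in the displayed piece vanish (left by the inductive hypothesis, right by the claim), forcing $\Ext^1(\cF_1, \cF_2(-kH)) = 0$. The same mechanism, applied one cohomological degree lower, gives $\Hom(\cF_1, \cF_2(-mH)) = 0$ for all $m \ge 1$. Finally, the $k=0$ instance of the long exact sequence reads
\begin{equation*}
0 \to \Hom(\cF_1, \cF_2(-H)) \to \Hom(\cF_1, \cF_2) \to \Hom(\cF_1\vert_S, \cF_2\vert_S) \to \Ext^1(\cF_1, \cF_2(-H));
\end{equation*}
the leftmost term vanishes by what has just been proved, the rightmost by the $k=1$ case, so the central map is an isomorphism $\Hom(\cF_1, \cF_2) \xrightiso{} \Hom(\cF_1\vert_S, \cF_2\vert_S) = \kk$.

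The main obstacle is precisely the auxiliary claim on $S$ for $m \ge 2$: the hypothesis controls only the $m=1$ term, and propagating the vanishing to all $m \ge 1$ requires a careful descending induction on $S$, reduced to the curve $C \in |H_S|$. The fact that $S$ is a K3 surface---so that $K_S=0$ and, by adjunction, $K_C = H_C$---is exactly what keeps the Serre-duality bookkeeping at every stage symmetric enough to make the curve-level vanishings tractable.
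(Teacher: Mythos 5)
Your overall architecture coincides with the paper's: restrict along $0 \to \cF_2(-(k+1)H) \to \cF_2(-kH) \to i_*(\cF_2(-kH)\vert_S) \to 0$, reduce everything to the auxiliary vanishing $\Ext^1(\cF_1\vert_S, \cF_2(-mH)\vert_S) = 0$ for all $m \ge 1$, and close the induction on $X$ with Serre vanishing (your final bookkeeping for $\Hom$ is also fine). The gap sits exactly where you place ``the main obstacle'': your proposed proof of the auxiliary claim for $m \ge 2$ by restricting further to a curve $C \in |H_S|$ does not go through. Writing $\cG \coloneqq \cF_1^\vee\vert_S \otimes \cF_2\vert_S$, the long exact sequence of $0 \to \cG(-(m+1)H_S) \to \cG(-mH_S) \to \cG\vert_C(-mH_C) \to 0$ requires, in the ascending direction, $\rH^0(C, \cG\vert_C(-mH_C)) = 0$, and in the descending direction, $\rH^1(C, \cG\vert_C(-mH_C)) = 0$. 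Serre duality on $C$ (with $K_C = H_C$) turns these into $\rH^1(C, \cG^\vee\vert_C((m+1)H_C))$ and $\rH^0(C, \cG^\vee\vert_C((m+1)H_C))$, i.e.\ cohomology of a \emph{positively} twisted bundle on a curve: the second is typically nonzero, and the first has no reason to vanish for small $m$. The hypotheses of the lemma constrain only the restrictions to $S$, not to $C$ (a section of $\cG\vert_C(-mH_C)$ need not extend to $S$), so nothing rules these groups out; in fact your own sequence identifies $\rH^1(S,\cG(-(m+1)H_S))$ with the cokernel of $\rH^0(S, \cG(-mH_S)) \to \rH^0(C, \cG\vert_C(-mH_C))$, and a single curve cannot detect that this cokernel vanishes. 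Note also that your inductive step on $S$ never uses the hypothesis $\Hom(\cF_1\vert_S, \cF_2\vert_S) = \kk$, which has to enter in an essential way.

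The paper's mechanism for the auxiliary claim is different: take the Koszul complex of three general sections of $\cO_S(H_S)$ and twist it by $\cF_2(-(m-2)H)\vert_S$. Given $\Ext^1(\cF_1\vert_S, \cF_2(-mH)\vert_S) = 0$, the hypercohomology spectral sequence exhibits $\Ext^1(\cF_1\vert_S, \cF_2(-(m+1)H)\vert_S)$ as a quotient of the kernel of the map $\Hom(\cF_1\vert_S, \cF_2(-(m-1)H)\vert_S)^{\oplus 3} \to \Hom(\cF_1\vert_S, \cF_2(-(m-2)H)\vert_S)$, $(\psi_i) \mapsto \sum s_i\psi_i$. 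Since for $j \ge 0$ each $\Hom(\cF_1\vert_S, \cF_2(-jH)\vert_S)$ injects, via multiplication by a power of a nonzero section, into $\Hom(\cF_1\vert_S, \cF_2\vert_S) = \kk$, and multiplication by a nonzero linear combination of three general (hence linearly independent) sections is injective on homomorphisms between locally free sheaves on the integral surface $S$, that kernel vanishes. This is precisely where $\Hom(\cF_1\vert_S,\cF_2\vert_S)=\kk$ and local freeness are used. You would need to replace your curve-restriction step by an argument of this kind; as written, the proposal is incomplete.
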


\begin{proof}
Consider the Koszul complex for three general global sections of the sheaf~$\cO_S(H_S)$ 
(which is globally generated by Lemma~\ref{lem:mbx-mbs}):
\begin{equation*}
0 \to \cO_S(-3H_S) \to \cO_S(-2H_S)^{\oplus 3} \to \cO_S(-H_S)^{\oplus 3} \to \cO_S \to 0.
\end{equation*}
Tensoring it by~$\cF_2(H)\vert_S$ we obtain an exact sequence
\begin{equation*}
0 \to 
\cF_2(-2H)\vert_S \to
\cF_2(-H)\vert_S^{\oplus 3} \to
\cF_2\vert_S^{\oplus 3} \to
\cF_2(H)\vert_S \to
0
\end{equation*}
Since~$\Ext^1(\cF_1\vert_S, \cF_2(-H)\vert_S) = 0$, 
a simple spectral sequence implies that~$\Ext^1(\cF_1\vert_S, \cF_2(-2H)\vert_S)$ 
is a quotient of the space
\begin{equation*}
\Ker\left(\Hom(\cF_1\vert_S, \cF_2\vert_S)^{\oplus 3} \to \Hom(\cF_1\vert_S, \cF_2(H)\vert_S)\right).
\end{equation*}
Since~$\Hom(\cF_1\vert_S, \cF_2\vert_S) = \kk$, the kernel is nonzero 
only if~$\Hom(\cF_1\vert_S, \cF_2\vert_S)$ is annihilated by a section of~$\cO_S(H_S)$, 
which is impossible because the~$\cF_i$ are locally free.
Thus, $\Ext^1(\cF_1\vert_S, \cF_2(-2H)\vert_S) = 0$.
Twisting the same Koszul complex by~$-H$, $-2H$, and so on, and repeating the same argument, 
we see that~$\Ext^1(\cF_1\vert_S, \cF_2(-kH)\vert_S) = 0$ for all~$k \ge 1$.

Now consider the restriction exact sequence
\begin{equation*}
\Ext^1(\cF_1, \cF_2(-(k + 1)H)) \to \Ext^1(\cF_1, \cF_2(-kH)) \to \Ext^1(\cF_1\vert_S, \cF_2(-kH)\vert_S).
\end{equation*}
Its right term is zero for~$k \ge 1$ as we just showed, hence its first arrow must be surjective.
Thus, if~$\Ext^1(\cF_1, \cF_2(-kH)) \ne 0$ for some~$k \ge 1$, the same nonvanishing holds for all sufficiently large~$k$, 
which contradicts Serre vanishing because the~$\cF_i$ are locally free.
This proves the vanishing of~$\Ext^1$.

Similarly, the restriction exact sequence for~$k = 0$ gives
\begin{equation*}
0 \to \Hom(\cF_1, \cF_2(-H)) \to \Hom(\cF_1, \cF_2) \to \Hom(\cF_1\vert_S, \cF_2\vert_S) \to \Ext^1(\cF_1, \cF_2(-H)),
\end{equation*}
and since the right term vanishes and the next to it term is~$\kk$, we have~$\Hom(\cF_1, \cF_2) \ne 0$.
Finally, if the dimension of~$\Hom(\cF_1, \cF_2)$ is greater than~$1$, then~$\Hom(\cF_1, \cF_2(-H)) \ne 0$, and then by induction~$\Hom(\cF_1, \cF_2(-kH)) \ne 0$ for all~$k \ge 0$, again contradicting Serre vanishing.
\end{proof}

\begin{corollary}
\label{cor:eu}
Let~$X$ be a prime Fano threefold with factorial terminal singularities of genus~$g$
and assume that~\mbox{$g = r \cdot s \ge 6$} with~$r,s \ge 2$.
If the Mukai sheaf\/~$\cU_X$ of type~$(r,s)$ on~$X$ is locally free, it is exceptional.
\end{corollary}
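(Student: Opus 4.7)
The plan is to reduce the global exceptionality of $\cU_X$ on $X$ to the already-established spherical property on a very general hyperplane section, combining the two key technical inputs from Section~3 with the restriction machinery of Section~5. By the $\cU_X \leftrightarrow \cU_X^\perp$ symmetry of Lemma~\ref{lem:mbx-mbs} (which swaps $r$ and $s$) we may assume throughout that $s \ge r$; the Iskovskikh bound $g \le 12$ then forces $r \in \{2,3\}$, so Proposition~\ref{prop:l-l-m-h} is applicable.

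First I would fix a very general smooth anticanonical divisor $S \subset X$. By Lemma~\ref{lem:mbx-mbs}, $\Pic(S) = \ZZ \cdot H_S$ and $\cU_X\vert_S$ is a Mukai bundle on $S$. Since $S$ carries no special Mukai class of type $(r,s)$ (such a class would enlarge $\Pic(S)$), Theorem~\ref{thm:mb-s} combined with uniqueness (Lemma~\ref{lem:mbs-unique}) identifies $\cU_X\vert_S$ with the Lazarsfeld bundle $\bL_S(\xi)$ attached to a Mukai pair $(\xi,\eta)$ on a BNP-general curve $C \in |H_S|$. Lemma~\ref{lem:lazarsfeld-bundle} then yields that $\cU_X\vert_S$ is spherical, so $\Hom(\cU_X\vert_S, \cU_X\vert_S) = \kk$, $\Ext^1(\cU_X\vert_S, \cU_X\vert_S) = 0$, and $\upchi(\cU_X\vert_S, \cU_X\vert_S) = 2$; applying Lemma~\ref{lem:hilb-mbx} delivers the numerical exceptionality $\upchi(\cU_X, \cU_X) = 1$.

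The crucial input for promoting simplicity on $S$ to simplicity on $X$ is Proposition~\ref{prop:l-l-m-h}, which provides
\[
\Ext^1(\cU_X\vert_S, \cU_X\vert_S(-H_S)) \cong \rH^1(S, \bL_S(\xi)^\vee \otimes \bL_S(\xi) \otimes \cO_S(-H_S)) = 0.
\]
Combined with $\Hom(\cU_X\vert_S, \cU_X\vert_S) = \kk$, this is exactly the pair of hypotheses of Lemma~\ref{lem:ext1-f1-f2} applied to $\cF_1 = \cF_2 = \cU_X$; the lemma gives $\Hom(\cU_X, \cU_X) = \kk$ and $\Ext^1(\cU_X, \cU_X(-kH)) = 0$ for every $k \ge 1$.

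To conclude I would compute the remaining higher $\Ext^i(\cU_X, \cU_X)$ by the restriction long exact sequence together with Serre duality on the Gorenstein threefold $X$. The long exact sequence associated with $0 \to \cU_X^\vee \otimes \cU_X(-H) \to \cU_X^\vee \otimes \cU_X \to i_*(\cU_X^\vee\vert_S \otimes \cU_X\vert_S) \to 0$ sandwiches $\Ext^1(\cU_X, \cU_X)$ between $\Ext^1(\cU_X, \cU_X(-H)) = 0$ (just established) and $\Ext^1(\cU_X\vert_S, \cU_X\vert_S) = 0$ (sphericity), hence $\Ext^1(\cU_X, \cU_X) = 0$. Serre duality with $\omega_X \cong \cO_X(-H)$ then yields $\Ext^2(\cU_X, \cU_X) \cong \Ext^1(\cU_X, \cU_X(-H))^\vee = 0$, and the Euler characteristic equality $\upchi(\cU_X, \cU_X) = 1$ forces $\Ext^3(\cU_X, \cU_X) = 0$. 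I anticipate no serious obstacle: all the difficult cohomology vanishings have been packaged into Proposition~\ref{prop:l-l-m-h} and Lemma~\ref{lem:ext1-f1-f2}, so the final step is routine bookkeeping.
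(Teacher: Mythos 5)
Your proposal follows the paper's proof essentially verbatim: restrict to a very general $S$ with $\Pic(S)=\ZZ\cdot H_S$, identify $\cU_X\vert_S$ with a spherical Lazarsfeld bundle via Theorem~\ref{thm:mb-s} and Lemma~\ref{lem:mbs-unique}, feed the vanishings of Lemma~\ref{lem:lazarsfeld-bundle} and Proposition~\ref{prop:l-l-m-h} into Lemma~\ref{lem:ext1-f1-f2}, and finish with Serre duality and the Euler characteristic computation of Lemma~\ref{lem:hilb-mbx}. The only step you pass over too quickly is the reduction to $s\ge r$: Lemma~\ref{lem:mbx-mbs} does swap the types of $\cU_X$ and $\cU_X^\perp$, but transferring exceptionality back from $\cU_X^\perp$ to $\cU_X$ still needs the observation (made explicitly in the paper) that $\cU_X^\vee$ is the right mutation of $\cU_X^\perp$ through $\cO_X$, which is legitimate because $\rH^\bullet(X,\cU_X^\perp)=0$ makes $(\cU_X^\perp,\cO_X)$ an exceptional pair.
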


\begin{proof}
First, assume~$s \ge r$.
By Lemma~\ref{lem:mbx-mbs}, if~$S \subset X$ is a very general anticanonical divisor, 
then~$\Pic(S) = \ZZ \cdot H_S$ and~$\cU_X\vert_S$ is a Mukai bundle on~$S$.
Moreover, $\cU_X\vert_S \cong \bL_S(\xi)$ is the Lazarsfeld bundle 
associated with a general curve~$C$ on~$S$ and a Mukai pair~$(\xi,\eta)$ of type~$(r,s)$ on~$C$ by Theorem~\ref{thm:mb-s}.
In particular, by Lemma~\ref{lem:lazarsfeld-bundle} this bundle is spherical and Proposition~\ref{prop:l-l-m-h} shows that
\begin{equation*}
\Hom(\bL_S(\xi), \bL_S(\xi)) = \kk
\qquad\text{and}\qquad 
\Ext^1(\bL_S(\xi), \bL_S(\xi) \otimes \cO_S(-H)) = 0.
\end{equation*}
As~$\cU_X$ is locally free by assumption, applying Lemma~\ref{lem:ext1-f1-f2} to~$\cF_1 = \cF_2 = \cU_X$, we obtain
\begin{equation*}
\Hom(\cU_X, \cU_X) = \kk
\qquad\text{and}\qquad 
\Ext^1(\cU_X, \cU_X \otimes \cO_X(-H)) = 0.
\end{equation*}
By Serre duality, the second equality implies that $\Ext^2(\cU_X, \cU_X) = 0$. 
On the other hand, we have~$\upchi(\cU_X, \cU_X) = \tfrac12\,\upchi(\bL_S(\xi),\bL_S(\xi)) = 1$ by Lemma~\ref{lem:hilb-mbx}.
Therefore,
\begin{equation*}
\Ext^1(\cU_X, \cU_X) = \Ext^3(\cU_X, \cU_X) = 0,
\end{equation*}
hence~$\cU_X$ is exceptional.

Now assume~$r > s$.
Then the Mukai sheaf~$\cU_X^\perp$ of type~$(s,r)$ defined in Lemma~\ref{lem:mbx-mbs} is also locally free,
hence it is exceptional by the above argument.
Furthermore, the defining sequence
\begin{equation*}
0 \to \cU_X^\perp \to \rH^0(X, \cU_X^\vee) \otimes \cO_X \to \cU_X^\vee \to 0
\end{equation*}
shows that~$\cU_X^\vee$ is the right mutation of~$\cU_X^\perp$ through~$\cO_X$,
hence~$\cU_X$ is also exceptional.
\end{proof}

\begin{proof}[Proof of Theorem~\textup{\ref{thm:Main}}]
The existence and uniqueness of a Mukai sheaf~$\cU_X$ of type~$(r,s)$ follows from Theorem~\ref{thm:mb-x}.
Since~$\cU_X$ is maximal Cohen--Macaulay and~$X$ is smooth, 
$\cU_X$ is locally free by the Auslander--Buchsbaum formula.
It remains to note that~$\cU_X$ is exceptional by Corollary~\ref{cor:eu}.
\end{proof}

\begin{remark}
\label{rem:g4}
If~$X$ is a smooth prime Fano threefold of genus~$g = 4$ 
then~$X = Q \cap R$ is a complete intersection of a quadric and a cubic hypersurface in~$\P^5$;
moreover, $Q$ is smooth or is a cone over a smooth 3-dimensional quadric~$\bar{Q}$,
and if~$Q$ is a cone then~$R$ does not contain its vertex.

If~$Q$ is smooth, \cite[Proposition~4.2]{KS23} shows that
the restriction to~$X$ of either of the two spinor bundles on~$Q$ is an exceptional Mukai bundle on $X$, 
but they are not isomorphic and thus a Mukai bundle on~$X$ exists, but it is not unique. 

If~$Q$ is singular, \cite[Proposition~4.2]{KS23} shows that
the pullback of the unique spinor bundle from~$\bar{Q}$ is the unique Mukai sheaf on $X$;
however, the sheaf is not exceptional. 

In both cases our arguments proving the uniqueness or exceptionality do not apply 
because Proposition~\ref{prop:l-l-m-h} fails for~$g = 4$.

Furthermore, consider a complete intersection $X = Q \cap R$ 
of a cone~$Q$ over a smooth quadric threefold~$\bar Q$ 
and a cubic hypersurface $R$ containing the vertex, where $R$ is very general among such cubics; 
such~$X$ is a factorial 1-nodal Fano threefold of genus~$g = 4$, 
see~\cite[Theorem~1.6(ii) and the proof of Theorem~1.8, Subcase~(4f-b)]{KP23}. 
Let $\cU_X$ be the reflexive extension of the pullback of the spinor sheaf on~$\bar Q$ to the complement of the node in $X$. 
Then $\cU_X$ is the unique Mukai sheaf on $X$, but it is neither locally free, nor exceptional.
\end{remark}

In~\cite{BKM:models} we prove that the Mukai sheaf 
on any prime Fano threefold~$X$ with factorial terminal singularites of genus~$g \ge 6$ is locally free,
hence it is exceptional.

%%%%%%%%%%%%%%%%%%%%%%%%%


\begin{thebibliography}{ACGH}

\bibitem[AC]{AC:2Fano}
Araujo, C., Castravet, A.-M.,
Classification of 2-Fano manifolds with high index,
in {\it A celebration of algebraic geometry}, 1--36,
Clay Math. Proc. {\bf 18},
American Mathematical Society, Providence, RI, 2013.

\bibitem[ABS]{ABS:Mukai}
Arbarello, E., Bruno, A., Sernesi, E., Mukai's program for curves on a K3
 surface,  {\it Algebr. Geom. 1}, No.~5, 532--557.

\bibitem[ACGH]{ACGH}
Arbarello, E., Cornalba, M., Griffiths, P.A., Harris, J., 
{\it Geometry of algebraic curves. Vol. I.},
Grundlehren Math. Wiss. {\bf 267}, 
Springer-Verlag, New York, 1985.

\bibitem[AH]{AH}
Auel, A., Haburcak, R.,
Maximal Brill--Noether loci via K3 surfaces,
eprint {\tt arXiv:2206.04610}.

\bibitem[BKM]{BKM:models}
Bayer, A., Kuznetsov, A., Macr{\`i}, E., 
Mukai models of Fano varieties, 
eprint {\tt arXiv:2501.16157}.

\bibitem[BLMS]{BLMS}
Bayer, A., Lahoz, M., Macr{\`i}, E., Stellari, P.,
Stability conditions on Kuznetsov components,
{\it Ann. Sci. \'{E}c. Norm. Sup\'{e}r. (4)} {\bf 56} (2023), 517--570.

\bibitem[B]{Bourbaki}
Bourbaki, N., 
{\it Lie groups and Lie algebras. Chapters 4--6},
Elements of Mathematics (Berlin), Springer-Verlag, Berlin, 2002.

\bibitem[Bu]{Buch}
Buchweitz, R.-O., 
{\it Maximal Cohen--Macaulay modules and Tate cohomology},
%with appendices and an introduction by L. Avramov, B. Briggs, S. Iyengar, and J. Letz}, 
Math. Surveys Monogr. {\bf 262},
American Mathematical Society, Providence, RI, 2021.

\bibitem[CFM]{CFM}
Chen, D., Farkas, G., Morrison, I.,
Effective divisors on moduli spaces of curves and abelian varieties,
in {\it A celebration of algebraic geometry}, 131--169,
Clay Math. Proc. {\bf 18}, 
American Mathematical Society, Providence, RI, 2013.

\bibitem[C]{Ciliberto}
Ciliberto, C.,
Sul grado dei generatori dell'anello canonico di una superficie di tipo generale,
{\it Rend. Sem. Mat. Univ. Politec. Torino} {\bf 41} (1983), 83--111.

\bibitem[CFK]{CFK}
Ciliberto, C., Flamini, F., Knutsen, A.L.,
Elliptic curves, ACM bundles and Ulrich bundles on prime Fano threefolds,
{\it Collect. Math.} (2023), \url{https://doi.org/10.1007/s13348-023-00413-9}.

\bibitem[CLM]{CLM}
Ciliberto, C., Lopez, A., Miranda, R.,
Projective degenerations of K3 surfaces, Gaussian maps, and Fano threefolds,
{\it Invent. Math.} {\bf 114} (1993), 641--667.

\bibitem[CCGK]{Corti:QuantumPeriods}
Coates, T., Corti, A., Galkin, S., Kasprzyk, A.,
Quantum periods for 3-dimensional Fano manifolds,
{\it Geom. Topol.} {\bf 20} (2016), 103--256.

\bibitem[CU]{CU}
Cukierman, F., Ulmer, D.,
Curves of genus ten on K3 surfaces,
{\it Compositio Math.} {\bf 89} (1993), 81--90.

\bibitem[EH]{EH}
Eisenbud, D., 
Harris, J., 
Irreducibility and monodromy of some families of linear series,
{\it Ann. Sci. École Norm. Sup. (4)} {\bf 20} (1987), 65--87.

\bibitem[E]{Esnault}
Esnault, H.
Reflexive modules on quotient surface singularities,
{\it J. Reine Angew. Math.} {\bf 362} (1985), 63--71.

\bibitem[F1]{Fey:Mukai}
Feyzbakhsh, S. 
Mukai's program (reconstructing a K3 surface from a curve) via wall-crossing. 
{\it J. Reine Angew. Math.}, 2020(765), pp.101--137.

\bibitem[F2]{Fey:HK}
\bysame,
Hyperk\"ahler varieties as Brill--Noether loci on curves,
eprint {\tt arXiv:2205.00681}.

\bibitem[Fu]{Fujita:extendampledivisor} Fujita, T.,
Vector bundles on ample divisors,
{\it J. Math. Soc. Japan} {\bf 33} (1981), 405--414.

\bibitem[G]{Gie}
Gieseker, D., 
Stable curves and special divisors: Petri's conjecture,
{\it Invent. Math.} {\bf 66} (1982), 251--275.

\bibitem[GLT]{GLT}
Greer, F., Li, Z., Tian, Z.,
Picard groups on moduli of K3 surfaces with Mukai models,
{\it Int. Math. Res. Not. IMRN} {\bf 16} (2015), 7238--7257.

\bibitem[Gu1]{Gu6}
Gushel\ensuremath{'}, N.P., 
Fano varieties of genus 6,
{\it Izv. Akad. Nauk SSSR Ser. Mat.} {\bf 46} (1982), 1159--1174, 1343.

\bibitem[Gu2]{Gu8}
\bysame, 
Fano 3-folds of genus 8,
{\it Algebra i Analiz} {\bf 4} (1992), 120--134.

\bibitem[H]{Hubarcak}
Haburcak, R., 
Curves on Brill--Noether special K3 surfaces, 
{\it Math. Nachr.} {\bf 297} (2024), 4497--4509. 

\bibitem[Hu]{Hu}
Huybrechts, D., 
{\it Lectures on K3 surfaces}, 
Cambridge Studies in Advanced Mathematics, 158. Cambridge University Press, Cambridge, 2016.

\bibitem[HL]{HL}
Huybrechts, D., Lehn, M., 
{\it The geometry of moduli spaces of sheaves},
Cambridge Mathematical Library, Cambridge University Press, Cambridge, 2010. 

\bibitem[IP]{Fano-book}
Iskovskikh, V., Prokhorov, Y.,
{\it Fano varieties, Algebraic geometry, V},
Encyclopaedia Math. Sci. {\bf 47},
Springer-Verlag, Berlin, 1999.

\bibitem[K08]{K08}
Kuznetsov, A., 
Derived categories of Fano threefolds, 
{\it Tr. Mat. Inst. Steklova} {\bf 264} (2009), Mnogomernaya Algebraicheskaya Geometriya, 116--128.
%;  translation in {\it Proc. Steklov Inst. Math.} {\bf 264} (2009), 110--122.

\bibitem[K25]{K22}
Kuznetsov, A., 
Derived categories of families of Fano threefolds, 
{\it Algebr. Geom.}, {\bf 12}(4):519--574, 2025.

\bibitem[KP]{KP23}
Kuznetsov, A., Prokhorov, Y., 
One-nodal Fano threefolds with Picard number one,
{\it Izv. Ross. Akad. Nauk Ser. Mat.}
{\bf 89}:3 (2025), 80--178.

\bibitem[KPS]{KPS}
Kuznetsov, A., Prokhorov, Y., Shramov, C., 
Hilbert schemes of lines and conics and automorphism groups of Fano threefolds,
{\it Jpn. J. Math.} {\bf 13} (2018), 109--185. 

\bibitem[KS]{KS23}
Kuznetsov, A., Shinder, E., 
Derived categories of Fano threefolds and degenerations, 
{\it Invent. math.} {\bf 239}, 377--430 (2025).

\bibitem[L]{Laz}
Lazarsfeld, R.,
Brill--Noether--Petri without degenerations,
{\it J. Differential Geom.} {\bf 23} (1986), 299--307. 

\bibitem[L-C]{L-C}
Lelli-Chiesa, M.,
Generalized Lazarsfeld--Mukai bundles and a conjecture of Donagi and Morrison,
{\it Adv. Math.} {\bf 268} (2015), 529--563.

\bibitem[M]{Maruyama}
Maruyama, M.,
On a family of algebraic vector bundles,
in {\it Number theory, algebraic geometry and commutative algebra, in honor of Yasuo Akizuki} 95--146,
Kinokuniya Book Store Co., Ltd., Tokyo, 1973.

\bibitem[Me]{Mella}
Mella, M., 
Existence of good divisors on Mukai varieties, 
{\it J. Algebraic Geom.} {\bf 8} (1999), 197--206. 

\bibitem[Mu1]{Mukai:vectorbundles}
Mukai, S., 
On the moduli space of bundles on K3 surfaces, I, 
in {\it Vector bundles on algebraic varieties (Bombay, 1984)} 341--413, 
Tata Inst. Fund. Res. Stud. Math. {\bf 11},
Published for the Tata Institute of Fundamental Research, Bombay, 1987.

\bibitem[Mu2]{Mukai:PNAS} 
\bysame,
Biregular classification of Fano 3-folds and Fano manifolds of coindex 3,
{\it Proc. Natl. Acad. Sci. USA} {\bf 86} (1989), 3000--3002.

\bibitem[Mu3]{Muk3}
\bysame,
Curves and Grassmannians, 
in {\it Algebraic geometry and related topics (Inchon, 1992)}, 19--40, 
Conf. Proc. Lecture Notes Algebraic Geom. {\bf I}, Int. Press, Cambridge, MA, 1993.

\bibitem[Mu4]{Mukai:noncommutativizability}
\bysame,
Noncommutativizability of {Brill}-{Noether} theory and 3-dimensional {Fano} variety, 
{\it Sugaku {Expo}. 14}, {No}. 2, 125--153 (2001).

\bibitem[Mu5]{Muk:New}
\bysame,
New developments in the theory of Fano threefolds: vector bundle method and moduli problems,
{\it Sugaku Expositions} {\bf 15} (2002), 125--150.

\bibitem[Mu6]{Muk22}
\bysame,
Curves and symmetric spaces III: BN-special vs.~1-PS degeneration. 
{\it Proc. Indian Acad. Sci. Math. Sci.} {\bf 132} (2022), Paper No. 57, 9 pp.

\bibitem[P]{Prokhorov:rationality}
Prokhorov, Y., 
Rationality of Fano threefolds with terminal Gorenstein singularities. I,
{\it Proc. Steklov Inst. Math.} {\bf 307} (2019), 210--231.

\bibitem[RS]{RS09}
Ravindra, G., Srinivas, V.,
The {N}oether--{L}efschetz theorem for the divisor class group.
{\it J. Algebra} {\bf 322} (2009), 3373--3391.

\bibitem[R1]{Reid:canonical}
Reid, M., 
Canonical $3$-folds. 
in {\it Journ\'ees de G\'eometrie Alg\'ebrique d'Angers}, 
Juillet 1979/Algebraic Geometry, Angers, 1979, pp. 273--310, Sijthoff \& Noordhoff, Alphen aan den Rijn—Germantown, Md., 1980.

\bibitem[R2]{Reid}
Reid, M., 
Minimal models of canonical $3$-folds,
in {\it Algebraic varieties and analytic varieties (Tokyo, 1981)}, 131--180, 
Adv. Stud. Pure Math. {\bf 1}, North-Holland, Amsterdam, 1983.

\bibitem[R3]{Reid93}
\bysame, 
Chapters on algebraic surfaces,
in {\it Complex algebraic geometry (Park City, UT, 1993)}, 3--159, 
IAS/Park City Math. Ser. {\bf 3}, Amer. Math. Soc., Providence, RI, 1997.

\bibitem[S-D]{SD}
Saint-Donat, B.,
Projective models of $K$-3 surfaces,
{\it Amer. J. Math.} {\bf 96} (1974), 602--639.

\bibitem[V]{Voisin:Wahl}
Voisin, C.,
Sur l'application de Wahl des courbes satisfaisant la condition de Brill--Noether--Petri,
{\it Acta Math.} {\bf 168} (1992), 249--272.

\end{thebibliography}
\end{document}